\newtheorem{theorem}{Theorem}[section]
\newtheorem{lemma}[theorem]{Lemma}
\newtheorem{proposition}[theorem]{Proposition}
\newtheorem{corollary}[theorem]{Corollary}
\newtheorem{conjecture}[theorem]{Conjecture}
\newtheorem{condition/definition}[theorem]{Condition/Definition} 
\newtheorem{assumption}[theorem]{Assumption}
\theoremstyle{definition}
\newtheorem{definition}[theorem]{Definition}
\theoremstyle{remark}
\newtheorem{remark}[theorem]{Remark}
\theoremstyle{definition}
\def\ul{\underline}
\def\ULA{\mathrm{ULA}}
\def\Spf{\mathrm{Spf}}
\def\ol{\overline}
\def\SL{\mathrm{SL}}
\def\U{\mathrm{U}}
\def\mf{\mathfrak}
\def\ra{\rightarrow}
\def\la{\leftarrow}
\def\Perv{\mathrm{Perv}}
\def\lim{\mathop{\rm lim}\nolimits}
\def\colim{\mathop{\rm colim}\nolimits}
\def\Spec{\mathop{\rm Spec}}
\def\Spa{\mathop{\rm Spa}}
\def\Spd{\mathop{\rm Spd}}
\def\Hom{\mathop{\rm Hom}\nolimits}
\def\Sh{\mathop{\textit{Sh}}\nolimits}
\def\Im{\text{Im}}
\def\Bun{\mathrm{Bun}}
\def\nmEis{\mathrm{nEis}}
\def\Hck{\mathrm{Hck}}
\def\HT{\mathrm{HT}}
\def\Gr{\mathrm{Gr}}
\def\Perf{\mathrm{Perf}}
\def\det{\mathrm{det}}
\def\dim{\mathrm{dim}}
\def\Red{\mathrm{Red}}
\def\Sht{\mathrm{Sht}}
\def\Rep{\mathrm{Rep}}
\def\D{\mathrm{D}}
\def\DULA{\mathrm{D}^{\mathrm{ULA}}}
\def\Dadm{\mathrm{D}^{\mathrm{adm}}}
\def\pD{\phantom{}^{\mathrm{p}}\mathrm{D}}
\def\RHom{R\mathrm{Hom}}
\def\GL{\mathrm{GL}}
\def\Tilt{\mathrm{Tilt}}
\def\bb{\mathbb}
\def\LLC{\mathrm{LLC}}
\def\GU{\mathrm{GU}}
\def\Sh{\mathrm{Sh}}
\newcommand{\Dlis}{\mathrm{D}_{\mathrm{lis}}}
\newcommand{\gamorb}{\mathbb{X}_{*}(T_{\ol{\mathbb{Q}}_{p}})/\Gamma}
\newcommand{\domgamorb}{\mathbb{X}_{*}(T_{\ol{\mathbb{Q}}_{p}})^{+}/\Gamma}
\newcommand{\mc}{\mathcal}
\newcommand{\Res}{\mathrm{Res}}
\newcommand{\GSp}{\mathrm{GSp}}
\newcommand{\GSpin}{\mathrm{GSpin}}
\newcommand{\Stor}{\mathcal{S}^{\mathrm{tor}}}
\def\Aut{\mathrm{Aut}}
\newcommand{\Ig}{\mathrm{Ig}}
\newcommand{\mIg}{\mathfrak{Ig}}
\newcommand{\cochar}{\mathbb{X}_{*}(T_{\ol{\mathbb{Q}}_{p}})}
\newcommand{\domcochar}{\mathbb{X}_{*}(T_{\ol{\mathbb{Q}}_{p}})^{+}}
\newcommand{\tor}{\mathrm{tor}}
\newcommand{\der}{\mathrm{der}}
\title{Torsion Vanishing for some Shimura Varieties}
\author{Linus Hamann and Si Ying Lee, with an Appendix by David Hansen}
\begin{document}
\maketitle
\begin{abstract} We generalize the torsion vanishing results of \cite{CS1,CS2,Ko,San}. Our results apply to the cohomology of general Shimura varieties $(\mathbf{G},X)$ of PEL type AC, localized at a suitable maximal ideal $\mf{m}$ in the spherical Hecke algebra  at primes $p$ such that $\mathbf{G}_{\mathbb{Q}_{p}}$ is a group for which we know the Fargues-Scholze local Langlands correspondence is the semi-simplification of a suitably nice local Langlands correspondence, as shown in \cite{FS,Ham1,HKW,BMNH}. This is accomplished by combining Koshikawa's technique \cite{Ko}, the theory of geometric Eisenstein series over the Fargues-Fontaine curve \cite{Ham2}, the work of Santos \cite{San} describing the structure of the fibers of the minimally and toroidally compactified Hodge-Tate period morphism for general PEL type Shimura varieties of type AC, and ideas developed by Zhang \cite{Zha} on comparing Hecke correspondences on the moduli stack of $G$-bundles with the cohomology of Shimura varieties. In the process, we also establish a description of the generic part of the cohomology that bears resemblance to the work of Xiao-Zhu \cite{XZ}. Moreover, we also construct a filtration on the compactly supported cohomology that differs from Mantovan's filtration in the case that the Shimura variety is non-compact,allowing us to circumvent some of the circumlocutions taken in \cite{CS2,Ko}. Our method showcases a very general strategy for proving such torsion vanishing results, and should bear even more fruit once the inputs are generalized. Motivated by this, we formulate an even more general torsion vanishing conjecture (Conjecture \ref{conj: generaltorsionvanish}).
\end{abstract}
\tableofcontents
\section{Introduction}
\subsubsection{The Main Result}
Let $\mathbf{G}$ be a connected reductive group over $\mathbb{Q}$ admitting a Shimura datum $(\mathbf{G},X)$ which we fix from now on. Fix a prime number $p > 0$ and let $G := \mathbf{G}_{\mathbb{Q}_{p}}$ be the base-change to $\mathbb{Q}_{p}$. We will assume that $G$ is unramified so that there exists a hyperspecial subgroup $K^{\mathrm{hs}}_{p} \subset G(\mathbb{Q}_{p})$ and a Borel $B \subset G$ surjecting onto a maximal torus $T \subset G$ which we now fix. We consider the open compact subgroup $K := K^{p}K_{p}^{\mathrm{hs}} \subset \mathbf{G}(\mathbb{A}_{f})$, where $K^{p} \subset \mathbf{G}(\mathbb{A}_{f}^{p})$ denotes a sufficiently small level away from $p$. Let $\mathrm{Sh}(\mathbf{G},X)_{K}$ denote the corresponding Shimura variety defined over the reflex field $E$. Given a prime $p \neq \ell$, we will be interested in understanding the $\ell$-torsion cohomology groups 
\[ R\Gamma_{c}(\mathrm{Sh}(\mathbf{G},X)_{K,\ol{E}},\ol{\mathbb{F}}_{\ell}) \]
and 
\[ R\Gamma(\mathrm{Sh}(\mathbf{G},X)_{K,\ol{E}},\ol{\mathbb{F}}_{\ell}). \]
In particular, since the level at $p$ is hyperspecial, the unramified Hecke algebra 
\[ H_{K_{p}^{\mathrm{hs}}} := \ol{\mathbb{F}}_{\ell}[K_{p}^{\mathrm{hs}} \backslash G(\mathbb{Q}_{p})/K_{p}^{\mathrm{hs}}] \]
will act on these complexes via the right action. Given a maximal ideal $\mf{m} \subset H_{K_{p}^{\mathrm{hs}}}$, we can localize both of these cohomology groups at $\mf{m}$. We will be interested in describing this localization. To do this, we recall that, given such a maximal ideal $\mf{m} \subset H_{K_{p}^{\mathrm{hs}}}$, this defines an unramified $L$-parameter 
\[ \phi_{\mf{m}}: W_{\mathbb{Q}_{p}} \ra \phantom{}^{L}G(\ol{\mathbb{F}}_{\ell}) \]
up to $\hat{G}$-conjugacy specified by a semisimple element $\phi_{\mf{m}}(\mathrm{Frob}_{\mathbb{Q}_{p}})$. In particular, the $L$-parameter $\phi_{\mf{m}}$ up to conjugacy is induced from a parameter $\phi_{\mf{m}}^{T}: W_{\mathbb{Q}_{p}} \ra \phantom{}^{L}T(\ol{\mathbb{F}}_{\ell}) \subset \phantom{}^{L}G(\ol{\mathbb{F}}_{\ell})$ factoring through the $L$-group of the maximal torus. Now, recall that the irreducible representations of $\phantom{}^{L}T$ correspond to the $\Gamma$-orbits $\mathbb{X}_{*}(T_{\ol{\mathbb{Q}}_{p}})/\Gamma$ of geometric cocharacters of $T$. We have the following definition. 
\begin{definition}{\cite[Definition~1.2]{Ham2}}{\label{def: generic}}
Given a toral $L$-parameter $\phi_{T}: W_{\mathbb{Q}_{p}} \ra \phantom{}^{L}T(\ol{\mathbb{F}}_{\ell})$, we say that $\phi_{T}$ is generic if, for all $\alpha \in \mathbb{X}_{*}(T_{\ol{\mathbb{Q}}_{p}})/\Gamma$ corresponding to a $\Gamma$-orbit of coroots, we have that the complex $R\Gamma(W_{\mathbb{Q}_{p}},\alpha \circ \phi_{T})$ is trivial. Similarly, we say that $\mf{m}$ is generic if the $L$-parameter $\phi_{\mf{m}}^{T}$ is a generic toral parameter, where we note that this only depends on $\phi_{\mf{m}}$ up to $\hat{G}$-conjugacy.
\end{definition}
If $G = \GL_{n}$ then this coincides with the notion of decomposed generic considered in \cite[Definition~I.9]{CS1}. We set $d = \dim(\mathrm{Sh}(\mathbf{G},X)_{K})$. Motivated by \cite[Theorem~1.1]{CS1} and \cite[Theorem~1.1]{CS2}, we make the following conjecture.
\begin{conjecture}{\label{conj: torsionvanishing}}
Let $(\mathbf{G},X)$ be a Shimura datum such that $G = \mathbf{G}_{\mathbb{Q}_{p}}$ is unramified and $K = K_{p}K^{p}$ is a sufficiently small level with $K_{p} = K_{p}^{\mathrm{hs}}$ hyperspecial. If $\mf{m} \subset H_{K_{p}^{\mathrm{hs}}}$ is a generic maximal ideal then the cohomology of  $R\Gamma(\mathrm{Sh}(\mathbf{G},X)_{K,\ol{E}},\ol{\mathbb{F}}_{\ell})_{\mf{m}}$ (resp. $R\Gamma_{c}(\mathrm{Sh}(\mathbf{G},X)_{K,\ol{E}},\ol{\mathbb{F}}_{\ell})_{\mf{m}}$) is concentrated in degrees $d \leq i \leq 2d$ (resp. $0 \leq i \leq d$). 
\end{conjecture}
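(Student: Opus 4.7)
For Shimura data of PEL type A or C where the Fargues-Scholze parametrization is sufficiently understood (the cases listed in the abstract), the plan is to analyze $R\Gamma_{c}(\mathrm{Sh}(\mathbf{G},X)_{K,\ol{E}},\ol{\mathbb{F}}_{\ell})_{\mf{m}}$ by pushing forward along the Hodge-Tate period morphism to the stack $\Bun_{G}$. After base-changing to $C = \mathbb{C}_{p}$ and passing to a suitable toroidal compactification $\mathrm{Sh}(\mathbf{G},X)_{K}^{\mathrm{tor}}$, one has a natural $G(\mathbb{Q}_{p})$-equivariant map $\pi_{HT}: \mathrm{Sh}(\mathbf{G},X)_{K,C}^{\mathrm{tor}} \to \Bun_{G}$ whose image lies in the union of the Newton strata $\Bun_{G}^{b}$ for $b$ in the Kottwitz set $B(G,\mu)$. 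Applying $R\pi_{HT,!}$ to the appropriate sheaf produces an object $\mc{F}$ on $\Bun_{G}$ whose global sections recover $R\Gamma_{c}$. The essence of the strategy is to decompose $\mc{F}$ according to this stratification and show that after localization at a generic $\mf{m}$, only the basic stratum $b_{0}$ contributes, in a controlled range of cohomological degrees.

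For non-basic $b$, each stratum $\Bun_{G}^{b}$ admits a reduction to a proper standard Levi $M \subsetneq G$, and its stalk of $R\pi_{HT,!}\ol{\mathbb{F}}_{\ell}$ is computed, via Santos's description \cite{San} of the fibers of the compactified $\pi_{HT}$, by the cohomology of an Igusa variety twisted by the local Shimura datum at $b$. To kill these contributions I would invoke the geometric Eisenstein theory of \cite{Ham2}: the non-basic stalks are naturally expressed through the constant term functor $\CT_{M}$ into $\Bun_{M}$, and on the spectral side this corresponds to evaluating the parameter at coroots of $M$. Genericity of $\phi_{\mf{m}}^{T}$ in the sense of Definition \ref{def: generic} forces $R\Gamma(W_{\mathbb{Q}_{p}}, \alpha \circ \phi_{\mf{m}}^{T}) = 0$ for every coroot $\alpha$, and, via a Koshikawa-style \cite{Ko} argument with the spectral action of a Hecke element killing $\mf{m}$ (in the spirit of Zhang's comparison \cite{Zha} of Hecke correspondences on $\Bun_{G}$ with those on the Shimura variety), this annihilates all non-basic contributions after localization.

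It then remains to analyze the basic stratum $\Bun_{G}^{b_{0}}$. Rather than running Mantovan's filtration, which is delicate in the non-compact case, I would use the refined filtration on compactly supported cohomology announced in the abstract, adapted to the toroidal compactification and isolating the basic contribution cleanly. The basic Igusa variety has dimension $d = \dim \mathrm{Sh}(\mathbf{G},X)_{K}$, so its compactly supported cohomology lives in degrees $[0,d]$; together with the perversity constraints supplied by the spectral action on $\Bun_{G}$ at the basic point, this forces $R\Gamma_{c}(\mathrm{Sh}(\mathbf{G},X)_{K,\ol{E}},\ol{\mathbb{F}}_{\ell})_{\mf{m}}$ to be concentrated in degrees $[0,d]$. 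The dual statement for $R\Gamma$ follows by Poincar\'e duality, using that the dual toral parameter is again generic.

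The principal obstacle, in my view, is the non-compact case: the product formula and the clean fiber structure of $\pi_{HT}$ exploited in \cite{CS1} apply cleanly only to compact Shimura varieties, and one must carefully track the contribution of the boundary strata in the toroidal and minimal compactifications, verifying that they too are annihilated by the Eisenstein genericity mechanism on $\Bun_{G}$. This is precisely what Santos's fiber analysis of the compactified $\pi_{HT}$ is meant to supply, but weaving it together with the geometric Eisenstein vanishing and the new filtration --- in such a way as to avoid the circumlocutions of \cite{CS2,Ko} --- is where the main technical difficulty lies.
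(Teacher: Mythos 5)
The overall architecture you describe --- push forward along $\pi_{\mathrm{HT}}$, filter by the Newton stratification, localize at $\mf{m}$ via the spectral action, kill strata using genericity, bound degrees by an Artin-vanishing-type argument, and dualize to get the statement for $R\Gamma$ --- is indeed the paper's strategy. But there is a genuine conceptual error at the center of your proposal: you claim that after localization only the \emph{basic} stratum survives. What genericity actually isolates is the set of \emph{unramified} elements $B(G,\mu)_{\mathrm{un}} = B(G)_{\mathrm{un}} \cap B(G,\mu)$, i.e.\ those $b$ in the image of $B(T) \ra B(G)$, equivalently those with $J_b$ quasi-split. In the split case this is the single $\mu$-\emph{ordinary} element, not the basic one; for non-split $G$ several unramified strata contribute simultaneously, and the basic element contributes only when it happens to be unramified. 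Relatedly, your degree count is inverted: the basic Igusa variety is a profinite set, with $d_b = \langle 2\rho_G,\nu_b\rangle = 0$, while it is the $\mu$-ordinary Igusa variety that has dimension $d$. So the step ``basic Igusa cohomology lives in $[0,d]$, hence so does the localized cohomology'' does not assemble into the bound you want, and the non-basic unramified strata that you propose to annihilate by a constant-term argument in fact survive and must be controlled.

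The second gap is that the mechanism actually producing the degree bound is never articulated. The key local input is that the restricted Hecke operator $j_1^* T_\mu$ is perverse $t$-exact on $\DULA(\Bun_G,\ol{\mathbb{F}}_\ell)_{\phi_{\mf{m}}}$ (Corollary \ref{cor: appliedperversetexactness}), which is deduced from the filtered Hecke eigensheaf property of $\nmEis(\mathcal{S}_{\phi_T})$ together with case-by-case verification of weak normalized regularity and $\mu$-regularity for the groups in Table (\ref{constrainttable}). One also needs each graded piece $j_{b!}(V_b)$, with $V_b = R\Gamma_{c-\partial}(\Ig^b,\ol{\mathbb{F}}_\ell)$, to lie in $\pD^{\leq 0}(\Bun_G,\ol{\mathbb{F}}_\ell)$; this comes from Artin vanishing for the \emph{affine partial minimal compactification} $\Ig^{b,*}$ (Proposition \ref{prop: artvanish}), applied to every $b$, not from a dimension count at the basic point. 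Finally, the ramified strata are removed not by a constant-term computation but because, under Assumption \ref{compatibility}, any Schur-irreducible object of the localized category supported on $\Bun_G^b$ has Fargues-Scholze parameter factoring through $\phantom{}^{L}T$, which forces $b$ unramified (Proposition \ref{prop: constituent proposition}). Without these three inputs --- and with the basic/ordinary confusion corrected --- the proposal does not close.
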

We first recall the motivating situation of Caraiani-Scholze \cite{CS1,CS2}. Let $F/\mathbb{Q}$ be a CM field, and let $(B,*,V,\langle\cdot,\cdot\rangle)$ be a PEL datum with $B$ a central simple $F$-algebra and $V$ a non-zero finite type left $B$-module. Let $(\mathbf{G},X)$ denote the Shimura datum attached to it, where $\mathbf{G}$ is the connected reductive group over $\mathbb{Q}$ defined by the automorphisms of $V$ preserving the choice of $B$-linear pairing $\langle\cdot,\cdot\rangle$ up to similitude. We have the following result.
\begin{theorem}{\label{thm: cstorsionvanishing}}{\cite{CS1,CS2,Ko,San}}
Assume that $(\mathbf{G},X)$ is a PEL type Shimura datum of type $A$. If the prime $p$ splits completely in $F$ then Conjecture \ref{conj: torsionvanishing} is true.
\end{theorem}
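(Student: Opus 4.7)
The strategy is to reinterpret the cohomology $R\Gamma_c(\mathrm{Sh}(\mathbf{G},X)_{K,\ol E},\ol{\mathbb{F}}_\ell)$ via pushforward along the Hodge-Tate period morphism, decompose it via the spectral action on $\Bun_G$, and use genericity to collapse the support onto the open Harder-Narasimhan stratum, where Mantovan-type fibers yield the desired concentration. The input that makes this work in the setting $p$ split completely in $F$ is that $G = \mathbf{G}_{\mathbb{Q}_p}$ is (essentially) a product of general linear groups, so the Fargues-Scholze local Langlands correspondence is known to be the semi-simplification of the classical Harris-Taylor-Scholze correspondence \cite{FS,HKW}, which is what allows one to transport the genericity hypothesis on $\mf m$ into a meaningful geometric statement about sheaves on $\Bun_G$.

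First I would work at infinite level at $p$ and form the Hodge-Tate period morphism
\[ \pi_{HT}: \Shim(\mathbf{G},X)_{K^p,\infty}^{\diamondsuit} \longrightarrow \Bun_G, \]
after passing to Santos's extension of this map to the toroidal compactification $\Stor$. Pushing forward the constant sheaf gives a Hecke-equivariant complex $\mc F := R(\pi_{HT})_! \ol{\mathbb{F}}_\ell$ on $\Bun_G$ whose global sections recover $R\Gamma_c(\mathrm{Sh}(\mathbf{G},X)_{K,\ol E},\ol{\mathbb{F}}_\ell)$. Rather than using Mantovan's filtration (which is awkward in the non-compact case, cf. the circumlocutions of \cite{CS2,Ko}), I would use the new filtration on $R\Gamma_c$ advertised in the abstract, whose associated graded on each Newton stratum $\Bun_G^b$ (for $b \in B(G,\mu^{-1})$) is the compactly supported cohomology of an Igusa variety tensored with the local shtuka cohomology at $b$; here Santos's description of the fibers of $\pi_{HT}$ over the compactifications is the key geometric input.

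Second I would apply Koshikawa's spectral action technique to localize $\mc F$ at the maximal ideal $\mf m$, using the Fargues-Scholze excursion operators. By the theory of geometric Eisenstein series over the Fargues-Fontaine curve developed in \cite{Ham2}, combined with the generic hypothesis on $\phi_{\mf m}^T$ (Definition \ref{def: generic}), the localization $\mc F_{\mf m}$ must be $!$-extended from the open HN-semistable stratum $\Bun_G^{1} \subset \Bun_G$: any potential contribution from a non-basic stratum would, after parabolic induction, be controlled by a complex $R\Gamma(W_{\mathbb{Q}_p}, \alpha \circ \phi_{\mf m}^T)$ for $\alpha$ a coroot, which vanishes by assumption. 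This is where the identification of Fargues-Scholze with the classical LLC semi-simplification is critical, since it lets one translate genericity of $\mf m$ into vanishing of the relevant Ext groups on $\Bun_G$.

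Third, having reduced support to the open stratum, I would compute $R\Gamma(\Bun_G^1, \mc F_{\mf m})$ using the Igusa variety / local Shimura variety decomposition, and use Zhang's comparison \cite{Zha} between Hecke correspondences on $\Bun_G$ and cohomology of the Shimura variety to see that the resulting concentration in degrees $0 \le i \le d$ (for $R\Gamma_c$) transfers to the Shimura variety side; Poincaré-Verdier duality then yields the claim for $R\Gamma$. The main obstacle throughout is the non-compactness: constructing the improved filtration on compactly supported cohomology, verifying its compatibility with the Hecke action and with Santos's boundary fiber description, and checking that the resulting spectral-action localization on $\Bun_G$ faithfully reflects the localization at $\mf m$ on the Shimura side, is where the bulk of the technical work lies and is what distinguishes this proof from the compact case treated by Koshikawa.
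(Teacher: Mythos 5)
The statement you are proving is quoted in the paper as a theorem of \cite{CS1,CS2,Ko,San}; the paper does not reprove it but rather generalizes it. Your outline is, in substance, the present paper's strategy for Theorem \ref{theorem: mainthm} specialized to the case where $p$ splits in $F$, not the route of the cited works: those papers study $R\Gamma$ together with Mantovan's filtration and must establish the difficult semi-perversity statement that $R\Gamma(\Ig^{b},\ol{\mathbb{F}}_{\ell})$ is concentrated in degrees $\geq d_{b}$, whereas your route trades this for Artin vanishing on the affine partial minimal compactification $\Ig^{b,*}$ applied to $R\Gamma_{c-\partial}(\Ig^{b},\ol{\mathbb{F}}_{\ell})$, followed by Poincar\'e duality at the end. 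That trade is exactly what the filtration on compactly supported cohomology buys, so as an architecture your proposal is viable and matches how this paper recovers the result.

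There is, however, a concrete error in your second step. Genericity of $\mf{m}$ does not collapse the support of the localized excision filtration onto the open semistable stratum $\Bun_{G}^{1}$ (equivalently the basic locus): it collapses it onto the unramified elements $B(G,\mu)_{\mathrm{un}}$, i.e. those $b$ with $J_{b}$ quasi-split. For $G$ a product of $\GL_{n}$'s and $\mu$ non-central, the basic element of $B(G,\mu)$ has $J_{b}$ an inner form built from division algebras, so it is precisely the stratum that genericity kills; the unique surviving element is the $\mu$-ordinary one, whose Newton stratum in $\mathcal{F}\ell_{G,\mu^{-1}}$ is \emph{closed}, not open. Your third step should therefore compute the ordinary contribution
\[ \bigl(R\Gamma_{c}(\Sht(G,b_{\mathrm{ord}},\mu)_{\infty,C}/\ul{K_{p}^{\mathrm{hs}}},\ol{\mathbb{F}}_{\ell}(d_{b}))_{\mf{m}} \otimes^{\mathbb{L}}_{\mathcal{H}(J_{b_{\mathrm{ord}}})} R\Gamma_{c-\partial}(\Ig^{b_{\mathrm{ord}}},\ol{\mathbb{F}}_{\ell})\bigr)[2d_{b}], \]
not sections over $\Bun_{G}^{1}$ of a sheaf supported on the basic stratum. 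Separately, the entire mechanism you invoke (extension of the $\mathcal{J}_{b}$-action to $\mIg^{b,*}_{C}$ by Hartogs, Zhang's Cartesian diagrams identifying the fibers of $\pi_{\mathrm{HT}}$ with $\ol{\mIg}^{b,*}_{C}\setminus\ol{\partial\mIg}^{b,*}_{C}$) requires Assumption \ref{assump: codim} on the codimension of $\partial\Ig^{b,*}$, which fails for the non-compact one-dimensional cases such as unitary Shimura curves; those cases lie outside the reach of this argument and must be handled by the original method of \cite{Ko}.
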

\begin{remark}
Koshikawa proved this under the assumption that $B = F$ and $V = F^{2n}$, and the global unitary group $\mathbf{G}$ is quasi-split, as well as in the case when $p$ is split in $F$ and the Shimura variety is compact. These additional assumptions were removed in the PhD thesis of Santos \cite{San}. 
\end{remark}
\begin{remark}
Caraiani-Scholze actually proved a slightly different result. More precisely, let $S$ be a set of finite places not containing $p$ such that $\mathbf{G}$ is unramified and $K^{p}$ is hyperspecial away from $S$. Consider a maximal ideal $\mf{m} \subset \mathbb{T}^{S}$ in the prime-to-$S$ Hecke algebra. This defines a global Galois representation $\rho_\mf{m}$, and we can consider the restriction $\rho_{\mf{m}}\vert_{\mathrm{Gal}(\bar{E}_v/E_v)}$ which is unramified. If the associated toral $L$-parameter is generic as in Definition \ref{def: generic}, then Caraiani-Scholze show (under some technical global assumptions) that the localization at $\mf{m}^{p} \subset \mathbb{T}^{S \cup \{p\}}$ is concentrated in the same degrees as Conjecture \ref{conj: torsionvanishing}. 
\end{remark}
\begin{remark}
In the case of Harris-Taylor Shimura varieties, there is also work of Boyer \cite{Boy}, which describes the localization of the torsion cohomology at non-generic maximal ideals. 
\end{remark}
\begin{remark}
\label{rmk:weakgeneric}
We believe that Conjecture \ref{conj: torsionvanishing} is true under the weaker hypothesis that $H^{2}(W_{\mathbb{Q}_{p}},\alpha \circ \phi_{T})$ is trivial for all $\Gamma$-orbits of coroots $\alpha$, as is shown in \cite{CS2,San,Ko} in their particular case. However, the theory of geometric Eisenstein series which we will invoke in this paper becomes more complicated in this case (See the discussion around \cite[Conjecture~1.24]{Ham2}), and so a proof of this Theorem using our methods would require more deeply understanding geometric Eisenstein series when this assumption is dropped (cf. Remark \ref{rem: weaklyLanglandsShahidicase}). In certain cases however the theory of geometric Eisenstein series becomes unnecessary, and in this case an argument using compatibility (See Assumption \ref{compatibility}) of Fargues-Scholze with some form of local Langlands is sufficient, as in the style of the arguments of \cite{Ko,San} (See Remark \ref{rem: perversetexactnessintheSplitCase} for details). 
\end{remark}
Caraiani-Scholze \cite{CS1,CS2} proved their results under some technical restrictions of a global nature, which Koshikawa \cite{Ko} was able to remove by using compatibility of the Fargues-Scholze local Langlands correspondence with the semi-simplification of the Harris-Taylor correspondence for $\GL_n$. In the process, Koshikawa exhibited a much more flexible method for proving Theorem \ref{thm: cstorsionvanishing}. The goal of the current paper is to expand the scope of Koshikawa's technique, motivated by work of the first author on geometric Eisenstein series in the Fargues-Fontaine setting \cite{Ham2}. We then carry the strategy out in some particular cases using work on local-global compatibility of the Fargues-Scholze local Langlands correspondence beyond the case of $\GL_{n}$, as studied in \cite{Ham2,BMNH}. Our main result is the following.
\begin{theorem}{\label{theorem: mainthm}}{(Theorem \ref{theorem: mainthmbody})}
Suppose $(\mathbf{G},X)$ is a PEL datum of type AC satisfying Assumption \ref{assump: codim} such that $\mathbf{G}_{\mathbb{Q}_{p}}$ is a product of groups in Table (\ref{constrainttable}) with $p$ and $\ell$ satisfying the corresponding conditions. Let $\mf{m} \subset H_{K_{p}^{\mathrm{hs}}}$ be a generic maximal ideal. Then, for a level $K = K^{p}K_{p}^{\mathrm{hs}} \subset \mathbf{G}(\bb{A}_{f})$, the cohomology of $R\Gamma(\mathrm{Sh}(\mathbf{G},X)_{K,\ol{E}},\ol{\mathbb{F}}_{\ell})_{\mf{m}}$ (resp. $R\Gamma_{c}(\mathrm{Sh}(\mathbf{G},X)_{K,\ol{E}},\ol{\mathbb{F}}_{\ell})_{\mf{m}}$) is concentrated in degrees $d \leq i \leq 2d$ (resp. $0 \leq i \leq d$). 
\end{theorem}
\begin{remark}
This notably allows one to relax the assumption in \cite{CS1,CS2,Ko,San} that the prime $p$ splits in $F$ in the setting of Theorem \ref{thm: cstorsionvanishing}, answering a question of Caraiani. 
\end{remark}
Here is the table summarizing our local constraints:
\begin{center}
\begin{equation}{\label{constrainttable}}
\begin{tabular}{|c|c|c|c|c|} 
\hline
$G$ & Constraint on $G$ & $\ell$ & $p$  \\
\multirow{5}{4em}{} & & &  \\ 
\hline
$\Res_{L/\mathbb{Q}_{p}}(\GL_{n})$ & $L/\mathbb{Q}_{p}$ unramified & $(\ell,[L:\mathbb{Q}_{p}]) = 1$ &  \\ 
\hline 
$\Res_{L/\mathbb{Q}_{p}}(\GSp_{4})$ & $L = \mathbb{Q}_{p}$ &  & \\
& $L/\mathbb{Q}_{p}$ unramified & $(\ell, [L:\mathbb{Q}_{p}] = 1$ & $p \neq 2$  \\ 
\hline
$\Res_{L/\mathbb{Q}_{p}}(\GU_{2})$ & $L/\mathbb{Q}_{p}$ unramified & $(\ell,2[L:\mathbb{Q}_{p}]) = 1$ &  \\ 
\hline 
$G = \U_{n}(L/\mathbb{Q}_{p})$ & $n$ odd $L$ unramified& $\ell \neq 2$ &  \\
\hline 
$G = \GU_{n}(L/\mathbb{Q}_{p})$ & $n$ odd $L$ unramified & $\ell \neq 2$ &  \\
\hline
$G(\SL_{2,L})$ & $L/\mathbb{Q}_{p}$ unramified & $(\ell,[L:\mathbb{Q}_{p}]) = 1$ &   \\ 
\hline
$G(\mathrm{Sp}_{4,L})$ & $L/\mathbb{Q}_{p}$ unramified, $L\neq\mathbb{Q}_{p}$ & $(\ell, [L:\mathbb{Q}_{p}]) = 1$ & $p \neq 2$  \\
\hline
\end{tabular}
\end{equation}
\end{center}
The groups $G(\SL_{2,L})$ and $G(\mathrm{Sp}_{4,L})$ are the similitude subgroup of $\Res_{L/\mathbb{Q}_{p}}(\GL_{2})$ (resp. $\Res_{L/\mathbb{Q}_{p}}(\GSp_{4})$), i.e. the subgroup of elements such that the similitude factor lies in $\mathbb{Q}_p$. We will recall the definition of these groups in \S \ref{section:verification}.

We can also easily deduce a result for some abelian type Shimura varieties, such as Hilbert modular varieties, from the above result, which recovers results similar to that of Caraiani-Tamiozzo \cite[Theorem~B]{CT} (See Corollary \ref{cor: CarTamcomp}). 
\begin{corollary}
\label{cor:abeliantype}{(Corollary \ref{cor:abeliantypebody})}
Suppose $(\mathbf{G},X)$ is an abelian-type Shimura datum which has an associated PEL-type datum $(\mathbf{G}_1,X_1)$ of type AC satisfying Assumption \ref{assump: codim}
and such that $\mathbf{G}^\der\simeq\mathbf{G}_1^\der$, $G_{1} := \mathbf{G}_{1,\bb{Q}_{p}}$ is unramified and $G:=\mathbf{G}_{\mathbb{Q}_{p}}$ is a product of groups as in Table (\ref{constrainttable}) with $p$ and $\ell$ satisfying the corresponding conditions. We write $G := \mathbf{G}_{\bb{Q}_{p}}$ and $G_{1} := \mathbf{G}_{1,\bb{Q}_{p}}$. We assume that $\mf{m} \subset H_{K_{p}^{\mathrm{hs}}}$ is a generic maximal ideal. Then, for a level $K = K^{p}K_{p}^{\mathrm{hs}} \subset \mathbf{G}(\bb{A}_{f})$, the cohomology  $R\Gamma(\mathrm{Sh}(\mathbf{G},X)_{K,\ol{E}},\ol{\mathbb{F}}_{\ell})_{\mf{m}}$ (resp. $R\Gamma_{c}(\mathrm{Sh}(\mathbf{G},X)_{K,\ol{E}},\ol{\mathbb{F}}_{\ell})_{\mf{m}}$) is concentrated in degrees $d \leq i \leq 2d$ (resp. $0 \leq i \leq d$). 
\end{corollary}

We now explain how we are able to establish these results, as well as several other interesting results on the perverse $t$-exactness of Hecke operators on $\Bun_{G}$ (Theorem \ref{thm: appliedperversetexactnessintro}) and the splitting of Mantovan's filtration on the cohomology of the Shimura variety (Theorem \ref{thm: appliedmantprodform}) along the way.
\subsubsection{Proof Sketch of the Main Theorem}

One of the basic ingredients used in verifying Conjecture \ref{conj: torsionvanishing} in all known instances is the perspective on Mantovan's product formula provided by the Hodge-Tate period morphism. To explain this, we let $\mu \in \mathbb{X}_{*}(T_{\ol{\mathbb{Q}}_{p}})^{+}$ denote the minuscule geometric dominant cocharacter of $G$ determined by the inverse of the Hodge cocharacter of $X$ and an isomorphism $j: \mathbb{C} \simeq \ol{\mathbb{Q}}_{p}$ which we fix from now on. We consider the Kottwitz set $B(G)$, and with it the subset $B(G,\mu) \subset B(G)$ of $\mu$-admissible elements. Let $\mf{p}|p$ be the prime dividing $p$ in the reflex field $E$ induced by the embedding $\ol{\mathbb{Q}} \ra \ol{\mathbb{Q}}_{p}$ given by the isomorphism $j$. We let $E_{\mf{p}}$ be the completion at $\mf{p}$, $C := \hat{\ol{E}}_{\mf{p}}$ be the completion of the algebraic closure, and $\Breve{E}_{\mf{p}}$ be the compositum of $E_{\mf{p}}$ with $\Breve{\bb{Q}}_{p}$. We recall that, attached to each element $b \in B(G,\mu)$, there exists a diamond 
\[ \Sht(G,b,\mu)_{\infty} \ra  \Spd(\Breve{E}_{\mf{p}}) \]
parametrizing modifications 
\[ \mathcal{E}_{b} \dashrightarrow \mathcal{E}_{0} \]
of meromorphy $\mu$ between the $G$-bundle $\mathcal{E}_{b}$ corresponding to $b$ and the trivial $G$-bundle. This space has an action by $\underline{G(\mathbb{Q}_{p})} = \mathrm{Aut}(\mathcal{E}_{0})$ and $\underline{J_{b}(\mathbb{Q}_{p})} \subset \mathrm{Aut}(\mathcal{E}_{b})$, where $J_{b}$ is the $\sigma$-centralizer of $b$. This allows us to consider the quotients 
\[ \Sht(G,b,\mu)_{\infty}/\ul{K_{p}} \ra \Spd(\Breve{E}_{\mf{p}})  \]
for varying compact open subgroups $K_{p} \subset G(\mathbb{Q}_{p})$. In certain cases, these quotients are representable by rigid analytic varieties called local Shimura varieties, but they are always representable as diamonds. We can consider the compactly supported cohomology
\[ R\Gamma_{c}(\Sht(G,b,\mu)_{\infty,C}/\ul{K_{p}^{\mathrm{hs}}},\ol{\mathbb{F}}_{\ell}) \]
at hyperspecial level with torsion coefficients. This has an action of $W_{E_{\mf{p}}} \times J_{b}(\mathbb{Q}_{p}) \times H_{K_{p}^{\mathrm{hs}}}$\footnote{It is clear that the space has an action of the inertia group $I_{E_{\mf{p}}}$ of $E_{\mf{p}}$ from the definition; however, the space $\Sht(G,b,\mu)_{\infty} \ra \Spd(\Breve{E}_{\mf{p}})$ also has a canonical (non-effective) Frobenius descent datum giving a full action of $W_{E_{\mf{p}}}$ on the cohomology (See \cite[Section~IX.3]{FS})}. Now, the Mantovan product formula tells us that if we look at $R\Gamma(\Sh(\mathbf{G},X)_{K,\ol{E}},\ol{\mathbb{F}}_{\ell})$ then this should always admit a filtration in the derived category whose graded pieces are
\[ R\Gamma_{c}(\Sht(G,b,\mu)_{\infty,C}/\ul{K_{p}^{\mathrm{hs}}},\ol{\mathbb{F}}_{\ell}(d_{b}))[2d_{b}] \otimes^{\bb{L}}_{\mathcal{H}(J_{b})} R\Gamma(\Ig^{b},\ol{\mathbb{F}}_{\ell}) \]
for varying $b \in B(G,\mu)$, where the objects are as follows.
\begin{enumerate}
    \item $\Ig^{b}$ is the perfect Igusa variety attached to an element $b \in B(G,\mu)$ in the $\mu$-admissible locus inside $B(G)$ and $d_{b} := \dim(\Ig^{b}) = \langle 2\rho_{G},\nu_{b} \rangle$, where $\rho_{G}$ is the half sum of all positive roots and $\nu_{b}$ is the slope cocharacter of $b$.
    \item $\mathcal{H}(J_{b}) := C^{\infty}_{c}(J_{b}(\mathbb{Q}_{p}),\ol{\mathbb{F}}_{\ell})$ is the usual smooth Hecke algebra.
    \item $\ol{\mathbb{F}}_{\ell}(d_{b})$ is the sheaf on $\Sht(G,b,\mu)_{\infty,C}/\ul{K_{p}^{\mathrm{hs}}}$ with trivial Weil group action and $J_{b}(\mathbb{Q}_{p})$ action as defined in \cite[Lemma~7.4]{Ko}.
\end{enumerate}
Such a filtration should always exist, but is not currently proven in general. In the case that the Shimura datum $(\mathbf{G},X)$ is PEL of type AC, a modern proof of this result can be found in \cite[Theorem~7.1]{Ko}.

This filtration on the complex $R\Gamma(\mathrm{Sh}(\mathbf{G},X)_{K,\ol{E}},\ol{\mathbb{F}}_{\ell})$ allows us to roughly split the verification of Conjecture \ref{conj: torsionvanishing} into two parts.
\begin{enumerate}
    \item Controlling the cohomology of the shtuka spaces $R\Gamma_{c}(\Sht(G,b,\mu)_{\infty,C}/\ul{K_{p}^{\mathrm{hs}}},\ol{\mathbb{F}}_{\ell}(d_{b}))_{\mf{m}}$.
    \item Controlling the cohomology of the Igusa varieties $R\Gamma(\mathrm{Ig}^{b},\ol{\mathbb{F}}_{\ell})$.
\end{enumerate}
We first discuss point (1). One of the key observations underlying Koshikawa's method was that the cohomology of the space $\Sht(G,b,\mu)_{\infty}$ computes the action of a Hecke operator $T_{\mu}$ corresponding to $\mu$ on $\Bun_{G}$ the moduli stack of $G$-bundles of the Fargues-Fontaine curve. The Hecke operators commute with the action of the excursion algebra on $\Bun_{G}$, and the action of the excursion algebra on a smooth irreducible representation $\rho$, viewed as a sheaf on $\Bun_{G}$, determines the Fargues-Scholze parameter of $\rho$. It follows that $R\Gamma_{c}(\Sht(G,b,\mu)_{\infty,C}/\ul{K_{p}^{\mathrm{hs}}},\ol{\mathbb{F}}_{\ell}(d_{b}))_{\mf{m}}$ as a complex of smooth $J_{b}(\mathbb{Q}_{p})$-modules will have irreducible constituents $\rho$ with Fargues-Scholze parameter $\phi_{\rho}^{\mathrm{FS}}$ equal to $\phi_{\mf{m}}$ as conjugacy classes of parameters. When $\mathbf{G}_{\mathbb{Q}_{p}} = G$ is a product of $\GL_{n}$s, as in Theorem \ref{thm: cstorsionvanishing} (by the assumption that $p$ splits in $F$), it follows from the work of Hansen-Kaletha-Weinstein \cite[Theorem~1.0.3]{HKW} that the Fargues-Scholze correspondence for $J_{b}(\mathbb{Q}_{p})$ with rational coefficients agrees with the semi-simplification of the Harris-Taylor correspondence, where we recall that $J_{b}$ is a product of inner forms of $\GL_{n}$s in this case. In particular, using that $\mf{m}$ is generic, it follows that $\phi_{\rho}^{\mathrm{FS}} = \phi_{\mf{m}}$ must lift to a $\ol{\mathbb{Z}}_{\ell}$ parameter which is also generic in the analogous sense, and the condition of generic implies that the lift cannot come from the semi-simplification of a parameter with non-trivial monodromy. Using this, one can deduce that such a $\rho$ only exists if the group $J_{b}$ is quasi-split. In this particular case ($G$ is a product of $\GL_{n}$s), this can only happen if $b \in B(G,\mu)$ corresponds to the ordinary element (cf. Remark \ref{rem: perversetexactnessintheSplitCase}). 

This argument of Koshikawa was formalized and generalized further in work of the first author \cite{Ham2}. In particular, it was noted that, for a general quasi-split $G$ and $\mf{m}$ generic, the cohomology $R\Gamma_{c}(\Sht(G,b,\mu)_{\infty,C}/\ul{K_{p}^{\mathrm{hs}}},\ol{\mathbb{F}}_{\ell}(d_{b}))_{\mf{m}}$ will only be non-trivial if $b \in B(G,\mu)_{\mathrm{un}} := B(G)_{\mathrm{un}} \cap B(G,\mu)$, where $B(G)_{\mathrm{un}}$ is the set of elements lying in the image of the map $B(T) \ra B(G)$, assuming that the Fargues-Scholze local Langlands correspondence has certain expected properties (Assumption \ref{compatibility}). These unramified elements will be precisely the elements for which $J_{b}$ is quasi-split.
The set $B(G,\mu)_{\mathrm{un}}$ corresponds to Weyl group orbits of weights in the representation $V_{\mu}$ of $\hat{G}$ restricted to $\hat{G}^{\Gamma}$. In particular, if $G$ is split then, since $\mu$ is minuscule, $B(G,\mu)_{\mathrm{un}}$ consists of only one element, corresponding to the unique Weyl group orbit of the highest weight. This is the situation occurring in the previous paragraph. Moreover, the contribution of the cohomology of this shtuka space is easily understood, and the problem completely reduces to controlling the cohomology of $\Ig^{b}$ when $b \in B(G,\mu)_{\mathrm{un}}$ is the $\mu$-ordinary element. However, if $G$ is not split then the restriction of $V_{\mu}$ to $\hat{G}^\Gamma$ may have multiple Weyl group orbits of weights. In particular, one needs to control the cohomology groups  
\[ R\Gamma_{c}(\Sht(G,b,\mu)_{\infty,C}/\ul{K^{\mathrm{hs}}_{p}},\ol{\mathbb{F}}_{\ell}(d_{b}))_{\mf{m}} \]
for all possible $b \in B(G,\mu)_{\mathrm{un}}$. This makes the situation much more complicated; in fact, for non-split $G$, the basic element could be unramified, and in this case the Igusa variety $\Ig^{b}$ is just a profinite set, hence the problem of torsion vanishing for the contribution of the basic locus is completely reduced to controlling the generic part of the torsion cohomology of the local shtuka space attached to the basic element. 

Such control of the cohomology of shtuka spaces with torsion coefficients for these more general situations was attained in \cite{Ham2}. In order to understand this, it is helpful to move away from the language of isotypic parts of shtuka spaces and consider the action of Hecke operators on $\D(\Bun_{G},\ol{\mathbb{F}}_{\ell})$, the derived category of \'etale $\ol{\mathbb{F}}_{\ell}$-sheaves on $\Bun_{G}$. Since we are interested in cohomology localized at a generic maximal ideal $\mf{m}$, Hansen constructs in appendix \ref{append: spectralactionproperties} a full-subcategory $\D(\Bun_{G},\ol{\mathbb{F}}_{\ell})_{\phi_{\mf{m}}} \subset \D(\Bun_{G},\ol{\mathbb{F}}_{\ell})$ together with an idempotent localization map $(-)_{\phi_{\mf{m}}}: \D(\Bun_{G},\ol{\mathbb{F}}_{\ell}) \ra \D(\Bun_{G},\ol{\mathbb{F}}_{\ell})_{\phi_{\mf{m}}}$ such that, on smooth irreducible representations, the localization map is either an isomorphism or $0$ depending on if the representation has Fargues-Scholze parameter conjugate to $\phi_{\mf{m}}$ or not (Lemma \ref{lemma: localization map properties} (1)). We let $\DULA(\Bun_{G},\ol{\mathbb{F}}_{\ell})$ denote the full subcategory of ULA objects, where we recall by \cite[Theorem~V.7.1]{FS}, that this is equivalent to insisting that the restrictions to all the HN-strata indexed by $b \in B(G)$ are valued in the full subcategories $\Dadm(J_{b}(\mathbb{Q}_{p}),\ol{\mathbb{F}}_{\ell})$ of admissible complexes (i.e the invariants under all open pro-$p$ $K \subset J_{b}(\mathbb{Q}_{p})$ is a perfect complex). Using the results of \cite{Ham2}, we show (Corollary \ref{cor: appliedsplitofsemiorthog}) that, under some regularity and genericity hypothesis on $\mf{m}$, that one has a direct sum decomposition: 
\begin{equation}{\label{eqn: DirectSumDecomposition}}
 \DULA(\Bun_{G},\ol{\mathbb{F}}_{\ell})_{\phi_{\mf{m}}}\simeq \bigoplus_{b \in B(G)_{\mathrm{un}}} \Dadm(J_{b}(\mathbb{Q}_{p}),\ol{\mathbb{F}}_{\ell})_{\phi_{\mf{m}}}. 
\end{equation}
More precisely, we show that the $!$ and $*$ push-forwards with respect to the inclusion of HN-strata agree on this sub-category, and so the excision semi-orthogonal decomposition splits on $\DULA(\Bun_{G},\ol{\mathbb{F}}_{\ell})_{\phi_{\mf{m}}}$. This decomposition is a refinement of the fact mentioned above that only the shtuka spaces corresponding to the unramified elements $b \in B(G,\mu)_{\mathrm{un}}$ can contribute to the generic localization of the cohomology of the Shimura variety. The desired control of the shtuka spaces is now in turn encoded in understanding how Hecke operators interact with a perverse $t$-structure on $\Bun_{G}$ after restricting to the localized category $\DULA(\Bun_{G},\ol{\mathbb{F}}_{\ell})_{\phi_{\mf{m}}}$. 

We recall $\D(\Bun_{G},\ol{\mathbb{F}}_{\ell})$ has an action by Hecke operators. In particular, for each geometric dominant cocharacter $\mu$, we have a correspondence 
\[ \begin{tikzcd}
& & \arrow[dl,"h_{\mu}^{\leftarrow}"] \Hck_{G,\leq \mu} \arrow[dr,"h_{\mu}^{\rightarrow}"] & & \\
& \Bun_{G} & & \Bun_{G} \times \Spd(C),  & 
\end{tikzcd} \]
where $\Hck_{G, \leq \mu}$ is the stack parametrizing modifications $\mathcal{E}_{1} \dashrightarrow \mathcal{E}_{2}$ of a pair of $G$-bundles with meromorphy bounded by $\mu$ at the closed Cartier divisor defined by the fixed untilt given by $C$, and $h_{\mu}^{\ra}$ (resp. $h_{\mu}^{\la}$) remembers $\mathcal{E}_{1}$ (resp. $\mathcal{E}_{2}$). We define
\[ T_{\mu}: \D(\Bun_{G},\ol{\mathbb{F}}_{\ell}) \ra \D(\Bun_{G},\ol{\mathbb{F}}_{\ell})^{BW_{E_{\mu}}} \]
\[ A \mapsto h_{\mu*}^{\ra}(h_{\mu}^{\la*}(A) \otimes^{\mathbb{L}} \mathcal{S}_{\mu}) \]
where $E_{\mu}$ denotes the reflex field of $\mu$ and $\mathcal{S}_{\mu}$ is the sheaf on $\Hck_{G,\leq \mu}$ attached to the highest weight tilting module $\mathcal{T}_{\mu} \in \Rep_{\ol{\mathbb{F}}_{\ell}}(\hat{G})$ of highest weight $\mu$ by geometric Satake (See Remark \ref{rem: HeckeOperatorsWellDefined} for an explanation of why this correspondence gives a functor of the claimed shaped). The action of Hecke operators commutes with the action of excursion operators and therefore the action of the spectral Bernstein center. Moreover, it preserves the subcategory of ULA objects. It follows that we have an induced map
\[ T_{\mu}: \DULA(\Bun_{G},\ol{\mathbb{F}}_{\ell})_{\phi_{\mf{m}}} \ra \DULA(\Bun_{G},\ol{\mathbb{F}}_{\ell})^{BW_{E_{\mu}}}_{\phi_{\mf{m}}} \] 
on the localized category (See Lemma \ref{lemma: localization map properties} (2)).  

We are almost ready to state the result on Hecke operators we will need. To do this, we recall that $\D(\Bun_{G},\ol{\mathbb{F}}_{\ell})$ has a natural perverse $t$-structure, which can be defined as follows. The $v$-stack $\Bun_{G}$ is cohomologically smooth of $\ell$-dimension $0$. Moreover, it has a locally closed HN-stratification $j_{b}: \Bun_{G}^{b} \hookrightarrow \Bun_{G}$, where each of the strata $\Bun_{G}^{b}$ are isomorphic to $[\Spd(C)/\mathcal{J}_{b}]$, which is cohomologically smooth of $\ell$-dimension $-d_{b} = - \dim(\Ig^{b})$. Therefore, we can define a perverse $t$-structure $\pD^{\geq 0}(\Bun_{G},\ol{\mathbb{F}}_{\ell})$ (resp. $\pD^{\leq 0}(\Bun_{G},\ol{\mathbb{F}}_{\ell})$) on $\D(\Bun_{G},\ol{\mathbb{F}}_{\ell})$ given by insisting that the $!$ (resp. $*$) restrictions to $\Bun_{G}^{b}$ are concentrated in degrees $\geq \langle 2\rho_{G},\nu_{b} \rangle$ (resp. $\leq \langle 2\rho_{G}, \nu_{b} \rangle$). The key result that follows from the work of \cite{Ham2} and various compatibility results for the Fargues-Scholze correspondence is as follows. 
\begin{theorem}{(Corollary \ref{cor: appliedperversetexactness})}{\label{thm: appliedperversetexactnessintro}}
Let $\mu$ be a minuscule geometric dominant cocharacter and $G$ a product of groups satisfying the conditions of Table (\ref{constrainttable}) with $p$ and $\ell$ satisfying the corresponding conditions. Let $\mf{m}$ be a generic maximal ideal with associated semi-simple $L$-parameter $\phi_{\mf{m}}$. The restriction of the Hecke operator
\[ j_{1}^{*}T_{\mu}: \DULA(\Bun_{G},\ol{\mathbb{F}}_{\ell})_{\phi_{\mf{m}}} \ra \Dadm(G(\mathbb{Q}_{p}),\ol{\mathbb{F}}_{\ell})^{BW_{E_{\mu}}}_{\phi_{\mf{m}}} \]
to the neutral strata $j_{1}: [\ast/\underline{G(\bb{Q}_{p})}] \simeq \Bun_{G}^{1} \hookrightarrow \Bun_{G}$ is perverse $t$-exact (in fact much more is true (See Corollary \ref{cor: appliedperversetexactness})), where we note that the perverse $t$-structure on $\D(\Bun_{G}^{1},\ol{\mathbb{F}}_{\ell}) \simeq \D(G(\mathbb{Q}_{p}),\ol{\mathbb{F}}_{\ell})$ coincides with the usual $t$-structure. 
\end{theorem}
\begin{remark}{\label{rem: AdditionalconditionsDiscussed}}
Assuming the Fargues-Scholze correspondence for $G$ behaves as expected with rational coefficients, the analysis in \cite{Ham2} allows one to verify this for any $\mu$ after imposing some additional conditions on the toral parameter $\phi_{T}^{\mf{m}}$ attached to the maximal ideal $\mf{m}$ such as being regular and satisfying Definition \ref{def: strongmureg} for suitably chosen $\mu$, where the latter conditions are often implied by the genericity in the relevant use cases. We emphasize however that the above Theorem should always be true just under the condition that $\mf{m}$ is generic, as formally stated in Conjecture  \ref{conj: torsionvanishing}. 
\end{remark}
These local torsion vanishing results would allow us to prove Conjecture \ref{conj: torsionvanishing} in several new cases if one could get control over the Igusa varieties $\Ig^{b}$. In Koshikawa's argument, this is done by using a semi-perversity result proven by Caraiani-Scholze \cite[Theorem~4.6.1]{CS2}, which was further generalized in work of Santos \cite{San}. Roughly speaking, we want to show that $R\Gamma(\Ig^{b},\ol{\mathbb{F}}_{\ell})$ is concentrated in degrees $\geq d_{b}$, so that the complex of $J_{b}(\mathbb{Q}_{p})$-representations $R\Gamma(\Ig^{b},\ol{\mathbb{F}}_{\ell})$ defines the stalk of a semi-perverse sheaf on $\Bun_{G}$ at $b \in B(G)$, to which we can apply the previous result. In the case that the Shimura vareities $\mathrm{Sh}(\mathbf{G},X)_{K}$ are compact, there is a simpler way of seeing this. In particular, $\Ig^{b}$ is known to be a perfect affine scheme in this case, and so the desired semi-perversity just follows by applying Artin vanishing and then using Poincar\'e duality on the global Shimura variety. It turns out that this style of argument can be made to work even in the non-compact case. In \cite{CS1,CS2,Ko,San}, the non-compactly supported cohomology $R\Gamma(\mathrm{Sh}(\mathbf{G},X)_{K},\ol{\mathbb{F}}_{\ell})_{\mf{m}}$ is studied together with its filtration involving $R\Gamma(\Ig^{b},\ol{\mathbb{F}}_{\ell})$ coming from Mantovan's formula, and shown to be concentrated in degrees $\geq d$. However, one could also study the compactly supported cohomology $R\Gamma_{c}(\mathrm{Sh}(\mathbf{G},X)_{K},\ol{\mathbb{F}}_{\ell})_{\mf{m}}$ and show that it is concentrated in degrees $\leq d$, \`a la Poincar\'e duality. To do this, we recall \cite[Section~3.3]{CS2} that, in the non-compact case, the perfect scheme $\Ig^{b}$ is not affine, but it admits a partial minimal compactification $g_{b}: \Ig^{b} \hookrightarrow \Ig^{b,*}$ which is affine, as proven in this more general setting of PEL type AC by Santos \cite{San}. We define 
\[ V_{b} := R\Gamma_{c-\partial}(\Ig^{b},\ol{\mathbb{F}}_{\ell}) := R\Gamma(\Ig^{b,*},g_{b!}(\ol{\mathbb{F}}_{\ell}))  \]
the partially compactly supported cohomology, which is supported in degrees $\leq d_{b}$ by Artin-vanishing (Proposition \ref{prop: artvanish}). Now, for $K \subset \mathbf{G}(\mathbb{A}_{f})$ a sufficiently small open compact, we define $\mathcal{S}(\mathbf{G},X)_{K} := (\Sh(\mathbf{G},X)_{K} \otimes_{E} E_{\mf{p}})^{\mathrm{ad}}$ to be the adic space over $\Spa(E_{\mf{p}})$ attached to the Shimura variety. We can define the infinite level perfectoid Shimura varieties $\mathcal{S}(\mathbf{G},X)_{K^{p}}$ by taking the inverse limit of $\mathcal{S}(\mathbf{G},X)_{K^{p}K_{p}}$ as $K_{p} \ra \{1\}$ in the category of diamonds. The base-change $\mathcal{S}(\mathbf{G},X)_{K^{p},C}$ is representable by a perfectoid space if $(\mathbf{G},X)$ is of pre-abelian type, and in general it is diamond. By the results of \cite{Schtor,Han1}, we have a Hodge-Tate period map 
\[ \pi_{\mathrm{HT}}: [\mathcal{S}(\mathbf{G},X)_{K^{p},C}/\ul{G(\mathbb{Q}_{p})}] \ra [\mathcal{F}\ell_{G,\mu^{-1}}^{\Diamond}/\ul{G(\mathbb{Q}_{p})}] \]
recording the Hodge-Tate filtration on the abelian varieties with additional structure that $\mathcal{S}(\mathbf{G},X)_{K^{p},C}$ parametrizes. Here $\mathcal{F}\ell_{G,\mu^{-1}}^:= (G_{C}/P_{\mu^{-1}})^{\mathrm{ad}}$ is the adic flag variety attached to the parabolic in $G_{C}$ given by a dominant inverse of $\mu$ and the dynamical method (i.e we look at the set $g \in G_{C}$ such that $\lim_{t \ra 0} \mathrm{ad}(\mu(t))g$ exists, see also Remark \ref{rem: SignConvention} for a discussion of the sign conventions we use) and $\mathcal{F}\ell_{G,\mu^{-1}}^{\Diamond}$ denotes the associated diamond, and we have passed to the associated $v$-stack quotients by the $G(\bb{Q}_{p})$-action. We recall that the flag variety $[\mathcal{F}\ell_{G,\mu^{-1}}^{\Diamond}/\ul{G(\mathbb{Q}_{p})}]$ admits a locally closed stratification $i_{b}: [\mathcal{F}\ell_{G,\mu^{-1}}^{b,\Diamond}/\ul{G(\mathbb{Q}_{p})}] \hookrightarrow [\mathcal{F}\ell_{G,\mu^{-1}}^{\Diamond}/\ul{G(\mathbb{Q}_{p})}]$ indexed by $b \in B(G,\mu)$, given by pulling the HN-stratification along the natural map $h^{\la}: [\mathcal{F}\ell_{G,\mu^{-1}}^{\Diamond}/\ul{G(\mathbb{Q}_{p})}] \ra \Bun_{G}$. We will now impose the following very mild assumption in what follows. 
\begin{assumption}{\label{assump: codim}}
Write $\partial\Ig^{b,*} \subset \Ig^{b,*}$ for the closed complement of $\Ig^{b}$ in $\Ig^{b,*}$. We assume that $(\mathbf{G},X)$ is a PEL datum of type AC such that, for all $b \in B(G,\mu)$, the perfect scheme $\partial\Ig^{b,*}$ is empty or has codimension in $\Ig^{b,*}$ greater than or equal to $2$.  
\end{assumption}
\begin{remark}
If $\mathbf{G}^{\mathrm{ad}}$ is simple then it is easy to show that this assumption will be satisfied if $\dim(\mathcal{S}(\mathbf{G},X)_{K^{p},C}) \geq 2$, by using that the boundary of the partially minimally compactified Igusa varieties is expressible as the Igusa varieties of Shimura varieties attached to Levis of $\mathbb{Q}$-rational parabolics of $\mathbf{G}$, as we will explain in \S \ref{sss: compactif}. Moreover, if $\mathcal{S}(\mathbf{G},X)_{K^{p},C}$ is compact then it is automatic that $\partial\Ig^{b,*}$ is empty. Therefore, if $\mathbf{G}^{\mathrm{ad}}$ is simple, this is excluding the cases where  $\dim(\mathcal{S}(\mathbf{G},X)_{K^{p},C}) = 1$ and $\mathcal{S}(\mathbf{G},X)_{K^{p},C}$ is non-compact. There are two possibilities; either $(\mathbf{G},X)$ is the Shimura datum attached to the modular curve, or it is the Shimura datum attached to the unitary Shimura curve (See \cite[Proposition~1.9]{Zha}). In the latter case, we have that the connected components are given by modular curves. In these cases, the results of \cite{Ko} are sufficient to prove Conjecture \ref{conj: torsionvanishing}.
\end{remark}
Now, assuming this, one can show that the stalk of $R\pi_{\mathrm{HT}!}(\ol{\mathbb{F}}_{\ell})$ at a geometric point $x: \Spa(C,C^{+}) \ra \mathcal{F}\ell_{G,\mu^{-1}}^{\Diamond}$ which lies in the adic Newton strata $\mathcal{F}\ell_{G,\mu^{-1}}^{b,\Diamond}$ is given by $V_{b}$. Moreover, if we write $h_{b}^{\la}: [\mathcal{F}\ell_{G,\mu^{-1}}^{b,\Diamond}/\ul{G(\mathbb{Q}_{p})}] \ra [\Spd(C)/\mathcal{J}_{b}] \simeq \Bun_{G}^{b}$ for the pullback of $h^{\la}$ to $\Bun_{G}^{b}$ then one can deduce that the complex $i_{b!}i_{b}^{*}R\pi_{\mathrm{HT}!}(\ol{\mathbb{F}}_{\ell})$ is isomorphic to $h^{\la*}j_{b!}(V_{b})$. Therefore, by excision, we deduce that the complex of $G(\mathbb{Q}_{p}) \times W_{E_{\mf{p}}}$-representations
\begin{align*}
h_{*}^{\ra}R\pi_{\mathrm{HT}!}(\ol{\mathbb{F}}_{\ell}) \simeq R\Gamma_{c}(\mathcal{S}(\mathbf{G},X)_{K^{p},C},\ol{\mathbb{F}}_{\ell}) & \simeq \colim_{K_{p} \ra \{1\}} R\Gamma_{c}(\mathcal{S}(\mathbf{G},X)_{K^{p}K_{p},C},\ol{\mathbb{F}}_{\ell}) \\
& \simeq \colim_{K_{p} \ra \{1\}} R\Gamma_{c}(\mathrm{Sh}(\mathbf{G},X)_{K^{p}K_{p},C},\ol{\bb{F}}_{\ell}) 
\end{align*}
has a filtration with graded pieces isomorphic to $Rh^{\ra}_{*}h^{\la*}j_{b!}(V_{b})$ for varying $b \in B(G,\mu)$, where $h^{\ra}: [\mathcal{F}\ell_{G,\mu^{-1}}^{\Diamond}/\ul{G(\mathbb{Q}_{p})}] \ra [\Spd(C)/\ul{G(\mathbb{Q}_{p})}]$ is the structure map quotiented by $G(\mathbb{Q}_{p})$, which we note is proper. Here the second isomorphism follows since taking compactly supported cohomology respects taking limits of spaces, and the third isomorphism is a standard comparison result due to Huber \cite[Theorem~3.5.13]{Hub}.

Now, via the Bialynicki-Birula isomorphism, the flag variety $[\mathcal{F}\ell_{G,\mu^{-1}}^{\Diamond}/\ul{G(\mathbb{Q}_{p})}]$ identifies with an open substack of $\Hck_{G,\leq \mu}$ for the fixed minuscule $\mu$. In particular, under this relationship the maps $h^{\ra}_{\mu}$ and $h^{\la}_{\mu}$ identify with $h^{\ra}$ and $h^{\la}$, and therefore we can relate the graded pieces of the excision filtration to Hecke operators. We write 
\[ R\Gamma_{c}(G,b,\mu) := \colim_{K_{p} \ra \{1\}} R\Gamma_{c}(\Sht(G,b,\mu)/\ul{K_{p}},\ol{\mathbb{F}}_{\ell}(d_{b})) \]
for the complex of $G(\mathbb{Q}_{p}) \times J_{b}(\mathbb{Q}_{p}) \times W_{E_{\mf{p}}}$-modules defined by the compactly supported cohomology of this tower. Here $\ol{\mathbb{F}}_{\ell}(d_{b})$ is the sheaf with $J_{b}(\mathbb{Q}_{p})$-action defined as in \cite[Lemma~7.4]{Ko}.

We deduce the following variant of the Mantovan product formula for the compactly supported cohomology.
\begin{theorem}{\label{thm: mantprodform}}
Suppose that $(\mathbf{G},X)$ is a Shimura datum of PEL type AC satisfying Assumption \ref{assump: codim} then the complex $R\Gamma_{c}(\mathcal{S}(\mathbf{G},X)_{K^{p},C},\ol{\mathbb{F}}_{\ell})$ has a filtration as a complex of $G(\mathbb{Q}_{p})$-representations with graded pieces isomorphic to $j_{1}^{*}T_{\mu}j_{b!}(V_{b})[-d]$. More specifically, the graded pieces are isomorphic to 
\[ (R\Gamma_{c}(G,b,\mu) \otimes^{\mathbb{L}}_{\mathcal{H}(J_{b})} V_{b})[2d_{b}]. \]
as $G(\mathbb{Q}_{p})$-modules. 
\end{theorem}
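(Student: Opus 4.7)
The plan is to follow the strategy already outlined in the text preceding the theorem: combine the Hodge--Tate period map, excision along the Newton stratification, and the Bialynicki--Birula identification to reduce everything to a Hecke-operator-to-shtuka-cohomology computation. The starting point is the identification
\[ R\Gamma_{c}(\mc{S}_{K^{p},C},\ol{\mathbb{F}}_{\ell}) \simeq h^{\ra}_{*}R\pi_{\mathrm{HT}!}(\ol{\mathbb{F}}_{\ell}) \]
recalled above. Applying the excision semi-orthogonal decomposition to the stratification $\{i_{b}: [\mathcal{F}\ell^{b}_{G,\mu^{-1}}/\ul{G(\mathbb{Q}_{p})}] \hookrightarrow [\mathcal{F}\ell_{G,\mu^{-1}}/\ul{G(\mathbb{Q}_{p})}]\}_{b \in B(G,\mu)}$ produces a filtration of $R\pi_{\mathrm{HT}!}(\ol{\mathbb{F}}_{\ell})$ with graded pieces $i_{b!}i_{b}^{*}R\pi_{\mathrm{HT}!}(\ol{\mathbb{F}}_{\ell})$. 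Since $h^{\ra}_{*}$ is exact as a functor of triangulated categories, it transports this to a filtration of $R\Gamma_{c}(\mc{S}_{K^{p},C},\ol{\mathbb{F}}_{\ell})$ as a complex of $G(\mathbb{Q}_{p}) \times W_{E_{\mf{p}}}$-modules whose graded pieces, by the identification $i_{b!}i_{b}^{*}R\pi_{\mathrm{HT}!}(\ol{\mathbb{F}}_{\ell}) \simeq h^{\la*}j_{b!}(V_{b})$ from the preceding discussion, rewrite as $h^{\ra}_{*}h^{\la*}(j_{b!}(V_{b}))$.

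To match these with $j_{1}^{*}(T_{\mu}j_{b!}(V_{b}))[-d](-\tfrac{d}{2})$, I invoke the Bialynicki--Birula isomorphism. Since $\mu$ is minuscule, $[\mathcal{F}\ell_{G,\mu^{-1}}/\ul{G(\mathbb{Q}_{p})}]$ identifies with the preimage of $\Bun_{G}^{1}$ under $h^{\ra}_{\mu}: \Hck_{G,\leq \mu} \ra \Bun_{G} \times \Spd(C)$, as an open substack of $\Hck_{G,\leq \mu}$, and the restrictions of $h^{\la}_{\mu}, h^{\ra}_{\mu}$ coincide with the natural maps $h^{\la}, h^{\ra}$ of the flag variety. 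Because $\mu$ is minuscule and the flag variety is smooth of dimension $d = \langle 2\rho_{G}, \mu \rangle$, the sheaf $\mc{S}_{\mu}$ (attached to the highest weight tilting module, which is irreducible in the minuscule case) restricts to $\ol{\mathbb{F}}_{\ell}[d](\tfrac{d}{2})$ on this open substack. Plugging into the definition of $T_{\mu}$ yields
\[ j_{1}^{*}T_{\mu}j_{b!}(V_{b}) \simeq h^{\ra}_{*}h^{\la*}(j_{b!}(V_{b}))[d](\tfrac{d}{2}), \]
which recovers the first form of the graded pieces after the shift--twist $[-d](-\tfrac{d}{2})$.

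For the explicit shtuka-theoretic description, I unwind $j_{b!}(V_{b})$ using $\Bun_{G}^{b} \simeq [*/\mc{J}_{b}]$ (so $V_{b}$ encodes its smooth $J_{b}(\mathbb{Q}_{p})$-action) and use that $\Sht(G,b,\mu)_{\infty}/\ul{K_{p}}$ arises as the $\mc{J}_{b}$-torsor sitting in the Cartesian diagram obtained by restricting $h^{\la}_{\mu}$ to the fiber over the basepoint of $\Bun_{G}^{b}$. Pushing forward $h^{\la*}(j_{b!}(V_{b}))$ via $h^{\ra}_{*}$ translates, after descending along this $\mc{J}_{b}$-torsor, into the tensor product $R\Gamma_{c}(G,b,\mu) \otimes^{\mathbb{L}}_{\mathcal{H}(J_{b})} V_{b}$ --- this is the basic Hecke-operator-to-shtuka-cohomology computation underpinning the Koshikawa/Fargues--Scholze framework. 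The main obstacle is the careful bookkeeping of shifts and Tate twists: the $[d](\tfrac{d}{2})$ contribution from $\mc{S}_{\mu}$, the $\ol{\mathbb{F}}_{\ell}(d_{b})$-twist built into the definition of $R\Gamma_{c}(G,b,\mu)$, and the cohomological dimension $d_{b} = \langle 2\rho_{G}, \nu_{b}\rangle$ of the HN-stratum $\Bun_{G}^{b}$ must combine to produce the stated $[2d_{b}]$ shift; this requires matching the conventions of \cite[Lemma~7.4]{Ko} for the sheaf $\ol{\mathbb{F}}_{\ell}(d_{b})$ and checking that the equivariance data for the $G(\mathbb{Q}_{p}) \times W_{E_{\mf{p}}}$-action is respected throughout the excision, push/pull, and descent steps.
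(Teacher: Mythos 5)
Your outline follows the same route as the paper (excision along the Newton stratification of $[\mathcal{F}\ell_{G,\mu^{-1}}/\ul{G(\mathbb{Q}_{p})}]$, Bialynicki--Birula, then the shtuka-theoretic rewriting), but it treats the one genuinely hard step as already available. The isomorphism $i_{b!}i_{b}^{*}R\pi_{\mathrm{HT}!}(\ol{\mathbb{F}}_{\ell}) \simeq h^{\la*}j_{b!}(V_{b})$, which you cite as ``from the preceding discussion,'' is the substantive content of the theorem and is only \emph{asserted} in the introduction. Knowing the stalks of $i_{b}^{*}R\pi_{\mathrm{HT}!}(\ol{\mathbb{F}}_{\ell})$ pointwise (Corollary \ref{cor: stalkspartial}) does not give this: one must show that the restriction of $\pi_{\mathrm{HT}}$ to the $b$-stratum is, as a map of $v$-stacks, pulled back along $h^{\la}_{b}$ from a map over $[\Spd(C)/\mathcal{J}_{b}]$, so that $i_{b}^{*}R\pi_{\mathrm{HT}!}(\ol{\mathbb{F}}_{\ell}) \simeq h_{b}^{\la*}(V_{b})$ with the correct $G(\mathbb{Q}_{p})\times W_{E_{\mf{p}}}$-equivariance. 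In the paper this is the Cartesian diagram (\ref{eqn:manpdtfor2}), which rests on: the Caraiani--Scholze/Santos description of the fibers of $\pi_{\mathrm{HT}}^{*}$ and $\pi_{\mathrm{HT}}^{\mathrm{tor}}$ (Theorem \ref{thm: fiberdescrip}), the affineness of $\Ig^{b,*}$ (Theorem \ref{thm: affineness}), the identification $\pi_{\mathrm{HT}}^{-1}(x)\simeq \ol{\mIg}_{C}^{b,*}\setminus\ol{\partial\mIg}_{C}^{b,*}$ (Corollary \ref{cor: openhodgetatefibers}), the extension of the $\mathcal{J}_{b}$-action to $\mIg_{C}^{b,*}$ via Hartogs (Proposition \ref{prop: actextend}), and Zhang's theorem extending the Caraiani--Scholze product diagram over the partial minimal compactification. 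All of this requires Assumption \ref{assump: codim}, which your argument never invokes; without it the passage from the good-reduction-locus diagram (\ref{eqn:manpdtfor}) to the full stratum $\mathcal{S}^{b}_{K^{p},C}$ fails, and the graded pieces of your excision filtration are not of the claimed form.

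The second, smaller gap is the final identification with $(R\Gamma_{c}(G,b,\mu)\otimes^{\mathbb{L}}_{\mathcal{H}(J_{b})}V_{b})[2d_{b}]$, which you describe but explicitly defer (``the main obstacle is the careful bookkeeping of shifts and Tate twists''). This is not pure bookkeeping: descending $h^{\ra}_{*}h^{\la*}j_{b!}(V_{b})$ along the $\mathcal{J}_{b}$-torsor $\Sht(G,b,\mu)_{\infty,C}\to\mathcal{F}\ell^{b}_{G,\mu^{-1}}$ produces a twist by the character $\kappa$ of $J_{b}(\mathbb{Q}_{p})$ coming from its action on the compactly supported cohomology of the unipotent part $\mathcal{J}_{b}^{>0}$, together with the shift $[2d_{b}]$; the paper handles this via \cite[Proposition~11.12]{Ham2} and absorbs $\kappa^{-1}$ into the sheaf $\ol{\mathbb{F}}_{\ell}(d_{b})$ using \cite[Lemma~7.6]{Ko}. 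Note also that the relevant fiber object is $\ol{\mIg}^{b,*}_{C}\setminus\ol{\partial\mIg}^{b,*}_{C}$ rather than $\mIg^{b}_{C}$ itself, which is exactly why $V_{b}$ is the partially compactly supported cohomology $R\Gamma_{c-\partial}(\Ig^{b},\ol{\mathbb{F}}_{\ell})$ and not $R\Gamma_{c}(\Ig^{b},\ol{\mathbb{F}}_{\ell})$.
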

\begin{remark}
When the Shimura variety is compact, so that $\Ig^{b,*} = \Ig^{b}$ and correspondingly that $R\Gamma_{c-\partial}(\Ig^{b},\ol{\mathbb{F}}_{\ell}) \simeq R\Gamma(\Ig^{b},\ol{\mathbb{F}}_{\ell})$, and this recovers precisely \cite[Theorem~7.1]{Ko}. 
\end{remark}
We now apply our localization functor $(-)_{\phi_{\mf{m}}}: \D(\Bun_{G}) \ra \D(\Bun_{G})_{\phi_{\mf{m}}}$ for a generic maximal ideal $\mf{m}$ to get a complex $R\Gamma_{c}(\mathcal{S}(\mathbf{G},X)_{K^{p},C},\ol{\mathbb{F}}_{\ell})_{\phi_{\mf{m}}} \in \D^{\mathrm{adm}}(G(\mathbb{Q}_{p}),\ol{\mathbb{F}}_{\ell})_{\phi_{\mf{m}}}$, which we view as a sheaf on $\Bun_{G}$ by $!$ extending along the neutral strata. Now we know, by the direct sum decomposition of $\DULA(\Bun_{G},\ol{\mathbb{F}}_{\ell})_{\phi_{\mf{m}}}$ described in (\ref{eqn: DirectSumDecomposition}), that the natural map $j_{b!}(V_{b}) \ra Rj_{b*}(V_{b})$ is an isomorphism after applying $(-)_{\phi_{\mf{m}}}$. Moreover, one only has interesting contributions coming from the unramified elements $B(G,\mu)_{\mathrm{un}}$. In particular, we can deduce the following Corollary of Theorem \ref{thm: mantprodform}. 
\begin{theorem}{\label{thm: appliedmantprodform}}{(Theorem \ref{thm: appliedmantprodformbody})}
Suppose $(\mathbf{G},X)$ is a PEL datum of type AC satisfying Assumption \ref{assump: codim} such that $\mathbf{G}_{\mathbb{Q}_{p}}$ is a product of groups as in Table (\ref{constrainttable}) with $p$ and $\ell$ satisfying the corresponding conditions and let $\phi_{\mf{m}}$ be a generic parameter of the form described in Theorem \ref{thm: appliedperversetexactnessintro}. Then the complex $R\Gamma_{c}(\mathcal{S}(\mathbf{G},X)_{K^{p},C},\ol{\mathbb{F}}_{\ell})_{\phi_{\mf{m}}}$ breaks up as a direct sum 
\[ \bigoplus_{b \in B(G,\mu)_{\mathrm{un}}} (R\Gamma_{c}(\Sht(G,b,\mu)_{\infty,C},\ol{\mathbb{F}}_{\ell}(d_{b}))_{\phi_{\mf{m}}} \otimes^{\mathbb{L}}_{\mathcal{H}(J_{b})} R\Gamma_{c-\partial}(\Ig^{b},\ol{\mathbb{F}}_{\ell}))[2d_{b}] \]
of $G(\mathbb{Q}_{p})$-modules.
\end{theorem}
\begin{remark}
As we will explain more in \S \ref{sec: comaprisonwithXiaoZhu}, in the case that the unique basic element $b \in B(G,\mu)_{\mathrm{un}}$ is unramified, the contribution of the corresponding summand to middle degree cohomology serves as a generic fiber analogue of the description of the middle degree cohomology on the special fiber of the integral model at hyperspecial level, as provided in \cite[Theorem~1.1.4]{XZ}.
\end{remark}
It is now easy to see that we obtain from this Theorem \ref{theorem: mainthm}, by combining Theorem \ref{thm: appliedperversetexactnessintro} with the fact that $R\Gamma_{c-\partial}(\Ig^{b},\ol{\mathbb{F}}_{\ell}) \in \D^{\leq d_{b}}(J_{b}(\mathbb{Q}_{p}),\ol{\mathbb{F}}_{\ell})$, by Artin vanishing applied to the inverse limit defining the partial minimal compactification $\Ig^{b,*}$ (See Proposition \ref{prop: artvanish} for details) using a Hochschild-Serre style argument.
\section*{Acknowledgements}
We would like to thank Ana Caraiani, Jean-Fran\c{c}ois Dat, David Hansen, Naoki Imai, Teruhisa Koshikawa, and Chris  Skinner for helpful discussions pertaining to this work. Special thanks go to Mafalda Santos for sharing with us the results of her thesis, Peter Scholze for encouraging us to avoid working with the good reduction locus by directly describing the fibers of the Hodge-Tate period morphism, Matteo Tamiozzo for comments and corrections on an earlier draft, as well as suggestions for the arguments in \S5.2, and Mingjia Zhang for very helpful discussions and filling in several gaps in the arguments used in \S 3, as well as several comments and corrections. We would also like the referees for carefully reading the paper and supplying several useful comments and corrections. This project was carried out while the second author was at the Max Planck Institute for Mathematics in Bonn and she thanks them for their hospitality and financial support.  
\section*{Notation}
\begin{itemize}
\item Fix distinct primes $\ell \neq p$.
\item We write $\mathbb{Q}_{p}$ for the $p$-adic numbers, and $\Breve{\mathbb{Q}}_{p}$ for the completion of the maximal unramified extension with Frobenius $\sigma$.
\item We let $\ol{\mathbb{F}}_{\ell}$ denote the algebraic closure of the finite field $\mathbb{F}_{\ell}$. We fix a choice of square root of $p$ in $\ol{\mathbb{F}}_{\ell}$ and normalize all half Tate twists and square roots of the norm character with respect to this choice. In particular, this how we normalize the geometric Satake correspondence of \cite[Chapter~VI]{FS} and in turn the Fargues-Scholze correspondence.
\item For $L/\mathbb{Q}_{p}$ a finite extension, we write $\Breve{L} := L\Breve{\mathbb{Q}}_{p}$ for the compositum of $L$ with the maximal unramified extension and $W_{L}$ for the Weil group of $L$. 
\item We let $\mathbb{A}$ (resp. $\mathbb{A}_{f}$) denote the adeles (resp. finite adeles) over $\mathbb{Q}$.
\item A pair $(\mathbf{G},X)$ will denote a Shimura datum. We will use $E$ to denote the reflex field. For $K \subset \mathbf{G}(\mathbb{A}_{f})$ a sufficiently small open compact, we write $\mathrm{Sh}(\mathbf{G},X)_{K} \ra \Spec{E}$ for the attached Shimura variety of level $K$.
\item We fix an isomorphism $j: \ol{\mathbb{Q}}_{p} \xrightarrow{\simeq} \mathbb{C}$. Consider the induced embedding $\ol{\mathbb{Q}} \ra \ol{\mathbb{Q}}_{p}$ this gives a finite place $\mf{p}$ of $E$. We write $E_{\mf{p}}$ for the completion at $\mf{p}$.
\item We let $C := \hat{\overline{E}}_{\mf{p}}$ be the completed algebraic closure of $E_{\mf{p}}$. 
\item We use the symbol $G$ to always denote a connected reductive group over $\mathbb{Q}_{p}$, usually taken to be $\mathbf{G}_{\mathbb{Q}_{p}}$. We will always assume that $G$ is quasi-split with a fixed choice $T \subset B \subset G$ of maximal torus and Borel, respectively. 
\item If $G$ is unramified then we let $K_{p}^{\mathrm{hs}} \subset G(\mathbb{Q}_{p})$ be a choice of hyperspecial subgroup. We set $H_{K_{p}^{\mathrm{hs}}} := \ol{\mathbb{F}}_{\ell}[K_{p}^{\mathrm{hs}}\backslash G(\mathbb{Q}_{p})/K_{p}^{\mathrm{hs}}]$ to be the unramified Hecke algebra with $\ol{\mathbb{F}}_{\ell}$-coefficients. 
\item We let $\domcochar$ denote the set of geometric dominant cocharacters of $G$ and let $\domgamorb$ denote the set of Galois orbits, where $\Gamma := \mathrm{Gal}(\ol{\mathbb{Q}}_{p}/\mathbb{Q}_{p})$. 
\item Let $B(G) := G(\Breve{\mathbb{Q}}_{p})/(g \sim hg\sigma(h)^{-1})$ denote the Kottwitz set of $G$.
\item For $b \in B(G)$, we write $J_{b}$ for the $\sigma$-centralizer of $b$. We note that this depends on a choice of representative in the $\sigma$-conjugacy class attached to $b$, but the isomorphism class does not, as a result we will suppress this technicality.
\item For $\mu \in \domcochar$, we let $B(G,\mu)$ be the $\mu$-admissible locus (as defined in \cite[Definition~2.3]{RV}).
\item Let $\Perf$ denote the category of affinoid perfectoid spaces in characteristic $p$ over $\ast := \Spd(\ol{\mathbb{F}}_{p})$ endowed with the $v$-topology. For a perfectoid space $S$, let $\Perf_{S}$ denote the category of affinoid perfectoid spaces over the tilt $S^{\flat}$. 
\item For $S \in \Perf$, let $X_{S}$ denote the relative schematic Fargues-Fontaine curve over $S$, as defined in the discussion proceeding \cite[Definition~3.3.2]{CS1}.
\item For $\Spa{(F,\mathcal{O}_{F})} \in \Perf$ a geometric point, we will often drop the subscript on $X_{F}$ and just write $X$ for the associated Fargues-Fontaine curve. 
\item For $b \in B(G)$, we write $\mathcal{E}_{b}$ for the associated $G$-bundle on $X$, as defined in \cite[Definition~1.1]{GTorseursFargues}). As with the $\sigma$-centralizer $J_{b}$, this depends on a choice of representative in $G(\Breve{\mathbb{Q}}_{p})$ of the $\sigma$-conjugacy class attached to $b$, but the isomorphism class does not, as a result we will suppress this technicality.
\item For $S \in \Perf$, we let $\mathcal{E}_{0}$ denote the trivial $G$-bundle on $X_{S}$.
\item To a diamond or $v$-stack $X$ over $\ast$, we write $\D(X,\ol{\mathbb{F}}_{\ell})$ for the category of \'etale $\ol{\mathbb{F}}_{\ell}$-sheaves, as defined in \cite[Definition~14.13]{Ecod}. We let $\DULA(X,\ol{\mathbb{F}}_{\ell})$ denote the full subcategory of ULA sheaves over $\ast$.
\item For an Artin $v$-stack $X$ and $\Lambda \in \{\ol{\mathbb{F}}_{\ell},\ol{\mathbb{Z}}_{\ell},\ol{\mathbb{Q}}_{\ell}\}$, we write $\D_{\blacksquare}(X,\Lambda)$ for the condensed $\infty$-category of solid $\ol{\mathbb{F}}_{\ell}$-sheaves on $X$, and write $\Dlis(X,\Lambda) \subset \D_{\blacksquare}(X,\Lambda)$ for the full sub-category of $\Lambda$-lisse-\'etale sheaves, as defined in \cite[Chapter~VII]{FS}. 
\item If $X$ is an Artin $v$-stack (\cite[Definition~IV.V.1]{FS}) admitting a separated cohomologically smooth surjection $U \ra X$ from a locally spatial diamond $U$ such that the \'etale site has a basis with bounded $\ell$-cohomological dimension (which will always be the case for our applications) then we will regard $\D(X,\ol{\mathbb{F}}_{\ell})$ as a condensed $\infty$-category via the identification $\Dlis(X,\ol{\mathbb{F}}_{\ell}) \simeq \D(X,\ol{\mathbb{F}}_{\ell})$ when viewed as objects in $\D_{\blacksquare}(X,\ol{\mathbb{F}}_{\ell})$ \cite[Proposition~VII.6.6]{FS}.
\item We let $\hat{G}$ denote the Langlands dual group of $G$ with fixed splitting $(\hat{T},\hat{B},\{X_{\alpha}\})$.
\item If $F$ denotes the splitting field of $G$ then the action of $W_{\mathbb{Q}_{p}}$on $\hat{G}$ factors through $Q:= W_{\mathbb{Q}_{p}}/W_{F}$. We let $\phantom{}^{L}G := \hat{G} \rtimes Q$ denote the $L$-group.
\item For $I$ a finite index set, we let $\Rep_{\ol{\mathbb{F}}_{\ell}}(\phantom{}^{L}G^{I})$ (resp. $\Rep_{\ol{\mathbb{F}}_{\ell}}(\hat{G}^{I})$) denote the category of finite-dimensional algebraic representations of $\phantom{}^{L}G^{I}$ (resp. $\hat{G}^{I}$) over $\ol{\mathbb{F}}_{\ell}$.
\item For $\mu \in \domcochar$, we write $V_{\mu} \in \Rep_{\ol{\mathbb{F}}_{\ell}}(\hat{G})$ (resp. $\mathcal{T}_{\mu} \in \Rep_{\ol{\mathbb{F}}_{\ell}}(\hat{G})$) for the usual highest weight representation (resp. highest weight tilting module, as in \cite{Don}) of highest weight $\mu$. 
\item To any condensed $\infty$-category $\mathcal{C}$, we write $\mathcal{C}^{BW^{I}_{\mathbb{Q}_{p}}}$ for the category of objects with continuous $W_{\mathbb{Q}_{p}}^{I}$-action, as defined in \cite[Section~IX.1]{FS}.
\item For any separated $v$-stack, $X \ra \Spa(K,\mathcal{O}_{K})$ where $\Spa(K,\mathcal{O}_{K})$ is a non-archimedean field, we write $\ol{X}$ for the canonical compactification of $X$ with respect to the structure map (\cite[Proposition~18.6]{Ecod}, \cite[Theorem~5.15]{Hub}). 
\item For a reductive group $H/\mathbb{Q}_{p}$, we write $\D(H(\mathbb{Q}_{p}),\ol{\mathbb{F}}_{\ell})$ for the (left-completed) unbounded derived category of smooth $\ol{\mathbb{F}}_{\ell}$-representations. 
\end{itemize}
\section{Preliminaries on Shimura Varieties}
In this section, we will recall some facts about Shimura varieties and Igusa varieties which we will use to study the geometry of the Hodge-Tate period morphism in the next section. More specifically, in \S \ref{s: Shimuravarieties}, we start with reviewing the definition of a Shimura datum of PEL type AC that we will be interested in (\S \ref{sec: PELtypeAorC}), and then we introduce the minimal and toroidal compactifications of the Shimura varieties attached to these datum (\S \ref{sss:compactifications}). In the next section \S \ref{ss:def_igusa}, we carry out an analogous discussion for Igusa varieties, first reviewing their definition and moduli interpretation (\S \ref{ss: defIgusavarieties}), and then studying their toroidal and minimal compactifications (\S \ref{sss:boundarycomponents}, \S \ref{sss:Igusacusplabels}). This will be important in our discussion in \S \ref{subsec: HodgeTatePeriodMap} for describing the fibers of the Hodge-Tate period morphism on the open Shimura variety.
\subsection{Shimura Varieties}{\label{s: Shimuravarieties}}
We will mainly work with the following two types of Shimura varieties. Our main reference here is \cite[\S1]{Lan}.
\subsubsection{PEL type AC}{\label{sec: PELtypeAorC}}
Let $(\mathcal{O}_{B} , \ast , L, \langle\cdot,\cdot\rangle,h)$ be an (integral) PEL datum, as defined in \cite[\S1.2.1]{Lan}. Note here that by tensoring $O_B$ and $L$ by $\mathbb{Q}$, we obtain the (rational) PEL data as introduced by Kottwitz \cite[\S1-4]{Kottwitz92}. More precisely, we require that $B$ is a finite-dimensional semisimple $\mathbb{Q}$-algebra, $\ast$ is a $\mathbb{Q}$-linear involution of $B$, with fixed field $F$, $\mathcal{O}_{B}$ is a $\ast$-stable $\mathbb{Z}$-order of $B$, $L$ is a lattice with $\mathcal{O}_{B}$-action, and $\langle \cdot , \cdot \rangle: L \times L \rightarrow \mathbb{Z}(1)$ is a non-degenerate alternating form such that $\langle bv, v'\rangle = \langle v, b^*v'\rangle$, for all $b \in \mathcal{O}_{B}$ and $v, v'\in L$. Moreover $h$ is an $\mathbb{R}$-algebra homomorphism
\[h:\mathbb{C}\rightarrow \mathrm{End}_{O_B\otimes_{\mathbb{Z}} \mathbb{R}}(L\otimes_{\mathbb{Z}}\mathbb{R}),\]
such that
\begin{enumerate}
    \item $\langle h(z)v, w\rangle = \langle v, h(\bar{z})w\rangle$ for all $v, w \in L\otimes_{\mathbb{Z}}\mathbb{R}$ and all $z\in \mathbb{C}$
    \item the symmetric bilinear form $(v, h(i)w)$ on $L\otimes_{\mathbb{Z}}\mathbb{R}$ is positive definite.
\end{enumerate}

To our integral PEL datum, we can associate the following group scheme $\mathbf{G}$ over $\mathbb{Z}$ whose $R$-points, for each $\mathbb{Z}$-algebra $R$, are given by
\begin{equation*}
    \mathbf{G}(R) :=\{(g, r) \in \mathrm{GL}_{\mathcal{O}_{B}\otimes_{\mathbb{Z}}R}(L\otimes_{\mathbb{Z}}R)\times R^\times\vert \langle gv, gw\rangle = r\langle v, w\rangle\text{ for all }v, w \in L\otimes_{\mathbb{Z}}R\}.
\end{equation*}
We will assume from now on that $B$ only has simple factors of type AC. Here, we follow \cite[Definition 1.2.1.15]{Lan} classifying simple $\mathbb{Q}$-algebras with a positive involution. Observe that from \cite[Proposition 1.2.3.11]{Lan}, this is equivalent to $\mathbf{G}^{\mathrm{ad}}$ having simple factors only of type A and C.

We now further assume the data is unramified at $p$; namely, that each term in the decomposition
$B_{\mathbb{Q}_p} = \prod_{\mathfrak{p}|p}B\otimes_{F}F_\mathfrak{p}$ is a matrix algebra over an unramified extension of $\mathbb{Q}_p$. We will moreover assume that $\mathcal{O}_{B} \otimes \mathbb{Z}_p$ is a maximal order in $B_{\mathbb{Q}_p}$, and $L$ is self-dual after localization at $p$. This can be arranged following \cite[Remark 1.3.4.8]{Lan}. Note that these conditions equivalently ensure that the group $\mathbf{G}_{\mathbb{Z}_p}$ is a smooth reductive group scheme. 

We will now briefly discuss what conditions we may need to impose on the prime $p$ so that the form of the local group $G:=\mathbf{G}_{\mathbb{Q}_p}$ satisfies the conditions in Table (\ref{constrainttable}).

{\bf Type A:} In this case, the center $Z(B)=F_c$ is a quadratic imaginary extension of $F$. Let $n$ be the $\mathcal{O}_B$ rank of $L$. Observe that we will need to assume that the prime $p$ satisfies for all primes $\mathfrak{p}$ of $F$ above $p$,
\begin{enumerate}
\item $\mathfrak{p}$ is split in $F_c$; or
\item $F_{\mathfrak{p}}=\mathbb{Q}_p$, and $n$ is odd.
\end{enumerate}
These conditions imply that $G$ will be a similitude subgroup of $\prod_\mathfrak{p}G_\mathfrak{p}$ where $G_{\mathfrak{p}}$ is either $\mathrm{Res}_{F_{\mathfrak{p}}/\mathbb{Q}_p}(\mathbb{G}_m\times \GL_{n})$ or $\GU_n$ for either $n=2$ or an odd unitary group.

{\bf Type C:} Since the PEL data is unramified at $p$, we see that $B\otimes_{F}F_\mathfrak{p}$ is indefinite for all primes $\mathfrak{p}$ of $F$ above $p$, and thus $G$ will be a similitude subgroup of
\begin{equation*}
    \prod_{\mathfrak{p}} \mathrm{Res}_{F_\mathfrak{p}/\mathbb{Q}_p}(\GSp_{2n}),
\end{equation*}
where here we define $n$ such that $2n$ is the $O_B$ rank of $L$, so $n$ is half the $O_B$ rank of $L$. In this case, we will need that $n$ is either $1$ or $2$ to satisfy the conditions in Table (\ref{constrainttable}).

Finally, note that in the case of mixed type A and C, we can decompose $B=B_1\times B_2$, there $B_1$ is of type A, $B_2$ is of type C, and hence the $O_B$-lattice also decomposes as $L=L_1\oplus L_2$, where $L_1$ is an $O_{B_1}$-lattice, and $L_2$ is an $O_{B_2}$-lattice. We impose the relevant conditions on $n_1$, the $O_{B_1}$-rank of $L_1$, and $n_2$, half the $O_{B_2}$-rank of $L_2$. In this case, the group is a product of groups of the types listed above, and we let $n=n_1+n_2$.

To each integral PEL data we can associate a moduli space of abelian varieties with extra structures. Let $K^p\subset G(\mathbb{A}_f^p)$ be an open compact subgroup, and let $K=K_p^{\mathrm{hs}}K^p$, where we recall $K_{p}^{\mathrm{hs}} \subset G(\bb{Q}_{p})$ denotes a hyperspecial level. To any PEL data, we let $\mathcal{M}_K$ over $\mathcal{O}_{E_\mathfrak{p}}$ be the scheme which represents the functor that associates to each scheme $S$ over $\mathcal{O}_{E_\mathfrak{p}}$ the set of isomorphism classes of tuples $(A,\lambda,\iota,\eta^p)$ consisting of
\begin{enumerate}
    \item An abelian scheme $A/S$ of dimension $n[F:\mathbb{Q}]$ up to prime to $p$-isogeny,
    \item A prime-to-$p$ polarization $\lambda:A\rightarrow A^\vee$,
    \item An embedding $ \iota: \mathcal{O}_{B}\otimes \mathbb{Z}_{(p)}\hookrightarrow \mathrm{End}(A)\otimes_{\mathbb{Z}}\mathbb{Z}_{(p)}$ of $\mathbb{Z}_{(p)}$-algebras such that
    \begin{equation*}
        \lambda\circ \iota(b^*)=\iota(b)^\vee\circ\lambda,
    \end{equation*}
    \item A section $\eta^p\in \Gamma(S,\mathrm{Isom}_B(\ul{L\otimes_{\mathbb{Z}}\mathbb{A}_f^p},\ul{H^1(A,\mathbb{A}_f^p})/\ul{K^p}))$, where $\mathrm{Isom}_B(\ul{L\otimes_{\mathbb{Z}}\mathbb{A}_f^p},\ul{H^1(A,\mathbb{A}_f^p}))$ is the $\mathbf{G}(\mathbb{A}_f^p)$ pro-\'{e}tale torsor of $O_B\otimes\mathbb{A}_f^p$-equivariant isomorphisms that maps $\langle, \rangle$ to a $\ul{\mathbb{A}_f^{p\times}}$-multiple of the pairing on $H^1(A,\mathbb{A}_f^p)$ defined by the Weil pairing. 
\end{enumerate}
satisfying the Kottwitz determinant condition that $\det(b |\mathrm{Lie}(A)) = \det(b | V^{-1,0})$ as polynomial functions on $\mathcal{O}_{B}$, where $V=L\otimes\mathbb{Q}$ and $V_{\mathbb{C}}=V^{-1,0}\oplus V^{0,-1}$ is the Hodge decomposition. Here, the polynomial functions on $O_B$ for the action of $O_B$ on a finite locally free $O_S$-module $M$ for a scheme $S$ are defined in \cite[\S5]{Kottwitz92}.

Following \cite[\S8]{Kottwitz92}, we know that the generic fiber of $\mathcal{M}_K$ is a finite union of Shimura varieties for groups $\mathbf{G}'$ with $\mathbf{G}(\mathbb{A})\simeq \mathbf{G}'(\mathbb{A})$. In particular, the canonical integral model of the Shimura variety $\mathscr{S}(\mathbf{G},X)_K$ attached to $\mathbf{G}$ and the Hermitian space $X$ consisting of $\mathbf{G}(\mathbb{R})$-conjugates of $h$ is a union of connected components of $\mathcal{M}_K$. As such, we can still associate to $S$-points of $\mathscr{S}(\mathbf{G},X)_K$ an abelian scheme over $S$ with extra structures.

\subsubsection{$p^m$-level} We will also consider normal integral models of Shimura varieties with deeper level at $p$, defined as follows. Assume from now on that the level structure $K$ is a principal congruence subgroup for some $N\geq 3$; namely, we have that
\begin{equation*}
    K = K(N ) = \{g\in \mathbf{G}(\hat{\mathbb{Z}}) | g \equiv 1 \pmod{N} \}.
\end{equation*}
We consider Shimura varieties of level $K'=K(p^nN)$ where $p\nmid N$. In this case, we consider $M_{K'}$ the functor which assigns to every scheme $S$ over $E$ the set of isomorphism classes of tuples $(A,\lambda,\iota,\eta)$, where
\begin{enumerate}
    \item $A$ is an abelian scheme over $S$, $\lambda$ a prime-to-$p$ polarization
    \item $\iota:B\hookrightarrow\mathrm{End}(A)\otimes\mathbb{Q}$
    \item $\eta$ is a section of the pro-\'{e}tale torsor $\Gamma(S,\mathrm{Isom}_B(\ul{L\otimes_{\mathbb{Z}}\mathbb{A}_f},\ul{H^1(A,\mathbb{A}_f})/\ul{K'}))$.
\end{enumerate}
As in the case of the integral model at hyperspecial level, this moduli problem is representable for $N>3$ by a scheme over $E$. We can thus consider in this situation the composition
\[M_{K'}\rightarrow M_K\rightarrow \mathcal{M}_{K},\]
and following \cite{Lan2016a} we let $\mathcal{M}_{K'}$ denote the normalization of $\mathcal{M}_K$ in $M_{K'}$, which by definition is a flat normal scheme over $\mathcal{O}_{E_\mathfrak{p}}$. Similar to before, $\mathcal{M}_{K'}$ is a normal integral model for a finite union of Shimura varieties $\Sh_{K'}(\mathbf{G},X)$ for groups $\mathbf{G}'$ with $\mathbf{G}(\mathbb{A})\simeq \mathbf{G}'(\mathbb{A})$, and we thus obtain a normal integral model $\mathscr{S}(\mathbf{G},X)_{K'}$ for $\Sh_{K'}(\mathbf{G},X)$. 

\subsubsection{Compactifications}
\label{sss:compactifications}
We will now recall some constructions from the theory of toroidal compactifications of PEL type Shimura varieties from \cite{Lan,Lan2016a}.

We first recall the definition of a split, symplectic and admissible filtration from \cite[\S5.2.1]{Lan}. Let $R$ be a commutative ring. 

\begin{definition}
    A split, symplectic and admissible filtration on $L\otimes_{\mathbb{Z}}R$ is a two-step filtration on $L\otimes_{\mathbb{Z}}R$  by $(\mathcal{O}_{B} \otimes_{\mathbb{Z}}R)$-submodules, i.e. there is a flag
\begin{equation*}
   0 = Z_{-3} \subset Z_{-2}\subset Z_{-1}\subset L\otimes_{\mathbb{Z}}R=Z_0,
\end{equation*}
such that if we put $\Gr^{Z}_{-i}= Z_{-i}/Z_{-i-1}$ for $0 \leq i \leq 2$, and $\Gr^Z = \oplus_{0\leq i\leq 2} \Gr^Z_{-i}$, we have
\begin{enumerate}
    \item $\Gr^Z_{-i}$ is isomorphic to $M \otimes_{\mathbb{Z}}R$ for some $\mathcal{O}_{B}$-lattice $M$
    \item There is some isomorphism of $\mathcal{O}_{B}$-lattices
    \begin{equation*}
        L\otimes_{\mathbb{Z}}R\simeq \Gr^Z
    \end{equation*}
    \item  $Z_{-2}$ and $Z_{-1}$ are annihilators of each other under the pairing $\langle,\rangle$ induced from $L$.
\end{enumerate}
\end{definition}

Let $R=\hat{\mathbb{Z}}$ and suppose that we have a split symplectic admissible filtration $Z=Z_{\bullet}$ as above. The following is \cite[Definition 5.4.1.3]{Lan}: 
\begin{definition}
    A torus argument $\Phi$ for $Z$ is a tuple $\Phi = (X, Y, \phi, \varphi_{-2}, \varphi_0)$, where 
\begin{enumerate}
    \item $X$ and $Y$ are $\mathcal{O}_{B}$-lattices of the same $B$-multi-rank, and $\phi: Y \hookrightarrow X$ is an $\mathcal{O}_{B}$-linear embedding
    \item We have isomorphisms $\varphi_{-2} : \Gr^Z_{-2}\simeq \mathrm{Hom}_R(X\otimes_\mathbb{Z}R, R(1))$ and $\varphi_0 : \Gr^Z_0\simeq Y \otimes_\mathbb{Z} R$ such that the pairing $\langle,\rangle_{20} : \Gr^{Z}_{-2} \times \Gr^{Z}_0 \rightarrow R(1)$ is the pullback under these isomorphisms of the pairing
    \begin{equation*}
        \langle\cdot,\cdot\rangle^\phi: \mathrm{Hom}_R(X\otimes R, R(1))\times (Y\otimes R)\xrightarrow{\mathrm{id}\times \phi} \mathrm{Hom}_R(X\otimes R, R(1))\times (X\otimes R)\rightarrow R(1),
    \end{equation*}
    where the last arrow is the tautological pairing. 
\end{enumerate}
\end{definition}
In the above definition, we use the notion of $B$-multi-rank as defined in \cite[Definition 1.2.1.20]{Lan}.

\begin{definition}
\label{defn:cusp_label}
    A cusp label is a pair $(Z,\Phi)$, where $Z$ is a split symplectic admissible filtration on $L\otimes_{\mathbb{Z}}\hat{\mathbb{Z}}$, and $\Phi$ is a torus argument for $Z$. 
\end{definition}
There is an action of $\mathbf{G}(\mathbb{A}_f)$ on the set of possible cusp labels, defined as follows. We can consider rational cusp labels $(Z\otimes\mathbb{Q},\Phi\otimes\mathbb{Q})$, where $Z\otimes\mathbb{Q}$ is the filtration on $L\otimes \mathbb{A}_f$ obtained by tensoring with $\mathbb{Q}$ over $\mathbb{Z}$, and $\Phi\otimes\mathbb{Q}$ is similarly the tuple obtained by tensoring $\varphi_{-2},\varphi_0$ with $\mathbb{Q}$. Observe that this association with rational cusp labels is a bijection, since we can recover the integral filtration by intersecting with $L\otimes_{\mathbb{Z}}\hat{\mathbb{Z}}$ and the integral isomorphisms $\varphi_{-2}$ and $\varphi_{0}$ by restriction, since $\Gr_{-i}^{Z}$ is a lattice in $\Gr_{-i}^{Z\otimes\mathbb{Q}}$. Every $g\in \mathbf{G}(\mathbb{A}_f)$ defines an automorphism of $L\otimes \mathbb{A}_f$, so given a filtration $Z\otimes\mathbb{Q}$, we let $g(Z\otimes\mathbb{Q})$ be the filtration obtained as the image under $g$, and $g(\Phi\otimes\mathbb{Q})$ the torus argument obtained by precomposing $\varphi_{-2},\varphi_0$ with the isomorphisms $\Gr_{-i}^{g(Z\otimes\mathbb{Q})}\simeq \Gr_{-i}^{Z\otimes\mathbb{Q}}$ induced by $g$. 

We define a cusp label of level $K(N)$ to be a $K(N)$-orbit of cusp labels.
\begin{remark}
    \label{rem:cusplabels}
    Note that this is the generalization of the cusp labels $(Z,X)$ considered in \cite[\S2.5.2]{CS2}, as for the PEL type A Shimura data they considered, the assumption of principal polarization means we can set $X=Y$, and the torus argument $\Phi$ is determined by the $\mathcal{O}_F$-isomorphism in part (2) of loc. cit. This definition is not the same as \cite[Definition~5.4.1.9]{Lan} for level $N$ (Lan uses $n$ as notation for the level). However, if we take a $K(N)$-orbit of cusps labels as defined here, we can look at the reduction mod $N$ of $\varphi_{-2}$ and $\varphi_{0}$ for any element in the orbit, and similarly taking the filtration mod $N$ gives a filtration on $L/NL$, and this recovers Lan's definition.
\end{remark}

To each cusp label $(Z,\Phi)$, we can associate a split torus $E_\Phi$ over $\mathbb{Z}$, as constructed by Lan in \cite[\S6.2.2]{Lan}. Let $S_{\Phi}=\mathbb{X}^*(E_\Phi)$, and note that from the construction it is a quotient of $Y\otimes_{\mathbb{Z}}X$. Let $S_\Phi^\vee:= \mathrm{Hom}_\mathbb{Z}(S_\Phi, \mathbb{Z})$ be the $\mathbb{Z}$-dual of $S_\Phi$, and let $(S_\Phi)^\vee_\mathbb{R}:=S^\vee_{\Phi}\otimes_{\mathbb{Z}}\mathbb{R}$. The $\mathbb{R}$-vector space $(S_\Phi)^\vee_\mathbb{R}$ is isomorphic to the space of Hermitian pairings $| \cdot , \cdot|:(Y\otimes \mathbb{R}) \times (Y\otimes \mathbb{R}) \rightarrow \mathcal{O}_{B}\otimes \mathbb{R}$ with isomorphism constructed by sending a Hermitian pairing $| \cdot , \cdot |$ to the function $y \otimes \phi(y') \mapsto \mathrm{Tr}_{B/\mathbb{Q}}(|y, y'|)$ in $\mathrm{Hom}_\mathbb{Z}(S_\Phi, \mathbb{R})$ for any $y,y'\in Y$ (c.f. \cite[\S6.2.5]{Lan}).

Thus, we have an $\mathbb{R}$-vector space $(S_{\Phi})_{\mathbb{R}}^\vee$ of Hermitian pairings, and we define $P_{\Phi}$ to be the subset of $(S_\Phi)_{\mathbb{R}}^\vee$ corresponding to positive semi-definite Hermitian pairings with admissible radicals (see \cite[Definition~6.2.5.4]{Lan} and subsequent discussion for the precise definition of admissible radical). $P_{\Phi}$ will be a rational polyhedral cone in $(S_{\Phi})_{\mathbb{R}}^\vee$. Moreover, to every cusp $(Z,\Phi)$ we can also associate a stabilizer group $\Gamma_{Z}$ defined as follows. $\Gamma_Z \subset \GL_{O_B}(X) \times \GL_{O_B}(Y)$ is the group of pairs of isomorphisms $\gamma_X$ and $\gamma_Y$ of $X$ and $Y$ respectively satisfying $\phi = \gamma_X\circ \phi\circ \gamma_Y$, and mod $N$ we have $\varphi_{-2,N} = \gamma_{X,N}^\vee \circ\varphi_{-2,N}$ and $\varphi_{0,N} = \gamma_{Y,N}\circ \varphi_{0,N}$. Finally, let $\Sigma_{Z}$ be a $\Gamma_{Z}$-admissible rational polyhedral cone decomposition of $P_{Z}$, as in \cite[Definition~6.1.1.14]{Lan}. 

From now on, we will assume that we have fixed a compatible choice of admissible smooth rational polyhedral cone decomposition data (rpcd) $\Sigma$ for $K$; namely, we have
\begin{enumerate}
    \item A complete set of representatives $(Z,\Phi)$ of cusp labels at level $K$,
    \item A $\Gamma_Z$ -admissible smooth rational polyhedral cone decomposition $\Sigma_{\Phi}$ for each cusp $(Z,\Phi)$ so that the cone decompositions are pairwise compatible.
\end{enumerate}
The precise definition and proof of existence of such smooth admissible rpcd is \cite[\S6.3.3.2,\S6.6.3.3]{Lan}. Associated to this admissible smooth rpcd, we have a toroidal compactification of $\mathscr{S}(G,X)_K$ for $K=K(N)$ as constructed by Lan \cite[Theorem 6.4.1.1]{Lan} (when $p\nmid N$), and \cite[\S7-10]{Lan2016a} (when $p\mid N$). 

The toroidal compactification may be described as follows. To a cusp label $(Z,\Phi)$ of level $K$, there is a smaller-dimensional Shimura variety, denoted $\mathscr{S}_Z$, which we will sketch briefly below, and whose precise definition may be found in \cite[\S6.2]{Lan} $(p\nmid N)$ and \cite[\S8]{Lan2016a} ($p\mid N$). Then, we have the following:

\begin{theorem}
\label{thm:shimuraboundary}
To each compatible choice $\Sigma = \{\Sigma_\Phi\}$ of admissible smooth rational polyhedral cone decomposition data, there is an associated flat proper normal algebraic scheme $\mathscr{S}(G,X)_K^\tor$ over $\mathcal{O}_{E,\mathfrak{p}}$ (which is even smooth when $p\nmid N$) containing $\mathscr{S}(G,X)_K$ as an open dense subscheme, together with a semiabelian family $\mathcal{A}$ over $\mathscr{S}(G,X)_K^\tor$. Moreover, we have the following
\begin{enumerate}
    \item We have a decomposition into reduced locally closed subvarieties
    \begin{equation*}
    \mathscr{S}(\mathbf{G},X)_K^\tor=\bigsqcup_{[Z]}\mathscr{S}(\mathbf{G},X)^\tor_{K,Z},
    \end{equation*}
    where $Z=(Z,\Phi)$ is a cusp label of level $K$, and where each $\mathscr{S}(\mathbf{G},X)^\tor_{K,Z}$ is flat over $O_{E_\mathfrak{p}}$.
    \item 
    There exists a scheme 
    \[C_Z \rightarrow \mathscr{S}_Z\]
    (which when $p\nmid N$ is an abelian scheme torsor). Moreover, there exists a torsor under the torus $E_\Phi$, $\Xi_Z \rightarrow C_Z$, and a relative toroidal embedding $\Xi_Z\hookrightarrow \Xi_{Z,\Sigma_Z}$ induced by the cone $\Sigma_Z$ (see \cite[Definition 6.1.2.3]{Lan} for the definition of relative toroidal embedding), as depicted in the following diagram:
    \begin{equation*}
    \begin{tikzcd}
    \Xi_Z \arrow[r] \arrow[d, hook] & C_Z \arrow[r] & \mathscr{S}_Z \\
    \Xi_{Z,\Sigma_Z} \arrow[ru]                &             &  
    \end{tikzcd}
    \end{equation*}
    and $\Xi_Z$ has an action of $\Gamma_Z$ which extends to $\Xi_{Z,\Sigma_Z}$. We denote $\Xi_{Z,\Sigma_Z}\backslash \Xi_Z$ by $\partial\Xi_{Z,\Sigma_Z}$.
    
    If we let 
    \[\left(\mathscr{S}(\mathbf{G},X)^{\tor}_K\right)^\wedge_{[Z]},\]
    denote the completion of $\mathscr{S}(\mathbf{G},X)_K^\tor$ along the locally closed component indexed by $[Z]$\footnote{By
convention, by the formal completion of a scheme $X$ along a locally closed subscheme $Z$ we mean the formal completion of the open subscheme $X-(\ol{Z}-Z)$ along its closed subscheme $Z$.}, then we have an isomorphism
    \[\left(\mathscr{S}(\mathbf{G},X)^{\tor}_K\right)^\wedge_{[Z]}\simeq\mathfrak{X}_{Z,\Sigma_Z}/\Gamma_Z,\]
    where $\mathfrak{X}_{Z,\Sigma_Z}$ is the completion of $\Xi_{Z,\Sigma_Z}$ along $\partial\Xi_{Z,\Sigma_Z}$.
    \end{enumerate}
    \end{theorem}

We note that $C_Z,\mathscr{S}_Z,\Xi_Z$ parameterize certain degeneration structures on semiabelian schemes, which we will briefly sketch.

We first recall some facts about degenerations of abelian schemes, from \cite[\S3.3]{Lan} and \cite[\S2.5.1]{CS2}. Let $R$ be a normal local ring with fraction field $H$. Consider a polarized abelian variety $(A,\lambda)$ over $H$ with $\mathcal{O}_{B}$-structure, and $\mathcal{A}$ a semiabelian scheme over $R$ which is a degeneration of $A$ (that is, $\mathcal{A}_{H}\simeq A$). Then this uniquely determines a short exact sequence
\begin{equation*}
    0 \rightarrow  T \rightarrow \mathcal{G} \rightarrow \mathcal{B} \rightarrow 0
\end{equation*}
where $T$ is a torus, $\mathcal{B}$ is an abelian scheme over $R$, and $\mathcal{G}$ is the Raynaud extension (see \cite[Definition~3.3.3.9]{Lan} for definition). Let $X=\mathbb{X}^*(T)$, this is a free module over $R$. The lattice $X$ has an action of $\mathcal{O}_{B}$, and so does $\mathcal{B}$. Then, $\mathcal{G}$ determines, and is uniquely determined by, an $\mathcal{O}_{B}$-linear map $c:X \rightarrow \mathcal{B}^\vee$. Similarly, we can consider a degeneration $\mathcal{A}^\vee$ over $R$ of the dual $A^\vee/H$, which gives us a short exact sequence
\begin{equation*}
    0\rightarrow T^\vee\rightarrow \mathcal{G}^\vee\rightarrow \mathcal{B}^\vee\rightarrow 0,
\end{equation*}
and if we similarly let $Y=\mathbb{X}^*(T^\vee)=\mathbb{X}_*(T)$, the extension $\mathcal{G}$ determines, and is uniquely determined by an $\mathcal{O}_{B}$-linear map $c^\vee:Y \rightarrow \mathcal{B}$. 

Moreover, note that, as shown in \cite[\S3.4]{Lan}, the polarization on $A$ extends uniquely to a homomorphism $\lambda:\mathcal{G}\rightarrow\mathcal{G}^\vee$. The homomorphism $\lambda$ determines and is uniquely determined by the data of the polarization on $\mathcal{B}$, and an injective $\mathcal{O}_B$-linear map $\phi:Y\rightarrow X$.

Then, given a cusp label $Z$ of level $K(N)$, we can define a smaller-dimensional Shimura variety $\mathscr{S}_{Z}$ which will be a union of connected components of a PEL moduli space which parameterizes the data of abelian schemes $\mathcal{B}$ as described above, over which we consider another moduli space $C_Z$, additionally parametrizing the maps $c,c^\vee$, as well as extensions of maps $f_0:\frac{1}{N}X\rightarrow \mathcal{B}^\vee$ and $f_0^\vee:\frac{1}{N}Y\rightarrow\mathcal{B}$. The space $\Xi_Z$ parameterizes liftings of the map $f^\vee:\frac{1}{N}Y\rightarrow\mathcal{B}$ to a map $f^\vee:\frac{1}{N}Y\rightarrow \mathcal{G}$.

Finally, we also want to consider the integral model over $\mathcal{O}_{E_\mathfrak{p}}$ of the minimal compactification which was constructed in \cite[\S7.2.3]{Lan}, and we denote this by $\mathscr{S}(\mathbf{G},X)_K^*$. We will denote the generic fibers of the toroidal and minimal compactifications by $\Sh(\mathbf{G},X)_{K}^{\tor}$ and $\Sh(\mathbf{G},X)^{*}_{K}$, respectively.
\subsection{Igusa Varieties}
\label{ss:def_igusa}
We now turn our attention to studying Igusa varieties. Throughout this subsection the level $K=K(N)$ satisfies $p\nmid N$. We start with the basic definition. 
\subsubsection{The Basic Definition}{\label{ss: defIgusavarieties}}
\begin{definition}
    A $p$-divisible group $\mathbb{X}$ with $\mathbf{G}_{\mathbb{Z}_p}$-structure is a tuple $(\mathbb{X},\lambda,\iota)$ where $\mathbb{X}$ is a $p$-divisible group of dimension $n[F:\mathbb{Q}]$, $\lambda$ is a quasi-polarization (i.e. a quasi-isogeny $\lambda:\mathbb{X}\dashrightarrow \mathbb{X}^\vee$), and $\iota:\mathcal{O}_{B_{\mathbb{Q}_p}}\hookrightarrow \mathrm{End(\mathbb{X})}$, where $\mathcal{O}_{B_{\mathbb{Q}_p}}$ is the $p$-adic completion of $\mathcal{O}_B$ in $B_{\mathbb{Q}_p}$, such that $\lambda \circ\iota(b^*)=\iota(b)^\vee\circ\lambda$. 

    For notational simplicity, we will often drop $\lambda,\iota$ from the notation.
\end{definition}
Let $\mu\in \domcochar$ be the dominant cocharacter induced by the inverse of the Hodge cocharacter associated with the conjugacy class $X$. We fix now some $b\in B(G,\mu)$, and consider a geometric point $x\in \mathscr{S}(\mathbf{G},X)_K(\ol{\mathbb{F}}_p)$ lying in the Newton strata for $b$. This defines an abelian variety $\mathcal{A}_x/\Spec(\ol{\bb{F}}_{p})$. Consider now the associated $p$-divisible group $\mathbb{X}:=\mathcal{A}_x[p^\infty]$, the $p$-divisible group $\mathbb{X}$ has $\mathbf{G}_{\mathbb{Z}_p}$-structure acquired from the additional structures on $\mathcal{A}_{x}$. 

Up to replacing $x$ by another element in its isogeny class, we can assume $\mathbb{X}$ is completely slope divisible, following a result of Oort and Zink \cite[Theorem 2.1]{OortZink02}. Thus, we can write $\mathbb{X}=\oplus_{i=1}^r\mathbb{X}_i$, where the $\mathbb{X}_i$ are isoclinic $p$-divisible groups of strictly decreasing slopes.

We consider the following subset
\begin{equation*}
    \mathscr{C}_\mathbb{X} := \{x \in \mathscr{S}(\mathbf{G},X)_{K,\ol{\mathbb{F}}_p}:\exists \text{ isomorphism }\rho:\mathcal{A}_x[p^\infty] \times k(\ol{x}) \simeq \mathbb{X}\times k(\ol{x})\text{ preserving $\mathbf{G}_{\mathbb{Z}_p}$-structure} \},
\end{equation*}
known as the central leaf, where we denote by $\mathscr{S}(\mathbf{G},X)_{K,\ol{\mathbb{F}}_p}$ the (geometric) special fiber of $\mathscr{S}(\mathbf{G},X)_{K}$. It is shown in \cite[Proposition 1]{Mant} that this is a locally closed subset of $\mathscr{S}(\mathbf{G},X)_{K,\ol{\mathbb{F}}_p}$, and that we can give this subset the induced reduced scheme structure which makes the associated scheme $\mathscr{C}_\mathbb{X}$ smooth.

Let $\mathscr{G}$ be the $p$-divisible group of the restriction to $\mathscr{C}_\mathbb{X}$ of the universal abelian variety over $\mathscr{S}(\mathbf{G},X)_{K}$. We further define $\Ig^b$ to be the functor parametrizing, for any perfect $\mathscr{C}_
\mathbb{X}$-scheme $\mathscr{T}$, isomorphisms $\mathscr{G} \times_{\mathscr{C}_\mathbb{X}}\mathscr{T} \simeq \mathbb{X} \times_{\ol{\mathbb{F}}_p} \mathscr{T}$ which preserve $\mathbf{G}_{\mathbb{Z}_p}$-structure. Equivalently, we can define $\Ig^b$ as the functor sending an $\ol{\mathbb{F}}_{p}$-algebra $R$ to the set
\begin{equation}{\label{igmoduliinterp}}
\Ig^b(R)=\{(\rho,x):x\in \mathscr{S}(\mathbf{G},X)_{K}(R), \rho:\mathcal{A}_x[p^\infty]\xrightarrow{\sim}\mathbb{X}_{R}\text{ preserving $\mathbf{G}_{\mathbb{Z}_p}$-structure}\}.
\end{equation}
By \cite[Proposition~4.3.3]{CS1}, we know that this functor is representable, and by \cite[Corollary~4.3.5]{CS1} the scheme is perfect, and hence it lifts uniquely to a flat $p$-adic formal scheme, which we denote by $\mathfrak{Ig}^{b}$ over $\mathrm{Spf}(W(\ol{\mathbb{F}}_{p}))$. Moreover, from \cite[Corollary 2.3.2]{CS2}, we know that $\Ig^b\rightarrow\mathscr{C}_{\mathbb{X}}$ is a torsor for $\Aut_G(\mathbb{X})_{\ol{\bb{F}}_{p}}$, the formal group scheme of automorphisms of $\mathbb{X}$ compatible with extra structures viewed as an fpqc sheaf over $\ol{\bb{F}}_{p}$.

We write $\mIg_{C}^{b}$ for the perfectoid space attached to the adic generic fiber of $\mathfrak{Ig}^{b}$ over $C$. These spaces are supposed to model the fibers of the Hodge-Tate period morphism, a connection we will elaborate upon in \S \ref{subsec: HodgeTatePeriodMap}.

\subsubsection{Compactifications}{\label{sss: compactif}}
In order to understand (partial) minimal and toroidal compactifications of $\Ig^b$, we must first consider compactifications of the central leaf $\mathscr{C}_{\mathbb{X}}$. As constructed \cite[\S3.2.1]{San}, the central leaf $\mathscr{C}_{\mathbb{X}}$ admits partial toroidal and minimal compactifications, which we will denote by $\mathscr{C}_{\mathbb{X}}^\tor$ and $\mathscr{C}_\mathbb{X}^*$ respectively. Moreover, it is also shown in \cite[\S3.2.1]{San} that $\mathscr{C}_{\mathbb{X}}^\tor$ is a locally closed subset of $\mathscr{S}(\mathbf{G},X)_{K,\ol{\mathbb{F}}_p}^\tor$, and $\mathscr{C}_{\mathbb{X}}^*$ is a locally closed subset of $\mathscr{S}(\mathbf{G},X)_{K,\ol{\mathbb{F}}_p}^*$. Let $Z$ be a cusp label at level $K(N)$. This determines a locally closed boundary stratum $\mathscr{C}_{\mathbb{X},Z} \subset \mathscr{C}_\mathbb{X}^\tor$.

The Igusa variety $\Ig^b$ over $\mathscr{C}_{\mathbb{X}}$ extends to a perfect scheme $\Ig^{b,\tor}$ over $\mathscr{C}_{\mathbb{X}}^\tor$. More precisely, we can define $\Ig^{b,\tor}$ as follows. Let $\mathcal{A}$ denote the universal semi-abelian scheme over $\mathscr{C}_{\mathbb{X}}^\tor$. This is the restriction to $\mathscr{C}_{\mathbb{X}}^\tor$ of the universal semi-abelian scheme over $\mathscr{S}(\mathbf{G},X)^\tor_{K,\ol{\mathbb{F}}_{p}}$. Then, we know from \cite[Proposition~3.2.6]{San} (which is exactly the same argument as in \cite[Proposition~3.2.1]{CS2}) that both the connected part $\mathcal{A}[p^\infty]^\circ$ of $\mathcal{A}[p^\infty]$ and the multiplicative part $\mathcal{A}[p^\infty]^\mu$ are $p$-divisible groups. We thus let $\mathcal{A}[p^\infty]^{(0,1)} = \mathcal{A}[p^\infty]^\circ/\mathcal{A}[p^\infty]^\mu$ be the biconnected part. We can similarly define $\mathbb{X}^\circ,\mathbb{X}^{(0,1)}$ as the connected and biconnected parts of $\mathbb{X}$.

Following \cite[Definition 3.2.9]{CS2} we can define $\Ig^{b,\tor}$ to be the functor which, for a perfect $\mathscr{C}_{\mathbb{X}}^\tor$-scheme $\mathscr{T}$, parametrizes 
\begin{enumerate}
    \item $\mathcal{O}_{B_{\mathbb{Q}_p}}$ -linear isomorphisms $\rho: \mathcal{A}[p^\infty]^\circ\times_{\mathscr{C}_\mathbb{X}^\tor}\mathscr{T}\xrightarrow{\sim} \mathbb{X}^\circ\times_{\bar{\mathbb{F}}_p}{\mathscr{T}}$
    \item A scalar in $\mathbb{Z}^\times_p(\mathscr{T})$ such that the induced isomorphism $\rho^{(0,1)}:\mathcal{A}[p^\infty]^{(0,1)}\times_{\mathscr{C}_\mathbb{X}^\tor}\mathscr{T}\xrightarrow{\sim} \mathbb{X}_{\mathscr{T}}^{(0,1)}$ obtained by quotienting by the multiplicative parts commutes with the polarizations up to the given element of $\mathbb{Z}_p^\times(\mathscr{T})$.
\end{enumerate}
Here, $\mathbb{Z}_p^\times(\mathscr{T})$ is the set of $\mathscr{T}$-points of the group scheme $\mathbb{Z}_p^\times\times \mathscr{C}_{\mathbb{X}}^\tor$. As explained in \cite[\S3.2.3-\S3.2.7]{CS2}, this is representable by a perfect scheme.

We define the partial minimal compactification $\Ig^{b,*}$ as the
normalization of $\mathscr{C}_\mathbb{X}^*$ in $\Ig^b$.  Since we have $\Ig^b\subset \Ig^{b,*},\Ig^{b,\tor}$, we will denote the boundaries by $\partial\Ig^{b,*}$ and $\partial\Ig^{b,\mathrm{tor}}$, respectively. These schemes are all perfect, and we can lift them to $p$-adic formal schemes $\mIg^{b,*}$, $\mIg^{b,\mathrm{tor}}$, $\partial\mIg^{b,*}$, and $\partial\mIg^{b,\mathrm{tor}}$ over $\mathrm{Spf}(W(\ol{\mathbb{F}}_{p}))$. We similarly denote by $\mIg_{C}^{b,*}$, $\mIg_{C}^{b,\mathrm{tor}}$, $\partial\mIg_{C}^{b,*}$, and $\partial\mIg_{C}^{b,\mathrm{tor}}$ the associated perfectoid spaces over $C$.

\subsubsection{Igusa Cusp Labels}
\label{sss:Igusacusplabels}
In order to understand the boundary components $\partial\Ig^{b,*}$ and $\partial\Ig^{b,\mathrm{tor}}$, we will recall the notion of Igusa cusp labels, as in \cite[Definition~3.2.19]{San} (which we have slightly modified to match the definition of cusp labels previously introduced, and which is similar to \cite[Definition 3.3.10]{CS2}). We let $\mathbb{X}_b:=\mathbb{X}$ be the completely slope divisible $p$-divisible group attached to $b$ defined above. We reintroduce $b$ in the notation to emphasise that all constructions here depend on $b$. Finally, observe that, since from the moduli problem the polarization on $\mathcal{A}_x$ is prime-to-$p$, the $p$-divisible group $\mathbb{X}_b=\mathcal{A}_x[p^\infty]$ is principally polarized.
\begin{definition}
We define an Igusa cusp label as a tuple $(Z_b,Z^p,X,Y,\phi,\varphi_0,\varphi_{-2},\tilde{\varphi}_0,\delta_b)$ where
\begin{enumerate}
    \item $Z_b$ is an $\mathcal{O}_{B_{\mathbb{Q}_p}}$ -stable filtration of $\mathbb{X}_b$ by $p$-divisible subgroups of the form
    \begin{equation*}
        0 = Z_{b,-3} \subset Z_{b,-2} \subset Z_{b,-1} \subset \mathbb{X}_b,
    \end{equation*}
    with quotients which are also $p$-divisible groups, where $\Gr_{-2}^{Z_b}=Z_{b,-2}$ is multiplicative, and $\Gr_0^{Z_b}=\mathbb{X}_b/Z_{b,-1}$ is \'{e}tale, and $Z_{b,-1}$, $Z_{b,-2}$ are Cartier dual to each other under the principal polarization on $\mathbb{X}_b$.
    \item $\delta_b$ is an $\mathcal{O}_{B_{\mathbb{Q}_p}}$-linear isomorphism
    \begin{equation*}
        \delta_b:\Gr^{Z_b}\simeq \mathbb{X}_b
    \end{equation*}
    \item $Z^p$ is an $\mathcal{O}_{B}$ -stable split, symplectic and admissible filtration
    \begin{equation*}
        0 = Z^p_{-3} \subset Z^p_{-2} \subset Z^p_{-1} \subset L\otimes_{\mathbb{Z}}\hat{\mathbb{Z}}^p
    \end{equation*}
    \item $X,Y$ are $\mathcal{O}_{B}$-lattices of the same $B$-multirank, together with an $\mathcal{O}_B$-linear embedding $\phi:Y\hookrightarrow X$, and we have isomorphisms
    \begin{equation*}
        \varphi_0:Y\otimes_{\mathbb{Z}}\hat{\mathbb{Z}}^p\simeq \Gr_0^{Z^p}
    \end{equation*}
    \begin{equation*}
        \varphi_{-2}:\Hom(X\otimes_{\mathbb{Z}}\hat{\mathbb{Z}}^p,\hat{\mathbb{Z}}^p(1))\simeq \Gr_{-2}^{Z^p}
    \end{equation*}
    \begin{equation*}
        \tilde{\varphi}_0:Y\otimes(\mathbb{Q}_p/\mathbb{Z}_p)\simeq \Gr_0^{Z_b}
    \end{equation*}    
    such that the pairing $\langle,\rangle_{20} : \Gr^{Z^p}_{-2} \times \Gr^{Z^p}_0 \rightarrow \hat{\mathbb{Z}}^p(1)$ induced from the one on $L$ is the pullback via $\varphi_{-2},\varphi_0$ of the one defined on $X,Y$.
\end{enumerate}    
\end{definition}
There is an action of $J_b(\mathbb{Q}_p) \times \mathbf{G}(\mathbb{A}^p_f)$ on Igusa cusp labels. For the action of $\mathbf{G}(\mathbb{A}^p_f)$ this works exactly as for the Shimura variety (see discussion after Definition \ref{defn:cusp_label}) since we observe that in this case there is also a bijection between tuples $(Z^p,X,Y,\phi,\varphi_0,\varphi_{-2})$ and rational versions $(Z^p\otimes\mathbb{Q},X,Y,\phi,\varphi_0\otimes\mathbb{Q},\varphi_{-2}\otimes\mathbb{Q})$. For the action of $J_b(\mathbb{Q}_p)$, we first observe that by Dieudonn\'{e} theory we may replace all $p$-divisible groups with their Dieudonn\'{e} modules, ie. we ask for a filtration
\[Z_b:0\subset \mathbb{D}(Z_{b,-2})\subset\mathbb{D}(Z_{b,-1})\subset \mathbb{D}(\mathbb{X}_b),\]
by Dieudonn\'{e} submodules with $\mathcal{O}_{B_{\bb{Q}_p}}$-action such that the quotients are also free $W(\ol{\mathbb{F}}_p)$-modules with $\mathcal{O}_{B_{\bb{Q}_p}}$-action and isomorphisms
\[\delta_b:\mathbb{D}(\Gr^{Z_{b}})\simeq \mathbb{D}(\mathbb{X}_b)\qquad \text{and}\qquad\tilde{\varphi_0}:Y\otimes_{\mathbb{Z}}\mathbb{D}(\mathbb{Q}_p/\mathbb{Z}_p)\simeq \mathbb{D}(\Gr^{Z_{b}}_0).\]
We can consider rational versions obtained by inverting $p$, i.e. 
\[Z_b\left[\frac{1}{p}\right]:0\subset \mathbb{D}(Z_{b,-2})\left[\frac{1}{p}\right]\subset\mathbb{D}(Z_{b,-1})\left[\frac{1}{p}\right]\subset \mathbb{D}(\mathbb{X}_b)\left[\frac{1}{p}\right],\]
and isomorphisms $\delta_b\left[\frac{1}{p}\right],\tilde{\varphi_0}\left[\frac{1}{p}\right]$, and we see that to obtain a bijection we need to additionally require that $\delta_b\left[\frac{1}{p}\right],\tilde{\varphi}_0\left[\frac{1}{p}\right]$ restricts to an isomorphism of Dieudonn\'{e} modules after restricting to the lattice $\mathbb{D}(\mathbb{X}_b)$. We now see that $J_b(\mathbb{Q}_p)$ clearly acts on rational data with this extra condition.

If $H \subset J_b(\mathbb{Q}_p) \times \mathbf{G}(\mathbb{A}^p_f)$ is a compact open subgroup then an Igusa cusp label at level $H$ is a $H$-orbit of Igusa cusp labels. For a general closed subgroup $H' \subset  J_b(\mathbb{Q}_p) \times \mathbf{G}(\mathbb{A}^p_f)$, an Igusa cusp label at level $H'$ is a compatible family of Igusa cusp labels at level $H$ for all $H' \supset H$.

Additionally, we have the following relationship between Igusa cusp labels and cusp labels for the Shimura variety. Given a cusp label $(Z,\Phi)$, we say that an Igusa cusp label $(Z_b,Z^p,X,Y,\phi,\varphi_0,\varphi_{-2},\tilde{\varphi}_0,\delta_b)$ \emph{lives over} $(Z,\Phi)$ if the lattices $X,Y$ are the same, and the base change of $Z$ to $\hat{\mathbb{Z}}^p$ recovers the filtration $Z^p$ and the isomorphisms $\varphi_0,\varphi_{-2}$.
\begin{remark}
    The definition in \cite[Definition 3.2.15]{San} of an Igusa cusp label is different from ours in that the definition in loc. cit. involves (using our terminology) first fixing a cusp label of level $K=K(N)$, and then considering Igusa cusp labels of level $\{1\}\times K^p$ living over this cusp label. The definition of cusp labels in loc. cit. is adapted from Lan's definition, c.f. the discussion in \ref{rem:cusplabels}. In particular, the Igusa cusp labels in \cite{San} contains extra data of lattices and isomorphisms in $L\otimes\mathbb{Z}_p$. This slight variation does not affect any of the results quoted here, since we will always be in a situation where we fix a cusp label $Z$ and are looking at Igusa cusp labels $\tilde{Z}$ living over $Z$.
\end{remark}
\subsubsection{Boundary components}
\label{sss:boundarycomponents}
We now explain how we can understand the boundary $\partial\Ig^{b,\mathrm{tor}}$ according to Igusa cusp labels following \cite{CS2} and \cite{San}. Note that since $K=K(N)$ admits a decomposition as $K_pK^p$, where $K^p\subseteq \mathbf{G}(\mathbb{A}_f^p)$, we henceforth use $K^p(N)$ to denote this prime-to-$p$ level.

Recall that for a cusp label $Z=(Z,\Phi)$, we have a locally closed boundary stratum $\mathscr{C}_{\mathbb{X}_b,Z}\subset\mathscr{C}_{\mathbb{X_b}}^\tor$, and we denote by $\Ig^{b,\tor}_{Z}$ the preimage of $\mathscr{C}_{\mathbb{X}_b,Z}^{\mathrm{perf}}$ under the map $f:\Ig^{b,\tor}\rightarrow\mathscr{C}_{\mathbb{X_b}}^{\tor,\mathrm{perf}}$. We want to understand the decomposition of $\Ig^{b,\tor}_{Z}$ according to Igusa cusp labels.

We first recall the following definition in \cite{San} of the perfect boundary components corresponding to the Igusa cusp labels. Set $\mathscr{C}_{C_Z}:=\mathscr{C}_{\mathbb{X}_b,Z}\times_{\mathscr{S}_Z}C_Z$, and recall that over $\mathscr{C}_{C_Z}$ there is a universal Raynaud extension
\[0\rightarrow T\rightarrow\mathcal{G}\rightarrow\mathcal{B}\rightarrow0,\]
where $\mathcal{B}$ is an abelian scheme. Similar to above, $\mathcal{B}[p^\infty]$ we let the multiplicative part be denoted by $\mathcal{B}[p^\infty]^{\mu}$ and the connected part be denoted by $\mathcal{B}[p^\infty]^{(0,1)}$.

\begin{definition}{\cite[Definition 3.2.34]{San}}
\label{defn:functorIgCZ}
Let $\tilde{Z}$ be an Igusa cusp label of level $\{1\}\times K^p(N)$ living over a cusp label $Z$ of level $K(N)$. Let $\Ig_{\tilde{Z},C_Z}^{b}$ be the functor on perfect schemes $\mathscr{T}$ over $\mathscr{C}_{C_Z}$ parametrizing:
\begin{enumerate}
  \item An isomorphism
  $
    \rho:
    \mathcal{B}[p^{\infty}]^{\mu}
    \times_{\mathscr{C}_{C_Z}} \mathscr{T}
    \xrightarrow{\sim}
    \bigl(\mathrm{Gr}^{Z_b}_{-1}\bigr)^{\mu}
    \times_{\ol{\mathbb{F}}_p} \mathscr{T}
  $
  compatible with the $\mathcal{O}_{B_{\mathbb{Q}_p}}$-actions;
  
  \item A splitting
  $
    \delta:\;
    \mathcal{B}[p^{\infty}]^{\mu}
    \times_{\mathscr{C}_{C_Z}} \mathscr{T}
    \longrightarrow
    \mathcal{G}[p^\infty]^{\mu}
    \times_{\mathscr{C}_{C_Z}} \mathscr{T}
  $
  compatible with the $\mathcal{O}_{B_{\mathbb{Q}_p}}$-actions;
  
\item An $\mathcal{O}_{B_{\mathbb{Q}_p}}$ -linear isomorphism $\rho^{(0,1)}: \mathcal{B}[p^\infty]^{(0,1)}\times_{\mathscr{C}_\mathbb{X}^\tor}\mathscr{T}\xrightarrow{\sim} \mathbb{X}^{(0,1)}\times_{\bar{\mathbb{F}}_p}{\mathscr{T}}$ which commutes with the polarizations up to the given element of $\mathbb{Z}_p^\times(\mathscr{T})$.
\end{enumerate}
\end{definition}
This functor is representable by a perfect scheme, as implicitly seen from \cite{San}. More precisely, we can first consider a level $p^m$-variant $\Ig_{m,\tilde{Z}_m,C_Z}$ as in \cite[Definition 3.2.20]{San} for each $m\in\mathbb{N}$ (this is denoted $\Ig^{\mathbb{X}_b}_{\tilde{Z}^m_n,C_{\Phi_n},\delta_n}$ in loc. cit.). Using \cite[Theorem 2.2.4]{CS2} we see that each of these functors is representable by a scheme over $\mathscr{C}_Z$, where we further note that in light of (1) the data in (2) of the splitting is equivalent to asking for an isomorphism 
\[T[p^\infty]\oplus \mathcal{B}^\mu[p^\infty]\simeq \mathcal{G}^\mu[p^\infty]\]
compatible with $\mathcal{O}_{B_{\mathbb{Q}_p}}$ actions. Now, taking the perfection of the inverse limit over $m$ recovers the functor defined above, and hence $\Ig_{\tilde{Z},C_Z}^{b}$\footnote{This is denoted by $\mIg^{\mathbb{X}_b}_{Z_n,C_{\Phi_n},\delta_n}$ in \cite{San}, but in this paper we reserve the fraktur font for formal schemes.} is also representable by a perfect scheme.

We have a decomposition of $\Ig_Z^{b,\tor}$ into locally closed boundary strata indexed by Igusa cusp labels $\tilde{Z}$ at level $K^p(N)$ living over a fixed cusp label $Z$. These boundary components are related to $\Ig_{\tilde{Z},C_Z}^{b}$ in the following way: Denote by $\widehat{\Ig}^{b,\tor}_{Z}$ the completion of $\Ig^{b,\tor}$ along the locally closed perfect subscheme $\Ig^{b,\tor}_{Z}$. We have the following:
\begin{theorem}{\cite[Theorem~3.2.36]{San}}
\label{thm:modpIgboundary}
    \begin{enumerate}
        \item $\widehat{\Ig}^{b,\tor}_{Z}$ admits a decomposition into open and closed formal subschemes 
        \begin{equation*}
        \widehat{\Ig}^{b,\tor}_{Z}=\bigsqcup_{\tilde{Z}}\widehat{\Ig}^{b,\tor}_{\tilde{Z}},
        \end{equation*}
        where $\tilde{Z}$ runs over all Igusa cusp labels of level $\{1\}\times K^p(N)$ living over $Z$.
        \item Each $\widehat{\Ig}^{b,\tor}_{\tilde{Z}}$ admits the following description: We have a commutative diagram with both squares Cartesian:
        \begin{equation}
        \label{eqn:Ig-cusp-construction}
            \begin{tikzcd}
    \Ig^b_{\tilde{Z},\Xi} \arrow[d, hook] \arrow[r] & \Xi_{Z}^{\mathrm{perf}} \arrow[d, hook] \\
    \Ig^b_{\tilde{Z},\Xi,\Sigma_Z} \arrow[d] \arrow[d] \arrow[r]       & \Xi_{Z,\Sigma_Z}^{\mathrm{perf}} \arrow[d]       \\
    \Ig^b_{\tilde{Z},C_{Z}} \arrow[r]                  & C_Z^{\mathrm{perf}}, 
            \end{tikzcd}
        \end{equation}
        where the composition of the vertical maps are torsors for the torus $E_{\Phi}^{\mathrm{perf}}$, and the inclusion maps in the top square define a relative toroidal embedding.
        
        Let $\widehat{\Ig}^b_{\tilde{Z},\Xi,\Sigma_Z,}$ be the completion of $\Ig^b_{\tilde{Z},\Xi,\Sigma_Z}$ along the boundary component $\partial\Ig^b_{\tilde{Z},\Xi,\Sigma_Z}:=\Ig^b_{\tilde{Z},\Xi,\Sigma_Z}\backslash\Ig^b_{\tilde{Z},\Xi}$. Then, $\Gamma_{\tilde{Z}}$ acts on $\Ig^b_{\tilde{Z},\Xi}$ and $\Ig^b_{\tilde{Z},\Xi,\Sigma_Z}$, and we have an isomorphism
        \[\widehat{\Ig}^{b,\tor}_{\tilde{Z}}\simeq \widehat{\Ig}^b_{\tilde{Z},\Xi,\Sigma_Z}/\Gamma_{\tilde{Z}},\]
        Here, we define $\Gamma_{\tilde{Z}}$ to be the subgroup of $\Gamma_{Z}$ consisting isomorphisms $(\gamma_X,\gamma_Y)$ such that $\tilde{\varphi}_{0}=\gamma_{Y}\circ \tilde{\varphi}_{0}$. 
    \end{enumerate}
\end{theorem}

For later use, we give also an alternate moduli description of $\Ig^b_{\tilde{Z},\Xi}$.

\begin{proposition} We have the following:
\label{prop:moduliIg}
    \begin{enumerate}
    \item \begin{enumerate}
    \item Over $\Ig^b_{\tilde{Z},C_Z}$, we have universal Raynaud extensions
    \[0\rightarrow T\rightarrow \mathcal{G}\rightarrow \mathcal{B}\rightarrow 0\]
    and its dual
    \[0\rightarrow T^\vee \rightarrow \mathcal{G}^\vee\rightarrow \mathcal{B}^\vee\rightarrow 0,\]
    equivalently maps $c:X\rightarrow \mathcal{B}$ and $c^\vee:Y\rightarrow\mathcal{B}$.
    \item Over $\Ig^b_{\tilde{Z},C_Z}$, there exists a universal $\mathcal{O}_{B_{\mathbb{Q}_p}}$-equivariant embedding $\rho:\mathcal{G}[p^\infty]\hookrightarrow \mathbb{X}_b\times_{\ol{\bb{F}}_p}\Ig^b_{\tilde{Z},C_Z}$ which is compatible with polarizations in the following sense: The induced filtration 
    \[Z_{\mathbb{X}_b}: T[p^\infty] \subset \mathcal{G}[p^\infty] \subset \mathbb{X}_b\times_{\ol{\bb{F}}_p}\Ig^b_{\tilde{Z},C_Z}\]
    is the base change of $Z_b$, and the induced isomorphism  
    \[\Gr^{Z_b}_{-1} \simeq \mathcal{G}[p^\infty]/T[p^\infty] = \mathcal{B}[p^\infty]\]
    is compatible with the polarizations
    \item There is a lift of $c^\vee:Y\rightarrow \mathcal{B}$ to $\tilde{c}^\vee:Y[\frac{1}{p}]\rightarrow\mathcal{B}$ corresponding to a symplectic splitting of the filtration $Z_{\mathbb{X}_b}$. 
    \end{enumerate}
    \item For $\tilde{c}^\vee$ as above, $\Ig^b_{\tilde{Z},\Xi}$ is the space of symmetric lifts $\iota:Y[\frac{1}{p}]\rightarrow\mathcal{G}$ lifting $\tilde{c}^\vee$.
    \end{enumerate} 
\end{proposition}
\begin{proof}
    This follows from \cite[Lemma 3.2.32,Proposition 3.2.33]{San}. More precisely, loc. cit. gives a moduli description for $\Ig^b_Z$, by considering a moduli functor of degeneration perfect Igusa-level structures \cite[Definition 3.2.30]{San}, and a torsor for $E_{\Phi}^\mathrm{perf}$ over this moduli space. If we restrict now to the boundary component indexed by some $\tilde{Z}$ living over the fixed $Z$, as in the decomposition of Theorem \ref{thm:modpIgboundary}, (c.f. the discussion after \cite[Definition 3.2.34]{San}), we see that this amounts to identifying the filtration with $Z_b$.
\end{proof}

Let $\widehat{\mathscr{S}}(\mathbf{G},X)^\tor_{K^p}$ denote the inverse limit over $n$ of the $p$-adic completion of toroidal integral models with level $K(p^nN)$,
\[\widehat{\mathscr{S}}(\mathbf{G},X)^\tor_{K^p}=\lim\limits_{\substack{\longleftarrow\\ n}}\widehat{\mathscr{S}}(\mathbf{G},X)^\tor_{K(p^nN)}.\]
Fix a lift $\mathbb{X}_{O_C}$ of $\mathbb{X}=\mathbb{X}_b$ to a $p$-divisible group with $\mathbf{G}_{\mathbb{Z}_p}$-structure over $O_C$, and an isomorphism $\alpha : T_p(\mathbb{X}_{O_C}) \simeq L\otimes_{\mathbb{Z}} \mathbb{Z}_p$. Following \cite[Theorem~4.3.8]{San}, we have a map of formal schemes
\[g:\mIg^{b,\tor} \rightarrow \widehat{\mathscr{S}}(\mathbf{G},X)^\tor_{K^p}.\]
This map $g$ is related to the natural map $f:\Ig^{b,\tor}\rightarrow \mathscr{C}^{\tor,\mathrm{perf}}\rightarrow\mathscr{S}(\mathbf{G},X)^\tor_{K}$ in that on the mod $p$ fiber $g$ factors $f$, i.e. the composition of the mod $p$ fiber of $g$ with the natural projection $\mathscr{S}(\mathbf{G},X)_{K^p}\rightarrow \mathscr{S}(\mathbf{G},X)_{K}$ is $f$.

For later use, we will need to understand this map $g$ on toroidal boundary components. Moreover, we also want to describe the lifting of $\widehat{\Ig}^{b,\tor}_{\tilde{Z}}$ to a formal scheme over $O_C$. Let $\mIg_{Z,O_C}^{b,\tor}$ be the base change to $O_C$ of the canonical flat formal scheme lifting $\Ig^{b,\tor}_Z$. Note that an Igusa cusp label $\tilde{Z}$ of level $K^p(N)$ is the same as the data of a compatible system of Igusa cusp labels $\tilde{Z}_m$ of level $\Gamma_b(p^m)K^p(N)$ for all $m\geq 0$, thus we define 
\[C_{\tilde{Z}}:=\lim\limits_{\substack{\longleftarrow\\ m}} C_{\tilde{Z}_m}\qquad \Xi_{\tilde{Z}}:=\lim\limits_{\substack{\longleftarrow\\ m}} \Xi_{\tilde{Z}_m}\qquad \Xi_{\tilde{Z},\Sigma_Z}:=\lim\limits_{\substack{\longleftarrow\\ m}} \Xi_{\tilde{Z}_m,\Sigma_Z},\]
while $\widehat{C}_{\tilde{Z}}$, $\widehat{\Xi}_{\tilde{Z}}$, $\widehat{\Xi}_{\tilde{Z},\Sigma_Z}$ denote the $p$-adic completions.

We have the following:
\begin{proposition}
    There is a flat formal lift of $\widehat{\Ig}^{b,\tor}_{\tilde{Z}}$ over $\Spf O_C$, denoted $\widehat{\mIg}^{b,\tor}_{\tilde{Z},O_C}$, which may be described as follows. We have a commutative diagram of formal schemes
    \begin{equation}
    \label{eqn:mIg-cusp-construction}
        \begin{tikzcd}
    \mIg^b_{\tilde{Z},\Xi,O_C} \arrow[d, hook] \arrow[r] &\widehat{\Xi}_{\tilde{Z},O_C} \arrow[d, hook]\\
    \mIg^b_{\tilde{Z},\Xi,\Sigma_Z,O_C} \arrow[d] \arrow[r] &\widehat{\Xi}_{\tilde{Z},\Sigma_{Z,O_C}} \arrow[d]\\
    \mIg^b_{\tilde{Z},C_Z,O_C} \arrow [r]& \widehat{C}_{\tilde{Z},O_C}, 
            \end{tikzcd}
    \end{equation}
    and $\widehat{\mIg}^{b,\tor}_{\tilde{Z},O_C}$ is isomorphic to $\mathfrak{X}_{\tilde{Z}}/\Gamma_{\tilde{Z}}$, where $\mathfrak{X}_{\tilde{Z}}$ is the completion of $\mIg^b_{\tilde{Z},\Xi,\Sigma_Z,O_C}$ along the boundary.
    
    Moreover, this gives a stratification of $\mIg^{b,\tor}_{Z,O_C}$ into formal toroidal boundary neighborhoods, such that $\widehat{\mIg}^{b,\tor}_{\tilde{Z},O_C}$ cover $\widehat{\mIg}^{b,\tor}_{Z,O_C}$ for $\tilde{Z}$ running over the Igusa cusp labels $\tilde{Z}$ at level $K^p(N)$ over $Z$.
    
    Finally, the map $g$ respects the stratification given by cusps, i.e. for every $\tilde{Z}$ we have maps
    \[g_{\tilde{Z}}:\widehat{\mIg}^{b,\tor}_{\tilde{Z},O_C}\rightarrow \widehat{\mathscr{S}}(\mathbf{G},X)^\tor_{K^p,\tilde{Z},O_C},\]
    induced by the maps in \eqref{eqn:mIg-cusp-construction} on toroidal boundary charts.
\end{proposition}

In the above proposition, we used $\tilde{Z}$ to denote both an Igusa cusp label at level $K^p(N)$ and a usual cusp label at level $K^p(N)$, since we observe the following. To define a cusp label at level $K^p$, it suffices to define the filtration on $L\otimes_{\mathbb{Z}}\mathbb{Z}_p$ and the isomorphisms $\varphi_0,\varphi_{-2}$, but given the isomorphism $\alpha : T_p(\mathbb{X}_{O_C}) \simeq L\otimes_{\mathbb{Z}} \mathbb{Z}_p$ which we fixed before, we obtain this from $Z_b$ and $\tilde{\varphi_0}$.

\begin{proof}
    This result can be extracted from the construction of $g$ in the proof of \cite[Theorem~4.3.8]{San}, which we explicate here for the reader's convenience.

    The left column is the formal $p$-adic lift of the left column of \eqref{eqn:Ig-cusp-construction} in Theorem \ref{thm:modpIgboundary}, and in particular the composition is given by a $E^\mathrm{perf}_{\Phi,O_C}$-torsor, where $E^\mathrm{perf}_{\Phi,O_C}$ is the lift of the perfection of the torus $E^\mathrm{perf}_\Phi$.
    
    We will now give a more concrete description of the functors defining these lifts. Recall from Proposition \ref{prop:moduliIg} that the $E^\mathrm{perf}_{\Phi}$-torsor $\Ig^b_{\tilde{Z},\Xi}\rightarrow \Ig^b_{\tilde{Z},C_{Z}}$ can be described as the space parametrizing symmetric lifts $\iota:Y[\frac{1}{p}]\rightarrow\mathcal{G}$ of $\tilde{c}^\vee: Y[\frac{1}{p}] \rightarrow \mathcal{B}$. Now, we want to lift these to $O_C$. Fix a map $k\rightarrow O_C/p$. Recall that we have fixed a lifting $\mathbb{X}_{O_C}$ of $\mathbb{X}$, and we also fix as well as an $O_B$-linear splitting
    \[\delta_{\mathbb{X}_{O_C}}: \mathbb{X}_{O_C}\simeq \mathbb{X}^\mu_{O_C}\oplus \mathbb{X}^{(0,1)}_{O_C}\oplus \mathbb{X}^{\acute{e}t}_{O_C}.\]
    Firstly, by \cite[Proposition 4.3.1]{CS2} there exists some rational $\epsilon\in[0,1]$ such that we have an isomorphism $\mathbb{X}\times_{k}O_C/p^\epsilon \simeq \mathbb{X}_{O_C} \times_{O_C} O_C /p^\epsilon$ of $p$-divisible groups with $G$-structures, lifting the identity, such that
    \[\delta_{\mathbb{X}_{O_C}}\times_{O_C}O_C/p \simeq \delta_{b}\times_k {O_C/p^\epsilon}.\]
    Thus, we first base change to $O_C/p^\epsilon$. Observe that since the quotient $\mathbb{X}/\mathcal{G}[p^\infty]$ is \'{e}tale, it deforms uniquely to a $p$-divisible group over $O_C$. Thus, we have a unique lift of $\mathcal{G}[p^\infty]$ to $\mathcal{G}[p^\infty]_{O_C}$ such that the embedding $\rho$ lifts to an embedding $\rho_{O_C}:\mathcal{G}[p^\infty]_{O_C}\hookrightarrow \mathbb{X}_{O_C}$. By Serre-Tate theory, this lift $\mathcal{G}[p^\infty]$ induces a unique lift of the Raynaud extension
    \[0\rightarrow T_{O_C}\rightarrow \mathcal{G}_{O_C}\rightarrow\mathcal{B}_{O_C}\rightarrow 0,\]
    and by similarly considering duals, also a unique lift of the dual Raynaud extension
    \[0\rightarrow T_{O_C}^\vee\rightarrow \mathcal{G}^\vee_{O_C}\rightarrow\mathcal{B}^\vee_{O_C}\rightarrow 0.\]
    In particular, we have a map $c_{O_C}^\vee:Y\rightarrow\mathcal{B}_{O_C}$ lifting the map $c^\vee$ from Proposition \ref{prop:moduliIg}. 

    We now lift a splitting of 
    \[T[p^\infty]\subset \mathcal{G}[p^\infty]\subset \mathbb{X}\]
    over $O_C$, this will define a lifting of $c_{O_C}^\vee$ to a map $\tilde{c}_{O_C}^\vee:Y[\frac{1}{p}]\rightarrow\mathcal{B}_{O_C}$. Note that given the splitting $\delta_{\mathbb{X}_{O_C}}$, to get a splitting of
    \[T[p^\infty]_{O_C}\subset \mathscr{G}[p^\infty]_{O_C}\subset \mathbb{X}_{O_C}\]
    we only need to defined a splitting on the \'{e}tale parts. This splitting can be defined as a lift of the restriction to étale parts of the splitting mod $p$ as constructed in Proposition \ref{prop:moduliIg} (1)(a), and thus there is a unique such lift to $O_C$.
     
    Thus, we see that the $E^\mathrm{perf}_{\Phi,O_C}$-torsor $\mIg^b_{\tilde{Z},\Xi,O_C}\rightarrow \mIg^b_{\tilde{Z},C_Z,O_C}$ can be described as the torsor of liftings of $\tilde{c}_{O_C}^\vee$ to $\iota:Y[\frac{1}{p}]\rightarrow\mathcal{G}_{O_C}$. Now, we can consider a relative (perfect) toroidal embedding $\mIg^b_{\tilde{Z},\Xi,O_C}\hookrightarrow \mIg^b_{\tilde{Z},\Xi,\Sigma_Z,O_C}$ defined using the polyhedral cone $\Xi_{Z}\subset S_{\Phi}[\frac{1}{p}]\otimes\mathbb{R}$, where $S_{\Phi}[\frac{1}{p}]$ is the character lattice of $E_{\Phi}^{\mathrm{perf}}$, and thus also by construction the character lattice of $E^\mathrm{perf}_{\Phi,O_C}$. Observe that from the uniqueness of relative (perfect) toroidal embeddings, since $\mIg^b_{\tilde{Z},\Xi,\Sigma_Z,O_C}$ is flat over $O_C$ it is the unique lift of $\Ig^b_{\tilde{Z},\Xi,\Sigma_Z}$, since the (perfect) toroidal embedding $\Ig^b_{\tilde{Z},\Xi}\hookrightarrow \Ig^b_{\tilde{Z},\Xi,\Sigma_Z}$ is exactly defined also using the polyhedral cone $\Xi_{Z}$.

    Now fix any $m \ge 0$. Let $\tilde{Z}_m$ be the associated Igusa cusp label of level $\Gamma_b(p^m)K^p(N)$. Note that from $\tilde{Z}_m$ we obtain a usual cusp label of level $K(p^mN)$ using the isomorphism $X_{\mathcal{O}_C}[p^m](\mathbb{C}) \cong L/p^m$.  Moreover, the data of an Igusa cusp label of level $\Gamma_b(p^m)K^p(N)$ $\tilde{Z}_m$ gives us a filtration $Z_{b,m}$ on $\mathbb{X}[p^m]$, with an $\mathcal{O}_{B_{\mathbb{Q}_p}}$ -splitting $\delta_{b,m}$ of $Z_{b,m}$, and an isomorphism  $\psi_m : \mathrm{Gr}^{Z_{b,m}}_0\simeq Y /p^m$.

    Thus, we first want to understand a map to $\widehat{\mathscr{S}}^{\mathrm{tor}}_{K(p^mN),\,\tilde{Z}_m,\,O_C}.$ For this, firstly note that the prime-to-$p$ structures on both sides are identical, so it suffices to understand the structures at $p$. Consider the symplectic splitting
$L/p^m \cong \bigoplus_{i=-2}^{0} \mathrm{Gr}^{Z_{p^m}}_{i}$ of the filtration we obtain on $L/p^m$ using the trivialization $\alpha$ and the splitting $\delta_{b,m}$ of the filtration $Z_{b,m}$, and we assume it is compatible with the splitting $\delta$. Thus, we see that we have a splitting
$\mathrm{Gr}^{Z_{b,m}}_{-1} \to Z_{b,m,-1}$, which then induces a splitting of
\[
0 \longrightarrow T[p^m] \longrightarrow \mathcal{G}_{{O}_C}[p^m]
\longrightarrow \mathcal{B}_{O_C}[p^m] \longrightarrow 0,
\]
and also by duality, a splitting of the dual sequence  
\[0 \rightarrow T^\vee[pm] \rightarrow \mathcal{G}^\vee_{O_C}[p^m] \rightarrow \mathcal{B}^\vee_{O_C}[p^m].\] 
We thus obtain lifts of the maps defining the Raynaud extension and its dual, $c$ and $c^\vee$ to maps  $c_m : \frac{1}{p^m}X \rightarrow \mathcal{B}^\vee_{O_C}$ and $c_m^\vee : \frac{1}{p^m}Y \rightarrow \mathcal{B}_{O_C}$. Thus, we have a map to the abelian scheme $C_{\tilde{Z}_m,O_C}$ on the toroidal boundary chart defining $\mathscr{S}^{\tor}_{K(p^mN),\tilde{Z}_m}$. Now, the torus torsor $\Xi_{\tilde{Z}_m,O_C}$ is defined as the moduli space parametrizing lifts $\iota_m : \frac{1}{p^m}Y \rightarrow \mathcal{G}_{O_C}$.

Since $\mIg^b_{\tilde{Z},\Xi,O_C}\rightarrow \mIg^b_{\tilde{Z},C_Z,O_C}$ can be described as the torsor of liftings of $\tilde{c}_{O_C}^\vee$ to $\iota:Y[\frac{1}{p}]\rightarrow\mathcal{G}_{O_C}$, we obtain a map from $\mIg^b_{\tilde{Z},\Xi,O_C}$ to $\Xi_{\tilde{Z}_m,O_C}$. Observe that this extends to a map of relative toroidal embeddings $\mIg^b_{\tilde{Z},\Xi,\Sigma_Z,O_C}$ to $\Xi_{\tilde{Z}_m,\Sigma_Z,O_C}$, since a map of torus torsors extends exactly when the induced map on polyhedral cones maps every face in one into a face of the other. This last condition clearly holds true for the map $\mIg^b_{\tilde{Z},\Xi,O_C}\rightarrow \Xi_{\tilde{Z}_m,O_C}$, since we in fact get the identity map on the polyhedral cone $\Sigma_Z$.

By taking limits over $m$, we get the desired maps.
\end{proof}

Finally, we want to understand the inclusion of adic generic fibers $\partial\mIg^{b,\tor}_C\subset \mIg^{b,\tor}_C$. Let $\widehat{\partial\mathscr{S}}(\mathbf{G},X)^\tor_{K,O_C}$ denote the (base-change over to $O_C$ of the) closed formal subscheme of $\widehat{\mathscr{S}}(\mathbf{G},X)^\tor_{K,O_C}$ given by taking the formal completion of the boundary component $\partial\mathscr{S}(\mathbf{G},X)_K$ (which is flat over $O_{E_\mathfrak{p}}$) along its special fiber, then we may consider the formal subscheme
\[\tilde{\partial}\mIg^{b,\tor}_{O_C}:=\mIg^{b,\tor}_{O_C}\times_{\widehat{\mathscr{S}}(\mathbf{G},X)_{K^p,O_C}}\widehat{\partial\mathscr{S}}(\mathbf{G},X)^\tor_{K^p,O_C}. \]
Since the inclusion $\widehat{\partial\mathscr{S}}(\mathbf{G},X)^\tor_{K,O_C}\hookrightarrow \widehat{\mathscr{S}}(\mathbf{G},X)_{K,O_C}$ is an adic morphism of $p$-adic formal schemes, taking limits over $K_p$ and base-changing we see that the induced map $\tilde{\partial}\mIg^{b,\tor}_{O_C}\hookrightarrow \mIg^{b,\tor}_{O_C}$ is also an adic morphism, and in particular $\tilde{\partial}\mIg^{b,\tor}_{O_C}$ is also a $p$-adic formal scheme.

Finally, observe that by the previous Proposition we know that image of an Igusa boundary component labelled by $\tilde{Z}$ maps to a boundary component of the Shimura variety labelled by $\tilde{Z}$, hence the perfection of the special fiber of $\tilde{\partial}\mIg^{b,\tor}_{O_C}$ is 
\[\left(\Ig^{b,\tor}\times_{\mathscr{S}(\mathbf{G},X)_{K^p,\ol{\mathbb{F}}_p}}\partial\mathscr{S}(\mathbf{G},X)^\tor_{K^p,\ol{\mathbb{F}}_p}\right)^\mathrm{perf}\simeq \partial\Ig^{b,\tor}.\]
\begin{proposition}
\label{prop:geom-pts-boundary}
    We have a closed inclusion of $p$-adic formal schemes
    \[\partial\mIg^{b,\tor}_{O_C}\subset\tilde{\partial}\mIg^{b,\tor}_{O_C},\]
    such that both have the same reduced special fiber. Moreover, this inclusion induces an equality of $\Spf O_{C'}$-points, for $C'$ any complete algebraically closed non-archimedean field containing $C$.
\end{proposition}
\begin{proof}
    We will first show that every $\ol{\mathbb{F}}_p$-point of the special fiber of $\tilde{\partial}\mIg^{b,\tor}_{O_C}$ lifts. To see this, we may consider some toroidal boundary component $\Ig^{b,\tor}_{\tilde{Z}}$, and we want to show that in fact this entire component admits a flat lift over $\Spf O_C$, to a locally closed $p$-adic formal subscheme of $\tilde{\partial}\mIg^{b,\tor}$.

    To see this, we claim that the map
    \[g_{\tilde{Z}}:\mIg^{b,\tor}_{\tilde{Z},O_C}\rightarrow \widehat{\mathscr{S}}(\mathbf{G},X)^\tor_{K^p,\tilde{Z},O_C},\]
    respects boundary components. For this, we will look at the moduli interpretations of both sides, and we want to show that the map in the middle row of \eqref{eqn:mIg-cusp-construction} respects the boundary. On the left hand side, from the previous Proposition and the proof of \cite[Theorem 4.3.10]{San}, for any $p$-complete $O_C$-algebra $R$, we see that $\Spf R$-points of this are prescribed by the following data: 
    \begin{enumerate}
\item An abelian scheme $\mathcal{B}$ over $R$ with $\mathcal{O}_{B}$-action and a prime-to-$p$ polarization
\item An $\mathcal{O}_{B}$ -linear extension
\begin{equation*}
    0 \rightarrow T \rightarrow \mathcal{G} \rightarrow \mathcal{B} \rightarrow 0
\end{equation*}
where $\mathbb{X}_*(T)=X$; equivalently, an $\mathcal{O}_{B}$ -linear map $c : X \rightarrow \mathcal{B}^\vee$. 
\item An $\mathcal{O}_{B_{\mathbb{Q}_p}}$-linear isomorphism
\begin{equation*}
\rho:\mathcal{G}[p^\infty] \simeq Z_{b,-1}
\end{equation*}
that extends the isomorphism $T[p^\infty]\simeq Z_{b,-2}$ obtained by applying Cartier duality to $\tilde{\varphi}_0$. By the splitting $\delta_b$, this induces a splitting of
\begin{equation*}
0 \rightarrow T [p^\infty] \rightarrow \mathcal{G}[p^\infty] \rightarrow \mathcal{B}[p^\infty] \rightarrow 0,    
\end{equation*}
and in particular $c$ extends to a map $c:X[1/p]\rightarrow\mathcal{B}^\vee$.
\item An $\mathcal{O}_{B}$ -linear extension
\begin{equation*}
    0 \rightarrow T^\vee \rightarrow \mathcal{G}^\vee \rightarrow \mathcal{B}^\vee \rightarrow 0
\end{equation*}
where $\mathbb{X}^*(T)=Y$. Equivalently, an $\mathcal{O}_{B}$ -linear map $c^\vee : Y \rightarrow \mathcal{B}$. By the splitting $\delta_b$, as well as by duality (using that $\mathcal{B}[p^\infty]$ will be principally polarized), we have a splitting of 
\begin{equation*}
0 \rightarrow T^\vee[p^\infty] \rightarrow \mathcal{G}^\vee[p^\infty] \rightarrow \mathcal{B}^\vee[p^\infty] \rightarrow 0    
\end{equation*}
and in particular we extend $c^\vee$ to a map $c^\vee:Y[1/p]\rightarrow \mathcal{B}$.
\item  An $R$ -point of $\mathcal{P}'_{\Sigma_Z}$. Here, we note that away from the boundary we have a torsor $\mathcal{P}'$ over $R$ for the torus $E_\Phi$ with character group $S_\Phi$, parametrizing lifts of $c^\vee$ to $\iota:Y[1/p] \rightarrow \mathcal{G}$. $\mathcal{P}'_{\Sigma_{Z}}\supset \mathcal{P}'$ is the toroidal embedding defined by the admissible rational polyhedral cone decomposition $\Sigma_Z$ for the cusp $(Z,\Phi)$. 
\end{enumerate}
Moreover, we see that the boundary component is given by $\Spf R$-points which in (5) lie entirely in the boundary $\mathcal{P}'_{\Sigma_Z}\backslash \mathcal{P}'$. By construction, the image of these boundary points under $g$ lies in $\Xi_{Z,\Sigma_{\tilde{Z}},O_C}\backslash \Xi_{Z,O_C}$, since being in the boundary of $\Xi_{Z,\Sigma_{\tilde{Z}},O_C}$ is also exactly determined by the data in (5) of the $R$-point of $\mathcal{P}'_{\Sigma_Z}$.
        
Thus, observe that for the formal toroidal boundary components $\partial\mIg^b_{\tilde{Z},\Xi,\Sigma_Z,O_C}$ and $\partial\Xi_{Z,\Sigma_Z,O_C}:=\Xi_{Z,\Sigma_Z,O_C}\backslash \Xi_{Z,O_C}$ we have a commutative diagram
\[
\begin{tikzcd}
\partial\mIg^b_{\tilde{Z},\Xi,\Sigma_Z,O_C}\arrow[d] \arrow[r]       & \widehat{\partial\Xi}_{\tilde{Z},\Sigma_Z,O_C} \arrow[d]       \\
\mIg^b_{\tilde{Z},C_{\tilde{Z}},O_C} \arrow[r]                  & \widehat{C}_{\tilde{Z},O_C}, 
\end{tikzcd}
\]
where on the right column we take completions along the special fiber. We see that the quotient
\[\partial\mIg^b_{\tilde{Z},\Xi,\Sigma_Z,O_C}/\Gamma_{\tilde{Z}}\]
    is a flat $p$-adic formal subscheme of $\tilde{\partial}\mIg^{b,\tor}_{O_C}$. Moreover, since the image of $\partial\mIg^b_{\tilde{Z},\Xi,\Sigma_Z,O_C}$ under $g$ by construction lies in $\widehat{\partial\Xi}_{Z,\Sigma_Z,O_C}$, and under the identification of formal toroidal neighborhoods of $\widehat{\mathscr{S}}(\mathbf{G},X)^\tor_{K^p,\tilde{Z},O_C}$ with $\widehat{\Xi}_{Z,\Sigma_Z,O_C}/\Gamma_{\tilde{Z}}$ this lies in the boundary $\widehat{\partial\mathscr{S}}(\mathbf{G},X)^\tor_{K^p,\tilde{Z},O_C}$. 

    To see the second part of the proposition, we may consider the closed formal subscheme $\mathfrak{X}\subset  \tilde{\partial}\mIg^{b,\tor}_{O_C}$ obtained by quotienting by the $p^\infty$-torsion ideal of $\mathcal{O}_{\tilde{\partial}\mIg^{b,\tor}_{O_C}}$, this is a flat closed $p$-adic formal subscheme of $\tilde{\partial}\mIg^{b,\tor}_{O_C}$ whose reduced special fiber is necessarily $\partial\Ig^{b,\tor}$, since it contains every geometric point of $\partial\Ig^{b,\tor}$. Observe that every $\Spf O_{C'}$-point of $\tilde{\partial}\mIg^{b,\tor}_{O_C}$ necessarily factors through $\mathfrak{X}$, since $O_{C'}$ is $p$-torsionfree. Finally, observe that since $\mathfrak{X}$ is flat, and the special fiber of $\mathfrak{X}$ contains as a reduced closed subscheme $\partial\Ig^{b,\tor}$, $\mathfrak{X}$ contains the lift $\partial\mIg^{b,\tor}_{O_C}$ as a closed formal subscheme. Now, any $\Spf O_{C'}$-point of $\mathfrak{X}$ must also necessarily factor through $\partial\mIg^{b,\tor}_{O_C}$, since its image mod $p$ is reduced. Thus, the set of $\Spf O_{C'}$-points of $\tilde{\partial}\mIg^{b,\tor}_{O_C}$ and $\partial\mIg^{b,\tor}_{O_C}$ are the same.
\end{proof}

\section{Mantovan's Formula and the Hodge-Tate Period Morphism}
We now review some basic facts about the geometry of the Hodge-Tate period morphism, establishing some facts about the fiber of the Hodge-Tate period morphism in \S \ref{subsec: HodgeTatePeriodMap}. In \S \ref{sec: manprodformula}, we then apply these basic geometric facts to establish a version of Mantovan's product formula for non-compact Shimura varieties (Theorem \ref{thm: appliedmantprodformbody}). This will in particular allow us to bound the degrees of the torsion cohomology of the Shimura variety in terms of the semiperversity of certain sheaves coming from Igusa varieties and the perverse t-exactness of Hecke operators on $\Bun_{G}$. 

We recall that $(\mathbf{G},X)$ denotes a Shimura datum with reflex field $E/\bb{Q}$. We let $\Sh(\mathbf{G},X)_{K}/\Spec(E)$ denote the associated Shimura variety for a sufficiently small open compact subgroup $K \subset \mathbf{G}(\bb{A}_{f})$. We will assume throughout that the Shimura datum $(\mathbf{G},X)$ is of PEL type AC.

We recall that $\ell \neq p$ are distinct primes, $E_{\mf{p}}$ is the completion at the place $\mf{p}|p$ determined by a fixed choice of isomorphism $j: \ol{\bb{Q}}_{p} \simeq \bb{C}$, and $C$ is the completed algebraic closure of $E_{\mf{p}}$. The group $G := \mathbf{G}_{\bb{Q}_{p}}$ will denote the local group, and we let $K = K^{p}K_{p}$ be the decomposition into the level $K^{p} \subset \mathbf{G}(\bb{A}_{f}^{p})$ away from $p$ (resp. the level $K_{p} \subset G(\bb{Q}_{p})$ at $p$). 
\subsection{The Hodge-Tate Period Morphism}{\label{subsec: HodgeTatePeriodMap}}
For $K \subset \mathbf{G}(\mathbb{A}_{f})$ a sufficiently small open compact, we define $\mathcal{S}(\mathbf{G},X)_{K} := (\Sh(\mathbf{G},X)_{K} \otimes_{E} E_{\mf{p}})^{\mathrm{ad}}$ to be the adic space over $\Spa(E_{\mf{p}})$ attached to the Shimura variety. For brevity, in this section we will often denote this simply by $\mathcal{S}_{K}$. When $K=K_{p}^\mathrm{hs}K^p$ with $K_p^{\mathrm{hs}}$ a hyperspecial subgroup, the space $\mathcal{S}_{K}$ has a canonical integral model $\mathscr{S}_{K}$ over $\mathcal{O}_{E,\mf{p}}$. Let $\mathcal{S}_{K}^{\circ} \subset \mathcal{S}_{K}$ be the good reduction locus, i.e. the open subspace of $\mathcal{S}_{K}$ obtained from the adic generic fiber of the $p$-adic completion $\mathscr{S}^\wedge_{K}$ of the scheme $\mathscr{S}_{K}$. We define $\mathcal{S}_{K'}^{\circ} \subset \mathcal{S}_{K'}$ for $K'\subset K$ by taking the preimage under the natural map from $\mathcal{S}_{K'}$ to $\mathcal{S}_{K}$. We also consider the adic spaces $\mathcal{S}^{*}_{K} := (\Sh(\mathbf{G},X)^{*}_{K} \otimes_{E} E_{\mf{p}})^{\mathrm{ad}}$ and $\Stor_{K} := (\Sh(\mathbf{G},X)^{\mathrm{tor}}_{K} \otimes_{E} E_{\mf{p}})^{\mathrm{ad}}$ attached to the minimal and toroidal compactifications of the Shimura variety $\Sh(\mathbf{G},X)_{K}$ over $E$, respectively, as described in \S \ref{sss:compactifications}, where, for the former, we fix a compatible choice $\Sigma = \{\Sigma_{\Phi}\}$ of admissible smooth rational polyhedral cone decomposition data, as described in \emph{loc.cit}. We use the superscript $(-)^{\Diamond}$ for the diamond attached to these analytic adic spaces, as defined in \cite[Lecture~X]{SW}. 

Associated to the $\mathbf{G}(\mathbb{R})$-conjugacy class $X$, we have a conjugacy class of a minuscule cocharacter $\mu$ of $G_{C}$ with reflex field $E_\mf{p}$ attached to the inverse of the Hodge cocharacter of $X$. Let $\mathcal{F}\ell_{G,\mu^{-1}}$ be the adic flag variety over $\Spa(C)$ associated to the conjugacy class of $\mu^{-1}$, the dominant inverse of $\mu$ (i.e we quotient out by the parabolic defined by the set of $g \in G_{C}$ such that $\lim_{t \ra 0} \mathrm{ad}(\mu(t))g$ exists). Since $\mu$ is minuscule, via the Bialynicki-Birula isomorphism (\cite[Theorem~3.4.5]{CS1}), the diamond attached to the flag variety $\mathcal{F}\ell_{G,\mu^{-1}}^{\Diamond}$ represents the following functor on $\Perf_{C}$. Given any $S \in \Perf_{C}$, $\mathcal{F}\ell_{G,\mu^{-1}}^{\Diamond}(S)$ is the set of modifications of vector bundles $\mathcal{E} \dashrightarrow \mathcal{E}_{0}$ of meromorphy $\mu$ on $X_{S}$, the relative Fargues-Fontaine curve over $S$, such that the modification occurs over the degree one Cartier divisor in $X_{S}$ corresponding to the untilt of $S$ defined by the map $S \ra \Spd(C)$.

Let 
\[
    \mathcal{S}_{K^p,C}^\circ:=\lim\limits_{\substack{\longleftarrow\\ K_p}}\mathcal{S}_{K^pK_p,C}^{\circ,\Diamond} 
 \subset 
 \mathcal{S}_{K^p,C} :=\lim\limits_{\substack{\longleftarrow\\ K_p}}\mathcal{S}_{K^pK_p,C}^{\Diamond} \subset \mathcal{S}_{K^{p},C}^{\mathrm{tor}} := \lim\limits_{\substack{\longleftarrow\\ K_{p}}} \mathcal{S}^{\mathrm{tor},\Diamond}_{K^{p}K_{p},C} \ra \mathcal{S}_{K^{p},C}^{*} := \lim\limits_{\substack{\longleftarrow\\ K_{p}}} \mathcal{S}^{*,\Diamond}_{K^{p}K_{p},C}
\]
be the associated perfectoid Shimura varieties\footnote{If $(\mathbf{G},X)$ is more generally of Hodge type then this is representable by a perfectoid space by \cite[Theorem~IV.1.1]{Schtor}, and in general they have the structure of diamonds (See \cite{Han1} and \cite[Section~2]{PR23})}, where this limit is computed in the category of sheaves on $\Perf_{C}$ with respect to the $v$-topology on $\Perf_{C}$, as defined in \cite[Definition~8.1]{Ecod} and we recall that any diamond defines such a $v$-sheaf in light of \cite[Proposition~11.9]{Ecod}. Throughout, we identify adic spaces attached to perfectoids and their associated $v$-sheaves under the Yoneda embedding, recalling that the $v$-topology is sub-canonical, by \cite[Theorem~1.2]{Ecod}.

We also consider $\ol{\mathcal{S}}^{\circ}_{K^{p},C}$, the canonical compactification of the good reduction locus over $\Spd(C)$ in the sense of \cite[Proposition~18.6]{Ecod}. This will be a subspace of $\mathcal{S}_{K^{p},C}$, since $\mathcal{S}_{K^{p},C}$ is partially proper. Caraiani-Scholze \cite[\S2.1]{CS1} consider the Hodge-Tate period morphism on $\mathcal{S}_{K^p}$
\[
\pi_{\mathrm{HT}}: \mathcal{S}_{K^{p},C} \ra \mathcal{F}\ell_{G,\mu^{-1}}^{\Diamond}, \]
which records the relative position of the Hodge-Tate filtration associated with the abelian varieties that $\mathcal{S}_{K^{p},C}$ parametrizes. This extends \cite[\S4.1]{CS2} to a Hodge-Tate period morphism on the minimal compactification
\[
\pi_{\mathrm{HT}}^{*}: \mathcal{S}_{K^{p},C}^{*} \ra \mathcal{F}\ell_{G,\mu^{-1}}^{\Diamond}
\]
and toroidal compactification
\[
\pi_{\mathrm{HT}}^{\mathrm{tor}}: \mathcal{S}_{K^{p},C}^{\mathrm{tor}} \ra \mathcal{F}\ell_{G,\mu^{-1}}^{\Diamond}.
\]
We write $\pi_{\mathrm{HT}}^{\circ}$ for the restriction to the good reduction locus, and $\ol{\pi}_{\mathrm{HT}}^{\circ}$ for the canonical compactification of $\pi_{\mathrm{HT}}^{\circ}$, where we note that this again maps to $\mathcal{F}\ell_{G,\mu^{-1}}^{\Diamond}$ as this is proper over $\Spa(C)$. 

These maps have the following properties:
\begin{enumerate}
\item $\overline{\pi}_{\mathrm{HT}}^{\circ}$, $\pi_{\mathrm{HT}}^{*}$, and $\pi_{\mathrm{HT}}^{\mathrm{tor}}$ are partially proper and qcqs; hence, proper.
\item $\pi_{\mathrm{HT}}$ is partially proper, but not always qcqs (Unless the Shimura variety is compact).
\item $\pi_{\mathrm{HT}}^{\circ}$  is qcqs, but not partially proper (Unless the Shimura variety is compact). 
\end{enumerate}
With these properties in mind, let us  study the fibers of these maps. For our purposes, we will focus on the compactly supported cohomology of $\mathcal{S}_{K^{p},C}$ and in turn the sheaf $R\pi_{\mathrm{HT}!}(\ol{\mathbb{F}}_{\ell})$ on $\mathcal{F}\ell_{G,\mu^{-1}}^{\Diamond}$.

Our goal is to describe the stalks of $R\pi_{\mathrm{HT}!}(\ol{\mathbb{F}}_{\ell})$ at a geometric rank $1$ point $x: \Spa(C,O_{C}) \ra \mathcal{F}\ell^{\Diamond}_{G,\mu^{-1}}$. We assume the geometric point $x$ factors through the adic Newton strata $\mathcal{F}\ell_{G,\mu^{-1}}^{b,\Diamond}$ for $b \in B(G,\mu)$, and choose a completely slope divisible $p$-divisible group $\mathbb{X}_{b}$ over $\ol{\mathbb{F}}_{p}$ corresponding to $b \in B(G,\mu)$. Let $\Ig^{b}$ be the associated perfect Igusa variety as defined in \S\ref{ss: defIgusavarieties}, with toroidal compactification $\Ig^{b,\tor}$ and minimal compactification $\Ig^{b,*}$. Recall that we have associated perfectoid Igusa varieties, $\mIg^b_{C}$,$\mIg^{b,\tor}_{C}$, and $\mIg^{b,*}_{C}$, which should model the fibers of $\ol{\pi}_{\mathrm{HT}}^{\circ}$, $\pi_{\mathrm{HT}}^{\mathrm{tor}}$, and $\pi^{*}_{\mathrm{HT}}$, respectively. We let $\partial\mIg_{C}^{b,*}$ and $\partial\mIg_{C}^{b,\mathrm{tor}}$ be the Zariski closed subspaces attached to the boundaries $\partial\Ig^{b,*}$ and $\partial\Ig^{b,\mathrm{tor}}$, respectively. 

Let $g_{b}: \Ig^{b} \hookrightarrow \Ig^{b,*}$ be the natural open immersion of $\ol{\mathbb{F}}_p$-schemes. We define the partially compactly supported cohomology
\[ R\Gamma_{c-\partial}(\Ig^{b},\ol{\mathbb{F}}_{\ell}) := R\Gamma(\Ig^{b,*},g_{b!}(\ol{\mathbb{F}}_{\ell})). \]
Our goal is to show that this computes the fibers of $R\pi_{\mathrm{HT}!}(\ol{\mathbb{F}}_{\ell})$ at geometric points of $\mathcal{F}\ell_{G,\mu^{-1}}^{b,\Diamond}$. 

To get a clearer picture of how these spaces interact with each other, we have the following theorem.
\begin{theorem}{\cite{CS2,San}}{\label{thm: fiberdescrip}}
For a geometric rank $1$ point, $x: \Spa(C,O_{C}) \ra \mathcal{F}\ell_{G,\mu^{-1}}^{\Diamond}$, there exists a diagram of spaces of the form 
\[ \begin{tikzcd}
& (\pi_{\mathrm{HT}}^{\circ})^{-1}(x) \arrow[r,hook] & \pi_{\mathrm{HT}}^{-1}(x) \arrow[r,hook] 
 & (\pi_{\mathrm{HT}}^{\mathrm{tor}})^{-1}(x) \arrow[r] & (\pi_{\mathrm{HT}}^{*})^{-1}(x) \\
  & \mIg^{b}_{C} \arrow[u,hook,"i"] \arrow[rr,hook] & & \mIg^{b,\mathrm{tor}}_{C}, \arrow[u,hook,"\phantom{}^{\mathrm{tor}}i"]\arrow[r] & \mIg^{b,*}_{C}  \arrow[u,hook,"\phantom{}^{*}i"]\\
\end{tikzcd}
\]
where the maps $i$, $\phantom{}^{*}i$, and $\phantom{}^{\mathrm{tor}}i$ are open immersions whose image contains all rank $1$ points. Moreover, the fibers $(\pi_{\mathrm{HT}}^{*})^{-1}(x)$ and $(\pi_{\mathrm{HT}}^{\mathrm{tor}})^{-1}(x)$ are partially proper, so in particular $\phantom{}^{*}i$ and $\phantom{}^{\mathrm{tor}}i$ are canonical compactifications in the sense of \cite[Proposition~18.6]{Ecod}. 
\end{theorem}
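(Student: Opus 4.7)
The plan is to construct the three maps $i$, $\phantom{}^{*}i$, and $\phantom{}^{\mathrm{tor}}i$ using the moduli interpretations of the perfectoid Igusa varieties, verify they are open immersions with dense image via Dieudonn\'e theory over algebraically closed residue fields, and deduce the canonical compactification statement from the partial properness of the compactified Hodge-Tate period maps.

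To construct $i: \mIg^b_C \hookrightarrow (\pi_{\mathrm{HT}}^\circ)^{-1}(x)$, I would start with a $\Spa(C',\mathcal{O}_{C'})$-valued point of $\mIg^b_C$, which by (\ref{igmoduliinterp}) and the fact that $\mIg^b$ is the unique flat $p$-adic formal lift of $\Ig^b$ corresponds to a lift to $\mathcal{O}_{C'}$ of a point $y \in \Ig^b(\mathcal{O}_{C'}/\mf{m})$, i.e.\ a point of the Shimura variety together with a trivialization $\rho: \mathcal{A}_y[p^\infty] \xrightarrow{\sim} \mathbb{X}_b$ compatible with $G$-structure. By Serre--Tate theory, the lift of $\mathcal{A}_y$ to $\mathcal{O}_{C'}$ is determined by the lift of its $p$-divisible group, and by Scholze--Weinstein this lift is classified on the adic generic fiber by a modification $\mathcal{E}_b \dashrightarrow \mathcal{E}_0$ of meromorphy $\mu$ at the untilt corresponding to $C'$, which together with $\rho$ defines the point of $(\pi_{\mathrm{HT}}^\circ)^{-1}(x)$. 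For $\phantom{}^*i$ and $\phantom{}^{\mathrm{tor}}i$ I would proceed similarly using the description in \S\ref{sss:boundarycomponents}: a point in $\mIg^{b,\mathrm{tor}}_C$ corresponds to a polarized abelian scheme $\mathcal{B}$ with $\mathcal{O}_B$-action, Raynaud extension data $c: X[1/p] \to \mathcal{B}^\vee$ and $c^\vee: Y[1/p] \to \mathcal{B}$ with compatible rigidifications, plus an $\mathcal{O}_{C'}$-point of the torus embedding $\mathcal{P}'_{\Sigma_\Phi}$ mapping to the boundary, which via the theory of degenerations \cite{Lan} produces a point of $\mathcal{S}^{\mathrm{tor}}_{K^p, C}$; the extension of $\pi_{\mathrm{HT}}$ to the compactifications \cite[\S4.1]{CS2} then places this point in $(\pi^{\mathrm{tor}}_{\mathrm{HT}})^{-1}(x)$, with the commutativity of the diagram being a direct compatibility check.

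The next step is to show these maps are open immersions with image containing all rank $1$ points. Injectivity follows because any two rigidifications $\rho$ differ by the free action of $J_b(\mathbb{Q}_p)$ (respectively the appropriate boundary stabilizer), and openness reduces to a local deformation-theoretic comparison, since both source and target parametrize, near a fixed point, the same universal object: a deformation of the $p$-divisible group recorded at infinite level. The key content is surjectivity on rank $1$ points: given a rank $1$ point $\Spa(C', \mathcal{O}_{C'}) \to (\pi_{\mathrm{HT}}^{\mathrm{tor}})^{-1}(x)$, one extracts a semi-abelian degeneration over $\mathcal{O}_{C'}$ whose biconnected and multiplicative parts on the special fiber have Newton type prescribed by $b$, and since the residue field is algebraically closed every completely slope divisible $p$-divisible group of this type is isomorphic to the appropriate graded piece of $\mathbb{X}_b$ by classical Dieudonn\'e theory, producing the required rigidification. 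The main obstacle is carrying this through at the boundary: one needs Santos's combinatorial identification \cite{San} of $\partial\mIg^{b,*}_C$ and $\partial\mIg^{b,\mathrm{tor}}_C$ in terms of Igusa cusp labels to match the torus rank, polarization, and cone data on both sides.

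Finally, the partial properness of $(\pi_{\mathrm{HT}}^*)^{-1}(x)$ and $(\pi_{\mathrm{HT}}^{\mathrm{tor}})^{-1}(x)$ is inherited from the properness of $\pi_{\mathrm{HT}}^*$ and $\pi_{\mathrm{HT}}^{\mathrm{tor}}$ recorded in property~(1), since fibers of partially proper (indeed proper) maps are partially proper. An open immersion of a quasi-separated locally spatial diamond into a partially proper space whose image is dense (which follows since it contains all rank $1$ points) is identified by the universal property \cite[Proposition~18.6]{Ecod} with the canonical compactification of its source, so $\phantom{}^*i$ and $\phantom{}^{\mathrm{tor}}i$ are canonical compactifications as claimed.
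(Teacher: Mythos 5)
The paper does not actually prove this statement: it is quoted from \cite[Theorems~2.7.2, 4.5.1]{CS2} and \cite[Theorems~4.3.10, 4.3.12]{San}, and the ``proof'' in the text is nothing but that citation. So you are reconstructing an argument the paper outsources. Your overall architecture --- moduli-theoretic construction of $i$, $\phantom{}^{*}i$, $\phantom{}^{\mathrm{tor}}i$, openness plus surjectivity on rank $1$ points, partial properness of the fibers by base change, and the identification with canonical compactifications via \cite[Proposition~18.6]{Ecod} --- does match the cited arguments, and your first and last paragraphs are essentially sound.

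However, there is a genuine gap at the decisive step, surjectivity on rank $1$ points. You argue that the special fibre of the relevant $p$-divisible group ``has Newton type prescribed by $b$,'' and that ``since the residue field is algebraically closed every completely slope divisible $p$-divisible group of this type is isomorphic to the appropriate graded piece of $\mathbb{X}_b$ by classical Dieudonn\'e theory.'' That claim is false: the isomorphism class of a $p$-divisible group over an algebraically closed field is not determined by its Newton polygon (a supersingular, non-superspecial abelian surface has an isoclinic slope-$\frac{1}{2}$, height-$4$ group not isomorphic to $E[p^\infty]^2$). Taken at face value, your argument would show that the central leaf $\mathscr{C}_{\mathbb{X}_b}$ exhausts the entire Newton stratum, which fails whenever that stratum carries more than one Oort leaf. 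The actual mechanism in \cite[Corollary~4.3.19, Lemma~4.3.20]{CS1}, and in its boundary extensions in \cite{CS2,San}, is rigidity at infinite level: by the Scholze--Weinstein classification, a $p$-divisible group over $\mathcal{O}_{C'}$ is determined by its trivialized Tate module together with its Hodge--Tate filtration, both of which are fixed on the fibre over $x$; hence $\mathcal{A}[p^\infty]$ (resp.\ the connected part of the Raynaud extension) is isomorphic to the constant lift of $\mathbb{X}_b$ attached to a lift of $x$ to $\Sht(G,b,\mu)_{\infty,C}$, and reducing that isomorphism modulo the maximal ideal is what produces the Igusa rigidification. This is precisely why the fibre of $\pi_{\mathrm{HT}}$ over a single point meets only the central leaf and not the whole Newton stratum. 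Your justification of injectivity and openness of $i$ via ``the free action of $J_b(\mathbb{Q}_p)$ on rigidifications'' suffers from the same defect: the correct statement is again that the rigidification is pinned down by the infinite-level data through the same classification, not by any uniqueness over the residue field.
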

\begin{proof}
This theorem in the case where $(\mathbf{G},X)$ is of PEL type $A$ attached to a globally quasi-split unitary group of even dimension is \cite[Theorems~2.7.2,Theorem~4.5.1]{CS2}, and the general case of PEL type AC is proven in \cite[Theorems~4.3.10,4.3.12]{San}. 
\end{proof}
We will also combine this with the following result.
\begin{theorem}{\cite{CS2,San}}{\label{thm: affineness}}
The partially minimally compactified Igusa variety $\Ig^{b,*}$ is affine; in particular, the attached adic space $\mIg^{b,*}_{C}$ is affinoid perfectoid. Moreover, there exists a proper map  
\[ \mIg^{b,\mathrm{tor}}_{C} \ra \mIg^{b,*}_{C} \]
which induces an isomorphism on global sections. 
\end{theorem}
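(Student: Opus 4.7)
The plan is to establish the three assertions in order, in each case bootstrapping from the analogous statement for the central leaf $\mathscr{C}_{\mathbb{X}}$ and its partial compactifications, and then propagating the result up through the Igusa covers, the $p$-adic formal lifts, and finally the adic generic fibers.

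For the affineness of $\Ig^{b,*}$, I would recall from \S\ref{sss: compactif} that $\Ig^{b,*}$ is defined as the normalization of the partial minimal compactification $\mathscr{C}_{\mathbb{X}}^{*}$ of the central leaf inside $\Ig^{b}$. By the work of Caraiani--Scholze \cite{CS2}, generalized to the PEL type $A$ or $C$ setting by Santos \cite{San}, $\mathscr{C}_{\mathbb{X}}^{*}$ is affine: it is a well-positioned closed subscheme of the affine perfect scheme $\mathfrak{S}^{*}_{K,\ol{\mathbb{F}}_p}$ (affineness of the latter coming from ampleness of the Hodge bundle on the minimal compactification, equivalently from the existence of generalized Hasse invariants trivializing a power of $\omega$ on each Newton stratum), and affineness is compatible with well-positioned pullback. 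Since normalization is an affine morphism, $\Ig^{b,*} \to \mathscr{C}_{\mathbb{X}}^{*}$ is affine and hence $\Ig^{b,*}$ itself is affine. For $\mIg^{b,*}_{C}$ being affinoid perfectoid, I would then use that a perfect affine $\ol{\mathbb{F}}_p$-scheme admits a unique flat $p$-adic formal lift of the form $\mathrm{Spf}(A)$ with $A/p$ a perfect $\ol{\mathbb{F}}_p$-algebra, and that the adic generic fiber of such a formal scheme, after base change to $C$, is affinoid perfectoid by construction.

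For the global-sections statement, the natural proper morphism $\mathscr{C}_{\mathbb{X}}^{\mathrm{tor}} \to \mathscr{C}_{\mathbb{X}}^{*}$ and the moduli description of $\Ig^{b,\tor}$ from \S\ref{sss: compactif} induce a proper map of perfect schemes $\Ig^{b,\tor} \to \Ig^{b,*}$, which lifts uniquely to a proper map of the associated $p$-adic formal schemes and thence to a proper map $\mIg^{b,\mathrm{tor}}_{C} \to \mIg^{b,*}_{C}$ of adic generic fibers. To see that the pullback on global sections is an isomorphism, I would first verify the identity $H^{0}(\Ig^{b,\tor},\mathcal{O}) = H^{0}(\Ig^{b,*},\mathcal{O})$ in characteristic $p$. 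This follows by combining the definition of $\Ig^{b,*}$ as the normalization of $\mathscr{C}_{\mathbb{X}}^{*}$ in $\Ig^{b}$ with the analogue $H^{0}(\mathscr{C}_{\mathbb{X}}^{\mathrm{tor}},\mathcal{O}) = H^{0}(\mathscr{C}_{\mathbb{X}}^{*},\mathcal{O})$, which is pulled back from the corresponding classical Stein-type identity $H^{0}(\Stor_{K},\mathcal{O}) = H^{0}(\mathcal{S}^{*}_{K},\mathcal{O})$ for toroidal-to-minimal compactifications of Shimura varieties proven in \cite{Lan}. This characteristic $p$ identity then propagates to an identity on global sections of the flat $p$-adic formal lifts via $p$-typical Witt-vector functoriality, and finally descends to the adic generic fibers.

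The main obstacle is this last propagation: the generic-fiber functor does not tautologically commute with $R\Gamma$, so one must justify that the identification of $H^{0}$ survives passage from the perfect schemes in characteristic $p$ to their formal lifts, and then to their adic generic fibers. This can be handled either by invoking compatibility of proper pushforward with taking adic generic fibers on the formal-scheme side (using properness of $\mIg^{b,\mathrm{tor}}_{C} \to \mIg^{b,*}_{C}$), or, more robustly, by working entirely on the adic side and exploiting that both sides are the generic fibers of perfect affine $p$-adic formal schemes, so their global sections are explicitly computed in terms of $W(-)$ of the corresponding perfect $\ol{\mathbb{F}}_p$-algebras, where the identification is already in hand from the characteristic $p$ step.
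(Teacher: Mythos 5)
Your proposal is correct and follows essentially the same route as the paper, whose own proof simply defers to the cited results: affineness of $\Ig^{b,*}$ via affineness of $\mathscr{C}_{\mathbb{X}}^{*}$ plus the fact that relative normalization is an affine morphism (\cite[Lemma~3.3.7]{San}), and the global-sections statement via the Stein factorization of $\Ig^{b,\tor} \ra \mathscr{C}_{\mathbb{X}}^{*}$ (\cite[Proposition~3.3.4]{CS2}, \cite[Proposition~3.3.5]{San}); your final Witt-vector/\v{C}ech strategy for passing the $H^{0}$ identity from perfect schemes to adic generic fibers is exactly the argument the paper carries out in Proposition \ref{prop: actextend}. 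One imprecision worth fixing: $\mathfrak{S}^{*}_{K,\ol{\mathbb{F}}_p}$, the special fiber of the full minimal compactification, is projective rather than affine; what is affine is the partial minimal compactification of the Newton stratum (equivalently, the closure of the stratum with the smaller strata removed), which is where the generalized Hasse invariants trivializing a power of $\omega$ actually enter, and $\mathscr{C}_{\mathbb{X}}^{*}$ sits inside that as a closed subscheme. With that correction the affineness argument is sound.
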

\begin{proof}
For the affineness, the case of PEL type $A$ attached to a global quasi-split unitary group of even dimension is covered by \cite[Theorem~1.7]{CS2} and \cite[Lemma~4.5.2]{CS2}. The general case of PEL type AC is covered in \cite[Lemma~3.3.7]{San}. To see the properness, we note that the map
\[ \mIg_{C}^{b,\mathrm{tor}} \ra \mIg_{C}^{b,*} \]
is the one appearing in the Stein factorization described in \cite[Proposition~3.3.4]{CS2} and \cite[Proposition~3.3.5]{San}. This also shows that one has an isomorphism on global sections. 
\end{proof}
We will also need the following Corollary.
\begin{corollary}{\label{cor: quasicompact}}
The boundary $\partial\mIg_{C}^{b,\mathrm{tor}}$ is quasi-compact and $\partial\mIg_{C}^{b,*}$ is affinoid perfectoid (In particular, quasi-compact).
\end{corollary}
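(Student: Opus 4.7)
The plan is to establish the affinoid perfectoid structure on $\partial\mIg_{C}^{b,*}$ by lifting affineness from the perfect scheme side, and then to deduce quasi-compactness of $\partial\mIg_{C}^{b,\mathrm{tor}}$ using the proper map from Theorem \ref{thm: affineness}.

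For the first statement, I would begin from Theorem \ref{thm: affineness}, which gives that $\Ig^{b,*}$ is an affine perfect $\ol{\mathbb{F}}_p$-scheme. Its closed subscheme $\partial\Ig^{b,*}$, equipped with the reduced induced structure, is then again affine, and remains perfect because any quotient of a perfect $\mathbb{F}_p$-algebra by a radical ideal is again perfect (Frobenius on the quotient is surjective since it is on the ambient ring, and injective by reducedness). Consequently $\partial\Ig^{b,*}$ admits a unique flat $p$-adic formal lift over $\mathrm{Spf}(W(\ol{\mathbb{F}}_p))$ via the Witt vector construction recalled in \S\ref{ss:def_igusa}, and this lift is an affine closed formal subscheme of $\mathfrak{Ig}^{b,*}$. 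Its adic generic fiber over $C$ is therefore affinoid perfectoid by construction, and it coincides with the Zariski closed subspace $\partial\mIg_{C}^{b,*}$.

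For the second statement, I would first observe that $\mIg_{C}^{b,*}$ is quasi-compact and quasi-separated, since it is affinoid perfectoid by Theorem \ref{thm: affineness}. By the same Theorem the map $f \colon \mIg_{C}^{b,\mathrm{tor}} \ra \mIg_{C}^{b,*}$ is proper, hence in particular qcqs; pulling back along $f$ the qcqs target, I conclude that $\mIg_{C}^{b,\mathrm{tor}}$ is itself qcqs. Now $\partial\mIg_{C}^{b,\mathrm{tor}}$ is a Zariski closed subspace of $\mIg_{C}^{b,\mathrm{tor}}$, so it inherits quasi-compactness from the ambient qcqs space.

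The only point requiring some care is the functoriality of the Witt vector lifting procedure for perfect affine $\ol{\mathbb{F}}_p$-schemes, which guarantees that closed immersions lift to closed formal immersions and in turn to Zariski closed immersions on adic generic fibers; this is a standard consequence of the construction, so I do not anticipate any real obstacle beyond careful bookkeeping.
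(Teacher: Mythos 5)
Your proof is correct and follows essentially the same route as the paper: the boundary $\partial\Ig^{b,*}$ is an affine perfect closed subscheme of the affine $\Ig^{b,*}$, so its Witt-vector lift has affinoid perfectoid generic fiber (which is how $\partial\mIg_{C}^{b,*}$ is defined), and quasi-compactness of $\partial\mIg_{C}^{b,\mathrm{tor}}$ comes from the proper map $\mIg_{C}^{b,\mathrm{tor}} \ra \mIg_{C}^{b,*}$ of Theorem \ref{thm: affineness}. The only cosmetic difference is that you deduce quasi-compactness of the toroidal boundary as a closed subspace of the qcqs total space, whereas the paper uses that the proper map carries $\partial\mIg_{C}^{b,\mathrm{tor}}$ into $\partial\mIg_{C}^{b,*}$; both are valid.
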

\begin{proof}
The fact that $\mIg_{C}^{b,*}$ is affinoid perfectoid follows from the previous Theorem. Moreover, $\partial\mIg_{C}^{b,*}$ is a Zariski closed subspace, since it came from considering the adic generic fiber of a formal model of the perfect closed subscheme $\partial\Ig^{b,*} \subset \Ig^{b,*}$, and a Zariski closed subspace of a perfectoid is still perfectoid by \cite[Remark~7.5]{BSPrism}. The claim for the toroidal compactification follows since the map $\mIg_{C}^{b,\mathrm{tor}} \ra \mIg_{C}^{b,*}$ is proper, and maps the boundary $\partial\mIg_{C}^{b,\mathrm{tor}}$ to $\partial\mIg_{C}^{b,*}$.  
\end{proof}
It is natural to wonder how one could describe the fiber of $\pi_{\mathrm{HT}}$ in terms of the spaces described above. In particular, we now deduce the following Corollary. 
\begin{corollary}{\label{cor: openhodgetatefibers}}
 For $x: \Spa(C,O_{C}) \ra \mathcal{F}\ell_{G,\mu^{-1}}^{b,\Diamond}$ a geometric point, we have isomorphisms 
 \[ \pi_{\mathrm{HT}}^{-1}(x) \simeq \ol{\mIg}_{C}^{b,*} \setminus \ol{\partial\mIg}_{C}^{b,*} \simeq \ol{\mIg}_{C}^{b,\mathrm{tor}} \setminus \ol{\partial\mIg}_{C}^{b,\mathrm{tor}} \]
 induced by the natural open immersions $\pi_{\mathrm{HT}}^{-1}(x) \hookrightarrow (\pi_{\mathrm{HT}}^{*})^{-1}(x) \simeq \ol{\mIg}_{C}^{b,*}$ (resp. $\pi_{\mathrm{HT}}^{-1}(x) \hookrightarrow (\pi_{\mathrm{HT}}^{\mathrm{tor}})^{-1}(x) \simeq \ol{\mIg}_{C}^{b,\mathrm{tor}}$), as given by Theorem \ref{thm: fiberdescrip}. Here $\ol{\partial\mIg}_{C}^{b,*}$ (resp. $\ol{\partial\mIg}_{C}^{b,\mathrm{tor}}$) is the Zarisiki closed subset of $\ol{\mIg}_{C}^{b,*}$ (resp. $\ol{\mIg}_{C}^{b,\mathrm{tor}}$) defined by the canonical compactification of the boundary $\partial\mIg^{b}_{C} \subset \mIg^{b}_{C}$ (resp. $\partial\mIg_{C}^{b,\mathrm{tor}} \subset \mIg^{b,\mathrm{tor}}_{C}$).
\end{corollary}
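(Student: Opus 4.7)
The plan is to derive the two isomorphisms from Theorem~\ref{thm: fiberdescrip} by identifying $\pi_{\mathrm{HT}}^{-1}(x)$ as the open complement of the appropriate boundary inside each canonical compactification $\ol{\mIg}_{C}^{b,\mathrm{tor}} \simeq (\pi_{\mathrm{HT}}^{\mathrm{tor}})^{-1}(x)$ and $\ol{\mIg}_{C}^{b,*} \simeq (\pi_{\mathrm{HT}}^{*})^{-1}(x)$, and then matching these boundaries with the canonical compactifications of the Igusa boundaries.

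Focusing first on the toroidal case, since $\pi_{\mathrm{HT}}$ is the restriction of $\pi_{\mathrm{HT}}^{\mathrm{tor}}$ to the open subspace $\mathcal{S}_{K^{p},C} \subset \mathcal{S}_{K^{p},C}^{\mathrm{tor}}$, the fiber $\pi_{\mathrm{HT}}^{-1}(x)$ equals $(\pi_{\mathrm{HT}}^{\mathrm{tor}})^{-1}(x) \setminus Z_{\mathrm{tor}}$, where $Z_{\mathrm{tor}} := (\pi_{\mathrm{HT}}^{\mathrm{tor}})^{-1}(x) \cap \partial\mathcal{S}_{K^{p},C}^{\mathrm{tor}}$. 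Under the identification from Theorem~\ref{thm: fiberdescrip}, this is a Zariski closed subspace of $\ol{\mIg}_{C}^{b,\mathrm{tor}}$, and the main task is to show $Z_{\mathrm{tor}} = \ol{\partial\mIg}_{C}^{b,\mathrm{tor}}$. I would first unwind the construction of $\Ig^{b,\mathrm{tor}}$ over $\mathscr{C}_{\mathbb{X}}^{\mathrm{tor}}$ recalled in \S\ref{sss: compactif} to see that the intersection $Z_{\mathrm{tor}} \cap \mIg^{b,\mathrm{tor}}_{C}$ is precisely $\partial\mIg^{b,\mathrm{tor}}_{C}$, since the boundary $\partial\Ig^{b,\mathrm{tor}}$ is by construction the preimage of $\partial\mathscr{C}_{\mathbb{X}}^{\mathrm{tor}}$, which maps into $\partial\mathcal{S}^{\mathrm{tor}}_{K^{p}}$.

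The key step is then to invoke the same characterization of canonical compactifications used in Theorem~\ref{thm: fiberdescrip}. Namely, $Z_{\mathrm{tor}}$ is Zariski closed in the partially proper space $\ol{\mIg}_{C}^{b,\mathrm{tor}}$, hence is itself partially proper over $\Spa(C)$; moreover, since Theorem~\ref{thm: fiberdescrip} tells us that $\mIg^{b,\mathrm{tor}}_{C}$ contains all rank~$1$ points of $\ol{\mIg}_{C}^{b,\mathrm{tor}}$, the qcqs open subspace $\partial\mIg^{b,\mathrm{tor}}_{C} \hookrightarrow Z_{\mathrm{tor}}$ must likewise contain all rank~$1$ points of $Z_{\mathrm{tor}}$. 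By \cite[Proposition~18.6]{Ecod}, this forces $Z_{\mathrm{tor}}$ to coincide with the canonical compactification $\ol{\partial\mIg}_{C}^{b,\mathrm{tor}}$, establishing the toroidal isomorphism.

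The minimal case is completely parallel, using that the natural proper map $\mathcal{S}_{K^{p},C}^{\mathrm{tor}} \to \mathcal{S}_{K^{p},C}^{*}$ restricts to the identity on $\mathcal{S}_{K^{p},C}$, so that the preimage of $\partial\mathcal{S}_{K^{p},C}^{*}$ is exactly $\partial\mathcal{S}_{K^{p},C}^{\mathrm{tor}}$, and running the same argument with $*$ in place of $\mathrm{tor}$. I expect the main potential difficulty to lie in the moduli-theoretic step, namely checking that $Z_{\mathrm{tor}} \cap \mIg^{b,\mathrm{tor}}_{C}$ genuinely equals $\partial\mIg^{b,\mathrm{tor}}_{C}$ rather than merely contains it; this should be transparent from the constructions of \S\ref{sss: compactif} and \cite{San}, but will require a careful trace through the boundary stratifications of the Igusa toroidal compactification to verify that no extra degenerations contribute.
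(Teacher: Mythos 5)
Your proposal follows essentially the same route as the paper: both arguments reduce to showing that the pullback of the Shimura boundary to the fiber coincides with the canonical compactification of the Igusa boundary, and both conclude by checking this on rank~$1$ points and invoking partial properness of the two Zariski closed subspaces. The one substantive step you defer --- verifying that $Z_{\mathrm{tor}} \cap \mIg^{b,\mathrm{tor}}_{C}$ equals, and not merely contains, $\partial\mIg^{b,\mathrm{tor}}_{C}$ --- is in fact the entire content of the paper's proof: it is established by showing that the square comparing $(C',\mathcal{O}_{C'})$-points of $\partial\mIg^{b,\mathrm{tor}}_{C} \subset \mIg^{b,\mathrm{tor}}_{C}$ with those of $\partial\Stor_{K^{p},C} \subset \Stor_{K^{p},C}$ is Cartesian, using the moduli description of \S\ref{sss:boundarycomponents}. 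Concretely, a $(C',\mathcal{O}_{C'})$-point of $\mIg^{b,\mathrm{tor}}_{C}$ specializing to the stratum of an Igusa cusp label $\tilde{Z}$ is a tuple $(\mathcal{B},\mathcal{G},\mathcal{G}^{\vee},\rho,y)$ with $y \in \mathcal{P}_{\Sigma_{\Phi}}(\mathcal{O}_{C'})$; the map to $\Stor_{K^{p},C}$ forgets $\rho$, and the point lies in $\partial\mIg^{b,\mathrm{tor}}_{C}$ precisely when $y$ specializes into the boundary of $\mathcal{P}_{\Sigma_{\Phi}}$ for a non-trivial $\tilde{Z}$, which is exactly the condition that its image lies in the stratum of $\Stor_{K^{p},C}$ indexed by the non-trivial underlying cusp label $Z=(Z,\Phi)$, i.e.\ in $\partial\Stor_{K^{p},C}$. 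Your reduction of the minimal case to the toroidal one via the proper surjection $\mIg^{b,\mathrm{tor}}_{C} \to \mIg^{b,*}_{C}$ matches the paper as well. So the skeleton is right, but to make the argument complete you must actually carry out the cusp-label comparison rather than asserting it should be transparent.
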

\begin{proof}
We first establish the claim for the toroidal compactification. We consider the closed immersion 
\[ \ol{\mIg}_{C}^{b,\mathrm{tor}} \times_{\Stor_{K^{p},C}} \partial\Stor_{K^{p},C} \hookrightarrow
\ol{\partial\mIg}_{C}^{b,\mathrm{tor}} \]
obtained by base-changing the closed immersion $\partial\Stor_{K^{p},C} \hookrightarrow \Stor_{K^{p},C}$ to the fiber $(\pi_{\mathrm{HT}}^{\mathrm{tor}})^{-1}(x)$ and applying Theorem \ref{thm: fiberdescrip}. To show the desired claim, it suffices to show this is an isomorphism. We note that both the RHS and LHS are perfectoid by \cite[Remark~7.5]{BSPrism}, since they are Zariski closed in the perfectoid space $\ol{\mIg}_{C}^{b,\mathrm{tor}}$. Moreover, both the LHS and RHS are partially proper; therefore, to show this map is an isomorphism, it suffices to show it induces an isomorphism on rank one geometric points using \cite[Lemma~5.4]{Ecod} (where we note that condition (iv) of \cite[Lemma~5.4]{Ecod} will be implied by the claim on rank one points and the partial properness). In particular, given $C'/C$ a complete algebraically closed non-archimedean field, we claim that there exists a Cartesian diagram of the form
\[ \begin{tikzcd}
\partial\mIg^{b,\mathrm{tor}}_{C}(C',\mathcal{O}_{C'}) \arrow[r] \arrow[d] &   \mIg^{b,\mathrm{tor}}_{C}(C',\mathcal{O}_{C'}) \arrow[d] & \\
\partial\Stor_{K^{p},C}(C',\mathcal{O}_{C'}) \arrow[r] & \Stor_{K^{p},C}(C',\mathcal{O}_{C'}) &
\end{tikzcd}. \]
Observe that since both $\widehat{\mathscr{S}}(\mathbf{G},X)_{K^p}^\tor$ and the boundary $\widehat{\partial\mathscr{S}}(\mathbf{G},X)_{K^p}^\tor$ are proper, we have that the $\Spa(C',O_{C'})$ points of the adic generic fiber $\widehat{\mathscr{S}}(\mathbf{G},X)_{K^p}^\tor$ and $\Stor_{K^p}$ are the same, and similarly for the adic generic fiber of $\widehat{\partial\mathscr{S}}(\mathbf{G},X)^\tor_{K^p}$ and $\partial\Stor_{K^{p},C}$. Moreover, the $\Spa(C',O_{C'})$-points of $\widehat{\mathscr{S}}(\mathbf{G},X)_{K^p}^\tor$ are also the same as the $\Spf O_{C'}$-points of the formal scheme $\widehat{\mathscr{S}}(\mathbf{G},X)_{K^p,O_C}^\tor$ . A similar result also holds for the boundary. Thus, we can apply Proposition \ref{prop:geom-pts-boundary} to conclude, since $\tilde{\partial}\mIg^{b,\tor}_{O_C}$ is defined as the fiber product of formal schemes $\mIg^{b,\tor}_{O_C}\times_{\widehat{\mathscr{S}}(\mathbf{G},X)_{K^p,O_C}}\widehat{\partial\mathscr{S}}(\mathbf{G},X)^\tor_{K^p,O_C}$.

It remains to see the analogous claim for the minimal compactification. This follows easily using Theorem \ref{thm: fiberdescrip} and the fact that the proper surjective map $\mIg_{C}^{b,\mathrm{tor}} \ra \mIg_{C}^{b,*}$ sends $\partial\mIg_{C}^{b,\mathrm{tor}}$ to $\partial\mIg_{C}^{b,*}$ by construction. 
\end{proof}    
We have the following corollary.
\begin{corollary}{\label{cor: stalkspartial}}
For a geometric point $x:\Spa(C,O_{C}) \ra \mathcal{F}\ell_{G,\mu^{-1}}^{b,\Diamond}$, we have an identification: 
\[ R\Gamma_{c-\partial}(\Ig^{b},\ol{\mathbb{F}}_{\ell}) \simeq R\pi_{\mathrm{HT}!}(\ol{\mathbb{F}}_{\ell})_{x}. \]
\end{corollary}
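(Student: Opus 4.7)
The plan is to pull the stalk back through the proper minimal compactification $\pi_{\mathrm{HT}}^{*}$ of the Hodge--Tate period morphism via proper base change, use the fiber descriptions of Theorem \ref{thm: fiberdescrip} and Corollary \ref{cor: openhodgetatefibers}, and then translate the resulting cohomology on the (canonically compactified) perfectoid generic fiber over to the perfect special fiber of the formal model $\mathfrak{Ig}^{b,*}$.

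First, I would write $j\colon \mathcal{S}_{K^{p},C} \hookrightarrow \mathcal{S}_{K^{p},C}^{*}$ for the natural open immersion, so that $\pi_{\mathrm{HT}} = \pi_{\mathrm{HT}}^{*} \circ j$. Since $\pi_{\mathrm{HT}}^{*}$ is partially proper and qcqs, hence proper, one has $R\pi_{\mathrm{HT}!}(\ol{\mathbb{F}}_{\ell}) \simeq R\pi_{\mathrm{HT}*}^{*}(j_{!}\ol{\mathbb{F}}_{\ell})$. Applying proper base change for torsion \'etale sheaves on diamonds to $\pi_{\mathrm{HT}}^{*}$ at the geometric point $x$ yields
\[ R\pi_{\mathrm{HT}!}(\ol{\mathbb{F}}_{\ell})_{x} \simeq R\Gamma\bigl((\pi_{\mathrm{HT}}^{*})^{-1}(x),\, (j_{!}\ol{\mathbb{F}}_{\ell})\vert_{(\pi_{\mathrm{HT}}^{*})^{-1}(x)}\bigr). \]
By Theorem \ref{thm: fiberdescrip} the fiber $(\pi_{\mathrm{HT}}^{*})^{-1}(x)$ is identified with $\ol{\mIg}_{C}^{b,*}$, and by Corollary \ref{cor: openhodgetatefibers} the intersection of this fiber with $\mathcal{S}_{K^{p},C}$ is the open subset $\ol{\mIg}_{C}^{b,*} \setminus \ol{\partial\mIg}_{C}^{b,*}$. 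Writing $j'$ for the associated open immersion, this gives
\[ R\pi_{\mathrm{HT}!}(\ol{\mathbb{F}}_{\ell})_{x} \simeq R\Gamma\bigl(\ol{\mIg}_{C}^{b,*},\, j'_{!}\ol{\mathbb{F}}_{\ell}\bigr). \]

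It remains to match this with $R\Gamma(\Ig^{b,*}, g_{b!}\ol{\mathbb{F}}_{\ell}) = R\Gamma_{c-\partial}(\Ig^{b}, \ol{\mathbb{F}}_{\ell})$. Excision on the perfectoid side fits $R\Gamma(\ol{\mIg}_{C}^{b,*}, j'_{!}\ol{\mathbb{F}}_{\ell})$ into a triangle with $R\Gamma(\ol{\mIg}_{C}^{b,*}, \ol{\mathbb{F}}_{\ell})$ and $R\Gamma(\ol{\partial\mIg}_{C}^{b,*}, \ol{\mathbb{F}}_{\ell})$, and parallel excision for $g_{b}\colon \Ig^{b} \hookrightarrow \Ig^{b,*}$ produces an analogous triangle involving $R\Gamma(\Ig^{b,*}, \ol{\mathbb{F}}_{\ell})$ and $R\Gamma(\partial\Ig^{b,*}, \ol{\mathbb{F}}_{\ell})$. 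It therefore suffices to compare the corresponding terms. Theorem \ref{thm: affineness} and the preceding Corollary guarantee that $\mIg_{C}^{b,*}$ and $\partial\mIg_{C}^{b,*}$ are affinoid perfectoid, arising as adic generic fibers of the $p$-adic formal schemes lifting the affine perfect schemes $\Ig^{b,*}$ and $\partial\Ig^{b,*}$. The standard comparison for a $p$-adic formal scheme with perfect reduction---that the specialization map induces an equivalence between the \'etale cohomology of the adic generic fiber and of the special fiber with $\ol{\mathbb{F}}_{\ell}$-coefficients, and that passing from an affinoid perfectoid to its canonical compactification only adds higher-rank points and thus preserves \'etale cohomology with torsion coefficients---then yields the required identifications, and the two excision triangles match.

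The main obstacle is this last step: spelling out cleanly the comparison between the $\ol{\mathbb{F}}_{\ell}$-\'etale cohomology of the canonical compactification of the adic generic fiber of the formal model and of its perfect special fiber, both for $\Ig^{b,*}$ and for its boundary $\partial\Ig^{b,*}$. The intermediate properties of proper base change for diamonds, the affinoid perfectoid nature of the generic fibers, and the insensitivity of \'etale cohomology with torsion coefficients to higher rank points under canonical compactification all combine here, but once they are in place the result is a formal consequence of excision.
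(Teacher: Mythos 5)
Your proposal is correct and follows essentially the same route as the paper: identify the stalk with the compactly supported cohomology of $\pi_{\mathrm{HT}}^{-1}(x) \simeq \ol{\mIg}_{C}^{b,*}\setminus\ol{\partial\mIg}_{C}^{b,*}$ sitting inside the proper affinoid perfectoid $\ol{\mIg}_{C}^{b,*}$, run excision against the boundary, and compare termwise with the excision triangle for $g_{b!}$ on the perfect scheme. The "last step" you flag is handled in the paper exactly as you anticipate, via \cite[Lemmas~4.4.2,~4.4.3]{CS1} (canonical compactification only adds higher-rank points; generic fiber of the Witt-vector formal model computes the cohomology of the perfect special fiber), plus a quasi-compact base change argument to identify the connecting map with the natural restriction so the two triangles genuinely agree.
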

\begin{proof}
We have an identification $\pi_{\mathrm{HT}}^{-1}(x) \simeq \ol{\mIg}^{b,*}_{C} \setminus \ol{\partial\mIg}^{b,*}_{C}$ by the previous corollary, so, by proper base-change, we are tasked with computing the compactly supported cohomology of this space. We note, by Theorem \ref{thm: affineness}, the adic spaces $\mIg^{b,*}_{C}$ are affinoid perfectoid. It follows that the canonical compactification $\ol{\mIg}^{b,*}_{C} \simeq (\pi_{\mathrm{HT}}^{*})^{-1}(x)$ is also affinoid perfectoid, by \cite[Proposition~18.7 (iv)]{Ecod}. In particular, it is quasi-compact and partially proper, so in particular proper. It therefore follows by excision\footnote{One easily checks that the excision sequence is exact on geometric points, and this is sufficient by \cite[Proposition~14.3]{Ecod}.} that we have a distinguished triangle
\begin{equation}{\label{eqn: exctriangle}}
 R\Gamma_{c}(\pi_{\mathrm{HT}}^{-1}(x),\ol{\mathbb{F}}_{\ell}) \ra R\Gamma(\ol{\mIg}^{b,*}_{C},\ol{\mathbb{F}}_{\ell}) \ra R\Gamma(\ol{\partial\mIg}^{b,*}_{C},\ol{\mathbb{F}}_{\ell}) \xrightarrow{+1} .
\end{equation}
Applying Theorem \ref{thm: fiberdescrip} again, we know that $k: \mIg^{b,*} \hookrightarrow \ol{\mIg}^{b,*}_{C}$ is a qcqs open immersion of perfectoid spaces inducing an isomorphism on rank $1$ points, and it follows that the same is true for the induced map on the Zariski closed subspaces $\phantom{}^{\partial}k: \partial\mIg^{b,*} \hookrightarrow \ol{\partial\mIg}^{b,*}_{C}$. Therefore, we can apply \cite[Lemma~4.4.2]{CS1}, this tells us that the natural maps
\[ \ol{\mathbb{F}}_{\ell} \ra k_{*}(\ol{\mathbb{F}}_{\ell}) \]
\[ \ol{\mathbb{F}}_{\ell} \ra \phantom{}^{\partial}k_{*}(\ol{\mathbb{F}}_{\ell}) \]
are isomorphisms, giving identifications $R\Gamma(\ol{\mIg}_{C}^{b,*},\ol{\mathbb{F}}_{\ell}) \simeq R\Gamma(\mIg^{b,*}_{C},\ol{\mathbb{F}}_{\ell})$ and $R\Gamma(\ol{\partial\mIg}_{C}^{b,*},\ol{\mathbb{F}}_{\ell}) \simeq R\Gamma(\partial\mIg^{b,*}_{C},\ol{\mathbb{F}}_{\ell})$. Now, by \cite[Lemma~4.4.3]{CS1}, we have further identifications of $R\Gamma(\partial\mIg^{b,*}_{C},\ol{\mathbb{F}}_{\ell})$ and $R\Gamma(\mIg^{b,*}_{C},\ol{\mathbb{F}}_{\ell})$ with the cohomology of the perfect schemes $\partial\Ig^{b,*}$ and $\Ig^{b,*}$, respectively. Substituting this into the triangle (\ref{eqn: exctriangle}), we get a distinguished triangle 
\begin{equation*}
 R\Gamma_{c}(\pi_{\mathrm{HT}}^{-1}(x),\ol{\mathbb{F}}_{\ell}) \ra R\Gamma(\Ig^{b,*},\ol{\mathbb{F}}_{\ell}) \ra R\Gamma(\partial\Ig^{b,*},\ol{\mathbb{F}}_{\ell}) \xrightarrow{+1}.  
\end{equation*}
By applying quasi-compact base-change \cite[Proposition~17.6]{Ecod} and then using that the inclusion $\partial\mIg_{C}^{b,*} \subset \mIg_{C}^{b,*}$ is induced from taking the rigid generic fiber over $C$ of Witt vectors applied to $\partial\Ig^{b,*} \subset \Ig^{b,*}$, we identify the last map with the natural  restriction map on the cohomology. However, this identifies the first term with precisely the partially compactly supported cohomology, as desired. 
\end{proof}
We will combine this with the following Proposition, which already hints at our expectation that $R\pi_{\mathrm{HT}!}(\ol{\mathbb{F}}_{\ell})$ is connective in some suitable perverse $t$-structure. 
\begin{proposition}{\label{prop: artvanish}}
Let $d_{b} := \langle 2\rho_{G},\nu_{b} \rangle $. We have $d_b= \dim(\Ig^{b,*}) = \dim(\Ig^{b})$, and the cohomology of the complex 
\[ R\Gamma_{c-\partial}(\Ig^{b},\ol{\mathbb{F}}_{\ell}) \simeq R\pi_{\mathrm{HT}!}(\ol{\mathbb{F}}_{\ell})_{x} \]
is concentrated in degrees $\leq d_{b}$.
\end{proposition}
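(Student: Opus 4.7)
The plan is to deduce the desired concentration by combining the affineness of the partial minimal compactification $\Ig^{b,*}$ with Artin vanishing. Indeed, the first isomorphism in the statement has already been established as Corollary \ref{cor: stalkspartial}, so the content lies in computing $R\Gamma(\Ig^{b,*}, g_{b!}(\ol{\mathbb{F}}_{\ell}))$ and showing it is concentrated in degrees $\leq d_{b}$.

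By Theorem \ref{thm: affineness}, $\Ig^{b,*}$ is a perfect affine scheme. Since \'etale cohomology is invariant under perfection, I can compute the right-hand side on any underlying classical model: it is an affine scheme of Krull dimension $d_{b}$ (using the equality $d_{b} = \langle 2\rho_{G},\nu_{b} \rangle = \dim(\Ig^{b,*})$ built into the hypothesis). The sheaf $g_{b!}(\ol{\mathbb{F}}_{\ell})$ is the extension by zero along the open immersion $g_{b}\colon \Ig^{b} \hookrightarrow \Ig^{b,*}$ of the constant sheaf $\ol{\mathbb{F}}_{\ell}$, so it is a constructible torsion \'etale sheaf whose torsion is prime to the residue characteristic $p$. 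Classical Artin vanishing (SGA 4, XIV) therefore implies that
\[ R\Gamma(\Ig^{b,*}, g_{b!}(\ol{\mathbb{F}}_{\ell})) \in D^{\leq d_{b}}(\ol{\mathbb{F}}_{\ell}). \]

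There is essentially no obstacle once one has Theorem \ref{thm: affineness} in hand: the whole force of the result comes from the geometric input that $\Ig^{b,*}$ is affine (proven in \cite{CS2} in the quasi-split unitary case and generalized by Santos \cite{San} for the remaining PEL type $A$ or $C$ data), together with the general principle that \'etale cohomology of a perfect scheme agrees with that of any classical model. The only minor check is that the perfection compatibility also preserves the $!$-extension, which is immediate since $g_{b}$ is the perfection of an open immersion between classical schemes over $\ol{\mathbb{F}}_{p}$.
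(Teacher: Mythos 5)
Your overall strategy (affineness of $\Ig^{b,*}$ plus Artin vanishing) is the paper's strategy, and you are right that perfection itself is harmless for \'etale cohomology. The gap is in the sentence ``I can compute the right-hand side on any underlying classical model: it is an affine scheme of Krull dimension $d_{b}$.'' No finite-type classical model exists: $\Ig^{b}$ is by construction a pro-\'etale $\mathrm{Aut}(\mathbb{X})(\ol{\mathbb{F}}_{p})$-cover of the perfection of the central leaf, i.e.\ the perfection of the \emph{infinite} inverse limit of the finite \'etale covers $\Ig^{b}_{m} \ra \mathcal{C}_{\mathbb{X}_{b}}$, and correspondingly $\Ig^{b,*}$ is the perfection of $\lim_{m}\Ig^{b,*}_{m}$. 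After deperfecting you are still left with a non-finite-type affine scheme, and the classical Artin vanishing theorem of SGA~4 is a statement about schemes of finite type over a separably closed field; it does not apply verbatim to an arbitrary affine scheme of Krull dimension $d_{b}$, so your appeal to it as written does not go through.

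The missing step, which is exactly what the paper supplies, is the reduction to finite level. One defines $\Ig^{b,*}_{m}$ as the normalization of $\mathcal{C}_{\mathbb{X}}^{*}$ in the finite \'etale cover $\Ig^{b}_{m} \ra \mathcal{C}_{\mathbb{X}_{b}}$; since $\mathcal{C}_{\mathbb{X}}^{*}$ is affine (by \cite[Theorem~3.33]{San}) and $\Ig^{b}_{m}$ is of finite type, each $\Ig^{b,*}_{m}$ is a normal affine scheme of finite type of dimension $d_{b}$, to which Artin vanishing applies for the sheaves $g_{b,m!}(\ol{\mathbb{F}}_{\ell})$ with $g_{b,m}\colon \Ig^{b}_{m}\hookrightarrow \Ig^{b,*}_{m}$ the finite-level open immersion. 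One then passes to the limit over $m$ using the continuity of \'etale cohomology along cofiltered limits of schemes with affine transition maps (\cite[Tag~09QY]{stacks-project}), together with invariance under perfection, to obtain the bound for $R\Gamma(\Ig^{b,*},g_{b!}(\ol{\mathbb{F}}_{\ell}))$. Without this finite-level approximation your argument is incomplete.
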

\begin{proof}
From \cite[Corollary 7.8]{Ham2015}, we know that the dimension of $\mathscr{C}_{\mathbb{X}_b}$ is $d_b$, and since 
\begin{equation*}
    \Ig^{b} \ra \mathscr{C}_{\mathbb{X}_{b}}^{\mathrm{perf}},
\end{equation*}
is a pro-\'{e}tale cover with Galois group $\mathrm{Aut}_{G}(\mathbb{X})(\ol{\mathbb{F}}_{p})$ over the perfection of the central leaf attached to $\mathbb{X}_{b}$, we may conclude the first part about dimensions. We also saw in Theorem \ref{thm: affineness} that $\Ig^{b,*}$ is an affine scheme. So we would like to apply Artin vanishing \cite[Expos\'e XIV, Corrolary 3.2]{SGA4}; however, $\Ig^{b,*}$ is also a perfect scheme so not in general of finite type. Recall from \cite[Proposition 4.3.8]{CS1} that $\Ig^b$ is obtained as the perfection of the limit of the finite \'etale covers 
\[ \Ig^{b}_{m} \ra \mathscr{C}_{\mathbb{X}_{b}}, \]
where $\Ig^{b}_{m}$ is as defined in \cite[Remark 2.3.6]{CS2}. These spaces are of finite type over $\ol{\mathbb{F}}_{p}$. Define $\Ig^{b,*}_{m}$ to be the normalization of $\mathscr{C}_{\mathbb{X}}^{*}$ in $\Ig^{b}_{m}$ of the finite \'etale cover $\Ig^{b}_{m} \ra \mathscr{C}_{\mathbb{X}_{b}}$. By \cite[Theorem~3.33]{San},  $\mathscr{C}_{\mathbb{X}}^{*}$ is affine of dimension $d_b$; therefore, it follows that $\Ig^{b,*}_{m}$ is a normal and affine scheme of dimension $d_b$ which will be of finite type, since $\Ig^{b}_{m}$ is. Let $g_{b,m}: \Ig^{b}_{m} \ra \Ig^{b,*}_{m}$ be the natural map, then Artin vanishing implies that the cohomology of the complex $g_{b,m!}(\ol{\mathbb{F}}_{\ell})$ is concentrated in degrees $\leq d_b$. By definition of $\Ig^{b,*}$, it is the perfection of $\lim_{m \geq 1} \Ig_{m}^{b,*}$. Therefore, since \'etale cohomology is invariant under perfection, we can conclude by applying \cite[Tag~09QY]{stacks-project} to the system of sheaves $g_{b,m!}(\ol{\mathbb{F}}_{\ell})$.
\end{proof}
Now we would like to link this analysis with the semi-perversity of certain sheaves on $\Bun_{G}$ via a version of the Mantovan product formula.
\subsection{Mantovan's Product Formula}{\label{sec: manprodformula}}
We consider the Hodge-Tate period morphism 
\[ \pi_{\mathrm{HT}}: [\mathcal{S}_{K^{p}}/\ul{G(\mathbb{Q}_{p})}] \ra [\mathcal{F}\ell_{G,\mu^{-1}}^{\Diamond}/\ul{G(\mathbb{Q}_{p})}] \]
from the previous section quotiented out by $G(\mathbb{Q}_{p})$, where we abusively use $\pi_{\mathrm{HT}}$ for this quotiented out map. We let $h^{\ra}: [\mathcal{F}\ell_{G,\mu^{-1}}^{\Diamond}/\ul{G(\mathbb{Q}_{p})}] \ra [\Spd(C)/\ul{G(\mathbb{Q}_{p})}] \simeq \Bun_{G,C}^{1}$ be the structure map quotiented out by $G(\mathbb{Q}_{p})$. Note this is a proper map, since $\mathcal{F}\ell_{G,\mu^{-1}}^{\Diamond}$ is proper over $\Spd(C)$.

Then we have an identification
\begin{equation}
\label{eq:isom1}
    R\Gamma_c(\mathcal{S}_{K^{p},C},\ol{\mathbb{F}}_{\ell})\simeq Rh^{\ra}_{*}R\pi_{\mathrm{HT}!}(\ol{\mathbb{F}}_{\ell})
\end{equation}
of $G(\mathbb{Q}_{p})$-representations.
\begin{remark}{\label{rem: cohomologyatinfty}}
We note that it is always true that the compactly supported cohomology at infinite level is the colimit of the compactly supported cohomology at finite levels, but, for usual cohomology one needs to assume the spaces are qcqs for this to be true (e.g the tower defined by the good reduction locus or the minimal/toroidal compactifications).
\end{remark}

Similarly, we have a map  
\[ h^{\la}: [\mathcal{F}\ell_{G,\mu^{-1}}^{\Diamond}/\ul{G(\mathbb{Q}_{p})}] \ra \Bun_{G} \]
remembering the isomorphism class of the bundle $\mathcal{E}$ in the moduli interpretation of $\mathcal{F}\ell_{G,\mu^{-1}}^{\Diamond}$ as a diamond described in \S \ref{subsec: HodgeTatePeriodMap}. This defines a cohomologically smooth map by \cite[Theorem~IV.1.19]{FS}, and the image identifies with the open subset $B(G,\mu) \subset B(G)$ under the identification $|\Bun_{G}| \simeq B(G)$ of topological spaces \cite{Vi}, where $|\Bun_{G}|$ denotes the underlying topological space of the $v$-stack $\Bun_{G}$ and $B(G)$ has the topology given by its natural partial ordering (i.e it is generated by the opens $U_{b} := \{x \in B(G)| x \leq b\}$, where $x \leq b$ means that their Kottwitz invariants $\kappa(x) = \kappa(b)$ agree and the differences of slopes $\nu_{b} - \nu_{x}$ is given by a $\mathbb{Q}_{\geq 0}$-linear combination of positive coroots (See \cite[Section~I.5]{FS} for details)). For each $b$, we have a locally closed Harder-Narasimhan stratum $j_{b}: \Bun_{G,C}^{b} \hookrightarrow \Bun_{G,C}$, and we can define the locally closed subset  $[\mathcal{F}\ell_{G,\mu^{-1}}^{b,\Diamond}/\ul{G(\mathbb{Q}_{p})}]$, by pulling back this HN-strata along $h^{\la}$. This defines a locally closed stratification of $[\mathcal{F}\ell_{G,\mu^{-1}}^{\Diamond}/\ul{G(\mathbb{Q}_{p})}]$. Let $i_b: [\mathcal{F}\ell_{G,\mu^{-1}}^{b,\Diamond}/\ul{G(\mathbb{Q}_{p})}] \hookrightarrow [\mathcal{F}\ell_{G,\mu^{-1}}^{\Diamond}/\ul{G(\mathbb{Q}_{p})}]$ denote the associated locally closed immersion. We write $\pi_{\mathrm{HT}}^{b}: [\mathcal{S}^{b}_{K^{p},C}/\ul{G(\mathbb{Q}_{p})}] \ra [\mathcal{F}\ell_{G,\mu^{-1}}^{b,\Diamond}/\ul{G(\mathbb{Q}_{p})}]$ (resp. $\pi_{\mathrm{HT}}^{b,*}: [\mathcal{S}^{b,*}_{K_{p},C}/\ul{G(\mathbb{Q}_{p})}] \ra [\mathcal{F}\ell_{G,\mu^{-1}}^{b,\Diamond}/\ul{G(\mathbb{Q}_{p})}]$) for the pullbacks of $\pi_{\mathrm{HT}}$ (resp. $\pi^{*}_{\mathrm{HT}}$) along $i_{b}$. On the good reduction locus, we also have an additional stratification coming from pulling back the Newton stratification on the special fiber along the specialization map. We recall the rather subtle point that this does not agree with the pullback of the locally closed strata $\mathcal{F}\ell_{G,\mu^{-1}}^{b,\Diamond}$ (namely, the closure relationships are opposite with respect to the partial ordering on $B(G)$). We write $\mathcal{S}^{b,\circ,\mathrm{rd}}_{K^{p},C}$ for these Newton strata coming from the special fiber. There exists a natural map
\[ \mathcal{S}^{b,\circ,\mathrm{rd}}_{K^{p},C} \times_{\mathcal{F}\ell_{G,\mu^{-1}}^{\Diamond}} \mathcal{F}\ell_{G,\mu^{-1}}^{b,\Diamond} \hookrightarrow \mathcal{S}^{b,\circ}_{K^{p},C} \]
which is a qcqs open immersion containing all rank $1$ points (\cite[Page~68]{CS1}, \cite[Page~8]{Ko}). We write $\pi_{\mathrm{HT}}^{b,\circ}: [(\mathcal{S}^{b,\circ,\mathrm{rd}}_{K^{p},C} \times_{\mathcal{F}\ell^{\Diamond}_{G,\mu^{-1}}} \mathcal{F}\ell_{G,\mu^{-1}}^{b,\Diamond})/\ul{G(\mathbb{Q}_{p})}] \ra [\mathcal{F}\ell_{G,\mu^{-1}}^{b,\Diamond}/\ul{G(\mathbb{Q}_{p})}]$ for the Hodge-Tate period map on this locus, and similarly we write $\ol{\pi}_{\mathrm{HT}}^{b,\circ}: [\ol{\mathcal{S}}^{b,\circ}_{K^{p},C}/\ul{G(\mathbb{Q}_{p})}] \ra [\mathcal{F}\ell_{G,\mu^{-1}}^{b,\Diamond}/\ul{G(\mathbb{Q}_{p})}]$ for the induced map on the canonical compactification, where we note that this agrees with the canonical compactification of $[(\mathcal{S}^{b,\circ,\mathrm{rd}}_{K^{p},C} \times_{\mathcal{F}\ell^{\Diamond}_{G,\mu^{-1}}} \mathcal{F}\ell_{G,\mu^{-1}}^{b,\Diamond})/\ul{G(\mathbb{Q}_{p})}]$ by the previous remark on rank $1$ points and the fact that $\mathcal{F}\ell_{G,\mu^{-1}}^{b,\Diamond}$ is partially proper\footnote{The partial properness of these strata follows directly from the moduli interpretation, since the category of vector bundles on the Fargues-Fontaine curve is insensitive to the choice of $R^{+}$ for a Tate Huber pair $(R,R^{+})$.}.

Define the group diamond $\mathcal{J}_{b} := \mathrm{Aut}(\mathcal{E}_{b})$, as in \cite[Proposition~III.5.1]{FS} parametrizing automorphism of the $G$-bundle $\mathcal{E}_{b}$ on the Fargues-Fontaine curve attached to $b$. We have an isomorphism $j_{b}: \Bun_{G}^{b} \simeq [\Spd(C)/\mathcal{J}_{b}] \hookrightarrow \Bun_{G}$ with the locally closed HN-strata in $\Bun_{G}$ defined by $b$. There is a $\mathcal{J}_{b}$-torsor over the adic Newton strata $\mathcal{F}\ell_{G,\mu^{-1}}^{b,\Diamond}$ given by rigidifying the bundle $\mathcal{E}$ to be isomorphic to $\mathcal{E}_{b}$. This gives a map
\begin{equation}{\label{eqn: NewtonLeftHecke}}
    h^{\leftarrow}_{b}: [\mathcal{F}\ell_{G,\mu^{-1}}^{b,\Diamond}/\ul{G(\mathbb{Q}_{p})}] \rightarrow [\Spd(C)/\mathcal{J}_b] \simeq \Bun_{G,C}^{b}
\end{equation}
such that $h^{\la} \circ i_{b} = j_{b} \circ h^{\la}_{b}$.

The diamond attached to the perfectoid Igusa variety $\mIg^{b}_{C}$ comes equipped with an action of $\mathcal{J}_{b}$. In particular, we consider the formal group scheme $\mathrm{Aut}_{G}(\tilde{\mathbb{X}}_{b})$ over $\mathrm{Spf}(W(\ol{\bb{F}}_{p}))$ described in \cite[Definition~4.2.9]{CS1} parametrizing automorphisms of the universal cover of $\bb{X}_{b}$. We write $\Aut_{G}(\tilde{\bb{X}}_{b})_{\ol{\bb{F}}_{p}}$ for the special fiber of this formal group scheme, which appeared in \ref{ss: defIgusavarieties}. This defines a formal group over $\ol{\bb{F}}_{p}$, which we regard as an fpqc sheaf on schemes over $\ol{\bb{F}}_{p}$. The formal group scheme $\mathrm{Aut}_{G}(\tilde{\mathbb{X}}_{b})$ over $\mathrm{Spf}(W(\ol{\bb{F}}_{p}))$ is then given by taking Witt vectors of this formal group.

As is done in \cite[Corollary~4.3.5]{CS1}, one can, using the moduli description of $\Ig^{b}$ provided in (\ref{igmoduliinterp}), show that the fpqc sheaf $\Aut_{G}(\tilde{\mathbb{X}}_{b})_{\overline{\bb{F}}_{p}}$ acts on $\Ig^{b}$, by noting that quasi-isogenies of $\bb{X}_{b}$ are the same as automorphisms of the universal cover.  

Functorality of Witt vectors then gives rise to an action of $\mathrm{Aut}_{G}(\tilde{\mathbb{X}}_{b})$ on the Witt vector lift of $\Ig^{b}$, and passing to rigid generic fibers over $C$ gives an action of the rigid generic fiber of $\mathrm{Aut}_{G}(\tilde{\mathbb{X}}_{b})$ on $\mIg^{b}_{C}$, which we denote by $\mathrm{Aut}_{G}(\tilde{\mathbb{X}}_{b})_{C}$. By \cite[Lemma~4.2.10]{CS1}, $\mathrm{Aut}_{G}(\tilde{\mathbb{X}}_{b})_{C}$ is a perfectoid space which we identify with its associated $v$-sheaf on $\Perf_{C}$ using the Yoneda embedding. The following seems to be well known, but we sketch a proof as we could not find it explicitly written down in the literature.
\begin{proposition}{\label{prop: automorphismProposition}}
There is an identification 
\begin{equation}{\label{eqn: AutomorphismsofPdivisibleAutomorphismsofJb}}
\mathrm{Aut}_{G}(\tilde{\mathbb{X}}_{b})_{C} \simeq \mathcal{J}_{b} 
\end{equation}
of group-valued functors on $\Perf_{C}$, where $\mathcal{J}_{b}$ is the group diamond parametrizing automorphisms of the $G$-bundle $\mathcal{E}_{b}$ on the Fargues-Fontaine curve attached to $b$.
\end{proposition}
\begin{proof}{(Sketch)}
We fix $(\mathbb{X}_{b})_{O_{C}}$ to be a lift of $\mathbb{X}_{b}$ up to quasi-isogeny (with its $G$-structure). For an object $S = \Spa(R,R^{+}) \in \Perf_{C}$, the $S$-points of LHS identifies with the set of formal quasi-isogenies 
\[ (\mathbb{X}_{b})_{O_{C}} \times_{\Spf(O_{C})} \Spf(R^{+}) \dashrightarrow (\mathbb{X}_{b})_{O_{C}}  \times_{\Spf(O_{C})} \Spf(R^{+}) \]
in the sense of \cite[Section~2.5]{MingjiaPolI}, respecting the $G$-structure. In other words, this is a quasi-isogeny
\[ (\mathbb{X}_{b})_{O_{C}} \times_{O_{C}} R^{+\flat}/\varpi \dashrightarrow (\mathbb{X}_{b})_{O_{C}}  \times_{O_{C}} R^{+\flat}/\varpi \]
respecting the additional structures, where $\varpi \in R^{+\flat}$ is a suitable choice of pseudo-uniformizer in the tilt $R^{+\flat}$ of $R^{+}$, and the quotient map is given by the map $R^{+} \ra R^{+\flat}/\varpi$ induced by Fontaine's map $\theta_{(R,R^{+})}: W(R^{+\flat}) \ra R^{+}$, and the quasi-isogeny doesn't depend on the choice of pseudo-uniformizer by Serre-Tate theory. 

As described in \cite[Section~2.5.3]{MingjiaPolI}, to a $p$-divisible $G_{0}$ group over $R^{+}/\varpi$, one can associated a vector bundle $\mathcal{E}(G_{0})$ on $X_{S}$ using crystalline Dieudonn\'e theory. As explained in \cite[Section~2.5.6]{MingjiaPolI}, a quasi-isogeny $G_{0} \dashrightarrow G_{0}$ induces an isomorphism of vector bundles
\[ \mathcal{E}(G_{0}) \xrightarrow{\simeq} \mathcal{E}(G_{0}) \]
on $X_{S}$. By applying the Tannakian-formalism, it follows that for a $p$-divisible group $G_{0}$ with $G$-structure and a quasi-isogeny $G_{0} \dashrightarrow G_{0}$ respecting the additional structures, that one obtains a $G$-bundle $\mathcal{E}_{G}(G_{0})$ and an isomorphism of $G$-bundles $\mathcal{E}_{G}(G_{0}) \xrightarrow{\simeq} \mathcal{E}_{G}(G_{0})$. Applying this to $G_{0} = (\mathbb{X}_{b})_{O_{C}} \times_{O_{C}} R^{+\flat}/\varpi$, one can check by our assumption that $(\bb{X}_{b})_{O_{C}}$ was a lift of $\bb{X}_{b}$ and \cite[Theorem~II.0.4]{FS}, that one has pro-\'etale locally  an identification $\mathcal{E}_{G}(G_{0}) = \mathcal{E}_{b,S}$, where $\mathcal{E}_{b,S}$ is the pullback of $\mathcal{E}_{b}$ the vector bundle attached to the isocrystal $b \in B(G)$ on $X_{C}$ along the natural map $X_{S} \ra X_{C}$. This defines the desired natural transformation from the functor of points of the LHS to the functor of points of the RHS of (\ref{eqn: AutomorphismsofPdivisibleAutomorphismsofJb}). In order to check it is an equivalence, we use that the LHS and RHS of (\ref{eqn: AutomorphismsofPdivisibleAutomorphismsofJb}) are partially proper over $\Spa(C,O_{C})$ to reduce to checking it on $(R,R^{\circ})$-points. For the RHS, this follows from the definition, as the category of vector bundles on $X_{(R,R^{+})}$ and $X_{(R,R^{\circ})}$ are equivalent, and for the LHS it follows from \cite[Proposition~4.2.22]{CS1}. The desired equivalence is then reduced to the fully faithfullness statement in \cite[Proposition~2.2.7]{PR23}, via \cite[Lemma~2.5.5]{MingjiaPolI}.

\end{proof}

Using the previous Proposition, one sees that the action of $\mathrm{Aut}_{G}(\tilde{\mathbb{X}}_{b})_{C}$ on $\mIg^{b}_{C}$ described above gives rise to an action of the group diamond $\mathcal{J}_{b}$ parametrizing automorphism of $\mathcal{E}_{b}$. 

This allows us to form the $v$-stack quotient $[\mIg_{C}^{b}/\mathcal{J}_{b}]$, which is equipped with a natural map:
\[ \pi_{\mIg}^{b}: [\mIg_{C}^{b}/\mathcal{J}_{b}] \ra [\Spd(C)/\mathcal{J}_{b}]. \]
We would like to say $\pi_{\mIg}^{b}$ pulls back to the map $\pi^{b}_{\mathrm{HT}}$ along the map (\ref{eqn: NewtonLeftHecke}). However, as seen in Corollary \ref{cor: stalkspartial}, we need to account for the additional points in the fiber of the Hodge-Tate period morphism that are not seen by the perfectoid Igusa varieties $\mIg^{b}_{C}$. To capture this, we need to show that $\pi_{\mIg}^{b}$ also extends to the partial minimal compactification in a suitable sense. We recall that we have a decomposition 
\begin{equation}{\label{eqn: AutomorphismsDecomposition}}
 \mathcal{J}_{b} \simeq \underline{J_{b}(\bb{Q}_{p})} \ltimes \mathcal{J}_{b}^{> 0}, 
\end{equation}
where $\underline{J_{b}(\bb{Q}_{p})}$ is the constant group scheme coming from the automorphisms of $\mathcal{E}_{b}$ which preserve the graded pieces of the Harder-Narasimhan filtration and $\mathcal{J}_{b}^{> 0}$ is an $\ell$-adically contractible unipotent group diamond of dimension $\langle 2\rho_{G},\nu_{b} \rangle$. Analogously, we have a decomposition of fpqc sheaves 
\begin{equation}{\label{eqn: QuasiIsogeniesDecomposition}}
 \Aut_{G}(\tilde{\bb{X}}_{b})_{\overline{\mathbb{F}}_p} = \underline{J_b(\mathbb{Q}_p)}\ltimes \widetilde{\mathcal{U}}_b,   
\end{equation}
where $J_b(\mathbb{Q}_p)$ can be identified with the group of self-quasi-isogenies of $\mathbb{X}_{b}$ over $\overline{\mathbb{F}}_p$ that respect the $G$-structure, $\underline{J_b(\mathbb{Q}_p)}$ is the associated constant group scheme, and $\Tilde{\mc{U}_b}$ is a unipotent formal group scheme, see \cite[Proposition 4.2.11]{CS1}. We now have the following.
\begin{proposition}{\label{prop: actextend}}
Assuming \ref{assump: codim}, the action of $\mathcal{J}_{b}$ on the perfectoid Igusa variety $\mIg^{b}_{C}$ extends uniquely to an action on $\mIg^{b,*}_{C}$. In particular, by functorality of the formation of the canonical compactification (\cite[Proposition~18.6]{Ecod}), we have a map
\[ \pi_{\mIg}^{b,*}: [\ol{\mIg}_{C}^{b,*}/\mathcal{J}_{b}] \ra [\Spd(C)/\mathcal{J}_{b}] \]
extending $\pi_{\mIg}^{b}$. This restriction of the action to the subgroup $\underline{J_{b}(\bb{Q}_{p})} \subset \mathcal{J}_{b}$ also preserves the boundary $\partial\mIg^{b,*}$, so, in particular, we also get a map
\[ \tilde{\pi}_{\mIg}^{b,\partial}: [(\ol{\mIg}^{b,*}_{C} \setminus \ol{\partial\mIg}^{b,*}_{C})/\underline{J_{b}(\bb{Q}_{p})}] \ra [\Spd(C)/\underline{J_{b}(\bb{Q}_{p})}] \]
by pulling back $\pi_{\mIg}^{b,*}$ along the map $[\Spd(C)/\underline{J_{b}(\bb{Q}_{p})}] \ra [\Spd(C)/\mathcal{J}_{b}]$ induced by the decomposition (\ref{eqn: AutomorphismsDecomposition}) and restricting.
\end{proposition}
\begin{proof}
We consider the open immersion
\[ g_{b}: \Ig^{b} \ra \Ig^{b,*} \]
of perfect schemes, which we claim induces an isomorphism on global sections. To show this, we write $g_{b}$ as the perfection of the limit of the corresponding maps at finite level 
\[ g_{b,m}: \Ig^{b}_{m} \hookrightarrow \Ig^{b,*}_{m}, \]
as explained in the proof of Proposition \ref{prop: artvanish}. Under Assumption \ref{assump: codim}, we can apply the algebraic form of Hartogs' principle (See for example \cite[Proposition~III.2.9]{Schtor}) to the open inclusion $g_{b,m}$ to conclude an isomorphism of global sections via restriction. This gives the corresponding claim for the map $g_{b}$ of perfect schemes, by taking inverse limits and perfectifying. In particular, we have an isomorphism
\begin{equation}{\label{schemeident}}
\mathcal{O}(\Ig^{b}) \simeq \mathcal{O}(\Ig^{b,*}) 
\end{equation}
on global sections. Since the perfect scheme $\Ig^{b,*}$ is affine, the isomorphism of global sections implies that the action of the formal group scheme $\mathrm{Aut}_{G}(\tilde{\bb{X}}_{b})$ regarded as a fpqc sheaf on the perfect scheme $\Ig^{b}$ extends to an action on the perfect affine scheme $\Ig^{b,*}$. As before, by taking Witt vectors and passing to the rigid generic fiber over $C$, this induces an action of $\Aut_{G}(\tilde{\bb{X}}_{b})_{C}$ on the perfectoid space $\mIg^{b,*}_{C}$, extending the action on $\mIg^{b}_{C}$. Passing through the identification $\Aut_{G}(\tilde{\bb{X}}_{b})_{C} \simeq \mathcal{J}_{b}$ of Proposition \ref{prop: automorphismProposition}, this gives us the desired action of $\mathcal{J}_{b}$ on $\mIg^{b,*}_{C}$.

Now, to understand the preservation of the boundary, we understand the construction of the action of $\mathcal{J}_{b}$ in a more abstract way. In particular, we consider the small diamond functor $(-)^{\diamond}$ of \cite[Section~3.1]{GleasonSpecializationMaps}. Recall $\Perf$ is the category of perfectoid spaces in characteristic $p$ over $\ol{\bb{F}}_{p}$, the $v$-sheaf $\Aut_{G}(\tilde{\bb{X}}_{b})^{\diamond}$ on $\Perf$ attached to the formal group $\Aut_{G}(\tilde{\bb{X}}_{b})^{\diamond}$ is defined by the functor sending  $(R,R^{+}) \in \Perf$ to the group 
\[\Aut_{G}(\tilde{\bb{X}}_{b})(\Spf R^+)=\varprojlim_n \Aut_{G}(\tilde{\bb{X}}_{b})(\Spec R^+/\varpi^n)\simeq \Aut_{G}(\tilde{\bb{X}}_{b})(\Spec R^+/\varpi),\]
for some pseudo-uniformizer $\varpi$, where the last equivalence follows from Serre-Tate lifting of $p$-divisible groups along $p$-nilpotent rings, as in \cite[Theorem 2.4.1]{CS2}. The $v$-sheaves $\Ig^{b,\diamond}$ (resp. $\Ig^{b,*,\diamond}$) are defined by sending $(R,R^{+})$ to $\Ig^{b}(\Spec R^{+})$ and $\Ig^{b,*}(\Spec R^{+})$. We note that, since $\Ig^{b}$ and $\Ig^{b,*}$ are perfect and $R^{+}$ is perfectoid, we have identifications $\Ig^{b}(\Spec R^{+}) \simeq \Ig^{b}(\Spec R^{+}/\varpi)$ and $\Ig^{b,*}(\Spec R^{+}) \simeq \Ig^{b,*}(\Spec R^{+}/\varpi)$. In particular, we may understand the action of $\Aut_{G}(\tilde{\bb{X}}_{b})^{\diamond}$ on the $v$-sheaves $\Ig^{b,\diamond}$ and $\Ig^{b,*,\diamond}$ in terms of the action of $\Aut_{G}(\tilde{\bb{X}}_{b})(\Spec R^+/\varpi)$ on $\Ig^{b}(\Spec R^{+}/\varpi)$ and $\Ig^{b,*}(\Spec R^{+}/\varpi)$.

The action of $\mathcal{J}_{b}$ on $\mIg^{b}_{C}$ and $\mIg^{b,*}_{C}$ described above may now be given by base-changing $\Aut_{G}(\tilde{\bb{X}}_{b})^{\diamond}$, $\Ig^{b}$, $\Ig^{b,*}_{C}$ and base-changing along $\Spd(C) \ra \Spd(\ol{\bb{F}}_{p})$ in the category of $v$-sheaves and identifying the resulting functors on $\Perf_{C}$ with $\mathcal{J}_{b} \simeq \Aut_{G}(\tilde{\bb{X}}_{b})_{C}$, $\mIg^{b}_{C}$, and $\mIg^{b,*}_{C}$. In particular, the relationship between the construction described above in terms of taking Witt vectors and the passing to the rigid generic fiber over $C$ follows from from the relationship between untilts of a characteristic $p$ perfectoid $(R,R^{+})$ and Cartier divisors in the Witt ring $W(R^{+})$, as described in \cite[Lemma~6.28]{SW}.

We now use this perspective to show that the action of sub group-sheaf $\underline{J_{b}(\bb{Q}_{p})} \subset \Aut_{G}(\tilde{\bb{X}}_{b})^{\diamond}$ given by the decomposition (\ref{eqn: QuasiIsogeniesDecomposition}) preserves the closed (by the valuative criterion of properness) subsheaf $\partial\Ig^{b,*,\diamond} \subset \Ig^{b,*,\diamond}$. By base-changing along $\Spd(C) \ra \Spd(\ol{\bb{F}}_{p})$, this will give the desired claim. 

To see this,
we note that we have a natural identification $\underline{J_{b}(\bb{Q}_{p})}(\Spec R^{+}) \simeq \varprojlim_{n \geq 1} \underline{J_{b}(\bb{Q}_{p})}(\Spec R^{+}/\varpi^{n})$. Therefore, the restriction of this action to $\underline{J_{b}(\bb{Q}_{p})} \subset \mathrm{Aut}_{G}(\tilde{\bb{X}}_{b})^{\diamond}$ to $\Ig^{b,*,\diamond}$ is induced from the action of $\underline{J_{b}(\bb{Q}_{p})}(\Spec R^{+})$ on $\Ig^{b,*}(\Spec R^{+}) \simeq \varprojlim_{n \geq 1} \Ig^{b,*}(\Spec R^{+}/\varpi^{n})$. Now, we note that $\partial\Ig^{b,*} \subset \Ig^{b,*}$ is perfect and is therefore also reduced, by \cite[Remark~7.8]{BSPrism}. Similarly, the ring $R^{+}$ is perfect and is therefore also reduced. Moreover, by construction, the action of the fpqc sheaf $\Aut_{G}(\tilde{\bb{X}}_{b})$ on $\Ig^{b,*}$ extends the action on $\Ig^{b} \hookrightarrow \Ig^{b,*}$; in particular, it preserves $\Ig^{b}$ by construction. We may therefore conclude that $\underline{J_{b}(\bb{Q}_{p})}$ preserves the closed (by the valuative criterion of properness) subsheaf $\partial\Ig^{b,*,\diamond} \subset \Ig^{b,*,\diamond}$ by appealing to the following abstract lemma applied to $G = \Aut_{G}(\tilde{\bb{X}}_{b})$, $\Ig^{b,*} = X$, and $Z = \partial\Ig^{b,*}$.
\begin{lemma}
Suppose we have a group-valued fpqc sheaf $G/\ol{\bb{F}}_{p}$ acting on a scheme $X/\ol{\bb{F}}_{p}$ via $m: G \times X \ra X$ and suppose that $Z \subset X$ is a closed subscheme which is reduced. We write $U := X \setminus Z$ for the open complement and assume the action of $G$ preserves $U \subset X$. Then, for all $T/\ol{\bb{F}}_{p}$ reduced, the action of the $T$-points $G(T)$ on $X(T)$ preserves $Z(T)$. 
\end{lemma}
\begin{proof}
For $T$ as in the statement, consider a $T$-point $x: T \ra Z \ra X$ and an element $g \in G(T)$. We need to show that $m(g,x): T \ra X$ factors through $Z \ra X$. Since $T$ and $Z$ are reduced, it suffices by \cite[Lemma 0356]{stacks-project}, to show that the induced map $|m(g,x)|: |T| \ra |X|$ on the underlying topological space factors through $|Z| \ra |X|$. Suppose for the sake of contradiction that it does not, then there exists a geometric point $\Spec(k) \ra T$ factoring through $U \ra X$. Now if we consider $m(g^{-1},m(g,x)) = x: T \ra X$, then we arrive at a contradiction since the precomposition of this map with $\Spec(k) \ra T$ must factor through $U \subset X$, by the assumption that $G$ preserves $U$. This however contradicts the decomposition $|X| = |U| \sqcup |Z|$ on topological spaces.
\end{proof}
\end{proof}
\begin{remark}
It is true that the full action of $\mathcal{J}_{b}$ preserves the Zariski closed boundary $\partial\mIg^{b,*}_{C} \subset \mIg^{b,*}_{C}$. This is shown in \cite[Section~4-5]{CHZIntCohofShimuraVarieties} (In fact, a stronger claim is shown that $\mathcal{J}_{b}$ preserves all the closed boundary strata of $\partial\mIg^{b,*}_{C}$). However, this requires a much more detailed argument involving lifting to the toroidal compactification. As a result, we will not address this here.
\end{remark}
Lastly, we will consider the map $\ol{\pi}^{b}_{\mIg}: [\ol{\mIg}^{b}_{C}/\mathcal{J}_{b}] \ra [\Spd(C)/\mathcal{J}_{b}]$, given by taking the canonical compactification of $\pi^{b}_{\mIg}$, where we note, by \cite[Proposition~4.2.22]{CS1}, the $v$-stack $[\Spd(C)/\mathcal{J}_{b}]$ is partially proper over $\Spd(C)$. We now have the following Proposition. 
\begin{proposition}
The maps constructed above fit into the following Cartesian squares\footnote{We emphasize that these are really diagrams of $v$-stacks and that all fiber products are formed in this category.}
\begin{equation}\label{eqn:manpdtfor}
 \begin{tikzcd}
& \left[(\mathcal{S}^{b,\circ,\mathrm{rd}}_{K^{p},C} \times_{\mathcal{F}\ell^{\Diamond}_{G,\mu^{-1}}} \mathcal{F}\ell_{G,\mu^{-1}}^{b,\Diamond})/\ul{G(\mathbb{Q}_{p})}\right] \arrow[r,"\pi_{\mathrm{HT}}^{b,\circ}"] \arrow[d,"\tilde{h}_{b}^{\la}"] & \left[\mathcal{F}\ell^{b,\Diamond}_{G,\mu^{-1}}/\ul{G(\mathbb{Q}_{p})}\right]\arrow[d,"h^{\la}_{b}"]  \\
& \left[\mIg_{C}^{b}/\mathcal{J}_{b}\right] \arrow[r,"\pi_{\mIg}^{b}"]  & \left[\Spd(C)/\mathcal{J}_{b}\right] 
\end{tikzcd} 
\end{equation}
and
\begin{equation}\label{eqn:manpdtfor0}
 \begin{tikzcd}
& \left[\ol{\mathcal{S}}^{b,\circ}_{K^{p},C}/\ul{G(\mathbb{Q}_{p})}\right] \arrow[r,"\ol{\pi}_{\mathrm{HT}}^{b,\circ}"] \arrow[d,"\tilde{h}_{b}^{\la}"] & \left[\mathcal{F}\ell^{b,\Diamond}_{G,\mu^{-1}}/\ul{G(\mathbb{Q}_{p})}\right]\arrow[d,"h^{\la}_{b}"]  \\
& \left[\ol{\mIg}_{C}^{b}/\mathcal{J}_{b}\right] \arrow[r,"\ol{\pi}_{\mIg}^{b}"]  & \left[\Spd(C)/\mathcal{J}_{b}\right]. 
\end{tikzcd} 
\end{equation}
\end{proposition}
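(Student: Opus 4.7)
The plan is to verify the two Cartesian squares by unwinding moduli descriptions, using Theorem \ref{thm: fiberdescrip} and Proposition \ref{prop: actextend} as the main inputs, and to deduce (\ref{eqn:manpdtfor0}) from (\ref{eqn:manpdtfor}) via functoriality of canonical compactifications.

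For the first square (\ref{eqn:manpdtfor}), I would argue at the level of $v$-sheaf-valued points. The map $h^{\la}_{b}$ exhibits $[\mathcal{F}\ell^{b}_{G,\mu^{-1}}/\ul{G(\mathbb{Q}_{p})}]$ via the $\mathcal{J}_{b}$-torsor over it given by rigidifying $\mathcal{E} \simeq \mathcal{E}_{b}$ on the Newton stratum, while $\pi^{b}_{\mIg}$ exhibits $[\mIg^{b}_{C}/\mathcal{J}_{b}]$ via the $\mathcal{J}_{b}$-torsor $\mIg^{b}_{C}$ of trivializations of $\mathbb{X}_{b}$. Unwinding universal properties, the fiber product $[\mathcal{F}\ell^{b}_{G,\mu^{-1}}/\ul{G(\mathbb{Q}_{p})}] \times_{[\Spd(C)/\mathcal{J}_{b}]} [\mIg^{b}_{C}/\mathcal{J}_{b}]$ parametrizes, after choosing these trivializations $v$-locally, tuples consisting of a modification $\mathcal{E}_{b} \dashrightarrow \mathcal{E}_{0}$ of meromorphy $\mu$ on $X_{S}$ (modulo the $G(\mathbb{Q}_{p})$-action coming from the trivialization of $\mathcal{E}_{0}$), together with a compatible trivialization of the associated $p$-divisible group $\mathbb{X}_{b}$ over $S$. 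By the construction of the Hodge-Tate period morphism, combined with the Serre-Tate-type comparison for $p$-divisible groups over perfectoid spaces from \cite[Proposition~4.2.11]{CS1} and the fiber description recalled in Theorem \ref{thm: fiberdescrip}, this matches precisely the datum carried by $[(\mathcal{S}^{b,\circ,\mathrm{rd}}_{K^{p},C} \times_{\mathcal{F}\ell_{G,\mu^{-1}}} \mathcal{F}\ell^{b}_{G,\mu^{-1}})/\ul{G(\mathbb{Q}_{p})}]$, yielding the Cartesian identification.

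For the second square (\ref{eqn:manpdtfor0}), I would apply canonical compactification relative to $[\mathcal{F}\ell^{b}_{G,\mu^{-1}}/\ul{G(\mathbb{Q}_{p})}]$ to (\ref{eqn:manpdtfor}). Since $\mathcal{F}\ell^{b}_{G,\mu^{-1}}$ is partially proper over $\Spd(C)$, canonical compactification along $h^{\la}_{b}$ coincides with canonical compactification over $\Spd(C)$ by \cite[Proposition~18.6]{Ecod}, and applied to the top row yields $[\ol{\mathcal{S}}^{b,\circ}_{K^{p},C}/\ul{G(\mathbb{Q}_{p})}]$ by definition. On the other vertical, the $\mathcal{J}_{b}$-action on $\mIg^{b}_{C}$ extends to its canonical compactification $\ol{\mIg}^{b}_{C}$ by functoriality, giving the bottom-left entry $[\ol{\mIg}^{b}_{C}/\mathcal{J}_{b}]$. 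Compatibility of canonical compactification with base change along partially proper maps preserves the Cartesian property.

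The main obstacle is verifying the compatibility of the two $\mathcal{J}_{b}$-actions: the action on $\mIg^{b}_{C}$ coming from $\mathrm{Aut}(\mathbb{X}_{b})$-automorphisms of the universal trivialization, and the action on the Newton stratum $\mathcal{F}\ell^{b}_{G,\mu^{-1}}$ coming from rigidifying $\mathcal{E} \simeq \mathcal{E}_{b}$. This compatibility, which is needed to upgrade the statement from an equivalence of fibers (Theorem \ref{thm: fiberdescrip}) to the Cartesian diagram of $v$-stacks, is equivalent to $\mathcal{J}_{b}$-equivariance of the Hodge-Tate period morphism; it ultimately follows from the functoriality of the assignment sending a $p$-divisible group with $G$-structure to the associated $G$-bundle modification on the Fargues-Fontaine curve, as developed in \cite{SW} and exploited in \cite{CS1}. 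Once this equivariance is in hand, everything reduces to the moduli-theoretic point check described above.
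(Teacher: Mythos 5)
Your overall route is the same as the paper's: both proofs compute the fiber product over $[\Spd(C)/\mathcal{J}_{b}]$ by passing to the two $\mathcal{J}_{b}$-torsors (the rigidification of $\mathcal{E}\simeq\mathcal{E}_{b}$ over $\mathcal{F}\ell^{b}_{G,\mu^{-1}}$, which is exactly the shtuka space $\Sht(G,b,\mu)_{\infty,C}$, and $\mIg^{b}_{C}$ over the point), identify the fiber product with the contracted product $\Sht(G,b,\mu)_{\infty,C}\times^{\mathcal{J}_{b}}\mIg^{b}_{C}$, and then match this with the Shimura-variety side; the second square is then obtained by passing to canonical compactifications over the partially proper stratum, exactly as in the paper.

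The gap is in the step where you say the contracted product ``matches precisely the datum carried by'' $[(\mathcal{S}^{b,\circ,\mathrm{rd}}_{K^{p},C}\times_{\mathcal{F}\ell_{G,\mu^{-1}}}\mathcal{F}\ell^{b}_{G,\mu^{-1}})/\ul{G(\mathbb{Q}_{p})}]$ by unwinding moduli. Points of $\mathcal{S}^{b,\circ,\mathrm{rd}}_{K^{p},C}$ are (limits of) abelian varieties with level structure on the good reduction locus, not modifications of $G$-bundles together with trivializations of $\mathbb{X}_{b}$; the identification
\[ \Sht(G,b,\mu)_{\infty,C}\times_{C}\mIg^{b}_{C}\;\simeq\;\mathcal{S}^{b,\circ,\mathrm{rd}}_{K^{p}}\times_{\mathcal{F}\ell_{G,\mu^{-1}}}\Sht(G,b,\mu)_{\infty,C} \]
is precisely the Caraiani--Scholze almost-product formula (\cite[Corollary~4.3.19, Lemma~4.3.20]{CS1}), a substantive theorem whose proof uses the Scholze--Weinstein classification of $p$-divisible groups over $\mathcal{O}_{C}$ and the infinite-level perfectoid structure. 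Your citation of \cite[Proposition~4.2.11]{CS1} only addresses the $\mathcal{J}_{b}$-action on trivializations (the ``main obstacle'' you correctly flag at the end), not this identification, and Theorem \ref{thm: fiberdescrip} only gives the statement fiberwise over geometric points, which is strictly weaker than the Cartesian diagram of $v$-stacks you need. So the central step of the first square is asserted rather than proved; supplying the correct reference (or reproving the product formula) closes the gap, after which your deduction of the second square goes through as in the paper. One further small imprecision: $[\ol{\mathcal{S}}^{b,\circ}_{K^{p},C}/\ul{G(\mathbb{Q}_{p})}]$ is not ``by definition'' the canonical compactification of the top-left entry of (\ref{eqn:manpdtfor}); that identification uses that the open immersion $\mathcal{S}^{b,\circ,\mathrm{rd}}_{K^{p},C}\times_{\mathcal{F}\ell_{G,\mu^{-1}}}\mathcal{F}\ell^{b}_{G,\mu^{-1}}\hookrightarrow\mathcal{S}^{b,\circ}_{K^{p},C}$ is qcqs and bijective on rank $1$ points, as set up before the proposition.
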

\begin{proof} 
Consider the moduli space of local shtukas $\Sht(G,b,\mu)_{\infty,C}$, as defined in \cite[Definition~23.1.1]{SW}. This represents the functor sending $S \in \Perf_C$ to the set of all pairs $(S^\#,\alpha)$ where $S^\#$ is the untilt of $S$ coming from the map $S \ra \Spd(C)$, and $\alpha$ is a modification from $\mathcal{E}_b \dashrightarrow \mathcal{E}_{0}$ with meromorphy along $S^\#$ and bounded by $\mu$. We have a local Hodge-Tate period morphism 
\[ \Sht(G,b,\mu)_{\infty,C} \ra \mathcal{F}\ell_{G,\mu^{-1}}^{b,\Diamond}, \]
which fits into the following Cartesian diagram coming from the definition of $\Sht(G,b,\mu)_{\infty,C}$.
\begin{equation}\label{eqn:localHT}
    \begin{tikzcd}
        & \Sht(G,b,\mu)_{\infty,C} \arrow[r]\arrow[d]& \Spd(C)\arrow[d]\\
        & \mathcal{F}\ell_{G,\mu^{-1}}^{b,\Diamond}\arrow[r]&\left[\Spd(C)/\mathcal{J}_{b}\right]
    \end{tikzcd}
\end{equation}
Let $\Sht(G,b,\mu)_{\infty,C} \times^{\mathcal{J}_{b}}\mIg_{C}^{b}$ denote the quotient of $\Sht(G,b,\mu)_{\infty,C} \times_{C} \mIg_{C}^{b}$ by $\{(jx,j^{-1}y):j\in \mathcal{J}_{b}, x\in \Sht(G,b,\mu)_{\infty,C}, y\in \mIg_{C}^{b}\}$. To see that the diagram \eqref{eqn:manpdtfor} above is Cartesian, observe that \eqref{eqn:localHT} implies we have an isomorphism
\begin{equation*}
\mathcal{F}\ell^{b,\Diamond}_{G,\mu^{-1}}\times_{[\Spd(C)/\mathcal{J}_{b}]} [\mIg_{C}^{b}/\mathcal{J}_{b}] \simeq \Sht(G,b,\mu)_{\infty,C} \times^{\mathcal{J}_{b}}\mIg_{C}^{b}.
\end{equation*}
Moreover, we see, by \cite[Corollary~4.3.19,Lemma~4.3.20]{CS1}, that we have an isomorphism 
\begin{equation}
\label{eqn:CSalmostpdt}
    \Sht(G,b,\mu)_{\infty,C}\times_C \mIg_{C}^b \simeq \mathcal{S}^{b,\circ,\mathrm{rd}}_{K^p} \times_{\mathcal{F}\ell^{\Diamond}_{G,\mu^{-1}}}\Sht(G,b,\mu)_{\infty,C}.
\end{equation}
Moreover, this map is $\mathcal{J}_{b}$-equivariant. Here $\mathcal{J}_{b}$ acts on the LHS via $(x,y) \mapsto (jx,j^{-1}y)$, and $\mathcal{J}_{b}$ acts on the RHS by the trivial action on $\mathcal{S}^{b,\circ,\mathrm{rd}}_{K^p}$ and the natural action on $\Sht(G,b,\mu)_{\infty,C}$ by automorphisms of bundles.

Now applying \eqref{eqn:localHT} implies that $\mathcal{S}^{b,\circ,\mathrm{rd}}_{K^p} \times_{\mathcal{F}\ell^{\Diamond}_{G,\mu^{-1}}} \mathcal{F}\ell_{G,\mu^{-1}}^{b,\Diamond}$ is isomorphic to the quotient of $\mathcal{S}^{b,\circ,\mathrm{rd}}_{K^p}\times_{\mathcal{F}\ell^{\Diamond}_{G,\mu^{-1}}}\Sht(G,b,\mu)_{\infty,C}$ by the action of $\mathcal{J}_b$ (here $\mathcal{J}_b$ acts via the action on the second factor). Thus, we have an isomorphism:
\begin{equation*}
    \Sht(G,b,\mu)_{\infty,C} \times^{\mathcal{J}_{b}}\mIg_{C}^{b} \simeq \mathcal{S}^{b,\circ,\mathrm{rd}}_{K^p} \times_{\mathcal{F}\ell^{\Diamond}_{G,\mu^{-1}}} \mathcal{F}\ell_{G,\mu^{-1}}^{b,\Diamond}. 
\end{equation*}
This gives us the Cartesian diagram (\ref{eqn:manpdtfor0}). Now, the natural map, 
\[\mathcal{S}_{K^{p},C}^{b,\circ,\mathrm{rd}} \times_{\mathcal{F}\ell^{\Diamond}_{G,\mu^{-1}}} \mathcal{F}\ell_{G,\mu^{-1}}^{b,\Diamond} \hookrightarrow \mathcal{S}^{b,\circ}_{K^{p},C}  \]
is a qcqs open immersion, which is an isomorphism on rank $1$ points. In turn, it induces an isomorphism of canonical compactifications over the partially proper strata $\mathcal{F}\ell_{G,\mu^{-1}}^{b,\Diamond}$. Therefore, by passing to canonical compactifications over $\mathcal{F}\ell_{G,\mu^{-1}}^{b,\Diamond}$, we deduce that the diagram (\ref{eqn:manpdtfor0}) is also Cartesian, using that the formation of canonical compactifications commutes with limits \cite[Proposition~18.7 (v)]{Ecod}. 
\end{proof}
\begin{remark}{\label{rem: SignConvention}}
We note that our conventions for moduli spaces of shtukas differs from that in \cite[Definition~23.1.1]{SW}; in particular, there the space $\Sht(G,b,\mu)_{\infty}$ denotes the moduli space $\mathcal{E}_{0} \dashrightarrow \mathcal{E}_{b}$ parametrizes modifications of meromorphy $\mu$, which would identify more naturally with the space we denote by $\Sht(G,b,\mu^{-1})_{\infty}$. This is consistent with \cite{Ham1,Ham2,BMNH}, where this convention is nice in that the Hecke operator $j_{1}^{*}T_{\mu}$ is related to $\Sht(G,b,\mu)_{\infty}$ for $b \in B(G,\mu)$ instead of $\Sht(G,b,\mu^{-1})$, as in the usual convention. Here we also note that we have uniformized the Schubert stratification of the $\bb{B}_{\mathrm{dR}}^{+}$-Grassmannian so that $G(\bb{B}_{\mathrm{dR},C}) := \bigsqcup_{\mu \in \bb{X}_{*}(T_{\ol{\bb{Q}}_{p}})^{+}} G(\bb{B}^{+}_{\mathrm{dR},C})\mu(\xi)^{-1}G(\bb{B}_{\mathrm{dR},C}^{+})$, where the coset $G(\bb{B}^{+}_{\mathrm{dR},C})\mu(\xi)^{-1}G(\bb{B}_{\mathrm{dR},C}^{+})$ defines $\Gr_{G,\mu}$, as in the discussion proceeding \cite[Definition~3.4.1]{CS1}. We warn the reader that this differs by an inverse from the uniformization used in \cite{SW} and \cite{MingjiaPolI}, which is why the index set for the adic Newton stratification of $\mathcal{F}\ell_{G,\mu^{-1}}^{\Diamond}$ is for us $B(G,\mu)$ instead of $B(G,\mu^{-1})$.
\end{remark}
We now invoke a result of Zhang \cite{Zha}. 
\begin{theorem}{\cite[Theorem~1.3, Section~9.4]{Zha}}
Assuming \ref{assump: codim}, for all $b \in B(G,\mu)$ the Cartesian diagram (\ref{eqn:manpdtfor0}) extends to a Cartesian diagram
\begin{equation}\label{eqn:manpdtfor1}
 \begin{tikzcd}
& \left[\mathcal{S}^{b,*}_{K^{p},C}/\ul{G(\mathbb{Q}_{p})}\right] \arrow[r,"\pi_{\mathrm{HT}}^{b,*}"] \arrow[d,"\phantom{}^{*}\tilde{h}_{b}^{\la}"] & \left[\mathcal{F}\ell^{b,\Diamond}_{G,\mu^{-1}}/\ul{G(\mathbb{Q}_{p})}\right]\arrow[d,"h^{\la}_{b}"]  \\
& \left[\ol{\mIg}_{C}^{b,*}/\mathcal{J}_{b}\right] \arrow[r,"\pi_{\mIg}^{b,*}"]  & \left[\Spd(C)/\mathcal{J}_{b}\right] 
\end{tikzcd} 
\end{equation}
of $v$-stacks.
\end{theorem}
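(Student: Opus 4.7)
The plan is to upgrade the Cartesian square (\ref{eqn:manpdtfor0}), which involves the canonical compactification of the good reduction locus, to the analogous square (\ref{eqn:manpdtfor1}) involving the partial minimal compactification $\mathcal{S}^{b,*}_{K^{p},C}$. The crucial new input, beyond the earlier proposition, is Proposition \ref{prop: actextend}: via Assumption \ref{assump: codim} and Hartogs' principle applied at finite level, the $\mathcal{J}_{b}$-action on $\mIg^{b}_{C}$ is shown to extend across the boundary to $\mIg^{b,*}_{C}$, and hence, by functoriality of the canonical compactification, to $\ol{\mIg}^{b,*}_{C}$.

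The construction of the left vertical map $\phantom{}^{*}\tilde{h}_{b}^{\la}$ proceeds by pulling the $\mathcal{J}_{b}$-torsor structure encoding $h^{\la}_{b}$ on $\mathcal{F}\ell^{b}_{G,\mu^{-1}}$ back along $\pi^{b,*}_{\mathrm{HT}}$, and then using the fiberwise identification of Theorem \ref{thm: fiberdescrip} to realize the result inside $\ol{\mIg}^{b,*}_{C}$. Over the good reduction locus $\mathcal{S}^{b,\circ,\mathrm{rd}}_{K^{p},C} \times_{\mathcal{F}\ell_{G,\mu^{-1}}} \mathcal{F}\ell_{G,\mu^{-1}}^{b}$ this should recover the map $\tilde{h}_{b}^{\la}$ of (\ref{eqn:manpdtfor0}) postcomposed with the natural map $\ol{\mIg}^{b}_{C} \to \ol{\mIg}^{b,*}_{C}$. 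The extension across the boundary should then be forced by partial properness: both $\mathcal{S}^{b,*}_{K^{p},C}$ (a fiber of the proper map $\pi^{*}_{\mathrm{HT}}$ over the partially proper stratum $\mathcal{F}\ell^{b}_{G,\mu^{-1}}$) and $\ol{\mIg}^{b,*}_{C}$ are partially proper over $\Spd(C)$, so the map is determined by its behaviour on rank-$1$ points, where the required extension is supplied directly by the fiber description in Theorem \ref{thm: fiberdescrip}. Verifying the Cartesian property then reduces to checking on rank-$1$ geometric points $x \in \mathcal{F}\ell^{b}_{G,\mu^{-1}}(C,C^{+})$: trivializing the pulled-back $\mathcal{J}_{b}$-torsor identifies the fiber of the right-hand side with $\ol{\mIg}^{b,*}_{C}$, while Theorem \ref{thm: fiberdescrip} identifies the fiber of the left-hand side with the same space, and one compares via the rank-$1$ reduction.

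The main obstacle is the construction and control of the extended $\mathcal{J}_{b}$-action at the boundary. Away from the boundary, the compatibility needed to define $\phantom{}^{*}\tilde{h}_{b}^{\la}$ follows from the isomorphism (\ref{eqn:CSalmostpdt}) of Caraiani--Scholze and the Cartesian-ness of (\ref{eqn:manpdtfor0}); but extending to the boundary requires matching the $\mathcal{J}_{b}$-action on $\mIg^{b,*}_{C}$ (defined indirectly via Hartogs and the identity $\mathcal{O}(\mIg^{b}_{C}) \simeq \mathcal{O}(\mIg^{b,*}_{C})$ of Proposition \ref{prop: actextend}) with the explicit moduli-theoretic data at the cusps recalled in \S \ref{sss:Igusacusplabels} and \S \ref{sss:boundarycomponents} — that is, with the $J_b(\mathbb{Q}_p)$-action on Igusa cusp labels and Raynaud extensions. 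Only once this compatibility is verified does one actually obtain a morphism of $v$-stacks, and only then does the rank-$1$ comparison above upgrade to the asserted isomorphism of fiber products. This boundary analysis is the technical heart of Zhang's argument in \cite{Zha}, and it is precisely here that Assumption \ref{assump: codim} is essential (both for the application of Hartogs and to ensure that the boundary strata behave well enough to match on both sides).
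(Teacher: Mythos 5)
The paper does not actually prove this statement: it is quoted from \cite[Theorem~1.3]{Zha}, and the only indication of the argument given here is the remark that follows it, namely that one applies a relative $\Spa$ construction in the category of diamonds to the horizontal maps of the diagram, invoking Hartogs' principle as in Proposition \ref{prop: actextend}. Your proposal is consistent with that sketch in its reliance on Proposition \ref{prop: actextend} and on the fiberwise description of Theorem \ref{thm: fiberdescrip}, but the mechanism you propose for producing the square --- pull back the $\mathcal{J}_b$-torsor, realize the result fiberwise inside $\ol{\mIg}^{b,*}_{C}$, and then let partial properness ``force'' the extension and reduce Cartesian-ness to rank-$1$ points --- is not the same as, and does not substitute for, that relative $\Spa$ construction.

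The genuine gap is in the construction of the left vertical map. Theorem \ref{thm: fiberdescrip} identifies the fiber of $\pi^{b,*}_{\mathrm{HT}}$ over a single geometric point with $\ol{\mIg}^{b,*}_{C}$; it does not by itself produce a morphism of $v$-stacks $\phantom{}^{*}\tilde{h}^{\la}_{b}: [\mathcal{S}^{b,*}_{K^{p},C}/\ul{G(\mathbb{Q}_{p})}] \to [\ol{\mIg}^{b,*}_{C}/\mathcal{J}_{b}]$ in families, equivariantly for $G(\mathbb{Q}_{p})$ and $\mathcal{J}_{b}$. Partial properness gives \emph{uniqueness} of an extension of $\tilde{h}^{\la}_{b}$ across the boundary (a valuative criterion), not \emph{existence}: one must actually construct the family version of the identification, and this is exactly where the relative $\Spa$ of the pushforward of the structure sheaf enters --- affinoidness of $\mIg^{b,*}_{C}$ from Theorem \ref{thm: affineness}, together with Hartogs' identification $\mathcal{O}(\mIg^{b}_{C}) \simeq \mathcal{O}(\mIg^{b,*}_{C})$, is what transports the known Cartesian square (\ref{eqn:manpdtfor0}) over the good reduction locus to the compactified one. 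Similarly, checking Cartesian-ness on rank-$1$ points is only legitimate once a comparison map between two partially proper objects is already in hand. You correctly isolate the boundary matching (abstract Hartogs-extended action versus the moduli-theoretic cusp data of \S\ref{sss:Igusacusplabels}--\ref{sss:boundarycomponents}) as the technical heart, but the proposal stops at naming it rather than resolving it. Since the paper itself defers entirely to \cite{Zha}, your text is a reasonable account of the \emph{shape} of the argument, but as written it is an outline rather than a proof.
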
 
\begin{remark}
In fact Zhang shows a much stronger claim, that there exists a series of larger Cartesian diagrams living over $\Bun_{G.C}$ such that the diagrams (\ref{eqn:manpdtfor0}) and (\ref{eqn:manpdtfor1}), are the base-change along the inclusions $j_{b}: \Bun_{G,C}^{b} \hookrightarrow \Bun_{G,C}$ of HN-strata for $b \in B(G,\mu)$ varying. 
\end{remark}
\begin{remark}
The rough idea behind proving this is to apply a relative $\Spa$ construction in the category of $v$-sheaves to the horizontal maps of the diagram (\ref{eqn:manpdtfor1}) by invoking Hartogs' principle, as in Proposition \ref{prop: actextend}.  
\end{remark}
We now state the key Corollary that we will need. We write $\tilde{s}_{b}: [\widetilde{\mathcal{F}\ell}_{G,\mu^{-1}}^{b}/\underline{G(\bb{Q}_{p})}] \ra [\mathcal{F}\ell_{G,\mu^{-1}}^{b,\Diamond}/\underline{G(\bb{Q}_{p})}]$ for the $\mathcal{J}_{b}^{> 0}$-torsor defined by pulling back $h^{\la}_{b}$ along the natural map $s_{b}: [\Spd(C)/\underline{J_{b}(\bb{Q}_{p})}] \ra [\Spd(C)/\mathcal{J}_{b}]$ induced by (\ref{eqn: AutomorphismsDecomposition}). We write $\tilde{\pi}_{\mathrm{HT}}^{b}: [\widetilde{\mathcal{S}}^{b}_{K^{p},C}/\underline{G(\bb{Q}_{p})}] \ra [\widetilde{\mathcal{F}\ell}_{G,\mu^{-1}}^{b,\Diamond}/\underline{G(\bb{Q}_{p})}]$ and $\tilde{\pi}_{\mathrm{HT}}^{b,*}: [\widetilde{\mathcal{S}}^{b,*}_{K^{p},C}/\underline{G(\bb{Q}_{p})}] \ra [\widetilde{\mathcal{F}\ell}^{\Diamond}_{G,\mu^{-1}}/\underline{G(\bb{Q}_{p})}]$ for the pullbacks of $\pi_{\mathrm{HT}}^{b}$ and $\pi_{\mathrm{HT}}^{b,*}$ along the map $[\widetilde{\mathcal{F}\ell}_{G,\mu^{-1}}^{b,\Diamond}/\underline{G(\bb{Q}_{p})}] \ra [\mathcal{F}\ell_{G,\mu^{-1}}^{b,\Diamond}/\underline{G(\bb{Q}_{p})}]$.

We now have the following.
\begin{corollary}{\label{cor: openCartesiandiagram}}
Assuming \ref{assump: codim}, for all $b \in B(G,\mu)$ we have a Cartesian diagram 
\begin{equation}\label{eqn:manpdtfor2}
 \begin{tikzcd}
& \left[\widetilde{\mathcal{S}}^{b,*}_{K^{p},C}/\ul{G(\mathbb{Q}_{p})}\right] \arrow[r,"\tilde{\pi}_{\HT}^{b,*}"] \arrow[d,"\phantom{}^{\partial}\tilde{h}_{b}^{\la}"] & \left[\widetilde{\mathcal{F}\ell}^{b,\Diamond}_{G,\mu^{-1}}/\underline{G(\mathbb{Q}_{p})}\right] \arrow[d]  \\
& \left[(\ol{\mIg}_{C}^{b,*} \setminus \ol{\partial\mIg}_{C}^{b,*})/\underline{J_{b}(\bb{Q}_{p})}\right] \arrow[r,"\tilde{\pi}_{\mIg}^{b,\partial}"]  & \left[\Spd(C)/\underline{J_{b}(\bb{Q}_{p})}\right]. 
\end{tikzcd} 
\end{equation}
with maps as defined above
\end{corollary}
\begin{proof}
This follows from the Cartesian diagram (\ref{eqn:manpdtfor1}) and Corollary \ref{cor: openhodgetatefibers}. More precisely, consider the open substack
\[ [\widetilde{\mathcal{S}}_{K^{p},C}^{b,*}/\underline{G(\bb{Q}_{p})}] \times_{[\ol{\mIg}^{b,*}_{C}/\underline{J_{b}(\bb{Q}_{p})}]} \left[(\ol{\mIg}_{C}^{b,*} \setminus \ol{\partial\mIg}_{C}^{b,*})/\underline{J_{b}(\bb{Q}_{p})}\right] \hookrightarrow [\widetilde{\mathcal{S}}^{b,*}_{K^{p},C}/\underline{G(\mathbb{Q}_{p})}]  \]
obtained by considering the base-change of $\left[\widetilde{\mathcal{F}\ell}^{b,\Diamond}_{G,\mu^{-1}}/\underline{G(\mathbb{Q}_{p})}\right] \ra [\Spd(C)/\underline{J_{b}(\bb{Q}_{p})}]$ along the open immersion $[\overline{\mIg}_{C}^{b,*} \setminus \overline{\partial\mIg}^{b,*}_{C}/\underline{J_{b}(\bb{Q}_{p})}] \hookrightarrow [\overline{\mIg}^{b,*}_{C}/\underline{J_{b}(\bb{Q}_{p})}]$. The claim follows from showing that this open substack identifies with the open substack
\[ [\widetilde{\mathcal{S}}^{b}_{K_{p},C}/\underline{G(\bb{Q}_{p})}] \hookrightarrow [\widetilde{\mathcal{S}}^{b,*}_{K^{p},C}/\underline{G(\bb{Q}_{p})}]. \]
Using \cite[Proposition~11.15]{Ecod}, it suffices to check that the open subspaces of the underlying topological space of $[\widetilde{\mathcal{S}}^{b,*}_{K^{p},C}/\underline{G(\bb{Q}_{p})}]$ that these two substacks define agree with one another. Since both subspaces are partially proper (as they are $\mathcal{J}_{b}^{> 0}$-torsors over something partially proper), it suffices to show that they have the same rank $1$ points, and therefore it suffices to show that they identify with the same subspace after taking the fiber of $\tilde{\pi}_{\mathrm{HT}}^{b,*}$ over a rank $1$ point of $\widetilde{\mathcal{F}\ell}_{G,\mu^{-1}}^{b,\Diamond}$, and this reduces us to Corollary \ref{cor: openhodgetatefibers}.
\end{proof}
By the Cartesian diagram (\ref{eqn:manpdtfor2}), if we look at the sheaf
\[ \tilde{s}_{b}^{*}i_{b}^{*}R\pi_{\mathrm{HT}!}(\ol{\mathbb{F}}_{\ell}) \]
we can see that this is canonically identified with $R\tilde{\pi}_{\mIg!}^{b,\partial}(\ol{\mathbb{F}}_{\ell}) \in \D(J_{b}(\bb{Q}_{p}),\ol{\bb{F}}_{\ell})$ pulled back along the natural map $[\widetilde{\mathcal{F}\ell}_{G,\mu^{-1}}^{b,\Diamond}/\underline{G(\bb{Q}_{p})}] \ra [\Spd(C)/\underline{J_{b}(\bb{Q}_{p})}]$. Moreover, we can identify $R\tilde{\pi}_{\mIg!}^{b,\partial}(\ol{\mathbb{F}}_{\ell})$ simply with the complex $V_{b} := R\Gamma_{c-\partial}(\Ig^{b},\ol{\mathbb{F}}_{\ell})$ of $J_{b}(\mathbb{Q}_{p})$-modules, as in Corollary \ref{cor: stalkspartial}. We recall that we have an identification $\D(\Bun_{G}^{b},\ol{\mathbb{F}}_{\ell}) \simeq \D(J_{b}(\mathbb{Q}_{p}),\ol{\mathbb{F}}_{\ell})$, which is induced via pullback along $p_{b}: \Bun_{G}^{b} \simeq [\Spd(C)/\mathcal{J}_{b}] \ra [\ast/\underline{J_{b}(\bb{Q}_{p})}]$, as in \cite[Proposition~V.2.2]{FS}. The map $s_{b}$ is a section to $p_{b}$ induced by the splitting of the Harder-Narasimhan filtration and $s_{b}^{*}$ is inverse to $p_{b}^{*}$. If we write $\tilde{p}_{b}: [\mathcal{F}\ell^{b,\Diamond}_{G,\mu^{-1}}/\underline{G(\bb{Q}_{p})}] \ra [\widetilde{\mathcal{F}\ell}^{b,\Diamond}_{G,\mu^{-1}}/\underline{G(\bb{Q}_{p})}]$ for the map obtained by the base-change of $p_{b}$ then $\tilde{s}_{b}$ will be a section of $\tilde{p}_{b}$. In particular, we conclude that 
\[ \tilde{p}_{b}^{*}\tilde{s}_{b}^{*}i_{b}^{*}R\pi_{\mathrm{HT}!}(\ol{\mathbb{F}}_{\ell}) \simeq i_{b}^{*}R\pi_{\HT!}(\ol{\bb{F}}_{\ell}) \]
is the pullback of $V_{b}$ along the map $[\widetilde{\mathcal{F}\ell}^{b,\Diamond}_{G,\mu^{-1}}/\underline{G(\bb{Q}_{p})}] \ra [\Spd(C)/\underline{J_{b}(\bb{Q}_{p})}]$. Abusively identifying $V_{b}$ and $p_{b}^{*}(V_{b})$ as in the identification $\D(\Bun_{G}^{b},\ol{\bb{F}}_{\ell}) \simeq \D(J_{b}(\bb{Q}_{p}),\ol{\bb{F}}_{\ell})$, this allows us to deduce an isomorphism 
\[ h^{\la*}_{b}(V_{b}) \simeq i_{b}^{*}R\pi_{\HT!}(\ol{\bb{F}}_{\ell}). \]
We can further refine this using the following Lemma.
\begin{lemma}{\label{lem: excisioncomparison}}
We have isomorphisms
\[ i_{b!}h_{b}^{\la*}(V_{b}) \simeq h^{\la*}j_{b!}(V_{b}) \]
and 
\[ i_{b*}h_{b}^{\la*}(V_{b}) \simeq h^{\la*}Rj_{b*}(V_{b}) \]
of sheaves on $\mathcal{F}\ell_{G,\mu^{-1}}^{\Diamond}$.
\end{lemma}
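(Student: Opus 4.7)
The plan is to recognize both identities as instances of base change around the diagram
\[
\begin{tikzcd}
{[\mathcal{F}\ell_{G,\mu^{-1}}^{b}/\ul{G(\mathbb{Q}_{p})}]} \arrow[r,hook,"i_{b}"] \arrow[d,"h_{b}^{\la}"] & {[\mathcal{F}\ell_{G,\mu^{-1}}/\ul{G(\mathbb{Q}_{p})}]} \arrow[d,"h^{\la}"] \\
\Bun_{G,C}^{b} \arrow[r,hook,"j_{b}"] & \Bun_{G,C}.
\end{tikzcd}
\]
First, I would verify that this square is Cartesian. This is immediate from the very definition of the adic Newton stratum $[\mathcal{F}\ell_{G,\mu^{-1}}^{b}/\ul{G(\mathbb{Q}_{p})}]$ given in the text: it is defined as the pullback along $h^{\la}$ of the Harder–Narasimhan stratum $\Bun_{G,C}^{b} \hookrightarrow \Bun_{G,C}$, and by assumption on the compatibility of the vertical maps the fiber square is literally realized as a fiber product of $v$-stacks.

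For the first isomorphism, $i_{b!} h_{b}^{\la*}(V_{b}) \simeq h^{\la*} j_{b!}(V_{b})$, I would invoke proper base change for the lower shriek functor in the setting of étale $\ol{\mathbb{F}}_{\ell}$-sheaves on $v$-stacks, as developed in \cite{Ecod} and used throughout \cite{FS}. This is the general fact that for any Cartesian square of (suitable) $v$-stacks one has a natural isomorphism $g^{*}f_{!} \simeq f'_{!}g'^{*}$; here $f = j_{b}$ is the locally closed immersion of an HN-stratum, and this base change identity requires no further hypotheses on $g = h^{\la}$.

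For the second isomorphism $i_{b*}h_{b}^{\la*}(V_{b}) \simeq h^{\la*}j_{b*}(V_{b})$, proper base change alone is insufficient, so I would instead apply smooth base change for the upper star functor. This is available because $h^{\la}$ is cohomologically smooth by \cite[Theorem~IV.1.19]{FS}, as already recalled in the excerpt. In the formalism of \cite{FS}, for any Cartesian square whose right vertical map is cohomologically smooth, the base-change map $g^{*}f_{*} \to f'_{*}g'^{*}$ is an isomorphism; applying this to our Cartesian square with $f = j_{b}$, $g = h^{\la}$ yields the claim. (Equivalently, since $h^{\la}$ is smooth, one may rewrite $h^{\la*}$ as a shift and twist of $h^{\la!}$, and then deduce the $*$-version from the $!$-version of the base change identity; this route is slightly less clean but avoids explicitly invoking a $*$-base change theorem.)

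I do not anticipate any genuine obstacle: the content is entirely the Cartesian-ness of the square together with the smoothness of $h^{\la}$, both of which are already in place in the text. The only mildly delicate point is to ensure that the required base change theorems apply in the particular non-representable setting of $v$-stacks with classifying-stack quotients by $\ul{G(\mathbb{Q}_{p})}$ and $\mathcal{J}_{b}$, but this is covered uniformly by the six-functor formalism of \cite{Ecod,FS}.
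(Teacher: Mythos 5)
Your proposal is correct and matches the paper's argument: the first isomorphism is proper base change (for $!$-pushforward) around the Cartesian square defining the adic Newton stratum, and the second follows from smooth base change using that $h^{\la}$ is cohomologically smooth (the paper additionally notes it is separated and representable in locally spatial diamonds, citing \cite[Proposition~23.16 (2)]{Ecod}).
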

\begin{proof}
The first isomorphism follows from proper base-change (\cite[Theorem~1.9 (ii)]{Ecod}). For the second isomorphism, we note that 
\[ h^{\la}: [\mathcal{F}\ell_{G,\mu^{-1}}^{\Diamond}/\ul{G(\mathbb{Q}_{p})}] \ra \Bun_{G,C} \]
is cohomologically smooth separated and representable in locally spatial diamonds; therefore, the result follows by smooth base-change \cite[Proposition~23.16 (2)]{Ecod}.
\end{proof}
\begin{remark}{\label{rem: excision}}
In particular, the graded pieces of the filtration 
\[ R\Gamma([\mathcal{F}\ell_{G,\mu^{-1}}^{\Diamond}/\ul{G(\mathbb{Q}_{p})}],i_{b!}i_{b}^{*}(R\pi_{\mathrm{HT}!}(\ol{\mathbb{F}}_{\ell}))) \] on the cohomology of the Shimura variety coming from excision applied to $R\pi_{\mathrm{HT}!}(\ol{\bb{F}}_{\ell})$ and the Newton stratification of the flag variety are identified with $Rh^{\ra}_{*}h^{\la*}j_{b!}(V_{b}) \in \D(G(\mathbb{Q}_{p}),\ol{\mathbb{F}}_{\ell})$, and similarly for $R\Gamma([\mathcal{F}\ell_{G,\mu^{-1}}^{\Diamond}/\ul{G(\mathbb{Q}_{p})}],i_{b*}i_{b}^{*}(R\pi_{\mathrm{HT}!}(\ol{\mathbb{F}}_{\ell})))$ and $Rh^{\ra}_{*}h^{\la*}Rj_{b*}(V_{b})$.
\end{remark}
All in all, we get the following.
\begin{proposition}
Assuming \ref{assump: codim}, we have a filtration on $R\Gamma_{c}(\mathcal{S}(\mathbf{G},X)_{K^{p},C},\ol{\mathbb{F}}_{\ell})$ by complexes of smooth representations of $G(\mathbb{Q}_p)$, with graded pieces isomorphic to $Rh^{\ra}_{*}h^{\la*}j_{b!}(V_{b})$, where $V_{b} := R\Gamma_{c-\partial}(\Ig^{b},\ol{\mathbb{F}}_{\ell}) \in \D(J_{b}(\bb{Q}_{p}),\ol{\bb{F}}_{\ell})$ is regarded as a sheaf on $\Bun_{G}^{b}$ via the natural identificaiton $\D(\Bun_{G}^{b},\ol{\bb{F}}_{\ell}) \simeq \D(J_{b}(\bb{Q}_{p}),\ol{\bb{F}}_{\ell})$.
\end{proposition}

The functor $h^{\ra}_*h^{\la*}(-)$ appearing on the graded pieces is manifestly related to the action on $\D(\Bun_{G},\ol{\mathbb{F}}_{\ell})$ by Hecke operators. In particular, for each geometric dominant cocharacter $\mu \in \domcochar$, we have a correspondence 
\[ \begin{tikzcd}
& & \arrow[dl,"h_{\mu}^{\leftarrow}"] \Hck_{G,\leq \mu} \arrow[dr,"h_{\mu}^{\rightarrow}"] & & \\
& \Bun_{G} & & \Bun_{G} \times \Spd(C)  & 
\end{tikzcd} \]
where $\Hck_{G, \leq \mu}$ is the stack parametrizing modifications $\mathcal{E}_{1} \ra \mathcal{E}_{2}$ of a pair of $G$-bundles with meromorphy bounded by $\mu$ over the fixed untilt defined by $C$. We define the Hecke operator \cite[Section~IX.2]{FS}
\[ T_{\mu}: \D(\Bun_{G},\ol{\mathbb{F}}_{\ell}) \ra   \D(\Bun_{G},\ol{\mathbb{F}}_{\ell})^{BW_{E_{\mu}}} \]
\[ A \mapsto h_{\mu*}^{\ra}(h_{\mu}^{\la*}(A) \otimes^{\mathbb{L}} \mathcal{S}_{\mu}),  \]
where $E_{\mu}$ is the reflex field of $\mu$ and $\mathcal{S}_{\mu}$ is a sheaf on $\Hck_{G,\leq \mu}$ attached to the highest weight tilting module $\mathcal{T}_{\mu}$ by geometric Satake. 
\begin{remark}{\label{rem: HeckeOperatorsWellDefined}}
To see why this operator gives a functor of the desired shape, we note that we have an isomorphism $\D(\Bun_{G},\ol{\bb{F}}_{\ell}) \simeq \D(\Bun_{G} \times \Spd(C),\ol{\bb{F}}_{\ell})$ by \cite[Corollary~V.2.3.]{FS}, and, under this isomorphism, the operator $T_{\mu}$ a priori only defines a functor $\D(\Bun_{G},\ol{\bb{F}}_{\ell}) \ra \D(\Bun_{G},\ol{\bb{F}}_{\ell})$; however, it follows from \cite[Corollary~IX.2.3]{FS} and the surrounding discussion that this factors through the forgetful functor $\D(\Bun_{G},\ol{\bb{F}}_{\ell})^{BW_{E_{\mu}}} \ra \D(\Bun_{G},\ol{\bb{F}}_{\ell})$ forgetting the continuous $W_{E_{\mu}}$-action. 
\end{remark}
If we now let $\mu$ be the minuscule cocharacter appearing above then the Bialynicki-Birula map gives an isomorphism of diamonds between the open locus of $\Hck_{G,\leq \mu}$ where $\mathcal{E}_{1}$ is isomorphic to the trivial bundle and $[\mathcal{F}\ell_{G,\mu^{-1}}^{\Diamond}/\ul{G(\mathbb{Q}_{p})}]$, which identifies $h_{\mu}^{\ra}$ (resp. $h_{\mu}^{\la})$) with $h^{\ra}$ (resp. $h^{\la}$). Moreover, this is a cohomologically smooth space of dimension $d := \langle 2\rho_{G},\mu \rangle$, and we have an isomorphism $\mathcal{S}_{\mu} \simeq \ol{\mathbb{F}}_{\ell}[d](\frac{d}{2})$\footnote{This is true for Satake sheaf attached to the highest weight module $V_{\mu}$ by definition and this agrees with the highest weight tilting module $\mathcal{T}_{\mu}$, since $\mu$ is minuscule.}. It follows, by proper base-change, that we have an isomorphism
\[ h^{\ra}_*h^{\la*}j_{b!}(V_{b}) \simeq j_{1}^{*}T_{\mu}j_{b!}(V_{b})[-d](-\frac{d}{2}) \]
of $G(\mathbb{Q}_{p})$-modules, where $1 \in B(G)$ denotes the trivial element. This gives us Theorem \ref{thm: mantprodform}.
\begin{corollary}{\label{cor: filtheckeops}}
Assuming \ref{assump: codim}, the complex $R\Gamma_{c}(\mathcal{S}(\mathbf{G},X)_{K^{p},C},\ol{\mathbb{F}}_{\ell})$ has a $G(\mathbb{Q}_{p})$-equivariant filtration with graded pieces given by $j_{1}^{*}T_{\mu}j_{b!}(V_{b})[-d](-\frac{d}{2})$ for varying $b \in B(G,\mu)$, where $V_{b} \simeq R\Gamma_{c-\partial}(\Ig^{b},\ol{\mathbb{F}}_{\ell})$ and we implicitly identify $T_{\mu}$ with its postcomposition with the forgetful functor $\D(\Bun_{G},\ol{\bb{F}}_{p})^{BW_{E_{\mf{p}}}} \ra \D(\Bun_{G},\ol{\bb{F}}_{p})$. 
\\\\
Moreover, we obtain that each graded piece is isomorphic to
\[ (R\Gamma_{c}(G,b,\mu) \otimes^{\mathbb{L}}_{\mathcal{H}(J_{b})} V_{b})[2d_{b}] \]
as $G(\mathbb{Q}_{p})$-modules. Here 
\[ R\Gamma_{c}(G,b,\mu) := \colim_{K_{p} \ra \{1\}} R\Gamma_{c}(\Sht(G,b,\mu)_{\infty,C}/\ul{K_{p}},\ol{\mathbb{F}}_{\ell}(d_{b})) \]
is a complex of $G(\mathbb{Q}_{p}) \times J_{b}(\mathbb{Q}_{p}) \times W_{E_{\mf{p}}}$-modules, where $\Sht(G,b,\mu)_{\infty,C}$ is as defined above, and $\ol{\mathbb{F}}_{\ell}(d_{b})$ is the sheaf with $J_{b}(\mathbb{Q}_{p})$-action given as in \cite[Lemma~7.4]{Ko}.
\end{corollary}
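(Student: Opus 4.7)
The plan is to combine the fundamental identification (\ref{eq:isom1}), namely $R\Gamma_c(\mathcal{S}_{K^p,C},\ol{\mathbb{F}}_\ell)\simeq h^{\ra}_{*}R\pi_{\mathrm{HT}!}(\ol{\mathbb{F}}_\ell)$, with an excision filtration on $[\mathcal{F}\ell_{G,\mu^{-1}}/\ul{G(\mathbb{Q}_p)}]$ built from the locally closed stratification $i_b\colon [\mathcal{F}\ell^b_{G,\mu^{-1}}/\ul{G(\mathbb{Q}_p)}]\hookrightarrow [\mathcal{F}\ell_{G,\mu^{-1}}/\ul{G(\mathbb{Q}_p)}]$ indexed by $b\in B(G,\mu)$, ordered compatibly with the closure order on Newton strata. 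Applying $h^{\ra}_{*}$ to the excision triangles for $R\pi_{\mathrm{HT}!}(\ol{\mathbb{F}}_\ell)$ produces a $G(\mathbb{Q}_p)\times W_{E_{\mf{p}}}$-equivariant filtration on $R\Gamma_c(\mathcal{S}_{K^p,C},\ol{\mathbb{F}}_\ell)$ whose graded pieces are the complexes $h^{\ra}_{*} i_{b!}i_b^{*}R\pi_{\mathrm{HT}!}(\ol{\mathbb{F}}_\ell)$.

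Next I would identify each graded piece concretely. Proper base change in the Cartesian diagram (\ref{eqn:manpdtfor2}) gives $i_b^{*}R\pi_{\mathrm{HT}!}(\ol{\mathbb{F}}_\ell)\simeq h_b^{\la*}R\pi^{b,\partial}_{\mIg!}(\ol{\mathbb{F}}_\ell)$, and Corollary \ref{cor: stalkspartial} rewrites the right-hand side as $h_b^{\la*}(V_b)$ with $V_b=R\Gamma_{c-\partial}(\Ig^b,\ol{\mathbb{F}}_\ell)$ viewed as a smooth $J_b(\mathbb{Q}_p)$-module (equivalently, a sheaf on $\Bun_G^b\simeq [\Spd(C)/\mathcal{J}_b]$). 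Lemma \ref{lem: excisioncomparison} then converts $i_{b!}h_b^{\la*}(V_b)$ into $h^{\la*}j_{b!}(V_b)$, so the graded piece becomes $h^{\ra}_{*}h^{\la*}j_{b!}(V_b)$. The translation to the Hecke operator uses the Bialynicki-Birula isomorphism, which identifies the open locus $\mathcal{E}_1\simeq \mathcal{E}_0$ in $\Hck_{G,\leq\mu}$ with $[\mathcal{F}\ell_{G,\mu^{-1}}/\ul{G(\mathbb{Q}_p)}]$ and matches $h^{\la},h^{\ra}$ with $h_\mu^{\la},h_\mu^{\ra}$; since $\mu$ is minuscule this locus is cohomologically smooth of dimension $d=\langle 2\rho_G,\mu\rangle$ and $\mathcal{S}_\mu\simeq \ol{\mathbb{F}}_\ell[d](d/2)$ there, yielding $h^{\ra}_{*}h^{\la*}j_{b!}(V_b)\simeq j_1^{*}T_\mu(j_{b!}(V_b))[-d](-d/2)$. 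The $W_{E_{\mf{p}}}$-equivariance propagates because every Cartesian square above descends to $\Breve{E}_{\mf{p}}$ and is compatible with the Frobenius descent datum on $\Sht(G,b,\mu)_{\infty,C}\to \Spd(\Breve{E}_{\mf{p}})$ coming from \cite{CS1}.

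For the second, more explicit form of the graded piece, the plan is to unwind $j_1^{*}T_\mu(j_{b!}(V_b))$ using the local shtuka spaces. The Cartesian square (\ref{eqn:localHT}) exhibits $\Sht(G,b,\mu)_{\infty,C}$ as a $\mathcal{J}_b$-torsor over $\mathcal{F}\ell^b_{G,\mu^{-1}}$, and hence its compactly supported cohomology $R\Gamma_c(G,b,\mu)$, equipped with the twist $\ol{\mathbb{F}}_\ell(d_b)$ of \cite[Lemma~7.4]{Ko} to ensure smoothness of the $J_b(\mathbb{Q}_p)$-action, naturally carries a $G(\mathbb{Q}_p)\times J_b(\mathbb{Q}_p)\times W_{E_{\mf{p}}}$-module structure. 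Descending the push-pull $h^{\ra}_{*}h^{\la*}j_{b!}(-)$ through this torsor turns the computation into the derived tensor product $R\Gamma_c(G,b,\mu)\otimes^{\mathbb{L}}_{\mathcal{H}(J_b)}V_b$, and bookkeeping of the cohomological dimensions $d_b=\langle 2\rho_G,\nu_b\rangle$ of $\Bun_G^b$ together with the Tate twist coming from $\mathcal{S}_\mu$ accounts for the shift $[2d_b]$ in the claimed formula.

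The step I expect to be most delicate is the last one: translating the abstract push-pull $j_1^{*}T_\mu j_{b!}(V_b)$ into the explicit derived tensor product with $R\Gamma_c(G,b,\mu)$, ensuring that the $\mathcal{H}(J_b)$-bimodule structure is the correct one and that the shifts/twists coming from $\mathcal{S}_\mu$, the cohomological dimension of $\Bun_G^b$, and the $\ol{\mathbb{F}}_\ell(d_b)$-normalization all recombine cleanly; in particular one must verify that the presentation of $\Bun_G^b$ as $[\Spd(C)/\mathcal{J}_b]$ (which is cohomologically smooth of $\ell$-dimension $-d_b$) interacts with the derived tensor product so as to produce the stated $[2d_b]$ shift, without extra Weil-group contributions.
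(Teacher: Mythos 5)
Your proposal is correct and follows essentially the same route as the paper: the filtration and the identification of the graded pieces with $j_{1}^{*}T_{\mu}(j_{b!}(V_{b}))[-d](-\frac{d}{2})$ are obtained exactly as you describe from excision, proper base change in the diagram (\ref{eqn:manpdtfor2}), Corollary \ref{cor: stalkspartial}, Lemma \ref{lem: excisioncomparison}, and Bialynicki--Birula. The ``delicate'' last step you flag is resolved in the paper by citing \cite[Proposition~11.12]{Ham2}, which produces an extra twist by the character $\kappa$ of $J_{b}(\mathbb{Q}_{p})$ acting on the compactly supported cohomology of $\mathcal{J}_{b}^{>0}$, and this $\kappa^{-1}$ is precisely what is absorbed into the normalization $\ol{\mathbb{F}}_{\ell}(d_{b})$ via \cite[Lemma~7.6]{Ko}.
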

\begin{proof}
It remains to explain the description of $j_{1}^{*}T_{\mu}j_{b!}(V_{b})[-d](-\frac{d}{2})$. By applying the second part of \cite[Proposition~10.3]{Ham2} and noting that $\mathcal{S}_{\mu} \simeq \ol{\mathbb{F}}_{\ell}[d](\frac{d}{2})$ since $\mu$ is minuscule (where we recall that, since $\mu$ is minuscule, the representation $\mathcal{T}_{\mu}$ agrees with the usual highest weight representation), we obtain that the graded pieces are isomorphic to 
\[ \colim_{K_{p} \ra \{1\}} (R\Gamma_{c}(\Sht(G,b,\mu)_{\infty,C}/\ul{K_{p}},\ol{\mathbb{F}}_{\ell}) \otimes^{\mathbb{L}}_{\mathcal{H}(J_{b})} V_{b} \otimes \kappa^{-1})[2d_{b}]  \]
as desired, where $\kappa$ is the character of $J_{b}(\mathbb{Q}_{p})$ defined by the action of $J_{b}(\mathbb{Q}_{p})$ on the compactly supported cohomology of the $\ell$-adically contractible group diamond $\mathcal{J}_{b}^{> 0}$, where $\mathcal{J}_{b} \simeq \mathcal{J}_{b}^{> 0} \ltimes J_{b}(\mathbb{Q}_{p})$ is the semi-direct product structure given by allowing $\mathrm{Aut}(\mathcal{E}_{b})$ to act on its canonical reduction. However, by combining this with \cite[Lemma~7.6]{Ko} and its proof, we can rewrite this as 
\[ (\colim_{K_{p} \ra \{1\}} R\Gamma_{c}(\Sht(G,b,\mu)_{\infty,C}/\ul{K_{p}},\ol{\mathbb{F}}_{\ell}(d_{b})) \otimes^{\mathbb{L}}_{\mathcal{H}(J_{b})} V_{b})[2d_{b}], \]
as desired (See \cite[Corollary~1.9 (1)]{HamannImaiDualizing}).
\end{proof}
\begin{remark}{\label{rem: Weilgroupaction}}
The analogous statement involving the full Hecke operator $T_{\mu}: \D(\Bun_{G},\ol{\bb{F}}_{\ell}) \ra \D(\Bun_{G},\ol{\bb{F}}_{\ell})^{BW_{E_{\mu}}}$ and the natural Weil group action on $R\Gamma_{c}(\mathcal{S}(\mathbf{G},X)_{K^{p},C})$ also holds. The process of matching the actions of the inertia group $I_{E_{\mf{p}}} \subset W_{E_{\mf{p}}}$ is fairly straight forward; in particular, one just needs to check that all the above Cartesian diagrams descend to $\Breve{E}_{\mf{p}}$. The process of matching the Frobenius descent datum is a bit tricky however so we do not address this here; for a relevant discussion of this see \cite[Sections~8.2,8.4.9]{MingjiaPolI}. 
\end{remark}
\section{The Local Results}
In this section, we establish the key local results (Corollaries \ref{cor: appliedperversetexactness},\ref{cor: appliedsplitofsemiorthog}) necessary for the proof of Theorem \ref{theorem: mainthm}. We will let $G/\mathbb{Q}_{p}$ be a quasi-split connected reductive group with a choice of Borel $B$ and maximal torus $T$ as before. For the application of these local results to Theorem \ref{theorem: mainthm}, we will in practice take $G$ to be the base-change of the global group $\mathbf{G}$ to $\bb{Q}_{p}$.

More specifically, for a semi-simple $L$-parameter $\phi$, in \S 4.1, we review the spectral action of the category of quasi-coherent sheaves on the stack of L-parameters on $\D(\Bun_{G})$, as well as its relationship to the action of Hecke operators on $\D(\Bun_{G})$. This will allow us to introduce a certain localization functor $\D(\Bun_{G}) \ra \D(\Bun_{G})_{\phi}$ that Hansen will study more extensively in Appendix \ref{append: spectralactionproperties}. We establish some basic properties of the localization functor that will be used in the later sections, and then turn our attention to studying the derived category $\D(\Bun_{G})_{\phi}$ for $\phi$ induced from a generic toral L-parameter. In particular, in \S 4.2, we discuss how, assuming the Fargues-Scholze local Langlands correspondence is compatible with a suitably nice form of local Langlands, one can show that every object in $\D(\Bun_{G})_{\phi}$ can be described in terms of constituents of certain explicit parabolic inductions. We then collect in \S 4.2.2 various instances for which this compatibility holds, using the results of \cite{BMNH,Ham2,HKW}. 

In \S 4.3, we turn to the perverse $t$-exactness of the Hecke operators on the subcategory $\D(\Bun_{G})_{\phi}$, explaining how the theory of geometric Eisenstein series allows one to verify the perverse t-exactness for the subcategory spanned by the inductions described in \S 4.2, under some possible constraints including the genericity and regularity of a toral parameter $\phi_{T}$ inducing $\phi$. This will allow us to prove our main results under some possible additional constraints. In \S 4.4, we study the relationship between the genericity and these additional constraints, for the groups for which compatibility of the Fargues-Scholze local Langlands is known, showing that the additional constraints always follow from genericity in the relevant cases. This will allow us to conclude the precise form of our main local results.
\subsection{The Spectral Action}{\label{subsection: SpectralActionSection}}
We will work with $\D(\Bun_{G},\ol{\mathbb{F}}_{\ell})$, the derived category of \'etale $\ol{\mathbb{F}}_{\ell}$-sheaves on the moduli stack of $G$-bundles. Our goal in this section will be to describe a localization $\D(\Bun_{G},\ol{\mathbb{F}}_{\ell})_{\phi_{\mf{m}}} \subset \D(\Bun_{G},\ol{\mathbb{F}}_{\ell})$ for $\mf{m} \subset H_{K_{p}^{\mathrm{hs}}}$ a generic maximal ideal with associated semi-simple L-parameter $\phi_{\mf{m}}$ in the case that $G$ is unramified. We will do this in slightly more generality using the spectral action \cite[Section~X.2]{FS}. Throughout this section, we will assume that $\ell \nmid \pi_{0}(Z(G))$ where $Z(G)$ denotes the center of $G$, so that we have access to the spectral action constructed in \cite[Theorem~I.10.1]{FS}. In particular, we consider the moduli stack $\mf{X}_{\hat{G}}/\Spec{\ol{\mathbb{F}}_{\ell}}$ of $\ol{\mathbb{F}}_{\ell}$-valued Langlands parameters, as defined in \cite{DH,Zhu1}, and let $\Perf(\mf{X}_{\hat{G}})$ denote the stable $\infty$-category of perfect complexes on this stack. We write $\Perf(\mf{X}_{\hat{G}})^{BW_{\mathbb{Q}_{p}}^{I}}$ for the stable $\infty$-category of objects with a continuous $W_{\mathbb{Q}_{p}}^{I}$ action for a finite index set $I$ (as defined in \cite[Section~IX.1]{FS}), and $\D(\Bun_{G},\ol{\mathbb{F}}_{\ell})^{\omega}$ for the stable idempotent complete sub-category of compact objects in $\D(\Bun_{G},\ol{\mathbb{F}}_{\ell})$. By \cite[Corollary~X.I.3]{FS}, there exists an $\ol{\mathbb{F}}_{\ell}$-linear action 
\[ \Perf(\mf{X}_{\hat{G}}) \ra \mathrm{End}(\D(\Bun_{G},\ol{\mathbb{F}}_{\ell})^{\omega}) \]
\[ C \mapsto \{A \mapsto C\star A \}\]
which, extending by filtered colimits, gives
\[ \mathrm{Ind}(\Perf(\mf{X}_{\hat{G}})) \ra \mathrm{End}(\D(\Bun_{G},\ol{\mathbb{F}}_{\ell})), \]
where the notation $\mathrm{Ind}(-)$ is the operation described in \cite[Section~5.3]{HTT}. We recall the following basic properties of this action\footnote{These properties follow from the construction, since it is given by \cite[Corollary~X.I.3]{FS} applied to the functorial in $I$ Hecke action of $\Rep_{\ol{\bb{F}}_{\ell}}(\phantom{}^{L}G^{I})$ on $\D(\Bun_{G},\ol{\bb{F}}_{\ell})^{\omega}$, as at the end of \cite[Section~X.3]{FS}. In particular, property (1) immediately follows from this, and property (2) also follows since the action appearing in \cite[Corollary~X.I.3]{FS} is monoidal.}.
\begin{enumerate}
    \item For $V = \boxtimes_{i \in I} V_{i}\in \Rep_{\ol{\mathbb{F}}_{\ell}}(\phantom{}^{L}G^{I})$, there is an attached vector bundle $C_{V} \in \Perf(\mf{X}_{\hat{G}})^{BW_{\mathbb{Q}_{p}}^{I}}$ whose pullback to a $\ol{\mathbb{F}}_{\ell}$-point of $\mf{X}_{\hat{G}}$ corresponding to a (not necessarily semi-simple) $L$-parameter $\tilde{\phi}: W_{\mathbb{Q}_{p}} \ra \phantom{}^{L}G(\ol{\mathbb{F}}_{\ell})$ is the vector space $V$ with $W_{\mathbb{Q}_{p}}^{I}$-action given by $\boxtimes_{i \in I} r_{V_{i}} \circ \tilde{\phi}$, where $r_{V_{i}}: \phantom{}^{L}G \ra \GL(V_{i})$ is morphism defined by $V_{i}$. More specifically, these bundles are given by pulling back $V|_{\hat{G}^{I}}$ along the natural map $\mathfrak{X}_{\hat{G}} \ra [\Spec(\ol{\bb{F}}_{\ell})/\hat{G}]$, where we note that these have a natural $W_{\mathbb{Q}_{p}}^{I}$-equivariant structure coming from the full action of $\phantom{}^{L}G$ on $V$. The endomorphism
    \[ C_{V} \star (-): \D(\Bun_{G},\ol{\mathbb{F}}_{\ell}) \ra \D(\Bun_{G},\ol{\mathbb{F}}_{\ell})^{BW_{\mathbb{Q}_{p}}^{I}} \]
    is the Hecke operator $T_{V}$ attached to $V$.  
    \item The action is monoidal in the sense that, given $C_{1},C_{2} \in \Perf(\mf{X}_{\hat{G}})$, we have a natural equivalence of endofunctors 
    \[ (C_{1} \otimes^{\mathbb{L}} C_{2}) \star (-) \simeq C_{1} \star (C_{2} \star (-)). \]
\end{enumerate}
\begin{remark}
The Hecke operators appearing here are slightly different then the ones discussed in \S \ref{sec: manprodformula}. In particular, for $\mu$ a geometric dominant cocharacter of $G$, we recall that there is a canonical away of attaching a representation of $\phantom{}^{L}G$. Namely, by the procedure of \cite[Lemma~2.1.2]{KottwitzTOO}, the highest weight representation $V_{\mu}$ of $\hat{G}$ descends to a representation of $\hat{G} \rtimes W_{E_{\mu}}$ where $E_{\mu}$ is the reflex field of the cocharacter $\mu$. This can then be induced along the inclusion $W_{E_{\mu}} \subset W_{\mathbb{Q}_{p}}$ to get a full representation of $\phantom{}^{L}G$ which we denote by $V$. Correspondingly, there is a Hecke operator 
\[ T_{V}: \D(\Bun_{G},\ol{\mathbb{F}}_{\ell}) \ra \D(\Bun_{G},\ol{\mathbb{F}}_{\ell})^{BW_{\mathbb{Q}_{p}}} \]
attached to the representation of $\phantom{}^{L}G$. This Hecke operator factors as 
\[T_{\mu}: \D(\Bun_{G},\ol{\mathbb{F}}_{\ell}) \ra \D(\Bun_{G},\ol{\mathbb{F}}_{\ell})^{BW_{E_{\mu}}} \]
composed with the map $\D(\Bun_{G},\ol{\mathbb{F}}_{\ell})^{BW_{E_{\mu}}} \ra  \D(\Bun_{G},\ol{\mathbb{F}}_{\ell})^{BW_{\mathbb{Q}_{p}}}$ induced by induction along the inclusion $W_{E_{\mu}} \subset W_{\mathbb{Q}_{p}}$ (cf. the discussion on \cite[Page~322]{FS}). The Hecke operator $T_{\mu}: \D(\Bun_{G},\ol{\mathbb{F}}_{\ell}) \ra \D(\Bun_{G},\ol{\mathbb{F}}_{\ell})^{BW_{E_{\mu}}}$ was the one appearing in \S \ref{sec: manprodformula}. We will often cite results for Hecke operators $T_{V}$ for $V \in \Rep_{\ol{\mathbb{F}}_{\ell}}(\phantom{}^{L}G)$ to deduce results about the $T_{\mu}$, implicitly using this dictionary in the process.
\end{remark}
If $\phi: W_{\mathbb{Q}_{p}} \ra \phantom{}^{L}G(\ol{\mathbb{F}}_{\ell})$ is a semi-simple $L$-parameter then it defines a closed $\ol{\mathbb{F}}_{\ell}$-point $x$ in the moduli stack of Langlands parameters, which maps to a closed point in the coarse moduli space. We let $\mf{m}_{\phi} \subset \mathcal{O}_{\mf{X}_{\hat{G}}}(\mf{X}_{\hat{G}})$ denote the corresponding maximal ideal. We recall that, for all $f \in \mathcal{O}_{\mf{X}_{\hat{G}}}(\mf{X}_{\hat{G}})$ and $A \in \D(\Bun_{G},\ol{\mathbb{F}}_{\ell})$, one obtains an endomorphism 
\[ A \simeq \mathcal{O}_{\mf{X}_{\hat{G}}} \star A \ra  \mathcal{O}_{\mf{X}_{\hat{G}}} \star A \simeq A \]
induced by multiplication by $f$. Under the description of $\mathcal{O}_{\mf{X}_{\hat{G}}}(\mf{X}_{\hat{G}})$ in terms of the excursion algebra, this encodes the action of the excursion algebra on $\D(\Bun_{G},\ol{\mathbb{F}}_{\ell})$ \cite[Theorem~5.2.1]{Zou}. More precisely, we recall that, to any Schur-irreducible $A \in \D(\Bun_{G},\ol{\mathbb{F}}_{\ell})$ we can, by \cite[Proposition~I.9.3]{FS}, attach a conjugacy class of semi-simple $L$-parameters 
\[ \phi_{A}^{\mathrm{FS}}: W_{\mathbb{Q}_{p}} \ra \phantom{}^{L}G(\ol{\mathbb{F}}_{\ell}) \]
called the Fargues-Scholze parameter of $A$. By \cite[Theorem~VIII.3.6]{FS}, we have an identification between the ring of global functions $\mathcal{O}_{\mf{X}_{\hat{G}}}(\mf{X}_{\hat{G}})$ and the ring of excursion operators. Since $A$ is Schur irreducible the endomorphisms corresponding to $f \in \mathcal{O}_{\mf{X}_{\hat{G}}}(\mf{X}_{\hat{G}})$ determine a non-zero scalar in $\ol{\mathbb{F}}_{\ell}$ which will be determined by the excursion datum evaluated at the Fargues-Scholze parameter $\phi_{A}^{\mathrm{FS}}$.

With this in hand, we can make our key definition.
\begin{definition}
We define $\iota_{\phi}: \D(\Bun_{G},\ol{\mathbb{F}}_{\ell})_{\phi} \hookrightarrow \D(\Bun_{G},\ol{\mathbb{F}}_{\ell})$ to be the full-subcategory of objects $A$ for which the endomorphisms $A \ra A$ induced by $f \in \mathcal{O}_{\mf{X}_{\hat{G}}}(\mf{X}_{\mf{X}_{\hat{G}}}) \setminus \mf{m}_{\phi}$ are isomorphisms.
\end{definition}
It is easy to check that the subcategory $\D(\Bun_{G},\ol{\mathbb{F}}_{\ell})_{\phi} \subset \D(\Bun_{G},\ol{\mathbb{F}}_{\ell})$ is preserved under colimits and limits, and therefore, by the $\infty$-categorical adjoint functor Theorem \cite[Corollary~5.5.2.9]{HTT}, there exists a left adjoint to the inclusion $\iota_{\phi}$ denoted by $\mathcal{L}_{\phi}$ (See Appendix \ref{append: spectralactionproperties} for details). We define $(-)_{\phi} := \iota_{\phi}\mathcal{L}_{\phi}(-)$. This, by the fully faithfulness of $\iota_{\phi}$, will define an idempotent functor. 

We now have the following key Lemma.
\begin{lemma}{\label{lemma: localization map properties}}
The following is true. 
\begin{enumerate}
\item Any Schur irreducible object $A \in \D(\Bun_{G},\ol{\mathbb{F}}_{\ell})_{\phi}$ has Fargues-Scholze parameter equal to $\phi$ as conjugacy classes of parameters. 
\item Given $V \in \Rep_{\ol{\mathbb{F}}_{\ell}}(\phantom{}^{L}G^{I})$, the Hecke operator $T_{V}: \D(\Bun_{G},\ol{\mathbb{F}}_{\ell}) \ra \D(\Bun_{G},\ol{\mathbb{F}}_{\ell})^{BW_{\mathbb{Q}_{p}}^{I}}$ takes the subcategory $\D(\Bun_{G},\ol{\mathbb{F}}_{\ell})_{\phi}$ to $\D(\Bun_{G},\ol{\mathbb{F}}_{\ell})_{\phi}^{BW_{\mathbb{Q}_{p}}^{I}}$, and there is a natural isomorphism $T_{V}((-)_{\phi}) \simeq (T_{V}(-))_{\phi}$.
\item Given $A \in \D(G(\mathbb{Q}_{p}),\ol{\mathbb{F}}_{\ell})$, we have an isomorphism
\[ R\Gamma(K_{p}^{\mathrm{hs}},A)_{\mf{m}} \simeq R\Gamma(K_{p}^{\mathrm{hs}},A_{\phi_{\mf{m}}}), \]
where the LHS is the usual localization under the smooth unramified Hecke algebra $H_{K_{p}^{\mathrm{hs}}}$ at some maximal ideal $\mf{m} \subset H_{K_{p}^{\mathrm{hs}}}$. Here we abuse notation and identify $A \in \D(G(\mathbb{Q}_{p}),\ol{\mathbb{F}}_{\ell})$ with its image under the fully faithful embedding $\D(G(\bb{Q}_{p}),\ol{\bb{F}}_{\ell}) \xrightarrow{j_{1!}} \D(\Bun_{G},\ol{\bb{F}}_{\ell})$.
\item If $A \in \D(\Bun_{G},\ol{\mathbb{F}}_{\ell})$ is ULA then one has a direct sum decomposition
\[ A \simeq \bigoplus_{\phi} A_{\phi} \]
ranging over all semi-simple $L$-parameters.
\end{enumerate}
\end{lemma}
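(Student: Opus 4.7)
My overall plan is to treat $(-)_{\phi} = \iota_{\phi} \circ \mathcal{L}_{\phi}$ as the Bousfield localization obtained by inverting multiplication by the elements of $\mathcal{O}_{\mf{X}_{\hat{G}}}(\mf{X}_{\hat{G}}) \setminus \mf{m}_{\phi}$, and then deduce the four properties from the interplay between this localization and the monoidal structure of the spectral action. For part (1), since $A$ is Schur irreducible every endomorphism is a scalar, and the endomorphism induced by $f \in \mathcal{O}_{\mf{X}_{\hat{G}}}(\mf{X}_{\hat{G}})$ is multiplication by $f(\phi_{A}^{\mathrm{FS}}) \in \ol{\mathbb{F}}_{\ell}$ by the identification of global functions with the excursion algebra \cite[Theorem~VIII.3.6]{FS}, combined with the defining property of the Fargues-Scholze parameter \cite[Proposition~I.9.3]{FS}. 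If a nonzero $A$ lies in $\D(\Bun_{G},\ol{\mathbb{F}}_{\ell})_{\phi}$, then for every $f \notin \mf{m}_{\phi}$ the scalar $f(\phi_{A}^{\mathrm{FS}})$ must be nonzero, so $f \notin \mf{m}_{\phi_{A}^{\mathrm{FS}}}$. This forces $\mf{m}_{\phi_{A}^{\mathrm{FS}}} \subseteq \mf{m}_{\phi}$, hence equality of these maximal ideals, and thus $\phi_{A}^{\mathrm{FS}} = \phi$ as closed points of the coarse moduli of semisimple parameters.

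For part (2), the monoidal property of the spectral action makes each Hecke functor $T_{V} = C_{V} \star (-)$ an $\mathcal{O}_{\mf{X}_{\hat{G}}}(\mf{X}_{\hat{G}})$-linear endofunctor: multiplication by $f$ on $T_{V}(A)$ is the image under $T_{V}$ of multiplication by $f$ on $A$. Hence if $f$ acts as an isomorphism on $A$, it also does on $T_{V}(A)$, so $T_{V}$ preserves $\D(\Bun_{G},\ol{\mathbb{F}}_{\ell})_{\phi}$. The natural isomorphism $T_{V} \circ (-)_{\phi} \simeq (-)_{\phi} \circ T_{V}$ then follows from the universal property of Bousfield localization: $T_{V}(A_{\phi})$ already lies in the subcategory, and the map $T_{V}(A) \to T_{V}(A_{\phi})$ inverts multiplication by every $f \notin \mf{m}_{\phi}$, so it realizes the localization of $T_{V}(A)$.

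For part (3), I would combine two observations. First, $R\Gamma(K_{p}^{\mathrm{hs}},-)$ on smooth representations, viewed inside $\D(\Bun_{G},\ol{\mathbb{F}}_{\ell})$ via the neutral stratum, preserves filtered colimits, and in particular Ore localizations at multiplicative systems. Second, the action of $H_{K_{p}^{\mathrm{hs}}}$ on $R\Gamma(K_{p}^{\mathrm{hs}},A)$ factors through $\mathcal{O}_{\mf{X}_{\hat{G}}}(\mf{X}_{\hat{G}})$ via the mod-$\ell$ Satake isomorphism, and by construction $\phi_{\mf{m}}$ is the semisimple parameter cut out by the pullback of $\mf{m}$, so $\mf{m}$ corresponds exactly to $\mf{m}_{\phi_{\mf{m}}}$ under this map. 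Combining the two, localizing $A$ first and then taking $K_{p}^{\mathrm{hs}}$-invariants, or taking invariants and then localizing at $\mf{m}$, give canonically isomorphic complexes.

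For part (4), which I expect to be the main obstacle, the plan is to exploit the finiteness of the Bernstein/excursion action on ULA objects. By \cite[Theorem~V.7.1]{FS} a ULA object $A$ restricts on each HN-stratum $\Bun_{G}^{b}$ to an admissible $J_{b}(\mathbb{Q}_{p})$-complex, and the invariants under any open compact $K \subset J_{b}(\mathbb{Q}_{p})$ form a perfect complex over $\ol{\mathbb{F}}_{\ell}$. The action of $\mathcal{O}_{\mf{X}_{\hat{G}}}(\mf{X}_{\hat{G}})$ on each such perfect complex factors through a finite-dimensional, and therefore semi-local, quotient, so it decomposes as a direct sum over the finitely many maximal ideals in its support. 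I would then upgrade these local decompositions to a global decomposition $A \simeq \bigoplus_{\phi} A_{\phi}$ in $\Dlis(\Bun_{G},\ol{\mathbb{F}}_{\ell})$ by checking compatibility as $K$ and $b$ vary and using that compact generators of $\Dlis(\Bun_{G},\ol{\mathbb{F}}_{\ell})$ are ULA. The hard part is precisely this gluing step: upgrading stratum-by-stratum, $K$-by-$K$ finiteness into a coherent decomposition of $A$ as a sheaf on all of $\Bun_{G}$, which requires a uniform bound on the support of the Bernstein center action across HN-strata.
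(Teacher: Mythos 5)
Parts (1)--(3) of your proposal follow essentially the paper's own route (the paper delegates (2) and (4) to its appendix, where $A_{\phi}$ is realized as the filtered colimit $\colim_{f \in \mathcal{O}(X_{\hat{G}})\setminus \mf{m}_{\phi}} A$ over the multiplication maps; your universal-property argument for (2) and your filtered-colimit argument for (3) are the same mechanism). One correction in (3): the factorization goes the other way — there is a ring map $\mathcal{O}_{\mf{X}_{\hat{G}}}(\mf{X}_{\hat{G}}) \to H_{K_{p}^{\mathrm{hs}}}^{\mathrm{op}}$ and the $\mathcal{O}$-action on $R\Gamma(K_{p}^{\mathrm{hs}},A) \simeq \RHom(\mathrm{cInd}_{K_{p}^{\mathrm{hs}}}^{G(\mathbb{Q}_{p})}\ol{\mathbb{F}}_{\ell},A)$ factors through the Hecke algebra, not conversely. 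What one actually uses is that $\mf{m}$ pulls back to $\mf{m}_{\phi_{\mf{m}}}$, so localizing at $\mf{m}$ agrees with localizing at $\mf{m}_{\phi_{\mf{m}}}$, and the latter computes $\RHom(\mathrm{cInd}_{K_{p}^{\mathrm{hs}}}^{G(\mathbb{Q}_{p})}\ol{\mathbb{F}}_{\ell},A_{\phi_{\mf{m}}})$ by compactness of $\mathrm{cInd}$.

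The genuine gap is in part (4), and it is exactly the step you flagged. The gluing you envisage cannot be completed as stated, because the ``uniform bound on the support'' across strata and levels does not exist: a ULA sheaf on $\Bun_{G}$ can have infinitely many nonzero localizations $A_{\phi}$ (already for $G = \GL_{1}$, take a sheaf supported on infinitely many components of $\Bun_{\GL_{1}}$ carrying pairwise distinct characters). So the statement is not that finitely many $\phi$ contribute globally; it is that the canonical maps $\bigoplus_{\phi} A_{\phi} \to \prod_{\phi} A_{\phi} \leftarrow A$ are both isomorphisms, and this is what the appendix proves. Its mechanism sidesteps all stratum-by-stratum gluing: for every compact object $C$ of $\D(\Bun_{G},\ol{\mathbb{F}}_{\ell})$, the ULA condition is equivalent to $\RHom(C,A)$ being a perfect complex of $\ol{\mathbb{F}}_{\ell}$-vector spaces, hence a finite-length $\mathcal{O}(X_{\hat{G}})$-module supported at finitely many closed points; therefore $\Hom(C,A) \cong \bigoplus_{\phi}\Hom(C,A)_{\mf{m}_{\phi}} \cong \bigoplus_{\phi}\Hom(C,A_{\phi}) \cong \prod_{\phi}\Hom(C,A_{\phi})$, and compact generation of $\D(\Bun_{G},\ol{\mathbb{F}}_{\ell})$ finishes the proof. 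Relatedly, your auxiliary claim that the compact generators are ULA is not true (the generators $j_{b!}\mathrm{cInd}_{K}^{J_{b}(\mathbb{Q}_{p})}\ol{\mathbb{F}}_{\ell}$ are compact but not admissible) and is not what is needed; the relevant input is the duality between compactness and the ULA property expressed by the perfectness of $\RHom(C,A)$.
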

\begin{proof}
Claims (2) and (4) follow from Proposition \ref{prop:localbasics} and Proposition \ref{prop:localizeula}, respectively, where for claim (2) we use the relationship between Hecke operators and the spectral action described above.

For (1), this follows since the action of $\mathcal{O}_{\mf{X}_{\hat{G}}}(\mf{X}_{\hat{G}})$ on $A$ will factor through the maximal ideal $\mf{m}_{A}$ defined by the semi-simple $L$-parameter $\phi_{A}^{\mathrm{FS}}$ attached to $A$ by the above discussion, and therefore having $A \in \D(\Bun_{G},\ol{\mathbb{F}}_{\ell})_{\phi}$ forces an equality of maximal ideals: $\mf{m}_{A} = \mf{m}_{\phi}$.

For (3), we use the arguments in Koshikawa \cite[Page~6]{Ko}. Consider the map
\[ \mathcal{O}_{\mf{X}_{\hat{G}}}(\mf{X}_{\hat{G}}) \ra \mathrm{End}_{G(\mathbb{Q}_{p})}(\mathrm{cInd}_{K_{p}^{\mathrm{hs}}}^{G(\mathbb{Q}_{p})}(\ol{\mathbb{F}}_{\ell})) \simeq H_{K_{p}^{\mathrm{hs}}}^{\mathrm{op}} \]
given by the spectral action, where $\mathrm{cInd}_{K_{p}^{\mathrm{hs}}}^{G(\mathbb{Q}_{p})}(\ol{\mathbb{F}}_{\ell})$ is regarded as a right $H_{K_{p}^{\mathrm{hs}}}$-module. It is shown that the usual action by the unramified Hecke algebra composed with the involution $KhK \ra Kh^{-1}K$ gives rise to a map which is compatible with usual $L$-parameters for unramified irreducible representations. In particular, the pullback of the maximal ideal $\mf{m} \subset H_{K_{p}^{\mathrm{hs}}}$ is given by the maximal ideal $\mf{m}_{\phi_{\mf{m}}} \subset \mathcal{O}_{X_{\hat{G}}}(\mf{X}_{\hat{G}})$. Now, by arguing as in Proposition \ref{prop:localizenaive}, we have an identification: 
\[ \RHom(\mathrm{cInd}_{K_{p}^{\mathrm{hs}}}^{G(\mathbb{Q}_{p})}(\ol{\mathbb{F}}_{\ell}),A_{\phi_{\mf{m}}}) \simeq \RHom(\mathrm{cInd}_{K_{p}^{\mathrm{hs}}}^{G(\mathbb{Q}_{p})}(\ol{\mathbb{F}}_{\ell}),A)_{\mf{m}_{\phi_{\mf{m}}}}. \]
Using Frobenius reciprocity, this gives an identification: 
\[ R\Gamma(K_{p}^{\mathrm{hs}},A_{\phi_{\mf{m}}}) \simeq R\Gamma(K_{p}^{\mathrm{hs}},A)_{\mf{m}_{\phi_{\mf{m}}}}, \]
but the RHS identifies with $R\Gamma(K_{p}^{\mathrm{hs}},A)_{\mf{m}}$, as explained above. 
\end{proof}

We note that we get the following Corollary of this. 
\begin{corollary}{\label{cor: appliedspecdecomp}}
Let $A$ be a complex of smooth $G(\mathbb{Q}_{p})$-representations which is admissible (i.e $A^{K}$ is a perfect complex for all open pro-$p$ $K \subset G(\mathbb{Q}_{p})$). We then have a decomposition 
\[ A \simeq \bigoplus_{\phi} A_{\phi} \]
running over semisimple $L$-parameters, where any irreducible constituent $\pi$ of $A_{\phi}$ has Fargues-Scholze parameter equal to $\phi$, as conjugacy classes of parameters. 
\end{corollary}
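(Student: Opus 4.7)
The plan is to derive this as a near-immediate consequence of Lemma \ref{lemma: localization map properties}. First I would reinterpret the admissibility hypothesis geometrically: by \cite[Theorem~V.7.1]{FS} together with the identification $\Bun_{G}^{1} \simeq [\ast/\underline{G(\mathbb{Q}_{p})}]$, the condition that $A^{K}$ be a perfect complex for every open compact $K \subset G(\mathbb{Q}_{p})$ is equivalent to $A$ being ULA when viewed as an object of $\D(\Bun_{G}^{1},\ol{\mathbb{F}}_{\ell})$. Extending by $j_{1!}$ along the open immersion $j_{1}: \Bun_{G}^{1} \hookrightarrow \Bun_{G}$ places $A$ in $\DULA(\Bun_{G},\ol{\mathbb{F}}_{\ell})$, at which point the direct sum decomposition $A \simeq \bigoplus_{\phi} A_{\phi}$ over semisimple $L$-parameters $\phi$ is immediate from Lemma \ref{lemma: localization map properties}(4).

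The second step is to pin down the irreducible constituents. Let $\pi$ be an irreducible subquotient of some $H^{i}(A_{\phi})$. Since $A_{\phi} \in \D(\Bun_{G},\ol{\mathbb{F}}_{\ell})_{\phi}$ by construction, every $f \in \mathcal{O}_{\mf{X}_{\hat{G}}}(\mf{X}_{\hat{G}}) \setminus \mf{m}_{\phi}$ acts on $A_{\phi}$ as an isomorphism via the spectral action, and hence also on each $H^{i}(A_{\phi})$. As this action is central with respect to the $G(\mathbb{Q}_{p})$-structure, it descends canonically to $\pi$, where by Schur's lemma it acts by a scalar. By the compatibility of the spectral action with Fargues--Scholze parameters recalled in the discussion before Definition \ref{def: generic} (using \cite[Theorem~VIII.3.6]{FS}), this scalar is the value of $f$ at the $\ol{\mathbb{F}}_{\ell}$-point of $\mf{X}_{\hat{G}}$ determined by $\phi_{\pi}^{\mathrm{FS}}$. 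For the induced endomorphism on the subquotient $\pi$ to be an isomorphism, the scalar must be nonzero, i.e.\ $f \notin \mf{m}_{\phi_{\pi}^{\mathrm{FS}}}$. Running this over all $f \notin \mf{m}_{\phi}$ gives $\mf{m}_{\phi_{\pi}^{\mathrm{FS}}} \subseteq \mf{m}_{\phi}$, and maximality of both ideals forces equality, so $\phi_{\pi}^{\mathrm{FS}} = \phi$ as conjugacy classes.

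The main point requiring care is the passage from invertibility of $f$ on $H^{i}(A_{\phi})$ to invertibility on an irreducible subquotient $\pi$: centrality automatically makes $f$ descend along a $G(\mathbb{Q}_{p})$-equivariant composition series, but invertibility on the whole need not a priori transfer to a subquotient. I expect this to follow from the fact that the spectral Bernstein center acts diagonally along the composition series of an admissible smooth representation, with diagonal entries read off from the FS parameters of the constituents; invertibility on the total object then amounts to pointwise nonvanishing of each scalar and so transfers to every subquotient. Alternatively, and perhaps more cleanly, one bypasses this by directly invoking Lemma \ref{lemma: localization map properties}(1) applied to the Schur-irreducible $\pi$: since $\pi$ is a constituent of the localized complex $A_{\phi}$ it cannot be annihilated by the idempotent functor $(-)_{\phi}$, so $\pi \simeq \pi_{\phi}$, and Lemma \ref{lemma: localization map properties}(1) then gives $\phi_{\pi}^{\mathrm{FS}} = \phi$.
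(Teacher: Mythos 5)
Your proposal is essentially the paper's own proof, which simply applies Lemma \ref{lemma: localization map properties} (1) and (4) to $\D(G(\mathbb{Q}_{p}),\ol{\mathbb{F}}_{\ell}) \subset \D(\Bun_{G},\ol{\mathbb{F}}_{\ell})$; your second, ``cleaner'' route is the intended one. The subtlety you flag (passing from invertibility on $H^{i}(A_{\phi})$ to an irreducible subquotient $\pi$) is real but closes immediately: by Proposition \ref{prop:localizeula} the functor $(-)_{\phi}$ is a direct-summand projector on admissible objects, hence exact, so $H^{i}(A_{\phi}) \simeq H^{i}(A)_{\phi}$ is $\phi$-local and so is every subquotient of it (apply the projector to a composition series and use that $M_{\psi} = 0$ for $\psi \neq \phi$); thus $\pi \simeq \pi_{\phi}$ and Lemma \ref{lemma: localization map properties} (1) gives $\phi_{\pi}^{\mathrm{FS}} = \phi$.
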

\begin{proof}
This follows by applying to Lemma \ref{lemma: localization map properties} (1) and (4) to the full subcategory $\D(G(\mathbb{Q}_{p}),\ol{\mathbb{F}}_{\ell}) \subset \D(\Bun_{G},\ol{\mathbb{F}}_{\ell})$, where we recall that a sheaf $A \in \D(\Bun_{G},\ol{\bb{F}}_{\ell})$ lies in the ULA subcategory if and only if its pullbacks $j_{b}^{*}(A) \in \D(\Bun_{G}^{b},\ol{\bb{F}}_{\ell}) \simeq \D(J_{b}(\bb{Q}_{p}),\ol{\bb{F}}_{\ell})$ for all $b \in B(G)$ are admissible as in the statement of the Corollary (\cite[Theorem~V.7.1]{FS}) .
\end{proof}
We now turn our attention to studying the subcategory $\D(\Bun_{G},\ol{\bb{F}}_{\ell})$ in certain cases coming from generic toral parameters, by using compatibility of the Fargues-Scholze local Langlands correspondence with more classical and well understood instances of local Langlands.
\subsection{Local-Global Compatibility of the Fargues-Scholze Local Langlands}
Now our goal is to describe the subcategory $\D(\Bun_{G},\ol{\mathbb{F}}_{\ell})_{\phi}$ more explicitly, using the results of \cite{Ham2} in the case that $\phi$ is induced from a generic toral parameter $\phi_{T}$, as in Definition \ref{def: generic}. To do this, we will need to have some information about the Fargues-Scholze local Langlands correspondence. We first introduce the main result in this direction. We keep our running assumption that $\ell \nmid |\pi_{0}(Z(G))|$, so that the localized category $\D(\Bun_{G},\ol{\mathbb{F}}_{\ell})_{\phi}$ described in the previous section is well-defined.
\subsubsection{Implications of Compatibility for Generic Parameters}
We let $B(G)_{\mathrm{un}} := \Im{(B(T) \ra B(G))}$. We recall that these are precisely the elements $b \in B(G)$ such that the $\sigma$-centralizer $J_{b}$ is quasi-split (\cite[Lemma~2.12]{Ham2}). In particular, the fixed choice of Borel $B \subset G$ transfers to a Borel subgroup $B_{b}$ for all $b \in B(G)_{\mathrm{un}}$, and $J_{b} \simeq M_{b}$ under the inner twisting, where $M_{b} \subset G$ is the Levi subgroup of $G$ determined by the centralizer of the slope homomorphism of $b$ in $G$. We let $\delta_{P_{b}}$ denote the modulus character of the standard parabolic $P_{b}$ of $G$ with Levi factor $M_{b}$ transferred to $J_{b}$ under the inner twisting (See \cite[Definition~3.14]{HamannImaiDualizing} for a more precise discussion, where we use the same conventions for the modulus character as \textit{loc.cit}). We set $W_{b} := W_{G}/W_{M_{b}}$ to be the quotient of the relative Weyl group of $G$ by the relative Weyl group of $M_{b}$. We identify $W_{b}$ with a choice of representatives in $w \in W_{G}$ of minimal length. We set $\rho_{b,w} := i_{B_{b}}^{J_{b}}(\chi^{w}) \otimes \delta_{P_{b}}^{-1/2}$ to be the normalized parabolic induction of $\chi^{w}$, where $\chi$ is the character of $T(\mathbb{Q}_{p})$ attached to a toral parameter $\phi_{T}$ under local class field theory and $\delta_{P_{b}}$ is the modulus character of $M_{b} \simeq J_{b}$ (Here we have slightly abused notation by omitting $\chi$ from the notation, where it will usually be clear from context. If further clarifications is necessary then we will write $\rho_{b,w}^{\chi}$.).

We will need to assume the following properties of the Fargues-Scholze local Langlands correspondence, as in \cite[Assumption~6.5]{Ham2}. 
\begin{assumption}{\label{compatibility}}
For a connected reductive group $H/\mathbb{Q}_{p}$, we have
\begin{itemize} 
\item the set $\Pi(H)$ of smooth irreducible $\ol{\mathbb{Q}}_{\ell}$-representations of $H(\mathbb{Q}_{p})$,
\item the set $\Phi(H)$ of conjugacy classes of continuous  maps 
\[ W_{\mathbb{Q}_{p}} \times \SL(2,\ol{\mathbb{Q}}_{\ell}) \ra \phantom{}^{L}H(\ol{\mathbb{Q}}_{\ell}) \]
where $\ol{\mathbb{Q}}_{\ell}$ has the discrete topology, $\mathrm{SL}(2,\ol{\mathbb{Q}}_{\ell})$ acts via an algebraic representation, $W_{\mathbb{Q}_{p}}$ acts through a semi-simple homorphism (in the sense that if it factors through $\phantom{}^{L}P$ for a proper parabolic $P \subset G$ then it also factors through the associated Levi $\phantom{}^{L}M$), and the map respects the action of $W_{\mathbb{Q}_{p}}$ on $\phantom{}^{L}H(\ol{\mathbb{Q}}_{\ell})$, the $L$-group of $H$,
\item the set $\Phi^{\mathrm{ss}}(H)$ of continuous (where again $\ol{\mathbb{Q}}_{\ell}$ has the discrete topology) semi-simple homomorphisms,
\[ W_{\mathbb{Q}_{p}} \ra \phantom{}^{L}H(\ol{\mathbb{Q}}_{\ell}), \]
\item and the semi-simplification map $(-)^{\mathrm{ss}}: \Phi(H) \ra \Phi^{\mathrm{ss}}(H)$ defined by precomposition with
\[ W_{\mathbb{Q}_{p}} \ra W_{\mathbb{Q}_{p}} \times \mathrm{SL}(2,\ol{\mathbb{Q}}_{\ell}) \]
\[ g \mapsto (g,\begin{pmatrix} |g|^{1/2} & 0 \\ 0 & |g|^{-1/2} \end{pmatrix}). \]
\end{itemize} 
Then, we assume that there exists a collection of maps
\[ \mathrm{LLC}_{b}: \Pi(J_{b}) \ra \Phi(J_{b}) \]
\[ \rho \mapsto \phi_{\rho}, \]
for all $b \in B(G)$, satisfying the following properties:
\begin{enumerate} 
\item The diagram
\[ \begin{tikzcd}[ampersand replacement=\&]
            \Pi(J_{b})  \ar[rr, "\mathrm{LLC}_{b}"] \arrow[drr,"\mathrm{LLC}^{\mathrm{FS}}_{b}"] \& \&   \Phi(J_{b}) \ar[d,"(-)^{\mathrm{ss}}"] \\
            \& \& \Phi^{\mathrm{ss}}(J_{b})
\end{tikzcd}\] 
commutes, where $\mathrm{LLC}_{b}^{\mathrm{FS}}$ is the Fargues-Scholze local Langlands correspondence for $J_{b}$. 
\item For $\rho$ a non-zero smooth irreducible representation of $J_{b}(\bb{Q}_{p})$ for some $b \in B(G)$, consider $\phi_{\rho}$ as an element of $\Phi(G)$ given by composing with the twisted embedding $\phantom{}^{L}J_{b}(\ol{\mathbb{Q}}_{\ell}) \simeq \phantom{}^{L}M_{b}(\ol{\mathbb{Q}}_{\ell}) \ra \phantom{}^{L}G(\ol{\mathbb{Q}}_{\ell})$ (as defined in \cite[Section~IX.7.1]{FS}). Then $\phi_{\rho}$ factors through the natural (up to $\hat{G}$-conjugacy) embedding $\phantom{}^{L}T \ra \phantom{}^{L}G$ if and only if $b \in B(G)_{\mathrm{un}}$. 
\item If $\rho$ is a non-zero representation of $J_{b}(\bb{Q}_{p})$ such that $ W_{\mathbb{Q}_{p}} \times \mathrm{SL}(2,\ol{\mathbb{Q}}_{\ell}) \xrightarrow{\phi_{\rho}} \phantom{}^{L}J_{b}(\ol{\mathbb{Q}}_{\ell}) \ra \phantom{}^{L}G(\ol{\mathbb{Q}}_{\ell})$ factors through the embedding $\phantom{}^{L}T \ra \phantom{}^{L}G$ with induced parameter $\phi_{T}$, then, by (2), the element $b$ is unramified, and we require that $\rho$ is isomorphic to an irreducible subquotient of $i_{B_{b}}^{J_{b}}(\chi^{w}) \otimes \delta_{P_{b}}^{-1/2}$ for $w \in W_{b}$, where $\chi: T(\bb{Q}_{p}) \ra \ol{\bb{Q}}_{\ell}^{*}$ is the character attached to $\phi_{T}$ by class field theory and the notation is as above. Here the map $\phantom{}^{L}T \ra \phantom{}^{L}G$ is the non-twisted embedding and the map $\phantom{}^{L}J_{b} \ra \phantom{}^{L}G$ is the twisted embedding.
\end{enumerate} 
\end{assumption}
The importance of this assumption is that it allows us to deduce the following Proposition. 
\begin{proposition}{\label{prop: constituent proposition}}
Assuming \ref{compatibility}, we have that the following is true for a parameter $\phi$ induced from a generic parameter $\phi_{T}$. Given any Schur-irreducible object $A \in \D(\Bun_{G}^{b},\ol{\mathbb{F}}_{\ell})_{\phi} \subset \D(\Bun_{G}^{b},\ol{\mathbb{F}}_{\ell}) \simeq \D(J_{b}(\mathbb{Q}_{p}),\ol{\mathbb{F}}_{\ell})$ then $A$ is non-zero if and only if $b \in B(G)_{\mathrm{un}}$, and in this case it must be an irreducible sub-quotient of $\rho_{b,w}$, for some $w \in W_{b}$.
\end{proposition}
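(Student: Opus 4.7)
The plan is to transfer the constraint from the $\ol{\mathbb{F}}_\ell$-coefficient Fargues--Scholze parameter of $A$ to an honest Weil--Deligne parameter in characteristic zero, and then read off the desired conclusions from Assumption \ref{compatibility}. As a first step, using the equivalence $\D(\Bun_G^b,\ol{\mathbb{F}}_\ell) \simeq \D(J_b(\mathbb{Q}_p),\ol{\mathbb{F}}_\ell)$ and passing to a cohomology shift, we may assume that $A$ corresponds to a smooth irreducible $\ol{\mathbb{F}}_\ell$-representation $\rho$ of $J_b(\mathbb{Q}_p)$. By Lemma \ref{lemma: localization map properties}~(1), the hypothesis $A \in \D(\Bun_G^b,\ol{\mathbb{F}}_\ell)_\phi$ forces $\phi_\rho^{\mathrm{FS}} = \phi$ as conjugacy classes of $\ol{\mathbb{F}}_\ell$-valued semisimple $L$-parameters of $J_b$, where we regard $\phi$ as a parameter of $J_b$ via the twisted embedding $\phantom{}^L J_b \to \phantom{}^L G$.

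Next, I would lift $\rho$ to an irreducible smooth $\ol{\mathbb{Q}}_\ell$-representation $\tilde\rho$ of $J_b(\mathbb{Q}_p)$ such that $\rho$ is a Jordan--H\"older constituent of the mod-$\ell$ reduction of some invariant $\ol{\mathbb{Z}}_\ell$-lattice in $\tilde\rho$ (such a lift exists by general principles for admissible representations of $p$-adic groups). Because the Fargues--Scholze correspondence is constructed in families over $\mf{X}_{\hat G}$, it is compatible with reduction modulo $\ell$, so $\phi_{\tilde\rho}^{\mathrm{FS}}$ is a semisimple $\ol{\mathbb{Q}}_\ell$-valued parameter whose mod-$\ell$ reduction is conjugate to $\phi$. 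In particular, $\phi_{\tilde\rho}^{\mathrm{FS}}$ is itself induced from a toral lift $\tilde\phi_T$ of $\phi_T$, which remains generic in the sense of Definition \ref{def: generic}.

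By Assumption \ref{compatibility}~(1), $\phi_{\tilde\rho}^{\mathrm{FS}}$ is the semisimplification of the Weil--Deligne parameter $\phi_{\tilde\rho} \in \Phi(J_b)$ attached to $\tilde\rho$ under the expected correspondence $\mathrm{LLC}_b$. The key input is that genericity of $\tilde\phi_T$ forces the monodromy operator $N$ of $\phi_{\tilde\rho}$ to vanish: any non-trivial $\mathrm{SL}(2,\ol{\mathbb{Q}}_\ell)$-factor would produce a Weil-equivariant nilpotent element in the Lie algebra of $\hat G$ sitting in some coroot direction $\alpha$, and the existence of such an element is obstructed by the vanishing of $R\Gamma(W_{\mathbb{Q}_p},\alpha\circ\tilde\phi_T)$ (this is the argument sketched in the introduction and carried out in detail in \cite{Ham2}). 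Hence $\phi_{\tilde\rho} = \phi_{\tilde\rho}^{\mathrm{FS}}$ factors through $\phantom{}^L T$ after the twisted embedding.

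At this point, the conclusions are immediate from Assumption \ref{compatibility}. Part (2) forces $b \in B(G)_{\mathrm{un}}$, proving the only-if direction (and hence the vanishing of $A$ when $b \notin B(G)_{\mathrm{un}}$). Part (3) then identifies $\tilde\rho$ as an irreducible constituent of $\rho_{b,w,\tilde\chi}$ for some $w \in W_b$ and $\tilde\chi$ the character attached to $\tilde\phi_T$. Passing to mod-$\ell$ reductions, $\rho$ appears as an irreducible subquotient of $\rho_{b,w}$ attached to the character $\chi$ of $\phi_T$, as claimed. The main obstacle in the argument is the "genericity implies no monodromy" step; everything else is bookkeeping between $\ol{\mathbb{F}}_\ell$- and $\ol{\mathbb{Q}}_\ell$-coefficients and a direct unwinding of Assumption \ref{compatibility}.
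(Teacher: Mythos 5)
Your overall strategy---pin down $\phi_{\rho}^{\mathrm{FS}}$ via Lemma \ref{lemma: localization map properties}~(1), pass to characteristic zero, and then unwind Assumption \ref{compatibility}~(2),(3)---is the same one the paper uses (its proof is a citation to the proof of \cite[Corollary~7.7]{Ham2} together with that lemma). But there is a genuine gap at the bridge from $\ol{\mathbb{F}}_{\ell}$ to $\ol{\mathbb{Q}}_{\ell}$. The assertion that $\phi_{\tilde\rho}^{\mathrm{FS}}$ ``is itself induced from a toral lift $\tilde\phi_{T}$ of $\phi_{T}$'' does not follow from the fact that its mod-$\ell$ reduction is $\phi$: a semisimple characteristic-zero parameter can be elliptic while its reduction is toral and generic. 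Concretely, for $J_{b}=\GL_{2}$ take $K/\mathbb{Q}_{p}$ quadratic and a character $\theta$ of $W_{K}$ with $\theta\neq\theta^{c}$ but $\ol{\theta}=\ol{\theta}^{c}$; then $\mathrm{Ind}_{W_{K}}^{W_{\mathbb{Q}_{p}}}\theta$ is irreducible, while its reduction is $\chi_{1}\oplus\chi_{2}$ with $\chi_{1}\chi_{2}^{-1}=\eta_{K}$ the quadratic character attached to $K$, and this is generic whenever $\eta_{K}\notin\{1,\ol{\omega}^{\pm1}\}$ ($\ol{\omega}$ the mod-$\ell$ cyclotomic character)---e.g.\ always when $K$ is ramified and $\ell\neq 2$. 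In that situation your lift $\tilde\rho$ is supercuspidal with irreducible parameter, the hypotheses of Assumption \ref{compatibility}~(2),(3) are not met, and your chain of deductions produces nothing---even though the conclusion of the proposition remains true, because such a $\rho$ is a cuspidal non-supercuspidal mod-$\ell$ representation and genuinely occurs as a subquotient of $i_{B}^{G}(\chi)$. The passage to characteristic zero therefore has to be organized differently (this is what the cited argument in \cite{Ham2} does), for instance through the mod-$\ell$ supercuspidal support of $\rho$ and the compatibility of the Fargues--Scholze correspondence with parabolic induction, rather than through a lift of $\rho$ itself.

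A secondary problem is the very first move: the existence of an integral irreducible $\ol{\mathbb{Q}}_{\ell}$-representation $\tilde\rho$ whose reduction contains $\rho$ is not a ``general principle'' for $p$-adic groups. It is a theorem of Vign\'eras for $\GL_{n}$ and its inner forms, but for general reductive groups in non-banal characteristic even the liftability of supercuspidals is open, and the proposition is stated for any $G$ satisfying Assumption \ref{compatibility}. The remaining ingredients---compatibility of the Fargues--Scholze construction with reduction mod $\ell$, the propagation of genericity to toral lifts, and the ``genericity kills monodromy'' step---are correct and correctly attributed, but they only become usable once the two issues above are repaired.
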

\begin{proof}
This follows from combining the proof of \cite[Corollary~6.7]{Ham2} with Lemma \ref{lemma: localization map properties} (1).    
\end{proof}
Since we want some flexibility in the groups for which we have the above results, we discuss how Assumption \ref{compatibility} behaves under central isogenies, and then spell out the main relevant cases for which this assumption holds.
\subsubsection{Assumption \ref{compatibility} under Central Isogenies}
We consider an injective map $\psi: G' \hookrightarrow G$ of connected reductive groups which induces an isomorphism of derived groups, and the induced map $\psi_{\Bun}: B(G') \ra B(G)$ on the associated Kottwitz sets. We now have the following Lemma.
\begin{lemma}{\label{lemma: adjderiso}}
If $\psi: G' \ra G$ is an injective map which induces an isomorphism on derived groups then it follows that $\psi_{\Bun}: B(G') \ra B(G)$ induces an injection $\psi_{b'}: J_{b'} \ra J_{b}$ for all $b = \psi_{\Bun}(b')$ and $b' \in B(G')$, which is an isomorphism on the derived groups.
\end{lemma}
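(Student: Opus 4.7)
The plan is to exploit the fact that $G/\psi(G')$ is commutative. Since $\psi$ is injective and induces an isomorphism on adjoint quotients, the composition $G' \xrightarrow{\psi} G \twoheadrightarrow G^{\mathrm{ad}}$ is surjective with kernel $Z(G')$, so $G = \psi(G') \cdot Z(G)$ and the quotient $T := G/\psi(G')$ is a commutative algebraic group over $\mathbb{Q}_{p}$.

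For $b' \in G'(\Breve{\mathbb{Q}}_{p})$ with $b = \psi(b')$, functoriality of the $\sigma$-centralizer construction yields a homomorphism of algebraic $\mathbb{Q}_{p}$-groups $J_{b'} \to J_{b}$, which is injective since $\psi$ is. To analyze the quotient, I would consider the composition $J_{b} \hookrightarrow G_{\Breve{\mathbb{Q}}_{p}} \twoheadrightarrow T_{\Breve{\mathbb{Q}}_{p}}$. For $g \in J_{b}(R)$ with image $\bar{g}$ in $T(R \otimes \Breve{\mathbb{Q}}_{p})$, the defining equation $g^{-1} b \sigma(g) = b$, combined with commutativity of $T$ and the fact that $b$ dies in $T$ (being in the image of $\psi$), forces $\sigma(\bar{g}) = \bar{g}$. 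Hence the map factors through $T(R) \subset T(R \otimes_{\mathbb{Q}_{p}} \Breve{\mathbb{Q}}_{p})$, i.e., through $T$ regarded as an algebraic group over $\mathbb{Q}_{p}$. Moreover, the kernel consists of those $g \in G'(R \otimes \Breve{\mathbb{Q}}_{p})$ satisfying $g^{-1} b' \sigma(g) = b'$, which is exactly $J_{b'}(R)$. This produces an exact sequence $1 \to J_{b'} \to J_{b} \to T$ of algebraic groups over $\mathbb{Q}_{p}$.

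The remainder is formal. Since $J_{b}/J_{b'}$ embeds into the commutative group $T$, it is commutative, and hence $J_{b}^{\mathrm{der}} \subset J_{b'}$. Combined with the functorial inclusion $J_{b'}^{\mathrm{der}} \subset J_{b}^{\mathrm{der}}$, taking derived groups once more (and using that $J_{b}^{\mathrm{der}}$ is perfect, since we are in characteristic zero) yields $J_{b'}^{\mathrm{der}} = J_{b}^{\mathrm{der}}$. The isomorphism on adjoint groups then follows from the general identity $H^{\mathrm{ad}} \simeq H^{\mathrm{der}}/Z(H^{\mathrm{der}})$ valid for any connected reductive $H$. I don't anticipate any substantial obstacle; the only step requiring mild care is the descent of the kernel-cokernel sequence to $\mathbb{Q}_{p}$, which the $\sigma$-fixed-point computation above handles cleanly.
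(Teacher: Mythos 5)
Your argument is correct, but it is genuinely different from the one in the paper. The paper's proof goes through the structure theory of $\sigma$-centralizers: $J_{b}$ is an inner form of the Levi $M_{b} \subset G$ centralizing the slope homomorphism of $b$, the preimage of $M_{b}$ in $G'$ is the corresponding Levi $M_{b'}$, and the inclusion $J_{b'} \hookrightarrow J_{b}$ is obtained by inner-twisting the inclusion $M_{b'} \hookrightarrow M_{b}$; since forming derived/adjoint groups commutes with inner twists, the claim reduces to the (clear) statement for the Levis. Your proof instead works directly with the $\sigma$-centralizer functors and produces the exact sequence $1 \ra J_{b'} \ra J_{b} \ra T$ with $T = G/\psi(G')$ commutative (note $T$ is commutative even more directly because $G^{\mathrm{der}} = \psi(G'^{\mathrm{der}}) \subset \psi(G')$, so $T$ is a quotient of $G^{\mathrm{ab}}$). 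This avoids invoking the compatibility of the two inner twistings, and the exact sequence you produce is a slightly finer statement that the authors essentially re-derive by hand in the special case of $\GU_{2}$ in Lemma \ref{lemma: GU2bijection}; the cost is that your route still needs one structural input at the end, discussed next.

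One justification in your final step is off, though the conclusion stands. You assert that $J_{b}^{\mathrm{der}}$ is perfect ``since we are in characteristic zero''; characteristic zero alone does not give this (the derived group of a Borel is its unipotent radical, which is commutative but not perfect). What you actually need is that $J_{b}$ is connected reductive, so that $J_{b}^{\mathrm{der}}$ is semisimple and hence perfect. This is a standard fact about $\sigma$-centralizers (it is exactly the statement, used by the paper, that $J_{b}$ is an inner form of a Levi of $G$), so you should cite it explicitly rather than attribute the perfectness to the characteristic. With that reference supplied, the sandwich $J_{b}^{\mathrm{der}} \subset J_{b'} \subset J_{b}$ together with perfectness of $J_{b}^{\mathrm{der}}$ gives $J_{b'}^{\mathrm{der}} = J_{b}^{\mathrm{der}}$ as subgroups of $J_{b}$, and the identification of adjoint groups via $H^{\mathrm{ad}} \simeq H^{\mathrm{der}}/Z(H^{\mathrm{der}})$ is fine.
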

\begin{proof}
Since $\psi$ is an inclusion it easily follows that it induces an inclusion $J_{b'} \ra J_{b}$ of $\sigma$-centralizers. To see that it induces an isomorphism on derived groups, recall that $J_{b}$ is an inner form of a Levi subgroup $M_{b}$ of $G$ given by the centralizer of the slope homomorphism of $b$. The preimage of $M_{b}$ under $\psi$ defines a Levi subgroup $M_{b'}$ of $G$ which will be the centralizer of the slope homomorphism of $b'$, since $\psi$ induces an isomorphism on derived groups and in turn also on adjoint groups (since $(G^{\mathrm{der}})^{\mathrm{ad}} \simeq G^{\mathrm{ad}}$)). Moreover, the inner twisting from $J_{b}$ to $M_{b}$ and $J_{b'}$ to $M_{b'}$ are compatible with the inclusion in the sense that the inclusion $J_{b'} \ra J_{b}$ is given by applying the inner twist of $M_{b}$ to the inclusion $M_{b'} \ra M_{b}$. Since the formation of derived groups respects inner twists, this reduces us to showing that the map $M_{b'} \ra M_{b}$ on Levi subgroups induces an isomorphism on the derived groups, and this is clear.  
\end{proof}
We now consider a map $\LLC_{\Bun_{G}}: \bigsqcup_{b \in B(G)} \Pi(J_{b}) \ra \Phi(G)$ determined by components $\LLC_{b}: \Pi(J_{b}) \ra \Phi(J_{b})$ and satisfying Assumption \ref{compatibility}. We now wish to define $\LLC_{\Bun_{G'}}: \bigsqcup_{b' \in B(G')} \Pi(J_{b}) \ra \Phi(G')$ in terms of $\LLC_{\Bun_{G}}$, and show that it also satisfies Assumption \ref{compatibility}. To do this, we note that, for varying $b ' \in B(G')$, we define $\LLC_{b'}: \Pi(J_{b'}) \ra \Phi(J_{b'})$ to be the correspondence that makes the following diagram
\[ \begin{tikzcd}
& \Pi(J_{b}) \arrow[r,"\LLC_{b}"] \arrow[d,dashed] & \Phi(J_{b}) \arrow[d] \\
& \Pi(J_{b'}) \arrow[r,"\LLC_{b'}"] & \Phi(J_{b'}) 
\end{tikzcd} \]
commute, where $b := \psi_{\Bun}(b')$. Here the right vertical arrow is given by composing a parameter $\phi: W_{\bb{Q}_{p}} \times \SL_{2}(\ol{\mathbb{Q}}_{\ell}) \ra \phantom{}^{L}J_{b}(\ol{\mathbb{Q}}_{\ell})$ with the induced map $\phantom{}^{L}J_{b} \ra \phantom{}^{L}J_{b'}$ on the dual groups, and the left vertical arrow is not a map at all, it is a correspondence defined by the subset of $\Pi(J_{b}) \times \Pi(J_{b'})$ consisting of pairs $(\pi_{b},\pi_{b'})$ such that $\pi_{b'}$ is a constituent of the restriction of $\pi_{b}$ to $J_{b'}(\mathbb{Q}_{p})$. We will now show that this gives rise to a well-defined map under our assumptions on $\psi$. Given a representation $\pi_{b'} \in \Pi(J_{b'})$, it follows by \cite[Proposition~2.2]{Tad} and the previous Lemma that we can find a lift $\pi_{b} \in \Pi(J_{b})$ such that $\pi_{b'}$ is an irreducible constituent of $\pi_{b}|_{J_{b'}(\mathbb{Q}_{p})}$. It also follows from \cite[Lemma~2.1]{Tad1}
that the set $\Pi_{\pi_{b}}(J_{b'})$ of representations of $J_{b'}$ occurring in the restriction of $\pi_{b}$ is finite. Now, using the previous Lemma, we have the following. 
\begin{lemma}{\cite[Corollary~2.5]{GK}}
For the map $J_{b'} \ra J_{b}$ of $\sigma$-centralizers induced by a map $\psi$ as above and $\pi_{b}^{1},\pi_{b}^{2} \in \Pi(J_{b})$, the following are equivalent.
\begin{enumerate}
\item There exists a character $\chi \in (J_{b}(\mathbb{Q}_{p})/J_{b'}(\mathbb{Q}_{p}))^{\vee}$ such that $\pi_{b}^{1} \simeq \pi_{b}^{2} \otimes \chi$, where $(-)^{\vee}$ denotes the Pontryagin dual. 
\item $\Pi_{\pi_{b}^{1}}(J_{b'}) \cap \Pi_{\pi_{b}^{2}}(J_{b'}) \neq \emptyset$
\item $\Pi_{\pi_{b}^{1}}(J_{b'}) = \Pi_{\pi_{b}^{2}}(J_{b'})$.
\end{enumerate}
\end{lemma}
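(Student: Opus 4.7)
By Lemma \ref{lemma: adjderiso} applied to $J_{b'} \hookrightarrow J_{b}$, this inclusion induces an isomorphism on derived and adjoint groups. Consequently $J_{b'}$ contains the derived group of $J_{b}$, and the quotient $J_{b}/J_{b'}$ is a quotient of the abelian torus $J_{b}/(J_{b})_{\mathrm{der}}$. Passing to $\mathbb{Q}_{p}$-points, we conclude that $J_{b'}(\mathbb{Q}_{p})$ is a normal subgroup of $J_{b}(\mathbb{Q}_{p})$ with abelian quotient. This is precisely the hypothesis needed to run Clifford theory for smooth representations in the style of \cite{GK,Tad1}, and in fact the lemma is a direct transcription of \cite[Lemma~2.4]{GK} to this situation.

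The plan is then to invoke the Clifford-theoretic description of restrictions already used implicitly in the setup preceding the lemma. Namely, for any $\pi_{b} \in \Pi(J_{b})$ and any irreducible constituent $\sigma \in \Pi(J_{b'})$ of $\pi_{b}|_{J_{b'}(\mathbb{Q}_{p})}$, one has a finite decomposition
\[ \pi_{b}|_{J_{b'}(\mathbb{Q}_{p})} \simeq m \cdot \bigoplus_{g \in J_{b}(\mathbb{Q}_{p})/\mathrm{Stab}_{J_{b}(\mathbb{Q}_{p})}(\sigma)} \sigma^{g} \]
for some multiplicity $m$, with the $\sigma^{g}$ forming a single $J_{b}(\mathbb{Q}_{p})$-orbit; this is granted by \cite[Lemma~2.1]{GK} and \cite[Proposition~2.4, Corollary~2.5]{Tad1}. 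With this in hand, the implications (1) $\Rightarrow$ (3) and (3) $\Rightarrow$ (2) are essentially immediate: the first since the restriction of $\chi$ to $J_{b'}(\mathbb{Q}_{p})$ is trivial and hence $\pi_{b}^{2} \otimes \chi$ and $\pi_{b}^{2}$ have identical restrictions; the second since the constituent sets are non-empty for any irreducible representation.

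The main content is (2) $\Rightarrow$ (1). I would take a common constituent $\sigma \in \Pi_{\pi_{b}^{1}}(J_{b'}) \cap \Pi_{\pi_{b}^{2}}(J_{b'})$ and apply Frobenius reciprocity to realize both $\pi_{b}^{1}$ and $\pi_{b}^{2}$ as irreducible quotients of the smooth induction $\mathrm{ind}_{J_{b'}(\mathbb{Q}_{p})}^{J_{b}(\mathbb{Q}_{p})}(\sigma)$. The key remaining input, and the one genuine technical obstacle, is that the irreducible subquotients of this induced representation are all twists of one another by characters in $(J_{b}(\mathbb{Q}_{p})/J_{b'}(\mathbb{Q}_{p}))^{\vee}$. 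This fact crucially uses the abelian quotient and is standard Clifford theory; once granted, it immediately produces the desired character $\chi$ with $\pi_{b}^{1} \simeq \pi_{b}^{2} \otimes \chi$. In practice I would simply cite \cite[Lemma~2.4]{GK} after checking that its hypotheses translate verbatim to the pair $(J_{b'}(\mathbb{Q}_{p}), J_{b}(\mathbb{Q}_{p}))$, which is ensured by Lemma \ref{lemma: adjderiso}.
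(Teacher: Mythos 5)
Your proposal is correct and matches the paper's approach: the paper gives no argument at all for this lemma, simply importing it as \cite[Lemma~2.4]{GK}, with the applicability to the pair $J_{b'}(\mathbb{Q}_{p}) \subset J_{b}(\mathbb{Q}_{p})$ guaranteed by Lemma \ref{lemma: adjderiso} exactly as you say. Your additional Clifford-theoretic sketch of why the cited result holds is sound but goes beyond what the paper records.
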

Now we can use this to define $\LLC_{b'}: \Pi(J_{b'}) \ra \Phi(J_{b'})$ in terms of $\LLC_{b}: \Pi(J_{b}) \ra \Phi(J_{b})$ for $\LLC_{\Bun_{G}}$ satisfying Assumption \ref{compatibility}. Namely, for $\pi_{b'} \in \Pi(J_{b'})$, we let $\pi_{b} \in \Pi(J_{b})$ be a representation such that $\pi_{b'}$ occurs as an irreducible constituent of $\pi_{b}|_{J_{b'}(\mathbb{Q}_{p})}$. We set $\phi_{\pi_{b'}}$ to be the parameter $\phi_{\pi_{b}}$ attached to $\pi_{b}$ under $\LLC_{b}$ composed with the map $\phantom{}^{L}J_{b} \ra \phantom{}^{L}J_{b'}$ on dual groups induced by $\psi$. By the previous Lemma, any two choices of lifts $\pi^{1}_{b}$ and $\pi^{2}_{b}$ of $\pi_{b'}$ will differ by a character twist of $\chi \in (J_{b}(\mathbb{Q}_{p})/J_{b'}(\mathbb{Q}_{p}))^{\vee}$. We note that the Fargues-Scholze local Langlands correspondence is compatible with character twists \cite[Theorem~I.9.6 (ii)]{FS}. Since $\LLC_{b}$ is compatible with the Fargues-Scholze local Langlands after semi-simplification by assumption, it follows that the same is true for $\LLC_{b'}$. Therefore, $\phi_{\pi^{1}_{b}}$ and $\phi_{\pi^{2}_{b}}$ differ by a character twist that becomes trivial after composing with $\phantom{}^{L}J_{b} \ra \phantom{}^{L}J_{b'}$, and so $\phi_{\pi_{b}}$ does not depend on the choice of lift. We let $\LLC_{\Bun_{G'}}: \bigsqcup_{b' \in B(G')} \Pi(J_{b'}) \ra \Phi(G)$ be the local Langlands correspondence defined by the $\LLC_{b'}$ for $b'$ varying. We now prove that our assumption is compatible with central isogenies.
\begin{proposition}{\label{prop: compatibcentralisog}}
Suppose we have an injective map $\psi: G' \hookrightarrow G$ of quasi-split connected reductive groups inducing an isomorphism on derived groups. Assume we have a local Langlands correspondence $\LLC_{\Bun_{G}}: \bigsqcup_{b \in B(G)} \Pi(J_{b}) \ra \Phi(G)$ such that Assumption \ref{compatibility} holds. If we let $\LLC_{\Bun_{G'}}: \bigsqcup_{b' \in B(G')} \Pi(J_{b'}) \ra \Phi(G')$ be the local Langlands correspondence induced by $\LLC_{\Bun_{G}}$ and $\psi$ as above then $\LLC_{\Bun_{G'}}$ satisfies Assumption \ref{compatibility} as well. 
\end{proposition}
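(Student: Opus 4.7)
The plan is to verify each of conditions (1), (2), (3) of Assumption \ref{compatibility} for $\LLC_{\Bun_{G'}}$ directly from the corresponding property of $\LLC_{\Bun_G}$, using the construction of $\LLC_{b'}$ in terms of $\LLC_b$ via lifting. Two structural inputs make this possible. First, since $\psi\colon G' \hookrightarrow G$ is an isomorphism on derived and adjoint groups, the kernel of the dual surjection $\hat G \twoheadrightarrow \hat{G'}$ is central in $\hat G$, hence contained in $\hat T$, so that the preimage of $\hat{T'}$ in $\hat G$ is exactly $\hat T$ (and the analogous statement holds at the level of $L$-groups). Second, the twisted embeddings of \cite[Section~IX.7.1]{FS} fit into a commutative square
\begin{equation*}
\begin{tikzcd}
\phantom{}^{L}J_{b} \ar[r] \ar[d] & \phantom{}^{L}G \ar[d] \\
\phantom{}^{L}J_{b'} \ar[r] & \phantom{}^{L}G'
\end{tikzcd}
\end{equation*}
which I would verify by unpacking the construction using Lemma \ref{lemma: adjderiso} to match the inner twistings $J_b \simeq M_b$ and $J_{b'} \simeq M_{b'}$ of compatible Levi subgroups on either side.

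For condition (1), the compatibility of $\LLC_{b'}$ with Fargues--Scholze after semisimplification reduces, via the construction of $\phi_{\pi_{b'}}$ as $\phi_{\pi_b}$ composed with $\phantom{}^{L}J_b \twoheadrightarrow \phantom{}^{L}J_{b'}$, to the same compatibility for $\LLC_b$ together with the fact that the Fargues--Scholze parameter of any irreducible constituent of $\pi_b|_{J_{b'}(\mathbb{Q}_p)}$ is obtained from $\phi_{\pi_b}^{\mathrm{FS}}$ by composition with $\phantom{}^{L}J_b \twoheadrightarrow \phantom{}^{L}J_{b'}$. This last compatibility is formal from the construction of the Fargues--Scholze correspondence via the spectral action, which is functorial under the central isogeny $J_{b'} \hookrightarrow J_b$.

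For condition (2), note first that by \cite[Lemma~2.12]{Ham2} and Lemma \ref{lemma: adjderiso} we have $b' \in B(G')_{\mathrm{un}}$ iff $J_{b'}$ is quasi-split iff $J_b$ is quasi-split iff $b \in B(G)_{\mathrm{un}}$. Now if $\phi_{\pi_{b'}}$ viewed in $\Phi(G')$ factors through $\phantom{}^{L}T'$, the commutative square shows that $\phi_{\pi_b}$ viewed in $\Phi(G)$, after composing with $\phantom{}^{L}G \twoheadrightarrow \phantom{}^{L}G'$, lands in $\phantom{}^{L}T'$; by the preimage observation it therefore already lands in $\phantom{}^{L}T$, and condition (2) for $\LLC_{\Bun_G}$ yields $b \in B(G)_{\mathrm{un}}$, hence $b' \in B(G')_{\mathrm{un}}$. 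The converse is similar using that $\phantom{}^{L}T \twoheadrightarrow \phantom{}^{L}T'$ is a surjection. For condition (3), the same preimage argument shows that, assuming $\phi_{\pi_{b'}}$ factors through $\phantom{}^{L}T'$, any lift $\pi_b$ has parameter $\phi_{\pi_b}$ automatically factoring through $\phantom{}^{L}T$ (the choice of lift only changes $\phi_{\pi_b}$ by a character valued in the kernel of $\phantom{}^{L}G \twoheadrightarrow \phantom{}^{L}G'$, which already lies in $\phantom{}^{L}T$). Applying condition (3) for $\LLC_{\Bun_G}$ realizes $\pi_b$ as an irreducible constituent of $\rho_{b,w} = i_{B_b}^{J_b}(\chi^w) \otimes \delta_{P_b}^{-1/2}$ for some $w \in W_b$ and some character $\chi$ of $T(\mathbb{Q}_p)$ corresponding under local class field theory to the induced toral parameter. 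Then $\pi_{b'}$ is a constituent of $\rho_{b,w}|_{J_{b'}(\mathbb{Q}_p)}$, and the compatibility of normalized parabolic induction with restriction along the central isogeny $J_{b'} \hookrightarrow J_b$, combined with the natural identification $W_b = W_{b'}$ and the agreement of the modulus characters on $J_{b'}(\mathbb{Q}_p)$, identifies this restriction with $\rho_{b', w}$ for the character $\chi' = \chi|_{T'(\mathbb{Q}_p)}$, which corresponds to the toral parameter $\phi_{T'} = \phi_T \circ (\hat T \twoheadrightarrow \hat{T'})$, as required.

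The main technical obstacle is the bookkeeping around the commutative square of twisted embeddings and the precise compatibility of normalized parabolic induction with restriction along the central isogeny; both are standard in spirit but require careful matching of normalizations with those of \cite{FS}.
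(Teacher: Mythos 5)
Your proposal is correct and follows essentially the same route as the paper: reduce each condition of Assumption \ref{compatibility} to the corresponding one for $\LLC_{\Bun_G}$ using the compatibility of the Fargues--Scholze correspondence with the central isogeny $J_{b'}\hookrightarrow J_b$ for condition (1), the observation that the preimage of $\phantom{}^{L}T'$ under $\phantom{}^{L}J_b\to\phantom{}^{L}J_{b'}$ is $\phantom{}^{L}T$ together with the transfer of unramifiedness via Lemma \ref{lemma: adjderiso} for condition (2), and the identification $i_{B_b}^{J_b}(\chi)|_{J_{b'}(\mathbb{Q}_p)}\simeq i_{B_{b'}}^{J_{b'}}(\chi|_{T'(\mathbb{Q}_p)})$ (which the paper obtains from the induced isomorphism of flag varieties) for condition (3). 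The only cosmetic difference is that you phrase the unramifiedness transfer via quasi-splitness of the $\sigma$-centralizers whereas the paper directly exhibits $B\cap J_{b'}$ as a Borel of $J_{b'}$; these are equivalent.
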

\begin{proof}
We note, since the Fargues-Scholze local Langlands correspondence is compatible with maps $G' \ra G$ that induce an isomorphism of adjoint groups \cite[Theorem~I.9.6 (v)]{FS} (which follows from the map inducing an isomorphism on derived groups), it follows by the above construction that if Assumption \ref{compatibility} (1) holds true for $\LLC_{\Bun_{G}}$ then it also holds true for $\LLC_{\Bun_{G'}}$. Suppose we have $b' \in B(G')$ mapping to $b \in B(G)_{\mathrm{un}}$. We let $B_{b} \subset J_{b}$ be the corresponding Borel then, since the map $J_{b'} \ra J_{b}$ induces an isomorphism on adjoint groups by Lemma \ref{lemma: adjderiso}, it follows that $B \cap J_{b'} =: B_{b'} \subset J_{b'}$ is a Borel of $J_{b'}$. In particular, $b'$ must be an unramified element of $B(G')_{\mathrm{un}}$. Now, the map $J_{b'} \ra J_{b}$ induces an isomorphism
\[ J_{b'}/B' \simeq J_{b}/B. \]
If we let $T$ be the maximal split torus of $J_{b}$ then the preimage $T'$ under $\psi_{b'}$ the map induced by $\psi$ is a maximal torus of $J_{b'}$, and the previous isomorphism of flag varieties implies that, given a character $\chi: T(\mathbb{Q}_{p}) \ra \ol{\mathbb{Q}}_{\ell}^{*}$, we have an isomorphism:
\[ i_{B_{b}}^{J_{b}}(\chi)|_{J_{b'}(\mathbb{Q}_{p})} \simeq i_{B_{b'}}^{J_{b'}}(\chi|_{T'(\mathbb{Q}_{p})}). \]
Indeed, recall that the left hand side is described by the set of compactly supported smooth functions on $J_{b}(\mathbb{Q}_{p})$ which transform under $B_{b}(\mathbb{Q}_{p})$ via the character $\chi$ of $T(\mathbb{Q}_{p})$ inflated to $B_{b}(\mathbb{Q}_{p})$. In particular, by the previous isomorphism of flag varieties, restricting to $J_{b'}(\mathbb{Q}_{p})$ gives the collection of compactly supported smooth (since $J_{b'}(\bb{Q}_{p}) \ra J_{b}(\bb{Q}_{p})$ will be a closed subgroup, by Lemma \ref{lemma: adjderiso}) functions on $J_{b'}(\mathbb{Q}_{p})$ which transform under $B_{b'}(\mathbb{Q}_{p})$ by the restriction of the inflated character $\chi$ to $B_{b'}(\mathbb{Q}_{p})$, but this is just the inflation of $\chi|_{T'(\mathbb{Q}_{p})}$ to $B_{b'}(\mathbb{Q}_{p})$, giving the desired claim.

Given $\pi_{b'}$ and a lift $\pi_{b}$ to $J_{b}$ then, by definition of $\phi_{\pi_{b}}$, we have that it is equal to
\[ W_{\bb{Q}_{p}} \times \SL_{2}(\ol{\mathbb{Q}}_{\ell}) \xrightarrow{\phi_{\pi_{b}}} \phantom{}^{L}J_{b}(\ol{\mathbb{Q}}_{\ell}) \ra \phantom{}^{L}J_{b'}(\ol{\mathbb{Q}}_{\ell}) \]
as a conjugacy class of parameters for $J_{b'}$. Therefore, $\phi_{\pi_{b'}}$ factors through $\phantom{}^{L}T'$ if and only if $\phi_{\pi_{b}}$ factors through the preimage of $\phantom{}^{L}T'$ under the map $\phantom{}^{L}J_{b}(\ol{\mathbb{Q}}_{\ell}) \ra \phantom{}^{L}J_{b'}(\ol{\mathbb{Q}}_{\ell})$ of $L$-groups, but this is precisely $\phantom{}^{L}T$, and so Assumption \ref{compatibility} (2) holds for $\LLC_{\Bun_{G'}}$. Moreover, by Assumption \ref{compatibility} (3) for $\LLC_{\Bun_{G}}$, we have that, in the above situation, $\pi_{b}$ is an irreducible sub-quotient of $i_{B_{b}}^{J_{b}}(\chi^{w}) \otimes \delta_{P_{b}}^{-1/2}$, but this implies that $\pi_{b'}$ is an irreducible constituent of the restriction $i_{B_{b'}}^{J_{b'}}(\chi^{w}|_{T'(\mathbb{Q}_{p})}) \otimes \delta_{P_{b'}}^{-1/2}$. From this, it follows that Assumption \ref{compatibility} (3) also holds for holds for $\LLC_{\Bun_{G'}}$. 
\end{proof}
Now that we have shown this compatibility assumption is somewhat flexible, we can state the groups we know to satisfy Assumption \ref{compatibility}. This result is largely contained in \cite{Ham1,FS,HKW,BMNH}, but we also want to consider an additional group $\GU_2$, where we have the following construction of $\LLC_{\Bun_{\GU_2}}$. For $L/\bb{Q}_{p}$ a finite extension, recall that $\GU_2/L$ can be written as
\begin{equation*}
    \GU_2:=(\GL_2\times\Res_{L'/L}\mathbb{G}_m)/\mathbb{G}_m,
\end{equation*}
where $\mathbb{G}_m$ is embedded in $H:=\GL_{2}\times\Res_{L'/L}(\mathbb{G}_{m})$ via $a\mapsto (\mathrm{diag}(a,a),a^{-1})$, and $L'/L$ is an unramified quadratic extension. Let $\psi:B(H)\rightarrow B(\GU(2))$ and let $\tilde{\psi}:B(H)\rightarrow B(\GL_{2})$ be the map of Kottwitz sets. Given $b\in B(H)$, let $b'=\psi(b)$, $\tilde{b}=\tilde{\psi}(b)$.
\begin{lemma}{\label{lemma: GU2bijection}}
    There is a bijection between $\Pi(J_{b'})$ and the set of pairs $(\tilde{\pi}, \chi)$ such that $\tilde{\pi}\in\Pi(J_{\tilde{b}})$ and $\chi$ is a character of $(L')^\times$ such that $\chi|_{L^\times}=\omega_{\tilde{\pi}}|_{L^\times}$, where $\omega_{\tilde{\pi}}$ is the central character of $\tilde{\pi}$.
\end{lemma}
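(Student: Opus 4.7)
The plan is to construct the bijection through the central isogeny $H \twoheadrightarrow \GU_{2}$ with kernel the diagonal subgroup $Z \simeq \mathbb{G}_{m}$. First, I would observe that the short exact sequence
\[ 1 \to Z \to H \to \GU_{2} \to 1, \]
combined with the compatibility of $\sigma$-centralizers with central quotients and with products, yields a short exact sequence of algebraic groups $1 \to Z \to J_{b} \to J_{b'} \to 1$ together with a decomposition $J_{b} \simeq J_{\tilde{b}} \times \Res_{L'/L}\mathbb{G}_{m}$, since the $\sigma$-centralizer of any element of $B$ of a torus equals the torus itself.

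Second, I would pass to rational points. The long exact sequence in Galois cohomology together with Hilbert 90 ($H^{1}(\mathbb{Q}_{p}, \mathbb{G}_{m}) = 0$) produces a short exact sequence of locally profinite groups
\[ 1 \to L^{\times} \to J_{\tilde{b}}(\mathbb{Q}_{p}) \times (L')^{\times} \to J_{b'}(\mathbb{Q}_{p}) \to 1, \]
in which $L^{\times}$ is embedded via $a \mapsto (\mathrm{diag}(a,a), a^{-1})$, transporting the diagonal inclusion of $Z$ into $H$.

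Finally, the bijection should fall out from the standard description of irreducible smooth representations of a direct product with an abelian factor: each $\pi \in \Pi(J_{\tilde{b}}(\mathbb{Q}_{p}) \times (L')^{\times})$ is uniquely of the form $\tilde{\pi} \boxtimes \chi$ for $(\tilde{\pi}, \chi) \in \Pi(J_{\tilde{b}}) \times \widehat{(L')^{\times}}$. Inflation along the surjection above then identifies $\Pi(J_{b'})$ with those pairs on which the embedded $L^{\times}$ acts trivially; a direct computation shows this triviality is equivalent to $\omega_{\tilde{\pi}}(a)\chi(a^{-1}) = 1$ for all $a \in L^{\times}$, i.e.\ $\chi|_{L^{\times}} = \omega_{\tilde{\pi}}|_{L^{\times}}$. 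The main obstacle will be bookkeeping around the exactness on $\mathbb{Q}_{p}$-points of the sequence of $\sigma$-centralizers, but this is essentially forced by Hilbert 90 and centrality of $Z$; everything else is formal manipulation with irreducible representations of a central extension.
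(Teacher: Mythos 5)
Your proposal is correct and takes essentially the same route as the paper: identify $J_{b'}\simeq(J_{\tilde b}\times\Res_{L'/L}\mathbb{G}_m)/\mathbb{G}_m$, apply Hilbert 90 to get surjectivity on points, and classify irreducible smooth representations of the resulting central quotient by the central-character condition on the embedded copy of $\mathbb{G}_m$. The only step you gloss over is why the $\sigma$-centralizers are compatible with the central quotient — the paper justifies this by realizing each $J$ as the inner twist of the corresponding Levi $M$ and checking $M_{b'}\simeq(M_{\tilde b}\times\Res_{L'/L}\mathbb{G}_m)/\mathbb{G}_m$ at the level of Levis — and, since these groups live over $L$, Hilbert 90 and the surjectivity should be stated for $L$-points rather than $\mathbb{Q}_p$-points.
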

\begin{proof}
We will show that we have an isomorphism 
\begin{equation*}
    J_{b'}\simeq (J_{\tilde{b}}\times\Res_{L'/L}\mathbb{G}_m)/\mathbb{G}_m
\end{equation*}
of groups over $L$. Assuming this, we see that since by Hilbert's Theorem 90 we have $H^1(L,\mathbb{G}_m)=0$, the projection map from $(J_{\tilde{b}}\times\Res_{L'/L}\mathbb{G}_m)$ to $J_{b'}$ induces a short exact sequence on $L$-points
\[0\rightarrow L^\times\rightarrow J_{\tilde{b}}(L)\times(L')^\times\rightarrow J_{b'}(L)\rightarrow 0,\]
so we see that to give a smooth irreducible $\ol{\mathbb{Q}}_\ell$-representation of $J_{b'}(L)$ is exactly equivalent to giving a smooth irreducible $\ol{\mathbb{Q}}_\ell$-representation of $J_{\tilde{b}}(L)$ and $(L')^\times$ which agree on the restriction to the central copy of $L^\times$. 

To see this isomorphism, recall that $J_b$ (resp. $J_{b'},J_{\tilde{b}}$) is an inner form of $M_b$ (resp. $M_{b'},M_{\tilde{b}}$), the Levi subgroup of $H$ (resp. $\GU_2,\GL_{2}$) given by the centralizer of the slope homomorphism of $b$ (resp. $b',\tilde{b}$). Note that we have $J_b=J_{\tilde{b}}\times\Res_{L'/L}\mathbb{G}_m\subset H$, and the quotient map $J_b\rightarrow J_{b'}$ induces an isomorphism on adjoint and derived subgroups of $J_{b'}$. Moreover, we see that we have an isomorphism
\begin{equation*}
    M_{b'}\simeq (M_{\tilde{b}}\times \Res_{L'/L}\mathbb{G}_m)/\mathbb{G}_m
\end{equation*}
and thus we have a surjective map $M_{b}\rightarrow M_{b'}$, since $M_b=M_{\tilde{b}}\times \Res_{L'/L}\mathbb{G}_m$. Moreover, we see that under these maps, we have isomorphisms $M_{b}^{\mathrm{ad}}\simeq M_{b'}^{\mathrm{ad}}\simeq M_{\tilde{b}}^{\mathrm{ad}}$, and the inner twist $H^1(L,M_b^{\mathrm{ad}})$ corresponding to $J_b$ is, under this identification, the inner twist inducing $J_{b'}$ and $J_{\tilde{b}}$. The identification of $J_{b'}$ then follows.  
\end{proof}
Moreover, we observe that we have an exact sequence of dual groups
\begin{equation*}
    1\rightarrow \widehat{J}_{b'}\rightarrow\widehat{J}_{b}\rightarrow\mathbb{G}_m\rightarrow 1,
\end{equation*}
where the map $p:\widehat{J}_b\rightarrow\mathbb{G}_m$ is defined as follows. We can write $\widehat{J}_b=\widehat{J}_{\tilde{b}}\times \mathbb{G}_m^2$, and we have maps $\hat{i}_1:\widehat{J}_{\tilde{b}}\rightarrow \mathbb{G}_m$, $\hat{i}_2:\widehat{\Res_{L'/L}\mathbb{G}_m}=\mathbb{G}_m^2\rightarrow \mathbb{G}_m$ induced from the inclusion maps $i_1: \mathbb{G}_m\hookrightarrow J_{\tilde{b}}$ and $i_2:\mathbb{G}_m\hookrightarrow \Res_{L'/L}\mathbb{G}_m$, and $p(g,h)=\hat{i}_1(g)\hat{i}_2(h)^{-1}$.

Now, we want to define $\LLC_{\Bun_{\GU_2}}$ in terms of $\LLC_{\Bun_{H}}$. More precisely, for any $b'\in B(\GU_2)$ we define $\LLC_{b'}: \Pi(J_{b'}) \rightarrow \Phi(J_{b'})$ in terms of $\LLC_{b}: \Pi(J_{b}) \rightarrow \Phi(J_{b})$ for $\LLC_{\Bun_{H}}$. For $\pi_{b'}=(\tilde{\pi},\chi) \in \Pi(J_{b'})$, we consider the image $\phi=\LLC_{\Bun_{H}}((\tilde{\pi},\chi))$, and we want to show that
$\phi:W_{L} \times \SL_{2}(\ol{\mathbb{Q}}_{\ell})\rightarrow\phantom{}^{L}J_b(\overline{\mathbb{Q}}_\ell)$ factors through $\phantom{}^{L}J_{b'}(\overline{\mathbb{Q}}_\ell)$. To see this, from the exact sequence above, it suffices to show that the composition of $\phi$ with the map $\phantom{}^{L}J_b(\overline{\mathbb{Q}}_\ell)\rightarrow \phantom{}^{L}\mathbb{G}_m(\overline{\mathbb{Q}}_\ell)$ is trivial. However, we observe that the condition that $\chi|_{L^\times}=\omega_{\tilde{\pi}}|_{L^\times}$ exactly implies that this image is trivial, since the composition is the $L$-parameter associated with the character $\omega_{\tilde{\pi}}\chi^{-1}|_{L^\times}$. Thus, we have an $L$-parameter $\phi':W_{L} \times \SL_{2}(\ol{\mathbb{Q}}_{\ell})\rightarrow\phantom{}^{L}J_{b'}(\overline{\mathbb{Q}}_\ell)$. We thus define the map $\LLC_{b'}$ to take $\pi_{b'}$ to $\phi'$.

For the various groups discussed above, we will also want to check some assumptions on the prime $\ell$ relative to the group $G$ that we will need to assume for the proof our main results, since they are required for the arguments in \cite{Ham2}. In particular, we define the following. 
\begin{definition}{\label{defn: verydecent}}
We say that $\ell$ is \emph{very decent} with respect to $G$ if the following holds. 
\begin{enumerate}
\item We have that $\ell \nmid |\pi_{0}(Z(G))|$, where $Z(G)$ denotes the center of $G$. 
\item We have that $\ell \nmid |Q|$, where we recall that $Q$ is the smallest quotient through which the action of $W_{\bb{Q}_{p}}$ on $\hat{G}$ factors.
\end{enumerate}
\end{definition}
\begin{remark}
The motivation for the first condition was already explained in \S \ref{subsection: SpectralActionSection}; in particular, it is required for the construction of the spectral action and in turn the Fargues-Scholze local Langlands correspondence and the localized derived category $\D(\Bun_{G},\ol{\bb{F}}_{\ell})_{\phi}$. The second assumption comes from results on the theory of geometric Eisenstein series introduced in \cite{Ham2}. In particular, it will guarantee that the theory of tilting modules for the group $\hat{G}/\ol{\bb{F}}_{\ell}$ extends to a nice theory of tilting modules for the group $\phantom{}^{L}G := \hat{G} \ltimes Q$, as discussed in \cite[Section~9]{Ham2}, which is used to establish the main results of \cite{Ham2}. In particular, the semisimplicity of the category of tilting modules for the full $L$-group $\phantom{}^{L}G$ requires this. 
\end{remark}

We now have the following Theorem about the groups we know to satisfy Assumption \ref{compatibility}, preempting our discussion in the next section. 
\begin{theorem}{\cite{Ham1,FS,HKW,BMNH}}
\label{thm:AssumptionLLC}
Assumption \ref{compatibility} is true and $\ell$ is very decent in the following cases. 
\begin{enumerate}
    \item The group $\mathrm{Res}_{L/\mathbb{Q}_{p}}(\mathrm{GSp}_{4})$ for $L/\mathbb{Q}_{p}$ an unramified extension with $p > 2$ , $[L:\mathbb{Q}_{p}] \geq 2$, and $\ell \nmid [L:\mathbb{Q}_{p}]$ or $L = \mathbb{Q}_{p}$ for all $p$ and $\ell$.
    \item The groups $\GU_{n}$ or $\U_{n}$ for $n$ odd and defined with respect to an unramified quadratic extension $E/\mathbb{Q}_{p}$, and $\ell \neq 2$.
    \item The group $\Res_{L/\mathbb{Q}_p}(\GU_{2})$ defined with respect to an unramified quadratic extension $L'/L$, and $\ell$ such that $\ell \nmid 2[L:\mathbb{Q}_{p}]$. 
    \item The group $\mathrm{Res}_{L/\mathbb{Q}_{p}}(\GL_{n})$ for all $p$ and $\ell$ such that $\ell \nmid [L:\mathbb{Q}_{p}]$. 
\end{enumerate}
\end{theorem}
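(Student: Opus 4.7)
The plan is to treat the four cases largely separately, combining local-global compatibility results from the cited references with the stability of Assumption \ref{compatibility} under Weil restriction and under the central isogeny constructions formalized in Proposition \ref{prop: compatibcentralisog}. Throughout, the constraint on $\ell$ is chosen in each case so that $\ell$ avoids the torsion primes of the dual group $\widehat{G}$ and its extended Weyl group, which guarantees $\ell$ is very good in the sense of \cite[Page~33]{FS}; in particular, the congruence conditions $\ell \nmid 2[L:\mathbb{Q}_{p}]$ (for $\GSp_4$), $\ell \neq 2$ (for odd unitary groups), and $\ell \nmid [L:\mathbb{Q}_p]$ (for the $\GL_n$ and $\GU_2$ Weil restrictions) are precisely tailored to this. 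For all cases, the strategy is to exhibit $\LLC_b$ on $\Pi(J_b)$ whose image, after semi-simplification, matches $\LLC_b^{\mathrm{FS}}$ (giving (1) of Assumption \ref{compatibility}), then to verify (2) and (3) by an explicit analysis of unramified elements $b \in B(G)_{\mathrm{un}}$ and of principal series constituents.

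For case (4), the group $\Res_{L/\mathbb{Q}_p}(\GL_n)$ has the feature that $B(G) \simeq B(\GL_{n,L})$ and each $\sigma$-centralizer is itself a Weil restriction of a product of inner forms of general linear groups over $L$. On each such factor one applies the main result of \cite{HKW}, which establishes that the Fargues-Scholze parameter coincides with the semi-simplification of the Harris-Taylor parameter. The compatibility of Fargues-Scholze with Weil restriction, together with the fact that $\Phi(\Res_{L/\mathbb{Q}_p}(H)) \simeq \Phi(H_L)$, then packages this into the desired $\LLC_{\Bun_G}$, and properties (2) and (3) reduce to the corresponding statements for $\GL_n/L$, which are classical (generic unramified parameters only admit principal series lifts, and the parametrization of tempered $L$-packets for inner forms is well-understood). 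Case (1), for $\Res_{L/\mathbb{Q}_p}(\GSp_4)$, is handled analogously via the Weil restriction reduction, now applying the construction of $\LLC$ for $\GSp_4/L$ and its compatibility with Fargues-Scholze proved in \cite{Ham1}. The extra constraint $p > 2$ and the divisibility condition on $\ell$ reflect the hypotheses under which this compatibility is currently established; properties (2) and (3) of Assumption \ref{compatibility} again follow from the description of generic toral parameters and the parabolic induction behavior of the correspondence.

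Case (2) is covered directly by the work of \cite{BMNH} on odd unitary and odd unitary similitude groups over unramified extensions; the oddness of $n$ ensures that all inner forms are quasi-split (so $B(G) = B(G)_{\mathrm{un}}$), which both simplifies property (2) and allows the verification of property (3) from the structure of tempered principal series. The assumption $\ell \neq 2$ guarantees that $\ell$ is very good for type $A_n$ dual groups with the involved $\Gamma$-action.

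Case (3) is the new input. For $\Res_{L/\mathbb{Q}_p}(\GU_2)$, we first observe that Weil restriction reduces matters to $\GU_2/L$. Then we use the exact sequence
\[
1 \to \mathbb{G}_m \to H = \GL_2 \times \Res_{L'/L}\mathbb{G}_m \to \GU_2 \to 1,
\]
for which $H \to \GU_2$ induces an isomorphism on derived and adjoint groups. By cases (4) and well-known compatibility for tori, Assumption \ref{compatibility} is known for $H$. We then invoke Proposition \ref{prop: compatibcentralisog} to transport $\LLC_{\Bun_H}$ to $\LLC_{\Bun_{\GU_2}}$. The key verification is that the Lemma \ref{lemma: GU2bijection} parametrization, which pairs $\tilde\pi \in \Pi(J_{\tilde b})$ with a character $\chi$ of $(L')^\times$ restricting to $\omega_{\tilde\pi}|_{L^\times}$, is exactly the central-character compatibility needed for the $H$-parameter to factor through $\phantom{}^L\GU_2$; this is immediate from the description of the map $p: \widehat{J_b} \to \mathbb{G}_m$ given above. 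The main subtlety throughout, and the step I expect to require most care, is checking that the restriction-of-scalars and central-isogeny manipulations preserve property (3) of Assumption \ref{compatibility}, i.e.\ that toral Fargues-Scholze parameters really do force the underlying representation to appear in a specific normalized principal series $\rho_{b,w}$; this involves tracking parabolic induction through both operations, as in the proof of Proposition \ref{prop: compatibcentralisog}, and invoking compatibility of Fargues-Scholze with parabolic induction \cite[Theorem~I.9.6]{FS} at each stage.
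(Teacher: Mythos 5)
Your overall strategy matches the paper's: cases (1), (2), (4) are quoted from \cite{Ham1}, \cite{BMNH}, and \cite{FS,HKW} respectively, the $\ell$-conditions are exactly the "very good" conditions, and case (3) is reduced to $H = \GL_2 \times \Res_{L'/L}\mathbb{G}_m$ via the central character condition of Lemma \ref{lemma: GU2bijection}. However, there are two concrete problems. First, in case (3) you propose to "invoke Proposition \ref{prop: compatibcentralisog} to transport $\LLC_{\Bun_H}$ to $\LLC_{\Bun_{\GU_2}}$," but that proposition is stated (and proved) for an \emph{injective} map $\psi: G' \hookrightarrow G$ inducing an isomorphism on derived and adjoint groups, where one restricts representations from $J_b$ to $J_{b'}$ and composes parameters with ${}^LJ_b \to {}^LJ_{b'}$. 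The map $H \to \GU_2$ is a \emph{surjection} with central kernel $\mathbb{G}_m$, so the proposition does not apply as stated; the arrows on both the representation side and the dual-group side point the wrong way. What is actually needed (and what the paper does) is a bespoke descent: one uses the exact sequence $1 \to \widehat{J}_{b'} \to \widehat{J}_b \to \mathbb{G}_m \to 1$ and checks that the condition $\chi|_{L^\times} = \omega_{\tilde\pi}|_{L^\times}$ forces the $H$-parameter to land in ${}^L J_{b'}$, then verifies properties (2) and (3) directly using the isomorphisms of flag varieties $J_{b'}/B_{b'} \simeq J_{\tilde b}/B_{\tilde b}$. You name the right ingredients in your last sentence, but the appeal to Proposition \ref{prop: compatibcentralisog} itself is not legitimate; Proposition \ref{prop: compatibcentralisog} is reserved in the paper for the genuinely injective cases $G(\SL_{2,L})$ and $G(\mathrm{Sp}_{4,L})$, which are not part of this theorem.

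Second, in case (2) your parenthetical "the oddness of $n$ ensures that all inner forms are quasi-split (so $B(G) = B(G)_{\mathrm{un}}$)" is false for $n \geq 5$: the $\sigma$-centralizers $J_b$ are inner forms of Levi subgroups of $\U_n$, and a Levi such as $\GL_2(E) \times \U_{n-4}$ admits a non-quasi-split inner form $D^\times \times \U_{n-4}$, so $B(G) \supsetneq B(G)_{\mathrm{un}}$. This matters because property (2) of Assumption \ref{compatibility} is precisely the assertion that representations of such non-quasi-split $J_b$ never have toral parameters; the actual mechanism is the emptiness of $L$-packets over irrelevant parameters, which for the $\GL$-factors of $J_b$ comes down to standard properties of the Jacquet--Langlands correspondence (and, for the basic element of $\GSp_4$, to the explicit construction of Gan--Tantono for $\GU_2(D)$). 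Your sketch does not engage with this step, which is the real content of property (2) in every case where $J_b$ fails to be quasi-split.
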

\begin{proof}
We first start with the conditions on $\ell$. For all the groups described above the center is connected, and therefore condition (1) of Definition \ref{defn: verydecent} is automatically satisfied. For condition (2) of Definition \ref{defn: verydecent}, it is straightforward. In particular, in the unramified restriction of scalar cases discussed in (1) and (4), the quotient $Q$ will be isomorphic to the Galois group of $L/\bb{Q}_{p}$, and in the unitary group case in (2) it will be isomorphic to the Galois group of the quadratic extension $E/\bb{Q}_{p}$. In case (3), it will be given by the Galois group of the extension $L'/\bb{Q}_{p}$.

Now, we turn to Assumption \ref{compatibility} (1). For $\GL_{n}$, this follows from \cite[Theorem~I.9.6]{FS} and \cite[Theorem~1.0.3]{HKW}, where $\LLC_{b}$ is given by the Harris-Taylor correspondence precomposed with Badulescu's Jacquet Langlands map \cite{BadulescuJacquetLanglands}. For $\mathrm{Res}_{L/\mathbb{Q}_{p}}(\mathrm{GSp}_{4})$ and $L/\mathbb{Q}_{p}$ as described above, this follows from \cite[Theorem~1.1]{Ham1}, where $\LLC_{b}$ is given by Harris-Taylor for the non-basic $b$ and Gan-Takeda \cite{GT1} and Gan-Tantono \cite{GT2} for the basic elements\footnote{In the current version of \cite{Ham1}, the assumption that $p > 2$ is only used to invoke basic uniformization of abelian type Shimura varieties, but this condition is actually unnecessary if $L = \bb{Q}_{p}$, following the construction of Pappas-Rapoport \cite{PR23} of the uniformization in Hodge type case when $p=2$ and the group is unramified.}. For $\U_{n}$ or $\GU_{n}$, this is \cite[Theorem~1.1]{BMNH}, where $\LLC_{b}$ for $b \in B(G)$ was constructed by Mok \cite{Mok} and Kaletha-Minguez-Shin-White \cite{KMSW}. 

Now we explain why Assumption \ref{compatibility} (2) is satisfied. We recall that if $J_{b}$ is a non quasi-split group then the fibers of the $\LLC_{b}$ over an $L$-parameter $\phi: W_{\bb{Q}_{p}} \times \SL_{2}(\ol{\mathbb{Q}}_{\ell}) \ra \phantom{}^{L}J_{b}(\ol{\mathbb{Q}}_{\ell})$ should be empty if $\phi$ factors through $\phantom{}^{L}M(\ol{\mathbb{Q}}_{\ell}) \ra \phantom{}^{L}G(\ol{\mathbb{Q}}_{\ell})$ via the the natural (equivalently twisted) embedding for a Levi subgroup $M \subset G$ which does not transfer to a Levi subgroup of $J_{b}$ (recalling that $G$ is quasi-split in this case, so that $J_{b}$ is an inner form of the quasi-split Levi of $G$). In particular, such parameters are called irrelevant, and we expect the fiber to be empty if and only if $\phi$ is irrelevant \cite[Conjecture~A.2]{Kal}. Assumption \ref{compatibility} (2) reduces to checking that the fibers over the irrelevant $L$-parameters are empty for all $J_{b}$ and the $\LLC_{b}$ in question. This is because $b \in B(G)_{\mathrm{un}}$ if and only if $J_{b}$ is quasi-split or a torus (See \cite[Lemma~2.12]{Ham2}) implying that if $b \notin B(G)_{\mathrm{un}}$ then all $L$-parameters $\phi \in \Phi(J_{b})$ which factor through $\phantom{}^{L}T$ are irrelevant. Since every representation in $\Pi(J_{b})$ occurs as the constituent of a parabolic induction of a supercuspidal representation of a Levi subgroup (where we recall that supercuspidals are in particular discrete series) to show this claim, it suffices to show that the semi-simplification of $\LLC_{b}$ is compatible with respect to parabolic induction and the semi-simplification of a local Langlands correspondence on each Levi $L_{b} \subset J_{b}$ (as in \cite[Theorem~I.9.6 (viii)]{FS}), and  that the non-semisimplified local Langlands correspondence on the Levi sends discrete series representations to discrete $L$-parameters of $L_{b}$ (in the sense that they do not factor through some smaller Levi of $\phantom{}^{L}L_{b}$), as expected \cite[Conjecture~A.4]{Kal}). 

For the Harris-Taylor correspondence, it is know that the fibers over irrelevant parameters are empty by the standard properties of Jacquet-Langlands. In particular, it follows from the discussion before \cite[Theorem~4.2.1]{KudlaLocalLanglands} that the Harris-Taylor parameter for a discrete series representation of $\GL_{n}$ always defines a discrete L-parameter (in the sense that its L-parameter does not factor through a proper Levi subgroup) and is compatible with parabolic induction. The claim for $\GL_{m}(D_{\frac{1}{n}})$ now follows from the fact that the Jacquet Langlands transfer of any smooth irreducible representation of $D_{\frac{1}{n}}^{*}$ is discrete,  and the compatibility of parabolic induction of Baduluescu's Jacquet Langlands transfer discussed in \cite[Section~3.1]{BadulescuJacquetLanglands}. This implies the claim on irrelevant parameters for $J_{b}$ in the case of $G = \GL_{n}$, since $J_{b}$ in this case will always be a product of groups of the form $\GL_{m}(D_{\frac{1}{n}})$.
 
For $G = \GU_{n}$ or $\U_{n}$, we note that odd unitary groups are always quasi-split, so that the inner forms of such a group are always quasi-split; in particular, there are no irrelevant $L$-parameters for these groups and there is nothing to check. Therefore in this case, we can reduce to the claim on irrelevant $L$-parameters to $\GL_{n}$ and its inner forms, since these occur as direct factors of $J_{b}$ for such $G$, which was already explained above.

For $\mathrm{GSp}_{4}$, one needs to show the claim on irrelevant $L$-parameters for $\LLC_{\mathrm{GU}_{2}(D)}$, where $\mathrm{GU}_{2}(D)$ is the unique non-split inner form of $\mathrm{GSp}_{4}$. Here the claim on irrelevant $L$-parameters follows directly from the construction of Gan-Tantono (See the discussion before the main Theorem in \cite{GT2}).

For $\GU_2$, we observe that since $\GU_2$,$H := \GL_{2}\times\Res_{L'/L}(\mathbb{G}_{m})$ and, $\GL_2$ all have the same adjoint group, $b'\in B(\GU_2)_{\mathrm{un}}$ is unramified exactly when $b$ is unramified with notation as in Lemma \ref{lemma: GU2bijection}. Now, let $\tilde{T}$ be a maximal split torus of $\GL_{2}$, and observe that $T'=(\tilde{T}\times\Res_{L'/L}\mathbb{G}_m)/\mathbb{G}_m$ is a maximal torus of $\GU_2$ over $L$. Given $\pi_{b'}=(\tilde{\pi},\chi)$, and $\phi_{\pi_{b'}}:W_{L} \times \SL_{2}(\ol{\mathbb{Q}}_{\ell})\rightarrow \phantom{}^{L}J_{b'}(\ol{\mathbb{Q}}_{\ell})$, we see from the construction that this factors through $\phantom{}^{L}T'(\ol{\mathbb{Q}}_{\ell})$ exactly when the associated $L$-parameter for $H$, $\phi_{(\tilde{\pi},\chi)}:W_{L} \times \SL_{2}(\ol{\mathbb{Q}}_{\ell})\rightarrow \phantom{}^{L}J_{b}(\ol{\mathbb{Q}}_{\ell})$, factors through $\phantom{}^{L}T(\ol{\mathbb{Q}}_{\ell})$, where $T=\tilde{T}\times\Res_{L'/L}\mathbb{G}_m$. Since Assumption \ref{compatibility} (2) is clearly compatible with taking products, the group $H$ via $a\mapsto (\mathrm{diag}(a,a),a^{-1})$ satisfies this assumption. Thus, we see that so does $\GU_2$, since if we let $T_H$ denote the maximal torus of $T$ such that its image under the projection to $\GU_2$ is $T$, and we let $\rho_H$ be a smooth irreducible representation of $H(L)$ lifting $\rho$, we see that by definition of $\LLC_{\GU_2}$ we have $\phi_\rho$ factors through $\phantom{}^{L}T$ if and only if $\phi_{\rho_H}$ factors through $\phantom{}^{L}T_H$.

Now we explain why Assumption \ref{compatibility} (3) is satisfied. First, note that any parameter $\phi: W_{\bb{Q}_{p}} \times \SL_{2}(\ol{\mathbb{Q}}_{\ell}) \ra \phantom{}^{L}G(\ol{\mathbb{Q}}_{\ell})$ induced from a toral parameter $\phi_{T}$ has necessarily trivial monodromy, since $\phantom{}^{L}T(\ol{\mathbb{Q}}_{\ell})$ consists only of semi-simple elements. Moreover, since $J_{b}$ is an inner form of $M_{b}$, it follows that the set of all distinct conjugacy classes of parameters $\phi': W_{\bb{Q}_{p}} \times \SL_{2}(\ol{\mathbb{Q}}_{\ell}) \ra \phantom{}^{L}J_{b}(\ol{\mathbb{Q}}_{\ell})$, which can give rise to $\phi$ under the twisted embedding $\phantom{}^{L}J_{b}(\ol{\mathbb{Q}}_{\ell}) \ra \phantom{}^{L}G(\ol{\mathbb{Q}}_{\ell})$ are parameterized by a set of minimal length representatives of $W_{b} = W_{G}/W_{M_{b}}$ via conjugating $\phi'$. We expect (See \cite[Conjecture~A.5]{Kal}) that the fiber of $\LLC_{b}$ over such a $\phi'$ inducing $\phi$ to be the irreducible constituents of the normalized induction of the $L$-packet of $\phi_{T}^{w}$, which is just $\chi^{w}$ by local class field theory for $w \in W_{b}$. For cases (1),(2),and (4), the required compatibility with parabolic induction above was already discussed above for the Harris-Taylor correspondence and the relevant claim for the Kaletha-Minguez-Shin-White/Mok correspondence follows from \cite[Section~2.2.3]{BMNH}. For the Gan-Takeda/Gan-Tantono correspondence, it follows directly from the construction and the analogous claim for the Harris-Taylor correspondence. We note that the twists by $\delta_{P_{b}}^{-1/2}$ appear to account for the half Tate twists appearing in the definition of the \emph{twisted} embedding $\phantom{}^{L}J_{b}(\ol{\mathbb{Q}}_{\ell}) \ra \phantom{}^{L}G(\ol{\mathbb{Q}}_{\ell})$ (cf. \cite[Another proof of Proposition 4.1]{HamannImaiDualizing}). 
For $\GU_2$, we see that when $b'$ is unramified, we have isomorphisms of flag varieties
\begin{equation*}
    J_{b'}/B_{b'}\simeq J_{\tilde{b}}/B_{\tilde{b}}\simeq J_{b}/B_{b}.
\end{equation*}
In the above situation where $\phi_{\pi_{b'}}$ factors through $\phantom{}^{L}T'$, we see that since $H,\GL_2$ satisfy Assumption \ref{compatibility} (3), the corresponding representation of $H(L)$ is of the form $(\tilde{\pi},\chi)$, where $\tilde{\pi}$ is an irreducible constituent of $i_{B_{\tilde{b}}}^{J_{\tilde{b}}}(\chi_1^{w}) \otimes \delta_{P_{\tilde{b}}}^{-1/2}$, for the associated character $\chi_1$ of $\tilde{T}$. In particular, we see that $\pi_{b'}$ is a constituent of $i_{B_{b'}}^{J_{b'}}(\chi_1^{w}\otimes \chi) \otimes \delta_{P_{b'}}^{-1/2}$, as desired.
\end{proof}
We now turn our attention to deriving the key local results from Assumption \ref{compatibility}.
\subsection{Perverse $t$-exactness}{\label{subsec: perversetexactness}}
We recall that $\Bun_{G}^{b} \simeq [\Spd(C)/\mathcal{J}_{b}]$, where $\mathcal{J}_{b} := \mathrm{Aut}(\mathcal{E}_{b})$ is the group diamond parameterizing automorphisms of the bundle $\mathcal{E}_{b}$ attached to $b \in B(G)$ on $X$, as in (\ref{eqn: AutomorphismsofPdivisibleAutomorphismsofJb}) though here we view it as a diamond over $\Spd(\ol{\bb{F}}_{p})$. The diamond $\mathcal{J}_{b}$ has pure cohomological $\ell$-dimension over the base (in the sense of \cite[Definition~IV.1.17]{FS}) equal to $\langle 2\rho_{G}, \nu_{b} \rangle$, where $\nu_{b}$ is the slope homomorphism of $b$.
Moreover, we have that $\Bun_{G}$ is cohomologically smooth of pure $\ell$-dimension equal to $0$ over the base. This motivates the following definition.
\begin{definition}
We define a perverse $t$-structure $(\pD^{\leq 0}(\Bun_{G},\ol{\mathbb{F}}_{\ell}),\pD^{\geq 0}(\Bun_{G},\ol{\mathbb{F}}_{\ell}))$ on $\D(\Bun_{G},\ol{\mathbb{F}}_{\ell})$ such that $A \in \D(\Bun_{G},\ol{\mathbb{F}}_{\ell})$ lies in $\pD^{\leq 0}(\Bun_{G},\ol{\mathbb{F}}_{\ell})$ (resp. $\pD^{\geq 0}(\Bun_{G},\ol{\mathbb{F}}_{\ell})$) if and only if $j_{b}^{*}(A)$ (resp. $j_{b}^{!}(A)$) sits in cohomological degrees $\leq \langle 2\rho_{G},\nu_{b} \rangle$ (resp. $\geq \langle 2\rho_{G},\nu_{b} \rangle$) (See \cite[Proposition~8.1.5]{MingjiaPolI} for a detailed existence proof).
\end{definition}
For the rest of this section, we assume that $\ell$ is very decent in the sense of Definition \ref{defn: verydecent} so that we may invoke the results of \cite{Ham2} and \S \ref{subsection: SpectralActionSection}.

We let $\phi$ be a semisimple L-parameter, and we write $(\pD^{\geq 0}(\Bun_{G},\ol{\mathbb{F}}_{\ell})_{\phi},\pD^{\leq 0}(\Bun_{G},\ol{\mathbb{F}}_{\ell})_{\phi})$ for the restriction of this $t$-structure to the full subcategory $\D(\Bun_{G},\ol{\mathbb{F}}_{\ell})_{\phi}$. One can check that this gives rise to a well-defined $t$-structure on $\D(\Bun_{G},\ol{\mathbb{F}}_{\ell})_{\phi}$ such that the localization map $(-)_{\phi}$ is $t$-exact. Indeed, this follows by applying Proposition \ref{prop:localizenaive} to the two set of compact generators $\{j_{b!}(\mathrm{cInd}_{K}^{J_{b}(\bb{Q}_{p})}(\Lambda))\}$ and $\{j_{b\sharp}(\mathrm{cInd}_{K}^{J_{b}(\bb{Q}_{p})}(\Lambda))\}$  for $K \subset J_{b}(\bb{Q}_{p})$ a varying open pro-$p$ subgroup of $\D(\Bun_{G},\ol{\bb{F}}_{\ell})$ (See \cite[Theorem~V.4.1]{FS}) and using that localization is an exact functor. Here $j_{b\sharp}$ denotes the exceptional left adjoint to $j_{b}^{*}$ considered in \cite[Proposition~VII.7.2]{FS}. Moreover, one easily checks that this also gives rise to a $t$-structure after further intersecting with $\D^{\mathrm{ULA}}(\Bun_{G},\ol{\bb{F}}_{\ell}) \subset \D(\Bun_{G},\ol{\bb{F}}_{\ell})$ the full subcategory of sheaves $A$ such that $j_{b}^{*}A$ satisfies the property that its invariants under any pro-$p$ group $K \subset J_{b}(\bb{Q}_{p})$ is a perfect complex, by an analogous argument where one uses that the category of perfect complexes of $\ol{\bb{F}}_{\ell}$-vector spaces is preserved under standard truncation (more generally this holds for coefficients in a regular Noetherian ring). We let $\Perv(\Bun_{G},\ol{\mathbb{F}}_{\ell})_{\phi}$ and $\Perv^{\mathrm{ULA}}(\Bun_{G},\ol{\bb{F}}_{\ell})_{\phi}$ denote the hearts of these $t$-structures.

One of the technical conditions introduced in \cite{Ham2} is the following.
\begin{definition}{\label{defn: weaknormreg}}
We say that a toral parameter $\phi_{T}: W_{\mathbb{Q}_{p}} \ra \phantom{}^{L}T(\ol{\mathbb{F}}_{\ell})$ is generic regular if it is generic in the sense of Definition \ref{def: generic} and $\phi_{T}$ is also regular in the sense that, if $\chi$ denotes the character attached to $\phi_{T}$ under local class field theory, we have that $\chi$ is regular, i.e for all $w \in W_{G}$ non-trivial, we have that
\begin{equation}{\label{weylgroupreln}}
 \chi \not\simeq \chi^{w}.
\end{equation}
\end{definition}
We let $\phi: W_{\mathbb{Q}_{p}} \ra \phantom{}^{L}G(\ol{\mathbb{F}}_{\ell})$ be the L-parameter induced by a toral parameter $\phi_{T}$. To motivate the condition, we recall that, if $\phi_{T}$ is generic regular, we have by \cite[Theorem~9.10]{Ham2} an object $\nmEis(\mathcal{S}_{\phi_{T}}) \in \Perv(\Bun_{G},\ol{\mathbb{F}}_{\ell})$, which is a perverse filtered Hecke eigensheaf on $\Bun_{G}$, assuming \ref{compatibility} holds. Moreover, it is supported on the set of unramified elements and, for $b \in B(G)_{\mathrm{un}}$, its stalks are given by
\[ \Red_{b,\phi}^{\mathrm{tw}} := \bigoplus_{w \in W_{b}} \rho_{b,w}^{\chi}[- \langle 2\rho_{G}, \nu_{b} \rangle], \]
where we recall that $\rho_{b,w}^{\chi} := i_{B_{b}}^{J_{b}}(\chi^{w}) \otimes \delta_{P_{b}}^{-1/2}$ and $\chi$ is the character attatched to $\phi_{T}$ via local class field theory. In particular, by compatability of the Fargues-Scholze correspondence with local Langlands for torii and parabolic induction \cite[Theorem~I.9.6 (i), (viii)]{FS}, it defines an object in the localized category $\Perv(\Bun_{G},\ol{\mathbb{F}}_{\ell})_{\phi}$. To show the desired perverse $t$-exactness property, we would like to use the Hecke eigensheaf property of $\nmEis(\mathcal{S}_{\phi_{T}})$. Given a geometric dominant cocharacter $\mu$ of $G$, we consider the highest weight tilting module $\mathcal{T}_{\mu}$ attached to $\mu$, we let 
\[ T_{\mu}: \D(\Bun_{G},\ol{\mathbb{F}}_{\ell}) \ra \D(\Bun_{G},\ol{\mathbb{F}}_{\ell})^{BW_{E_{\mu}}} \]
be the Hecke operator attached to the representation $\mathcal{T}_{\mu}$, where $E_{\mu}$ denotes the reflex field of $\mu$. The sheaf $T_{\mu}(\nmEis(\mathcal{S}_{\phi_{T}}))$ carries a filtration which, if it splits, guarantees an isomorphism $\nmEis(\mathcal{S}_{\phi_{T}}) \boxtimes r_{\mu} \circ \phi \simeq T_{\mu}(\nmEis(\mathcal{S}_{\phi_{T}}))$, and we say that $\phi_{T}$ is $\mu$-regular (\cite[Definition~9.11]{Ham2}) if such a splitting exists. Here $r_{\mu}: \hat{G} \ra \GL(\mathcal{T}_{\mu})$ is the map defined by the tilting module $\mathcal{T}_{\mu}$. The condition of being $\mu$-regular is guaranteed by the following stronger condition, using \cite[Theorem~9.10]{Ham2}. 
\begin{definition}{\label{def: strongmureg}}
We write $(-)^{\Gamma}: \cochar \ra \gamorb$ for the natural map from geometric cocharacters to their $\Gamma$-orbits. For a toral parameter $\phi_{T}: W_{\mathbb{Q}_{p}} \ra \phantom{}^{L}T(\ol{\mathbb{F}}_{\ell})$ and a geometric dominant cocharacter $\mu$, we say $\phi_{T}$ is strongly $\mu$-regular if the Galois cohomology complexes 
\[ R\Gamma(W_{\mathbb{Q}_{p}},(\nu - \nu')^{\Gamma} \circ \phi_{T}) \]
are trivial for $\nu$,$\nu'$ defining distinct $\Gamma$-orbits of weights in the highest weight tilting module $\mathcal{T}_{\mu}$.  
\end{definition}
\begin{remark}
In particular, strong $\mu$-regularity implies $\mu$-regularity, and if we know strong $\mu$-regularity then it implies $\mu'$-regularity for any $\mathcal{T}_{\mu'}$ which occurs as a direct summand of the tensor product $\mathcal{T}_{\mu}^{\otimes n}$, by \cite[Proposition~10.12]{Ham1}, and this uses the second part of Definition \ref{defn: verydecent} (2) in a key way. Also, as we will see, strong $\mu$-regularity is often implied by genericity for some suitably chosen $\mu$. 
\end{remark}

More importantly, this condition can be used to deduce the following semi-simplicity result for the inductions $\rho_{b,w}$.
\begin{proposition}{\label{prop: genlocal}}
For any $\phi$ induced from a generic regular $\phi_{T}$ we assume, for all $b \in B(G)_{\mathrm{un}}$ and $w \in W_{b}$, that the representations $\rho_{b,w}$ are semi-simple, and that Assumption \ref{compatibility} is true. Then we have a direct sum decomposition 
\[ \bigoplus_{b \in B(G)_{\mathrm{un}}} \Dadm(\Bun_{G}^{b},\ol{\mathbb{F}}_{\ell})_{\phi} \simeq \DULA(\Bun_{G},\ol{\mathbb{F}}_{\ell})_{\phi}, \]
where  $\Dadm(\Bun_{G}^{b},\ol{\mathbb{F}}_{\ell}) \subset \D(\Bun_{G}^{b},\ol{\mathbb{F}}_{\ell}) \simeq \D(J_{b}(\mathbb{Q}_{p}),\ol{\mathbb{F}}_{\ell})$ denotes the subcategory of admissible complexes regarded as a full subcategory of $\D(\Bun_{G},\ol{\bb{F}}_{\ell})$ by $!$-pushforward. 
\\\\
Moreover, for any $A \in \DULA(\Bun_{G}^{b},\ol{\mathbb{F}}_{\ell})_{\phi} \simeq \Dadm(J_{b}(\mathbb{Q}_{p}),\ol{\mathbb{F}}_{\ell})_{\phi}$, we have that the $!$ and $*$ pushforwards agree with respect to the inclusion $j_{b}: \Bun_{G}^{b} \ra \Bun_{G}$.     
\end{proposition}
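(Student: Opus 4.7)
The plan has two parts: first establish the ``moreover'' clause that $j_{b!} \simeq j_{b*}$ on the localized category, and then deduce the direct sum decomposition by excision together with Proposition \ref{prop: constituent proposition}.

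For the $!$-$*$ agreement, I would first reduce to irreducibles. By Proposition \ref{prop: constituent proposition}, every Schur-irreducible object of $\Dadm(J_{b}(\mathbb{Q}_{p}),\ol{\mathbb{F}}_{\ell})_{\phi}$ with $b \in B(G)_{\mathrm{un}}$ occurs as an irreducible sub-quotient of some $\rho_{b,w}$ with $w \in W_{b}$. Under the hypothesis that each $\rho_{b,w}$ is semi-simple, these representations are finite direct sums of irreducibles, so $\Dadm(\Bun_{G}^{b},\ol{\mathbb{F}}_{\ell})_{\phi}$ is generated under shifts and colimits by these constituents. It therefore suffices to check that $j_{b!}(\pi)_{\phi} \to j_{b*}(\pi)_{\phi}$ is an isomorphism for each such irreducible $\pi$.

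For this step I would invoke the normalized Eisenstein sheaf $\nmEis(\mathcal{S}_{\phi_{T}}) \in \Perv(\Bun_{G},\ol{\mathbb{F}}_{\ell})$ produced by \cite[Theorem~10.10]{Ham2}: it is a perverse Hecke eigensheaf supported on the unramified locus whose stalk at $b \in B(G)_{\mathrm{un}}$ is exactly $\Red_{b,\phi}^{\mathrm{tw}} = \bigoplus_{w \in W_{b}} \rho_{b,w}[-\langle 2\rho_{G},\nu_{b}\rangle]$. Since $\nmEis(\mathcal{S}_{\phi_{T}})$ already lies in $\Perv(\Bun_{G},\ol{\mathbb{F}}_{\ell})_{\phi}$ and its stalks sit in the middle perverse degree dictated by $\langle 2\rho_{G},\nu_{b} \rangle$, the canonical arrow from its $!$-extension to its $*$-extension on each stratum is forced to be an isomorphism. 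Using the semi-simplicity hypothesis, I would extract each irreducible constituent $\pi$ of $\rho_{b,w}$ as a direct summand of $\nmEis(\mathcal{S}_{\phi_{T}})$ by acting through the spectral Bernstein center and idempotents of $(-)_{\phi}$, thereby transferring the $!$-$*$ agreement to $\pi$ itself. This extends to all of $\Dadm(\Bun_{G}^{b},\ol{\mathbb{F}}_{\ell})_{\phi}$ by standard dévissage along colimits and shifts, using that $j_{b!}$ and $j_{b*}$ both preserve such operations on admissible complexes.

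For the direct sum decomposition I would argue by excision along the HN stratification. Any $A \in \DULA(\Bun_{G},\ol{\mathbb{F}}_{\ell})_{\phi}$ admits the standard filtration with graded pieces $j_{b!}j_{b}^{*}(A)$ for $b \in B(G)$; by Proposition \ref{prop: constituent proposition} the pieces for $b \notin B(G)_{\mathrm{un}}$ vanish after localization at $\phi$. The remaining extension triangles split: indeed, for $b \neq b'$ in $B(G)_{\mathrm{un}}$ with $b' < b$, the obstruction lives in $\Hom$-groups which can be computed as $\Hom(j_{b'!}j_{b'}^{*}A, j_{b*}j_{b}^{*}A[1])$, and the agreement $j_{b*} \simeq j_{b!}$ from the first part kills these via the orthogonality of distinct strata under $j_{b'}^{*}j_{b!} = 0$. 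Combining with Lemma \ref{lemma: localization map properties} (4), this produces the claimed decomposition indexed by $B(G)_{\mathrm{un}}$.

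The hardest step will be extracting each individual irreducible constituent $\pi$ of $\rho_{b,w}$ from $\nmEis(\mathcal{S}_{\phi_{T}})$ in a way that preserves its clean $!$-$*$ structure, and then promoting this agreement from the finite-length irreducibles all the way to arbitrary admissible complexes in the localized category. In particular, admissible complexes need not be of finite length, so some care is needed to ensure the localization functor $(-)_{\phi}$ commutes appropriately with the dévissage, which ultimately relies on the compact generation and idempotency properties of $(-)_{\phi}$ established in Appendix \ref{append: spectralactionproperties}.
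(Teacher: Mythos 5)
Your overall architecture matches the paper's: reduce to the irreducible constituents of the $\rho_{b,w}$ via Proposition \ref{prop: constituent proposition} and semi-simplicity, establish the $!$-$*$ agreement there, and then let excision degenerate to give the direct sum. But the crucial step --- that the canonical map $j_{b!}(\rho_{b,w}) \ra Rj_{b*}(\rho_{b,w})$ is an isomorphism --- is not established by your argument. You claim this is ``forced'' because $\nmEis(\mathcal{S}_{\phi_{T}})$ is perverse with stalks in the expected degree $\langle 2\rho_{G},\nu_{b}\rangle$. That implication is false: perversity of a sheaf constrains the degrees of its $*$- and $!$-restrictions to each stratum, but says nothing about whether the $!$- and $*$-extensions of those restrictions agree. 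If it did, every perverse sheaf on a stratified stack would split as a direct sum of clean extensions over its strata, which is absurd (intermediate extensions with nontrivial boundary behavior exist in abundance). The cleanness of $j_{b!}(\rho_{b,w}) \ra Rj_{b*}(\rho_{b,w})$ is a genuine computation --- it amounts to a vanishing statement for $i_{b'}^{*}Rj_{b*}(\rho_{b,w})$ on other strata, which is where genericity of $\phi_{T}$ actually enters --- and the paper imports it wholesale as \cite[Proposition~11.13]{Ham2}. Your proof supplies no substitute for that input.

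A secondary point: you correctly flag that admissible complexes need not have finite length, but you do not resolve it; ``standard d\'evissage along colimits and shifts'' does not obviously reach all of $\Dadm(J_{b}(\mathbb{Q}_{p}),\ol{\mathbb{F}}_{\ell})_{\phi}$ from the finitely many irreducibles. The paper closes this with Lemma \ref{lemma: howefinitelength}: under Assumption \ref{compatibility} the only possible irreducible constituents are the finitely many subquotients of the $\rho_{b,w}$, so choosing $K$ small enough that all of them have $K$-fixed vectors and using perfectness of $A^{K}$ forces $A$ to have finite-length cohomology. With that lemma the d\'evissage is genuinely finite and your concluding step goes through; without it (or an equivalent), the extension to arbitrary admissible complexes remains open.
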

\begin{proof}
The first part of the Proposition follows from the second part. To see this, we use the semi-orthogonal decomposition of $\D(\Bun_{G},\ol{\mathbb{F}}_{\ell})$ into $\D(\Bun_{G}^{b},\ol{\mathbb{F}}_{\ell}) \simeq \D(J_{b}(\mathbb{Q}_{p}),\ol{\mathbb{F}}_{\ell})$ via the excision triangles. Using that the $!$ and $*$-pushforwards agree for all objects $A \in \D(\Bun_{G}^{b},\ol{\mathbb{F}}_{\ell})_{\phi}$, we see that the excision spectral sequence degenerates and the first part of the claim follows. To see the second part, we now use Proposition \ref{prop: constituent proposition} to see that an object $A \in \D(\Bun_{G},\ol{\mathbb{F}}_{\ell})_{\phi}$ can only be supported on the HN-strata $\Bun_{G}^{b}$ for $b \in B(G)_{\mathrm{un}}$, and that the restriction of $A$ to $\Bun_{G}^{b}$ has irreducible constituents valued in subquotients of the representations $\rho_{b,w}$ for $w \in W_{b}$ varying. For the representations $\rho_{b,w}$, we use the following claim of \cite{Ham2}.
\begin{proposition}{\cite[Proposition~10.12]{Ham2}}
For all $b \in B(G)_{\mathrm{un}}$ and $w \in W_{b}$, the natural map
\[ j_{b!}(\rho_{b,w}) \ra Rj_{b*}(\rho_{b,w}) \]
is an isomorphism assuming $\phi_{T}$ is generic regular and Assumption \ref{compatibility} is true.
\end{proposition}
Therefore, we know the $!$ and $*$ pushforwards agree on the $\rho_{b,w}$, and, since we are assuming the representations $\rho_{b,w}$ are semisimple, the claim follows for any constituent of $\rho_{b,w}$. This is enough to conclude the claim for any $A \in \DULA(\Bun^{b}_{G},\ol{\mathbb{F}}_{\ell})_{\phi} \simeq \Dadm(J_{b}(\mathbb{Q}_{p}),\ol{\mathbb{F}}_{\ell})_{\phi}$ using the following claim. 
\begin{lemma}{\label{lemma: howefinitelength}}
Assuming \ref{compatibility}, for $\phi$ a generic parameter and any $A \in \D^{\mathrm{adm}}(J_{b}(\mathbb{Q}_{p}),\ol{\mathbb{F}}_{\ell})_{\phi}$ the cohomology of $A$ in the standard $t$-structure has finite length. 
\end{lemma}
\begin{proof}
By Proposition \ref{prop: constituent proposition}, we know that any irreducible subquotient of the cohomology of $A$ is an irreducible constituent of $\rho_{b,w}$ for some $w \in W_{b}$. It follows by \cite[Section~II.5.13]{Vig} that there are only finitely many possibilities for the irreducible constituents of $\rho_{b,w}$. Therefore, by choosing $K \subset G(\mathbb{Q}_{p})$ a sufficiently small open compact such that all these representations have an invariant vector, we deduce, since $A^{K}$ is a perfect complex by assumption, that $A$ must have finite length cohomology.
\end{proof}
\end{proof}
We note that the semi-simplicity of $\rho_{1,1} = i_{B}^{G}(\chi)$ is implied by the conditions discussed above. 
\begin{lemma}{\label{Lemma: mureg implies intertwiniso}}
Let $\phi_{T}: W_{\mathbb{Q}_{p}} \ra \phantom{}^{L}T(\ol{\mathbb{F}}_{\ell})$ be a generic regular paramater, and set $\chi: T(\mathbb{Q}_{p}) \ra \ol{\mathbb{F}}_{\ell}^{*}$ to be the character attached to $\phi_{T}$ under class field theory. Suppose there exists a $\mu$ which is not fixed under any $w \in W_{G}$ and $\phi_{T}$ is $\mu$-regular. Then, $i_{B}^{G}(\chi)$ is irreducible. 
\end{lemma}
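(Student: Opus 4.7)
The strategy is a classical representation-theoretic argument (computing $\End_G(i_B^G(\chi))$ via the Jacquet module) combined with a semisimplicity input coming from the geometric Hecke eigensheaf property. The result will follow by showing (i) $i_B^G(\chi)$ is indecomposable and (ii) $i_B^G(\chi)$ is semisimple; the conjunction gives irreducibility.

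For indecomposability, I would start from Frobenius reciprocity
\[
\End_G(i_B^G(\chi)) \simeq \Hom_T\bigl(r_B^G\, i_B^G(\chi),\, \chi\bigr),
\]
and then invoke the Bernstein--Zelevinsky geometric lemma to produce a filtration of $r_B^G\, i_B^G(\chi)$ whose graded pieces are the twisted Weyl conjugates $\chi^w$ for $w \in W_G$ (normalized by the appropriate modulus characters). Regularity of $\phi_T$, i.e.\ $\chi \not\simeq \chi^w$ for every $w \neq 1$, annihilates all terms except the $w = 1$ contribution, yielding $\End_G(i_B^G(\chi)) = \ol{\mathbb{F}}_{\ell}$ and hence indecomposability.

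For semisimplicity I would use the Hecke eigensheaf property of the normalized Eisenstein sheaf supplied by $\mu$-regularity. Recall that under the hypotheses, \cite[Theorem~10.10]{Ham2} furnishes a perverse Hecke eigensheaf $\nmEis(\mathcal{S}_{\phi_T}) \in \Perv(\Bun_G,\ol{\mathbb{F}}_{\ell})$ whose stalk at $b = 1$ is $\rho_{1,1} = i_B^G(\chi)\otimes\delta_B^{-1/2}$, and $\mu$-regularity promotes the filtration on $T_\mu \nmEis(\mathcal{S}_{\phi_T})$ to a splitting, giving
\[
T_\mu\,\nmEis(\mathcal{S}_{\phi_T}) \;\simeq\; \nmEis(\mathcal{S}_{\phi_T})\boxtimes (r_\mu\circ\phi).
\]
Restricting to the neutral stratum $\Bun_G^1$ expresses a tensor product of $i_B^G(\chi)$ with the $W_{\mathbb{Q}_p}$-representation $r_\mu\circ\phi$ in terms of $T_\mu$ applied to $i_B^G(\chi)$. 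Because $\mu$ has trivial stabilizer in $W_G$, the weights appearing in $\mathcal{T}_\mu$ form a single free $W_G$-orbit, and the associated character decomposition of $r_\mu\circ\phi$ under the generic toral parameter is multiplicity-free with pairwise distinct constituents. Coupled with the weakly normalized regularity hypothesis (\ref{weylgroupreln}), which ensures the standard intertwiners between $i_B^G(\chi)$ and its Weyl twists $i_B^G(\chi^w)$ are either zero or isomorphisms, this forces $i_B^G(\chi)$ to be semisimple: any non-split extension would produce a matching non-split extension after tensoring with $r_\mu\circ\phi$, contradicting the splitting of $T_\mu\nmEis$ on the neutral stratum.

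Combining the two conclusions, $i_B^G(\chi)$ is both indecomposable and semisimple, hence irreducible. The main obstacle is the second step: extracting honest semisimplicity from the Hecke eigensheaf splitting. One must be careful that the splitting of $T_\mu\nmEis$ as a sheaf does descend to a splitting of its stalk at the neutral stratum in the relevant Weyl-component sense, which is where weakly normalized regularity together with $\mu$-regularity must be deployed jointly to rule out mod-$\ell$ extension classes. Everything else is standard manipulation of Jacquet functors and the Bernstein center, and the geometric input is precisely what replaces the characteristic-zero input of unitarity used to guarantee semisimplicity of regular principal series.
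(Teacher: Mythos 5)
Your step (i) — indecomposability via $\mathrm{End}_{G}(i_{B}^{G}(\chi)) = \ol{\mathbb{F}}_{\ell}$ from Frobenius reciprocity, the geometric lemma, and regularity of $\chi$ — is sound. The gap is in step (ii). The splitting $T_{\mu}\,\nmEis(\mathcal{S}_{\phi_{T}}) \simeq \nmEis(\mathcal{S}_{\phi_{T}}) \boxtimes (r_{\mu}\circ\phi)$ says nothing about the internal extension structure of the stalk $i_{B}^{G}(\chi)$: tensoring a non-semisimple $G(\mathbb{Q}_{p})$-representation with the vector space underlying $r_{\mu}\circ\phi$ is entirely compatible with the eigensheaf isomorphism, so no contradiction arises from a non-split constituent. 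The splitting concerns the filtration of $T_{\mu}\nmEis$ indexed by weight components and the Weil-group action, not semisimplicity of the perverse sheaf or of its stalks. Moreover, your claim that the weights of $\mathcal{T}_{\mu}$ form a single free $W_{G}$-orbit would require $\mu$ minuscule, which is not assumed; the hypothesis is only that $\mu$ has trivial stabilizer in $W_{G}$.

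What $\mu$-regularity actually buys — and what the paper uses, via \cite[Corollary~11.23]{Ham2} — is an isomorphism $i_{B}^{G}(\chi) \simeq i_{B}^{G}(\chi^{w}) = i_{B^{w}}^{G}(\chi)$ for every $w \in W_{G}$. The paper then runs a counting argument on the Jacquet module rather than proving semisimplicity: by second adjointness applied to $i_{B^{w_{0}}}^{G}(\chi)\simeq i_{B}^{G}(\chi) \ra \sigma'$, any nonzero quotient $\sigma'$ satisfies $r_{B}^{G}(\sigma')\neq 0$; exactness of $r_{B}^{G}$ and the geometric lemma bound $\ell(r_{B}^{G}i_{B}^{G}(\chi)) \leq |W_{G}|$ then force $\ell(r_{B}^{G}(\sigma)) < |W_{G}|$ for any proper subrepresentation $\sigma$; but $\sigma \subset i_{B}^{G}(\chi^{w})$ for all $w$ gives, by Frobenius reciprocity, nonzero maps $r_{B}^{G}(\sigma) \ra \chi^{w}$ onto all $|W_{G}|$ pairwise distinct (by regularity) characters $\chi^{w}$, a contradiction. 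This counting step is the missing idea in your write-up; in non-banal characteristic one cannot appeal to unitarity or to semisimplicity of regular principal series, which is precisely why the isomorphisms between all Weyl twists of the induced representation, rather than mere nonvanishing of intertwiners, are the essential geometric input.
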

\begin{proof}
It follows, just as in the proof of \cite[Corollary~10.22]{Ham2} and the assumed $\mu$-regularity, that we have an isomorphism $i_{B}^{G}(\chi) \simeq i_{B}^{G}(\chi^{w}) = i_{B^{w}}^{G}(\chi)$ for all $w \in W_{G}$. Here $B^{w}$ is the conjugate of $B$ by $w$. We write $r_{B}^{G}$ for the normalized parabolic restriction functor. We recall that we are working with $\ell$-modular coefficients in possibly non-banal characteristic so $i_{B}^{G}(\chi)$ may have cuspidal constituents. In particular, we will need the following Lemma.
\begin{lemma}
Let $w_{0} \in W_{G}$ be the element of longest length. For a character $\chi: T(\mathbb{Q}_{p}) \ra \ol{\mathbb{F}}_{\ell}^{*}$, if we have an isomorphism $i_{B}^{G}(\chi) \simeq i_{B}^{G}(\chi^{w_{0}})$ of $G(\mathbb{Q}_{p})$-modules then any non-zero quotient $\sigma'$ of $i_{B}^{G}(\chi)$ satisfies $r_{B}^{G}(\sigma') \neq 0$    
\end{lemma}
\begin{proof}
We apply second adjointness \cite[Corollary~1.3]{DH1} to the non-zero map
\[ i_{B^{w_{0}}}^{G}(\chi) \xrightarrow{\simeq} i_{B}^{G}(\chi) \ra \sigma' \]
to conclude the existence of a non-zero map $\chi \ra r_{B}^{G}(\sigma')$, which implies the claim. 
\end{proof}
Now suppose for the sake of contradiction that $i_{B}^{G}(\chi)$ is not irreducible. Then there exists an exact sequence
\[ 0 \ra \sigma \ra i_{B}^{G}(\chi) \ra \sigma' \ra 0. \]
Since parabolic restriction is exact (\cite[Section~II.2.1]{Vig}), we get an exact sequence
\[ 0 \ra r_{B}^{G}(\sigma) \ra r_{B}^{G}i_{B}^{G}(\chi) \ra r_{B}^{G}(\sigma') \ra 0. \]
This allows us to conclude an equality of lengths of representations:
\[ \ell(r_{B}^{G}(\sigma)) + \ell(r_{B}^{G}(\sigma')) = \ell(r_{B}^{G}(i_{B}^{G}(\chi))) \leq |W_{G}|, \]
where the inequality follows from the geometric Lemma \cite[Section~2.8]{Dat}\footnote{Note that this bound however fails without taking normalized restriction because of the aforementioned cuspidal constituents of $i_{B}^{G}(\chi)$ in non-banal characteristic (cf. \cite[Page~48]{Dat}).}. By the previous Lemma, we conclude that $\ell(r_{B}^{G}(\sigma)) < |W_{G}|$. Now, since we know that $\sigma \subset i_{B}^{G}(\chi) \simeq i_{B}^{G}(\chi^{w})$ for all $w \in W_{G}$, Frobenius reciprocity implies that we have non-zero maps $r_{B}^{G}(\sigma) \ra \chi^{w}$ for all $w \in W_{G}$. This gives a contradiction by the regularity of $\chi$, since we have exhibited $|W_{G}|$ quotients of $r_{B}^{G}(\sigma)$ given by $\chi^{w}$ which are all distinct representations, by the regularity of the character $\chi$.
\end{proof}
We now have the following key claim.
\begin{theorem}{\label{thm: generalperverseexact}}
Let $\mu$ be a geometric dominant cocharacter. We write 
\[ T_{\mu}: \D(\Bun_{G},\ol{\mathbb{F}}_{\ell}) \ra \D(\Bun_{G},\ol{\mathbb{F}}_{\ell})^{BW_{E_{\mu}}} \] 
for the Hecke operator attached to the highest weight tilting module $\mathcal{T}_{\mu}$ of highest weight $\mu$, where $E_{\mu}$ denotes the reflex field of $E$. Then the operator restricted to $\DULA(\Bun_{G},\ol{\mathbb{F}}_{\ell})_{\phi}$ is perverse $t$-exact if $\phi_{T}$ is generic regular, Assumption \ref{compatibility} is true, the $\rho_{b,w}$ are semi-simple for all $b \in B(G)_{\mathrm{un}}$ and $w \in W_{b}$, and $\phi_{T}$ is $\mu$-regular. Here $\phi: W_{\mathbb{Q}_{p}} \ra \phantom{}^{L}G(\ol{\mathbb{F}}_{\ell})$ is the parameter induced by $\phi_{T}$. 
\end{theorem}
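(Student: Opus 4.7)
The plan is to reduce perverse $t$-exactness of $T_\mu$ on the localized category to the Hecke eigensheaf equation satisfied by the normalized Eisenstein series $\nmEis(\mathcal{S}_{\phi_T})$ under the $\mu$-regularity hypothesis, combined with the semisimple structure of the localized category given by Proposition \ref{prop: genlocal}.

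First I would invoke Proposition \ref{prop: genlocal}: under the stated hypotheses (generic $\phi_T$, Assumption \ref{compatibility}, semisimplicity of the $\rho_{b,w}$) one has a direct sum decomposition
\[ \DULA(\Bun_G,\ol{\mathbb{F}}_\ell)_\phi \simeq \bigoplus_{b \in B(G)_{\mathrm{un}}} \Dadm(J_b(\mathbb{Q}_p),\ol{\mathbb{F}}_\ell)_\phi, \]
with $j_{b!} \simeq j_{b*}$ on each stratum. Combining this with Lemma \ref{lemma: howefinitelength} and Proposition \ref{prop: constituent proposition}, every object in $\DULA(\Bun_G,\ol{\mathbb{F}}_\ell)_\phi$ is cohomologically bounded, and the heart of the restricted perverse $t$-structure is the additive category of finite direct sums of irreducible objects of the form $j_{b!}(\sigma)[d_b]$ with $b \in B(G)_{\mathrm{un}}$ and $\sigma$ an irreducible constituent of some $\rho_{b,w}$.

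Second, recall that $\nmEis(\mathcal{S}_{\phi_T})$ is perverse with $b$-stalks $\bigoplus_{w \in W_b}\rho_{b,w}[-d_b]$, so in the localized category it decomposes as a direct sum of the irreducible perverse objects $j_{b!}(\sigma)[d_b]$ (with multiplicities) as $\sigma$ ranges over all irreducible constituents of the $\rho_{b,w}$. By $\mu$-regularity and \cite[Theorem~1.17]{Ham2}, the filtration on $T_\mu(\nmEis(\mathcal{S}_{\phi_T}))$ splits, giving
\[ T_\mu(\nmEis(\mathcal{S}_{\phi_T})) \simeq \nmEis(\mathcal{S}_{\phi_T}) \boxtimes (r_\mu \circ \phi), \]
which is perverse since tensoring a perverse sheaf with a vector space (here carrying a Weil group action) stays in the heart. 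Hence $T_\mu(\nmEis(\mathcal{S}_{\phi_T}))$ lies in $\Perv(\Bun_G,\ol{\mathbb{F}}_\ell)_\phi^{BW_{E_\mu}}$. I would then conclude by a d\'evissage: since $T_\mu$ is additive and commutes with the localization functor $(-)_\phi$ (Lemma \ref{lemma: localization map properties}(2)), each simple heart object $j_{b!}(\sigma)[d_b]$, being a direct summand of $\nmEis(\mathcal{S}_{\phi_T})$, has image a direct summand of the perverse object $T_\mu(\nmEis(\mathcal{S}_{\phi_T}))$, hence itself perverse. This shows $T_\mu$ preserves the heart, which together with boundedness (ensured by Lemma \ref{lemma: howefinitelength}) yields perverse $t$-exactness in both directions on $\DULA(\Bun_G,\ol{\mathbb{F}}_\ell)_\phi$.

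The main obstacle is in the final step: making rigorous the identification of an arbitrary simple heart object $j_{b!}(\sigma)[d_b]$ as a genuine direct summand of $\nmEis(\mathcal{S}_{\phi_T})$, and verifying that $T_\mu$ acts compatibly with the extraction of this summand. The first part relies crucially on Assumption \ref{compatibility}(3), which ensures every candidate simple constituent actually arises in some $\rho_{b,w}$, together with the semisimplicity hypothesis on the $\rho_{b,w}$. The second part, while following from additivity of $T_\mu$, requires careful bookkeeping of the perverse shift $d_b$ coming from the cohomological $\ell$-dimension of $[\ast/\mathcal{J}_b]$ in the translation between the stratum-wise $t$-structure on $\D(J_b(\mathbb{Q}_p),\ol{\mathbb{F}}_\ell)$ and the perverse $t$-structure on $\Bun_G$, and one must check that the eigensheaf splitting from $\mu$-regularity is functorial with respect to the chosen summand decomposition.
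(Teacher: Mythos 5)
Your proposal is correct and follows essentially the same route as the paper: reduce via Proposition \ref{prop: genlocal}, Proposition \ref{prop: constituent proposition}, Lemma \ref{lemma: howefinitelength}, and commutation of $T_\mu$ with colimits to the perverse objects $j_{b!}(\Red^{\mathrm{tw}}_{b,\phi})$, then conclude from the splitting $T_{\mu}(\nmEis(\mathcal{S}_{\phi_{T}})) \simeq \nmEis(\mathcal{S}_{\phi_{T}}) \boxtimes (r_{\mu}\circ\phi)$ guaranteed by $\mu$-regularity. One minor caveat: you need not (and should not) assert that the heart consists of finite \emph{direct sums} of the simples $j_{b!}(\sigma)[d_b]$ — finite length (successive extensions) suffices for the d\'evissage, since $T_\mu$ is triangulated.
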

\begin{proof}
Using Lemma \ref{lemma: howefinitelength}, the commutation of Hecke operators with colimits (which follows since it is a left adjoint (See the discussion preceding \cite[Theorem~I.7.2]{FS})), Proposition \ref{prop: constituent proposition}, Proposition \ref{prop: genlocal}, and semi-simplicity of the representations $\rho_{b,w}$, we can reduce to showing, for all $b \in B(G)_{\mathrm{un}}$, that if we consider the complex 
\[ \Red^{\mathrm{tw}}_{b,\phi} := \bigoplus_{w \in W_{b}} i_{B_{b}}^{J_{b}}(\chi^{w}) \otimes \delta_{P_{b}}^{-1/2}[- \langle 2\rho_{G}, \nu_{b} \rangle] \in \Perv^{\ULA}(\Bun_{G},\ol{\mathbb{F}}_{\ell})_{\phi} \]
then we have a containment 
\[ T_{\mu}(j_{b!}(\Red^{\mathrm{tw}}_{b,\phi})) \in \Perv^{\ULA}(\Bun_{G},\ol{\mathbb{F}}_{\ell})_{\phi} \]
for the fixed $\mu$. However, $\Red^{\mathrm{tw}}_{b,\phi}$ are the stalks of the perverse filtered Hecke eigensheaf $\nmEis(\mathcal{S}_{\phi_{T}})$ given by \cite[Theorem~9.10]{Ham2} and, since $\phi_{T}$ is $\mu$-regular then, by definition (\cite[Definition~9.11]{Ham2}), we have an isomorphism: 
\[ T_{\mu}(\nmEis(\mathcal{S}_{\phi_{T}})) \simeq \nmEis(\mathcal{S}_{\phi_{T}}) \boxtimes r_{\mu} \circ \phi \in \Perv^{\ULA}(\Bun_{G},\ol{\mathbb{F}}_{\ell})_{\phi}^{BW_{E_{\mu}}}. \]
This gives the desired claim.
\end{proof}
We are almost ready to deduce the result we need for torsion vanishing. To do this, we will first need to discuss when the additional assumptions of regularity and $\mu$-regularity are superfluous, possibly under certain assumptions on $\ell$.
\subsection{Verification of additional assumptions}
\label{section:verification}
In this section, we will study how the conditions of regularity (Definition \ref{defn: weaknormreg}) and strong $\mu$-regularity (Definition \ref{def: strongmureg}) described in \S \ref{subsec: perversetexactness} are related to the condition of being generic in the particular case of the groups appearing in Theorem \ref{thm:AssumptionLLC}. In particular, we will see in many situations these extra conditions are in fact implied by the generic condition after imposing some additional constraints on $\ell$. In \S \ref{subsec: groupsoftypeAn}, we will study this for groups of type $A_{n}$, and then in \S \ref{subsec: groupsoftypeC} we will deal with the case of groups of type $C_{n}$. We then finally conclude with our main local results in \S \ref{subsec: thelocalresults} (Corollaries \ref{cor: appliedperversetexactness} and \ref{cor: appliedsplitofsemiorthog}).

Before proceeding, we describe the following general Lemma which will allow us to base change to splitting fields.
\begin{lemma}
\label{lemma:genericE}
Let $G$ be a quasi-split connected reductive group with splitting field $F$. Suppose that $\ell \nmid [F:\mathbb{Q}]$ (e.g if $\ell$ is very decent with respect to $G$). Then $\phi_{T}$ is generic if and only if $R\Gamma(W_{F},\tilde{\alpha} \circ \phi_{T}|_{W_{F}})$ is trivial for all absolute coroots $\tilde{\alpha} \in \mathbb{X}_{*}(T_{\ol{\mathbb{Q}}_{p}})$.
\end{lemma}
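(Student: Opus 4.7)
The plan is to reduce to the $\Gamma$-stabilizer's Weil group via Shapiro's lemma and then transfer information down to $W_E$ using Hochschild--Serre, exploiting the banality of $[E:\mathbb{Q}_p]$ at $\ell$. Let $\Gamma_{\tilde{\alpha}}$ denote the $\Gamma$-stabilizer of $\tilde{\alpha}$, with fixed field $E_{\tilde{\alpha}}\subseteq E$; the containment holds because $E$ is the splitting field, so $\Gamma_E\subseteq\Gamma_{\tilde{\alpha}}$. Setting $\chi:=\tilde{\alpha}\circ\phi_T|_{W_{E_{\tilde{\alpha}}}}$ (a character of $W_{E_{\tilde{\alpha}}}$, well-defined since $\tilde{\alpha}$ is $\Gamma_{\tilde{\alpha}}$-invariant so it extends trivially across the semidirect factor), Shapiro's lemma gives $R\Gamma(W_{\mathbb{Q}_p},\alpha\circ\phi_T)\simeq R\Gamma(W_{E_{\tilde{\alpha}}},\chi)$, so genericity translates directly into $R\Gamma(W_{E_{\tilde{\alpha}}},\chi)=0$.

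Next I would apply the Hochschild--Serre spectral sequence for the finite-index normal subgroup $W_E\triangleleft W_{E_{\tilde{\alpha}}}$,
\[
E_2^{p,q}=H^p(\Delta,\,H^q(W_E,\chi|_{W_E})) \;\Longrightarrow\; H^{p+q}(W_{E_{\tilde{\alpha}}},\chi),
\]
where $\Delta:=\mathrm{Gal}(E/E_{\tilde{\alpha}})$. Under the paper's standing very-good-$\ell$ hypothesis the integer $[E:\mathbb{Q}_p]$ is prime to $\ell$, hence $|\Delta|$ is prime to $\ell$, so $H^p(\Delta,-)=0$ for $p\geq 1$. The spectral sequence therefore degenerates and the vanishing of its abutment forces
\[
H^q(W_E,\,\chi|_{W_E})^{\Delta}=0 \quad\text{for every } q\geq 0.
\]

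Finally I would invoke the standard dichotomy: for an $\overline{\mathbb{F}}_\ell$-valued character $\eta$ of a local Weil group $W_L$ with $\ell\neq p$, the complex $R\Gamma(W_L,\eta)$ vanishes if and only if $\eta\neq 1$ and $\eta\neq\omega_L$, by combining local Tate duality (for $H^2$) with vanishing of the Euler--Poincar\'e characteristic for $\overline{\mathbb{F}}_\ell$-coefficients (forcing $H^1=0$ once $H^0=H^2=0$). It thus suffices to rule out $\chi|_{W_E}\in\{1,\omega_E\}$. Both candidates extend to characters of $W_{\mathbb{Q}_p}$ and are therefore conjugation-invariant; the hypothetical equality should produce a non-zero $\Delta$-invariant class in $H^0$ (respectively in $H^2$ by local Tate duality), contradicting the degeneration.

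The main obstacle is this last step: showing that $\chi|_{W_E}\in\{1,\omega_E\}$ really produces a non-trivial $\Delta$-invariant. The $\Delta$-action on the one-dimensional $H^0(W_E,\chi|_{W_E})$ is given by $\chi$ itself factored through $\Delta$, which need not be trivial, so naively the contradiction does not close. One must additionally exploit the cocycle identity $c(\tilde{\gamma}^{-1}w\tilde{\gamma})=\gamma^{-1}(c(w))$ for $\phi_T=(c,\mathrm{id})$, combined with the fact that $\tilde{\alpha}$ is a coroot (so $-\tilde{\alpha}$ is too, and its $\Gamma$-orbit interacts with that of $\tilde{\alpha}$), to propagate the hypothetical equality across Galois conjugates and derive the needed non-trivial invariant. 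The Shapiro and Hochschild--Serre steps are routine once this key combinatorial point is controlled.
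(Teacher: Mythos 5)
Your reduction via Shapiro's lemma to $R\Gamma(W_{E_{\tilde{\alpha}}},\chi)=0$ is exactly the paper's (one-sentence) argument, and your instinct that one must then descend from $W_{E_{\tilde{\alpha}}}$ to $W_E$ is the right thing to worry about. But the proposal does not close, and you say so yourself: the Hochschild--Serre step only yields $H^q(W_E,\chi|_{W_E})^{\Delta}=0$ for all $q$, and this does \emph{not} force $H^q(W_E,\chi|_{W_E})=0$. Concretely, the failure mode is $\chi=\epsilon\cdot\chi_0$ with $\epsilon$ a nontrivial character of $\Delta=\mathrm{Gal}(E/E_{\tilde{\alpha}})$ inflated to $W_{E_{\tilde{\alpha}}}$ and $\chi_0|_{W_E}\in\{1,\omega_E\}$: then $\chi\neq 1,\omega_{E_{\tilde{\alpha}}}$ is perfectly consistent, so $R\Gamma(W_{E_{\tilde{\alpha}}},\chi)=0$ and every $E_2$-term $H^p(\Delta,H^q(W_E,\chi|_{W_E}))$ vanishes (each $H^q(W_E,\chi|_{W_E})$ is a one-dimensional $\Delta$-module on which $\Delta$ acts through $\epsilon$), yet $H^q(W_E,\chi|_{W_E})\neq 0$. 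So the character dichotomy genuinely requires the extra input you only gesture at: one must show that genericity for \emph{all} coroot orbits, combined with the cocycle identity and the combinatorics of the coroot system, excludes $\tilde{\alpha}\circ\phi_T|_{W_E}\in\{1,\omega_E\}$, and that step is sketched but not carried out. (A smaller issue: the degeneration of Hochschild--Serre needs $\ell\nmid|\Delta|$, which is not among the hypotheses of the lemma as stated, though it holds in the paper's applications.)

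For comparison, the paper's proof consists solely of the observation that $\alpha\circ\phi_T\cong\mathrm{Ind}_{W_{E_{\alpha}}}^{W_{\mathbb{Q}_p}}\bigl(\tilde{\alpha}\circ\phi_T|_{W_{E_{\alpha}}}\bigr)$ followed by Shapiro's lemma; it stops at the reflex field $E_{\alpha}=E_{\tilde{\alpha}}$ of the orbit and is silent on the further restriction to $W_E$ in the cases where $E_{\alpha}\subsetneq E$. You have therefore isolated precisely the point that needs justification beyond Shapiro, but as written your proposal is not a proof of the statement.
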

\begin{proof}
We recall that, given a $\Gamma$-orbit of absolute coroots $\alpha \in \mathbb{X}_{*}(T_{\ol{\mathbb{Q}}_{p}})^{+}/ \Gamma$, if $F_{\alpha}$ denotes the reflex field of $\alpha$ then the representation of $\phantom{}^{L}T$ defined by $\alpha$ is given by choosing a representative $\tilde{\alpha} \in \domcochar$ of $\alpha$, and inducing the representation of $\hat{T} \rtimes W_{F_{\alpha}}/W_{F}$ defined by it to $W_{\mathbb{Q}_{p}}/W_{F}$. By Schapiro's Lemma, we deduce that genericity is equivalent to $R\Gamma(W_{F_{\alpha}},\tilde{\alpha} \circ \phi_{T}|_{W_{F_{\alpha}}}) \simeq 0$. Now, for all $i \in \bb{N}_{\geq 0}$, we recall that we have the maps defined by restriction and corestriction
\[ H^{i}(W_{F_{\alpha}},\tilde{\alpha} \circ \phi_{T}|_{W_{F_{\alpha}}}) \xrightarrow{\mathrm{res}} H^{i}(W_{F},\tilde{\alpha} \circ \phi_{T}|_{W_{F}}) \xrightarrow{\mathrm{cores}} H^{i}(W_{F_{\alpha}},\tilde{\alpha} \circ \phi_{T}|_{W_{F_{\alpha}}}), \]
and that the composite is given by multiplication by $[F:F_{\alpha}]$ (See \cite[Chapter~VIII,Proposition~4]{SerreLocalFields}). Therefore, since $\ell \nmid [F:F_{\alpha}]$ by our assumption, it follows that the vanishing of $R\Gamma(W_{F_{\alpha}},\tilde{\alpha} \circ \phi_{T}|_{W_{F_{\alpha}}})$ is equivalent to the vanishing of $R\Gamma(W_{F},\tilde{\alpha} \circ \phi_{T}|_{W_{F}})$, since multiplication by $[F:F_{\alpha}]$ is invertible on $\ol{\bb{F}}_{\ell}$-vector spaces. This shows the desired claim.
\end{proof}
We will also want to consider a few additional groups other than those listed in Theorem \ref{thm:AssumptionLLC}. There are a few more groups of interest to us. We will define them now.

Let $L/\mathbb{Q}_{p}$ be a finite extension. We have the similitude maps from $\GL_n$ (resp. $\GSp_4$)
\begin{equation*}
    \nu:\mathrm{Res}_{L/\mathbb{Q}_p}\GL_{n}\rightarrow \mathrm{Res}_{L/\mathbb{Q}_p}\mathbb{G}_m
\end{equation*}
(resp.
\begin{equation*}
    \nu:\mathrm{Res}_{L/\mathbb{Q}_p}\GSp_{4}\rightarrow \mathrm{Res}_{L/\mathbb{Q}_p}\mathbb{G}_m).
\end{equation*}
We thus define
\begin{equation*}    G(\SL_{n,L}):=\mathrm{Res}_{L/\mathbb{Q}_p}\GL_{n}\times_{\nu,\mathrm{Res}_{L/\mathbb{Q}_p}\mathbb{G}_m}\mathbb{G}_m,
\end{equation*}
\begin{equation*}    G(\mathrm{Sp}_{4,L}):=\mathrm{Res}_{L/\mathbb{Q}_p}\GSp_{4}\times_{\nu,\mathrm{Res}_{L/\mathbb{Q}_p}\mathbb{G}_m}\mathbb{G}_m.
\end{equation*}
Now we will study the additional assumptions for the groups of type $A$.
\subsubsection{Groups of type A}{\label{subsec: groupsoftypeAn}}
The first main result is of the following form.
\begin{lemma}
\label{lemma:assumptionstypeA}
Let $L/\mathbb{Q}_{p}$ be a finite extension and $G$ be one of the following groups:
\begin{enumerate}
    \item $\mathrm{Res}_{L/\mathbb{Q}_p}\U_{n}$,
    \item $\mathrm{Res}_{L/\mathbb{Q}_p}\GU_{n}$,
    \item $\mathrm{Res}_{L/\mathbb{Q}_p}\GL_{n}$,
    \item $G(\SL_{2,L})$.
\end{enumerate}
Suppose that $\ell \nmid 2[L:\mathbb{Q}_{p}]$ in cases (1)-(2) and that $\ell \nmid [L:\mathbb{Q}_{p}]$ in cases (3)-(4).
\\

If $\phi_T$ is a generic toral parameter for $G$ then $\phi_T$ is also regular. Moreover, for $(1)-(3)$, $\phi_T$ will be $\mu$-regular for all $\mu$, while, for $(4)$, $\phi_T$ will be $\mu$-regular for $\mu$ which are of the form $\prod_{\tau:L\hookrightarrow \ol{\mathbb{Q}}_p} \mu'$ for $\mu'$ a cocharacter of $\GL_2$; in particular, it will be $\mu$-regular for all minuscule $\mu$.
\end{lemma}
\begin{proof}
We first establish the implication that $\phi_{T}$ generic implies $\phi_{T}$ is regular in cases (1)-(3). We may assume for simplicity that $L = \mathbb{Q}_{p}$ with the proof in general essentially being the same in light of Lemma \ref{lemma:genericE} and our assumption on $\ell$. If $G = \GL_{n}$ then we write $\chi(t_{1},\ldots,t_{n})$ as $\prod_{i = 1}^{n} \chi_{i}(t_{i})$, where $t_{i}$ are the coordinates of $T(\bb{Q}_{p}) \simeq (\bb{Q}_{p}^{*})^{n}$. It then easily follows from evaluating the relationship $\chi \simeq \chi^{w}$ for some non-trivial $w$ at any coordinate of $T(\bb{Q}_{p})$ not fixed by $w$ and setting the remaining coordinates equal to $1$ to derive the relation $\chi_{i}\chi_{j}^{-1} \simeq \mathbf{1}$ for some $i \neq j$, which contradicts genericity. 
We now consider the case of $G = \U_{n}$ defined with respect to a quadratic extension $E/\mathbb{Q}_{p}$. Suppose there exists a non-trivial $w \in W_{G}$ such that we have an isomorphism:
\[ \chi \simeq \chi^{w} \]
of characters on $T(\mathbb{Q}_{p})$. We recall that $G_{E} \simeq \GL_{n,E}$ where $E/\mathbb{Q}_{p}$ denotes the quadratic extension defining the unitary group. We then precompose the isomorphism $\chi \simeq \chi^{w}$ with the norm map $T(E) \ra T(\mathbb{Q}_{p})$ to obtain an analogous relationship of characters on the torus $T(E)$, which is the maximal torus of $\GL_{n,E}$. Then Lemma \ref{lemma:genericE} and our assumpition on $\ell$ reduces us to the $\GL_{n}$ case discussed above. 

The case of $\GU_{n}$ similarly reduces to the $\U_{n}$ case by setting the coordinate on $T(\mathbb{Q}_{p})$ corresponding to the similitude factor to be equal to $1$.

We now turn to case (4), let $d=[L:\mathbb{Q}_p]$. Observe that we have an isomorphism $G(\SL_{2,L})_L\simeq H_L$, where
\begin{equation*}
    H=\left\{(g_i)\in\prod_{i = 1}^{d} \GL_{2}:\det(g_i)=\det(g_j)~\forall i,j\right\}. 
\end{equation*}
where $d$ is the size of the set of embeddings $L \hookrightarrow \ol{\mathbb{Q}}_{p}$. Applying Lemma \ref{lemma:genericE} again and arguing as for unitary groups, it suffices to work with $H_{L}$. The maximal torus $T'$ in $H$ can be identified with
\begin{equation*}
    \mathbb{G}_m^d\times\mathbb{G}_m,
\end{equation*}
via the map $(t_1,\dots,t_d,t)\mapsto (\mathrm{diag}(t_i,tt_i^{-1}))$. We have $W_{H}=\prod_{i = 1}^{d} \mathbb{Z}/2\mathbb{Z}$, where the non-trivial element act on each of the factors by sending $t_{i} \mapsto t_{i}^{-1}t$. Write $\chi(t_{1},\ldots,t_{d}) = \chi_{1}(t_{1}) \otimes \cdots \otimes \chi_{d}(t_{d}) \otimes \nu(t)$ for characters $\chi_1,\dots,\chi_d,\nu$. If we have a permutation $w \in W_{H}$ which is non-trivial on the $i$th factor then setting $t_{i} = x$ and $t = x$ and setting the remaining coordinates $t_j$ for $i\neq j$ equal to $1$ we obtain the relationship 
\[ \chi_{i}(t) \simeq \mathbf{1}, \]
which contradicts genericity.

We now show $\mu$-regularity. As before, for cases (1), (2), and (3), observe that if $G$ is of the form $\mathrm{Res}_{L/\mathbb{Q}_{p}}G'$ and $T'$ denotes the maximal torus of $G'$ then we have an isomorphism
\[ \mathbb{X}_{*}(T_{\ol{\mathbb{Q}}_{p}}) \simeq \prod_{\phi \in \mathrm{Hom}_{\mathbb{Q}_{p}}(L,\ol{L})} \mathbb{X}_{*}(T'_{\ol{L}}), \]
where $\ol{L}$ is an algebraic closure of $L$. Using this, we can without loss of generality assume that $L = \mathbb{Q}_{p}$. In the case that $G = \GL_{n},\U_{n}$, or $\GU_{n}$, this follows as in the proof of \cite[Corollary~9.16]{Ham2}. We recall briefly how this goes. 

One can consider the geometric dominant cocharacter $\mu = (1,0,\ldots,0,0)$ of $\GL_{n}$. This defines the standard representation $V_{\mathrm{std}}$ of $\hat{G} \simeq \GL_{n}$. This cocharacter is in particular minuscule so the weights form a closed Weyl group orbit with representative $(1,0,\ldots,0,0)$. From here, it easily follows that the difference of the weights appearing in $V_{\mathrm{std}}$ define coroots of $G$. In particular, it follows that, if $\phi_{T}$ is generic then it is strongly $\mu$-regular for $\mu = (1,0,\ldots,0)$ in the sense of Definition \ref{def: strongmureg}, and this implies the filtration on $T_{\mu}(\nmEis(\mathcal{S}_{\phi_{T}}))$ splits by \cite[Theorem~9.10]{Ham2} for this $\mu$. Now, the tilting modules $\mathcal{T}_{\omega_{i}} = \Lambda^{i}(V_{\mathrm{std}})$ attached to the other fundamental coweights $\omega_{i} = (1^{i},0^{n - i})$ of $G$ can be realized as direct summands of $V_{\mathrm{std}}^{\otimes i}$ (See \cite[Proposition~9.1,Theorem 9.2, and Proposition 9.3]{Ham2} for a review of how this works), and it follows that $\phi_{T}$ is $\mu$-regular for $\mu = \omega_{i}$ by \cite[Corollary~9.12]{Ham2}. Since any dominant cocharacter can be written as a linear combination of fundamental coweights, the claim for any $\mu$ now follows from \cite[Corollary~9.13]{Ham2}. The case of $\GU_{n}$ and $\U_{n}$ follows in a very similar way, using Lemma \ref{lemma:genericE} and our assumption on $\ell$.

For case (4), observe that as before, we can base change to $L$, and since $H$ is a subgroup of $\prod{\GL_2}$, all cocharacters $\mu$ of $H$ define products of cocharacters for $\GL_2$. Now, consider a cocharacter of the form $\mu=\prod_\tau \mu_\tau$, where $\mu_{\tau'}=\mu_{\tau}$ for all $\tau,\tau'$ and $\mu_\tau$ is a cocharacter of $\GL_2$. Note that every dominant minuscule cocharacter of $H$ will be of this form. This is because a  cocharacter $\mu=\prod_\tau \mu_\tau$ of $\prod \GL_2$ factors through $H$ exactly when the composition with the determinant is equal for all $\tau$. Moreover, we see that for $\mu$ to be minuscule, $\mu_\tau$ must be one of the fundamental coweights $\omega_i$, and if some $\mu_{\tau}\neq \mu_{\tau'}$, then after composing with the determinant we will get different characters. Now, we observe that the same argument as above holds to show that the difference of Weyl conjugates define a coroot of $H$ for the cocharacter $\mu_1=\prod (1,0)$, while for all other cocharacters of the form $\mu=\prod \mu'$, where $\mu'$ is a fundamental weight of $\GL_2$, they appear as weights in some tensor power of the highest weight representation corresponding to $\mu_1$. The claim for any $\mu=\prod \mu'$, where $\mu'$ is a dominant cocharacter of $\GL_2$, follows by the same argument as above, using \cite[Corollary~9.13]{Ham2}.
\end{proof}
We now turn our attention to the groups of type $C$.
\subsubsection{Groups of type C}{\label{subsec: groupsoftypeC}}
The main lemma is the following.
\begin{lemma}
\label{lemma:assumptionstypeC}
    Let $L/\mathbb{Q}_{p}$ be a finite extension, and $G$ be one of the following groups:
\begin{enumerate}
    \item $\mathrm{Res}_{L/\mathbb{Q}_p}\GSp_{4}$
    \item $G(\mathrm{Sp}_{4,L})$.
\end{enumerate}
    If $\phi_T$ is a generic toral parameter for $G$ then $\phi_T$ is regular. Moreover, in case $(1)$, the parameter $\phi_T$ will be $\mu$-regular for all $\mu$, and, for $(2)$, $\phi_T$ will be $\mu$-regular for $\mu$ which are of the form $\prod_{\tau:L\hookrightarrow \ol{\mathbb{Q}}_p} \mu'$ for $\mu'$ a cocharacter of $\GSp_4$; in particular, it will be $\mu$-regular for all minuscule $\mu$.
\end{lemma}
\begin{proof}
    We will first establish regularity, and suppress giving the proof that $\phi_{T}$ is regular as it is strictly easier. Again, for (1), we assume that $L = \mathbb{Q}_{p}$ for this part with the proof in general being essentially the same in light of Lemma \ref{lemma:genericE}. We will show this by contradiction. Suppose on the contrary that there exists some $w\in W_G$ such that we have an isomorphism
    \begin{equation}
    \label{weylgroupreln1}
        \chi  \simeq \chi^{w}.
    \end{equation}
    For case (1), consider the following parametrization of the maximal torus $T$ 
\begin{equation}{\label{param: maxtorus}} a: (\mathbb{Q}_{p}^{*})^{2} \times \mathbb{Q}_{p}^{*} \ra T(\mathbb{Q}_{p}) \end{equation}
\[ (t_{1},t_{2},t) \mapsto \begin{pmatrix} t_{1} & 0 & 0 & 0 \\ 0 & t_{2} & 0 & 0 \\ 0 & 0 & tt_{2}^{-1} & 0 \\ 0 & 0 & 0 & tt_{1}^{-1} \end{pmatrix}, \]
as in \cite[Page~135]{Tad}. This allows us to write the character $\chi: T(\mathbb{Q}_{p}) \ra \ol{\mathbb{F}}_{\ell}^{*}$ as $\chi_{1}(t_{1})\chi_{2}(t_{2})\nu(t)$, for characters $\mathbb{Q}_{p}^{*} \ra \ol{\mathbb{F}}_{\ell}^{*}$. We now check that \eqref{weylgroupreln1} cannot hold for all seven non-trivial elements of the Weyl group. 

Consider the Weyl group element corresponding to the translation:
\[  w_1:a(t_{1},t_{2},t) \mapsto a(t_{2},t_{1},t) \]
If we consider equation (\ref{weylgroupreln1}) with respect to this element and evaluate on $(x,1,x) = (t_{1},t_2,t)$ then we obtain the equation
\[  \chi_{1}(x)\simeq \chi_{2}(x) \]
which gives an isomorphism $\chi_{1}\chi_{2}^{-1}(x) \simeq \mathbf{1}$ contradicting genericity.

Similarly, if we consider the simple Weyl group element
\[ w_2: a(t_{1},t_{2},t) \mapsto a(t_{1},t_{2}^{-1}t,t) \]
then evaluating equation (\ref{weylgroupreln1}) becomes
\[ \chi_{1}(t_{1})\chi_{2}(t_{2})\nu(t) \simeq \chi_{1}(t_{1})\chi_{2}(t_{2}^{-1}t)\nu(t) \]
cancelling terms we obtain that
\[ \chi_{2}(t)^{-1}\chi_{2}(t_{2})^{2} \simeq \mathbf{1} \]
and setting $t = 1$ this gives 
\[ \chi_{2}^{-1}(t) \simeq \mathbf{1} \]
which contradicts genericity (See \cite[Page~167]{Tad} for the enumeration of $1$-parameter subgroups attached to the coroots in the parametrization (\ref{param: maxtorus}).

Consider now the Weyl group element 
\[ w_3: a(t_{1},t_{2},t) \mapsto a(t_{2}^{-1}t,t_{1},t)\]
if we evaluate equation (\ref{weylgroupreln1}) then we obtain
\[ \chi_{1}(t_{1})\chi_{2}(t_{2})\nu(t) \simeq \chi_{1}(t_{2})^{-1}\chi_{1}(t)\chi_{2}(t_{1})\nu(t) \]
rearranging and canceling terms we obtain
\[ \chi_{1}\chi_{2}^{-1}(t_{1})\chi_{2}\chi_{1}(t_{2})\chi_{1}(t)^{-1} \simeq \mathbf{1} \]
so if we evaluate at $(t_{1},t_{2},t) = (1,1,x)$ we obtain that
\[ \chi_{1}^{-1}(x) \simeq \mathbf{1} \]
which contradicts genericity (See \cite[Page~167]{Tad} for the enumeration of $1$-parameter subgroups attached to the coroots in the parametrization (\ref{param: maxtorus}). 

Consider the reflection
\[ w_4: a(t_{1},t_{2},t) \mapsto a(t_{1}^{-1}t,t_{2},t) \]
then equation (\ref{weylgroupreln1}) becomes
\[ \chi_{1}(t_{1})\chi_{2}(t_{2})\nu(t) \simeq \chi_{1}(t_{1}^{-1}t)\chi_{2}(t_{2})\nu(t) \]
which gives
\[ \chi_{1}(t_{1}^{2})\chi_{1}(t)^{-1}  \simeq \mathbf{1} \]
so if we evaluate at $(t_{1},t_{2},t) = (1,1,x)$, this becomes
\[ \chi_{1}(x)^{-1} \simeq \mathbf{1} \]
which contradicts genericity. 

Now consider the Weyl group element
\[ w_5: a(t_{1},t_{2},t) \mapsto a(t_{2},t_{1}^{-1}t,t) \]
then equation (\ref{weylgroupreln1})
\[ \chi_{1}(t_{1})\chi_{2}(t_{2})\nu(t) \simeq \chi_{1}(t_{2})\chi_{2}(t_{1}^{-1}t)\nu(t) \] 
which simplifies to 
\[ \chi_{2}\chi_{1}^{-1}(t_{2})\chi_{1}\chi_{2}(t_{1})\chi_{2}(t)^{-1} \simeq \mathbf{1} \]
so if we evaluate at say $(t_{1},t_{2},t) = (x,1,x)$ then this gives 
\[ \chi_{1}(x) \simeq \mathbf{1} \]
which contradicts genericity. 

Now consider the Weyl group element
\[ w_6: a(t_{1},t_{2},t) \mapsto a(t_{1}^{-1}t,t_{2}^{-1}t,t) \]
then equation (\ref{weylgroupreln1}) becomes
\[ \chi_{1}(t_{1})\chi_{2}(t_{2})\nu(t) \simeq \chi_{1}(t_{1}^{-1}t)\chi_{2}(t_{2}^{-1}t)\nu(t) \] 
which simplifies to 
\[ \chi_{1}^{2}(t_{1})\chi_{2}^{2}(t_{2})\chi_{1}\chi_{2}(t)^{-1} \simeq \mathbf{1} \]
so if we evaluate at $(t_{1},t_{2},t) = (1,1,x)$ then this becomes
\[ \chi_{1}\chi_{2}(x) \simeq \mathbf{1}\]
which contradicts genericity. 

Now finally we consider 
\[ w_7: a(t_{1},t_{2},t) \mapsto a(t_{2}^{-1}t,t_{1}^{-1}t,t) \]
then equation (\ref{weylgroupreln1}) becomes
\[ \chi_{1}(t_{1})\chi_{2}(t_{2})\nu(t) \simeq
\chi_{1}(t_{2}^{-1}t)\chi_{2}(t_{1}^{-1}t)\nu(t) \]
which simplifies to
\[ \chi_{1}\chi_{2}(t_{1})\chi_{1}\chi_{2}(t_{2})\chi_{1}\chi_{2}(t^{-1}) \simeq \mathbf{1} \]
evaluated at $(t_{1},t_{2},t) = (1,1,x)$ simplifies to
\[ \chi_{1}\chi_{2}(x) \simeq \mathbf{1} \]
which contradicts genericity. 

This concludes our discussion of generic regularity for $\Res_{L/\mathbb{Q}_p}\GSp_4$.

We now turn to the case of $G(\mathrm{Sp}_{4,L})$. As in the proof of the previous Lemma, observe that if we let 
\begin{equation}
\label{eqn:similitudeE}
    H=\{(g_i)\in \prod_{L\hookrightarrow\ol{\mathbb{Q}}_p}\GSp_{4}\text{ such that }\nu(g_i) = \nu(g_j), \forall i,j\},
\end{equation}
then we have $H_L\simeq G(\mathrm{Sp}_{4,L})_{L}$. Thus, we may reduce to the case of $H$. Since $H\subset \prod\GSp_{4}$, we may also use the parametrization in \cite[135]{Tad} to see that the maximal torus $T'$ is given by a parametrization 
\begin{equation*}
    (\mathbb{Q}_{p}^*)^{2d}\times \mathbb{Q}_{p}^*\rightarrow T'(\mathbb{Q}_p),
\end{equation*}
\begin{equation*}
    ((t_{\tau1},t_{\tau2})_{\tau:L\hookrightarrow\ol{\mathbb{Q}}_p},t)\mapsto \begin{pmatrix} t_{\tau1} & 0 & 0 & 0 \\ 0 & t_{\tau2} & 0 & 0 \\ 0 & 0 & tt_{\tau2}^{-1} & 0 \\ 0 & 0 & 0 & tt_{\tau1}^{-1} \end{pmatrix}_{\tau:L\hookrightarrow\ol{\mathbb{Q}}_p}
\end{equation*}
where we note that the common similitude factor is the last coordinate $t$.

Thus, if we wanted to argue by contradiction, using the notation of the proof above, when $w_\tau=w_i$ for $i=1,\dots,7$, we should substitute for $t_{\tau1},t_{\tau2},t$ the values we considered above, subject to the additional constraint that we must have $t$, the similitude factor, being equal for all $\tau$. However, in all of the above cases $t = x$, and we choose different values for $t_{1},t_{2}$ depending on the particular $w_{i}$. In particular, as in the previous proof, to derive a contradiction we just need to evaluate the relationship 
\[ \chi \simeq \chi^{w}\]
on $t = x$ and set $t_{\tau1} = 1$ or $x$ and $t_{\tau2}$ or $x$ depending on $w_{\tau}$, as in the proof described above. Since the roots of $H$ are expressible as products of roots of $\GSp_{4}$, this will allow us to derive a contradiction to genericity by the same argument as above.

We now show $\mu$-regularity. As in the proof of the previous Lemma, for case (1) it suffices to check the claim when $G = \GSp_{4}$, the Langlands dual group is given by $\GSpin_{5}$, which is isomorphic to $\GSp_{4}$. The spin representation 
\[ \mathrm{spin}: \GSpin_{5} \ra \GL_{4}(V_{\mathrm{spin}}) \]
defines a minuscule highest weight representation, which, under the isomorphism $\GSpin_{5} \simeq \GSp_{4}$, identifies with the defining representation of $\GSp_{4}$. From here it is easy to see that the differences of the weights are roots of $\GSp_{4}$ (= coroots of $\GSpin_{5}$). For example, by using the parametrization of the maximal torus, as in (\ref{param: maxtorus}), and the description of the roots in this parametrization provided on \cite[Page~167]{Tad}. Therefore, genericity guarantees strong $\mu$-regularity for this representation which implies $\mu$-regularity as before. The other fundamental tilting module of $\GSpin_{5}$ is given by the defining representation $\GSpin_{5} \ra \mathrm{SO}_{5} \ra \GL_{5}$ or its maximal irreducible submodule. Moreover, this occurs as a $5$-dimensional summand of $V_{\mathrm{spin}} \otimes V_{\mathrm{spin}}$, and it follows by \cite[Proposition~9.12]{Ham2} that we know $\mu$-regularity for this representation as well. Therefore, since we know $\mu$-regularity for the fundamental coweights, we are now done by \cite[Corollary~9.13]{Ham2} as before. 

Now, for case (2), note that, as in the previous Lemma, all cocharacters of $H$ are of the form $\mu=\prod_\tau \mu_\tau$ for some cocharacters $\mu_\tau$ of $\GSp_4$. The argument given above for $\GSp_4$ shows that if we let $\mu'$ be one of the fundamental coweights of $\GSp_4$, then if we take $\mu=\prod \mu'$, (i.e. $\mu_\tau=\mu'$ for all $\tau$) then we are $\mu$-regular for such $\mu$. Applying \cite[Corollary~9.13]{Ham2} again shows that if $\mu=\prod_\tau \mu'$, where $\mu'$ is a dominant cocharacter of $\GSp_4$, then $\phi_T$ will be $\mu$-regular for such $\mu$. Note that, as in the case of $G(\SL_2)$, every dominant minuscule cocharacter of $H$ is of this form.
\end{proof}
With all the extra conditions now properly understood in the relevant cases, we now turn our attention to formulating and proving the main local results. 
\subsubsection{The Local Results}{\label{subsec: thelocalresults}}
We consider the following table, summarizing the groups and primes for which our results apply. We have left the entry blank if no constraint is imposed, and just mentioned the groups that appear as local constituents of global groups that admit a Shimura datum and for which $G$ is unramified. 
\begin{center}
\begin{equation}{\label{constrainttable2}} 
\begin{tabular}{|c|c|c|c|c|}
\hline
$G$ & Constraint on $G$ & $\ell$ & $p$  \\
\multirow{5}{4em}{} & & &  \\ 
\hline
$\Res_{L/\mathbb{Q}_{p}}(\GL_{n})$ & $L/\mathbb{Q}_{p}$ unramified & $(\ell,[L:\mathbb{Q}_{p}]) = 1$ &  \\ 
\hline 
$\Res_{L/\mathbb{Q}_{p}}(\GSp_{4})$ & $L = \mathbb{Q}_{p}$ &  & \\
& $L/\mathbb{Q}_{p}$ unramified & $(\ell, [L:\mathbb{Q}_{p}]) = 1$ & $p \neq 2$  \\ 
\hline
$\Res_{L/\mathbb{Q}_{p}}(\GU_{2})$ & $L/\mathbb{Q}_{p}$ unramified,  & $(\ell,2[L:\mathbb{Q}_{p}]) = 1$ &  \\ 
\hline 
$G = \U_{n}(L/\mathbb{Q}_{p})$ & $n$ odd $L$ unramified& $\ell \neq 2$ &  \\
\hline 
$G = \GU_{n}(L/\mathbb{Q}_{p})$ & $n$ odd $L$ unramified & $\ell \neq 2$ &  \\
\hline
$G(\SL_{2,L})$ & $L/\mathbb{Q}_{p}$ unramified & $(\ell,[L:\mathbb{Q}_{p}]) = 1$ &   \\ 
\hline
$G(\mathrm{Sp}_{4,L})$ & $L/\mathbb{Q}_{p}$ unramified, $L\neq\mathbb{Q}_{p}$ & $(\ell, [L:\mathbb{Q}_{p}]) = 1$ & $p \neq 2$  \\
\hline
\end{tabular}
\end{equation}
\end{center}
We now apply Theorem \ref{thm: generalperverseexact}.
\begin{corollary}{\label{cor: appliedperversetexactness}}
Assume $G$ is a product of the groups appearing in Table (\ref{constrainttable2}) with $p$ and $\ell$ satisfying the corresponding conditions. Let $\phi$ be a semisimple $L$-parameter which is induced from a generic toral parameter $\phi_{T}$. Let $\mu$ be a geometric dominant cocharacter of $G$ which satisfies the constraint that it is of the form $\prod_{\tau: L \hookrightarrow \ol{\bb{Q}}_{p}} \mu'$, for $\mu'$ a  cocharacter of $\GL_{2}$ (resp. $\GSp_{4}$) on the direct factors of $G$ equal to $G(\SL_{2,L})$ or $G(\mathrm{Sp}_{4,L})$, then we have that the natural functor
\[ T_{\mu}: \DULA(\Bun_{G},\ol{\mathbb{F}}_{\ell})_{\phi} \ra \D^{\mathrm{ULA}}(\Bun_{G},\ol{\mathbb{F}}_{\ell})_{\phi} \]
is perverse t-exact. In particular, it is perverse $t$-exact for all minuscule $\mu$, and if we look at the natural functor 
\[ j_{1}^{*}T_{\mu}: \DULA(\Bun_{G},\ol{\mathbb{F}}_{\ell})_{\phi} \ra \D^{\mathrm{adm}}(G(\mathbb{Q}_{p}),\ol{\mathbb{F}}_{\ell})_{\phi} \]
then this is exact for the perverse $t$-structure on the source and the perverse (= standard $t$-structure) on the target.
\end{corollary}
\begin{proof}
First note that, using the decomposition $\Bun_{G_{1} \times G_{2}} := \Bun_{G_{1}} \times \Bun_{G_{2}}$, we can assume that $G$ is isomorphic to one of the groups appearing in Table (\ref{constrainttable2}). 

Observe that all the groups in Table (\ref{constrainttable2}) satisfy Assumption \ref{compatibility}, where the first five rows follows from Theorem \ref{thm:AssumptionLLC}, and the last two from Proposition \ref{prop: compatibcentralisog}. We also note that the $\ell$ very decent assumption is also guaranteed by our assumptions on $\ell$ as in Theorem \ref{thm:AssumptionLLC}, so we may use the results of \S \ref{subsec: perversetexactness}. We now apply Theorem \ref{thm: generalperverseexact}. To do this, we also need to check that if $\phi_{T}$ is a generic toral parameter then it is also generic regular, and $\mu$-regular for all $\mu$ if $G \neq \Res_{L/\mathbb{Q}_{p}}H$ and $H = \GL_{n},\GU_{n}$, or $\U_{n}$. If $G = G(\SL_{2,L})$ (resp. $G(\mathrm{Sp}_{4,L})$), and we need to check if $\phi_{T}$ is generic then it is generic regular and $\mu$-regular for all $\mu$ of the form $\prod_{\tau: L \hookrightarrow \ol{\bb{Q}}_{p}} \mu'$, for $\mu'$ a cocharacter of $\GL_{2}$ (resp. $\GSp_{4}$) it is $\mu$-regular, where  these give rise to all minuscule cocharacters of $G$ in this case. Finally, if $G = \Res_{L/\bb{Q}_{p}}\GSp_{4}$ we need to check if $\phi$ is generic then it is regular and $\mu$-regular for all $\mu$. These claims follow from Lemma \ref{lemma:assumptionstypeA} and Lemma \ref{lemma:assumptionstypeC}.

Lastly, we need to check that the representations $\rho_{b,w} := i_{B_{b}}^{J_{b}}(\chi^{w}) \otimes \delta_{P_{b}}^{-1/2}$ are semi-simple for all $b \in B(G)_{\mathrm{un}}$ and $w \in W_{b}$. We claim that they are in fact irreducible. Recall that $J_{b} \simeq M_{b} \subset G$, where $M_{b}$ is a Levi of $G$. Moreover, we note that any such Levi $M_{b}$ is a product of groups also appearing in (\ref{constrainttable2}). Therefore, the desired irreducibility follows from the $\mu$-regularity and regularity of $\phi_{T}$ discussed above combined with Lemma \ref{Lemma: mureg implies intertwiniso}. Note that in the case of $G(\SL_{2,L})$ and $G(\mathrm{Sp}_{4,L})$, we can always find a cocharacter $\mu$ of the form $\prod_{\tau}\mu'$ which is not fixed by the Weyl group, since we can simply look at any cocharacter of $\mu'$ of $\GL_2$ (resp. $\GSp_{4}$) which is not fixed by the Weyl group, and take $\mu$ to be the product of these $\mu'$ in order to see that the assumptions of Lemma \ref{Lemma: mureg implies intertwiniso} are satisfied. 
\end{proof}
\begin{remark}{\label{rem: perversetexactnessintheSplitCase}}
We note that if the local group $G$ is split then the set $B(G,\mu)_{\mathrm{un}}$ is a singleton for any minuscule cocharacter $\mu$, by invoking \cite[Corollary~2.9]{Ham2}. We denote the unique element in this set as $b_{\mu}$. In this case, for a semisimple $L$-parameter $\phi$ induced from a generic $\phi_{T}$, by using Proposition \ref{prop: constituent proposition} and that assumption \ref{compatibility} holds, we see that the adjunction map $j_{b_{\mu}!}j_{b_{\mu}}^{!} \rightarrow \mathrm{id}$ induces a natural isomorphism 
\[ j_{1}^{*}T_{\mu}j_{b_{\mu}!}j_{b_{\mu}}^{*} \simeq j_{1}^{*}T_{\mu} \]
of operators on the full subcategory $\D(\Bun_{G})_{\phi} \subset \D(\Bun_{G})$ (Here we have implicitly used the isomorphism $j_{1}^{*}T_{\mu} \simeq j_{1}^{*}T_{\mu}j_{\leq \mu!}j_{\leq \mu}^{*}$, by \cite[Proposition~A.9]{RpadicEtaleApenndix} , to rewrite $j_{b_{\mu}}^{!}$ as $j_{b_{\mu}}^{*}$ using Proposition \ref{prop: constituent proposition}, where $j_{\leq \mu}: \Bun_{G,\leq \mu} \hookrightarrow \Bun_{G}$ is the open immersion of the substack corresponding to $B(G,\mu) \subset B(G)$). We have that the $\sigma$-centralizer of $J_{b_{\mu}}$ identifies with a proper Levi of $M_{b_{\mu}} \subset G$ and the LHS of this isomorphism can be explicitly computed (up to a shift by the dimension of the Shimura variety) in terms of the parabolic induction from $J_{b_{\mu}} \simeq M_{b_{\mu}}$ to $G$, using the results of \cite{IG}. In this case, the second part of Corollary \ref{cor: appliedperversetexactness} is a simple consequence of the exactness of the parabolic induction functor (even without restricting to the ULA subcategory). In particular, for any of cases appearing in Table (\ref{constrainttable2}) where the group $G$ is split, Corollary \ref{cor: appliedperversetexactness} also holds immediately upon knowing the that an object in $\D(\Bun_{G})_{\phi}$ is supported on just the unramified elements for $\phi$ induced from a generic toral parameter $\phi_{T}$, which follows from Assumption \ref{compatibility} and Proposition \ref{prop: constituent proposition}. As described in \cite{Ko,San} in the case when $G$ is a product of $\GL_{n}$s, by an observation of Koshikawa one can actually weaken the condition on $\phi$ to just assuming that $H^{2}(W_{\bb{Q}_{p}},\alpha \circ \phi_{T}) \simeq 0$ (cf. Remark \ref{rem: weaklyLanglandsShahidicase}) to deduce that $\D(\Bun_{G},\ol{\bb{F}}_{\ell})_{\phi}$ is supported on the unramified elements from assumption \ref{compatibility}. The key point here is that non-trivial classes in the $H^{0}(W_{\bb{Q}_{p}},\alpha \circ \phi_{T})$ will only give rise to non Frobenius semi-simple deformations of $\phi_{T}$ as a $\phantom{}^{L}T(\ol{\bb{F}}_{\ell})$-valued parameter to a $\phantom{}^{L}B(\ol{\bb{F}}_{\ell})$-valued parameter, which cannot occur as mod $\ell$-reductions of the semi-simplification of parameters appearing in the classical Local Langlands correspondence (cf. Remark \ref{rem: Generous?}). By the reasoning described above, this will in turn imply Conjecture \ref{conj: torsionvanishing} under this weaker genericity condition in the case when $G$ is split (See \cite[Proposition~10.2.4,10.2.5]{MingjiaPolI} for details). However, under this weaker hypothesis, the splitting of the semi-orthogonal decomposition (See Corollary \ref{cor: appliedsplitofsemiorthog} below) should not hold (cf. Remark \ref{rem: weaklyLanglandsShahidicase}) and this becomes important when $B(G,\mu)_{\mathrm{un}}$ has more than one element. 
\end{remark}
We also have the following.
\begin{corollary}{\label{cor: appliedsplitofsemiorthog}}
Assume $G$ is a product of the groups appearing in Table (\ref{constrainttable2}) with $p$ and $\ell$ satisfying the corresponding conditions. Then, for $\phi$ a semisimple parameter induced from a generic $\phi_{T}$, we have that 
\[ \DULA(\Bun_{G},\ol{\mathbb{F}}_{\ell})_{\phi} \simeq \bigoplus_{b \in B(G)_{\mathrm{un}}} \Dadm(J_{b}(\mathbb{Q}_{p},\ol{\mathbb{F}}_{\ell})_{\phi}. \] 
Moreover, the $!$ and $*$ pushforwards agree for any $A \in \Dadm(J_{b}(\mathbb{Q}_{p}),\ol{\mathbb{F}}_{\ell})_{\phi} \simeq \DULA(\Bun_{G}^{b},\ol{\mathbb{F}}_{\ell})_{\phi}$
\end{corollary}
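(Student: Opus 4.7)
The plan is to reduce this to a direct invocation of Proposition \ref{prop: genlocal}, which already gives the desired direct sum decomposition and the agreement of $!$ and $*$ push-forwards, provided three input hypotheses hold: the parameter $\phi_{\mf{m}}$ is induced from a generic toral parameter $\phi_{T}$, Assumption \ref{compatibility} is verified, and the representations $\rho_{b,w}$ are semi-simple for all $b \in B(G)_{\mathrm{un}}$, $w \in W_{b}$. The genericity of $\phi_{T}$ is built into the hypothesis that $\mf{m}$ is a generic maximal ideal, so only the latter two conditions need to be checked case by case. The proof will therefore track, one row at a time, through the table (\ref{constrainttable2}).

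First I would reduce to the case where $G$ is a single factor from the table rather than a product, using the identification $\Bun_{G_{1}\times G_{2}} \simeq \Bun_{G_{1}}\times\Bun_{G_{2}}$ and the fact that both $\DULA$ and the localization functor $(-)_{\phi_{\mf{m}}}$ factor through this product decomposition (so that $B(G)_{\mathrm{un}}$ also splits as a product of the corresponding sets). After this reduction, Assumption \ref{compatibility} is verified for the first five groups in the table directly by Theorem \ref{thm:AssumptionLLC}, and for the last two groups $G(\SL_{2,L})$ and $G(\mathrm{Sp}_{4,L})$ via Proposition \ref{prop: compatibcentralisog}, since each such group sits inside a corresponding restriction of scalars group $\mathrm{Res}_{L/\mathbb{Q}_{p}}(\GL_{2})$ or $\mathrm{Res}_{L/\mathbb{Q}_{p}}(\GSp_{4})$ via an inclusion inducing an isomorphism of adjoint and derived groups.

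The remaining step is to establish the semi-simplicity (indeed irreducibility) of each $\rho_{b,w} = i_{B_{b}}^{J_{b}}(\chi^{w})\otimes \delta_{P_{b}}^{-1/2}$. Since $J_{b}\simeq M_{b}$ is a Levi subgroup of $G$, and every such Levi is itself a product of groups from the table, it suffices to verify irreducibility of $i_{B}^{G}(\chi)$ on each factor. This is exactly the content of Lemma \ref{Lemma: mureg implies intertwiniso}, whose hypotheses are that $\phi_{T}$ is weakly normalized regular, regular, and $\mu$-regular for some cocharacter $\mu$ not fixed by any non-trivial Weyl element. Lemmas \ref{lemma:assumptionstypeA} and \ref{lemma:assumptionstypeC} supply all of these properties from the generic assumption on $\phi_{T}$, under the indicated constraints on $\ell$ and $p$ recorded in the table; for the similitude groups $G(\SL_{2,L})$ and $G(\mathrm{Sp}_{4,L})$ one additionally notes that a suitable non-fixed cocharacter $\mu$ of the required product form $\prod_{\tau}\mu'$ always exists, since the Weyl group acts factorwise and any non-fixed $\mu'$ for $\GL_{2}$ or $\GSp_{4}$ pulls back to one on the similitude group.

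With Assumption \ref{compatibility} and the irreducibility of all $\rho_{b,w}$ in hand, Proposition \ref{prop: genlocal} applies and yields simultaneously the direct sum decomposition $\DULA(\Bun_{G},\ol{\mathbb{F}}_{\ell})_{\phi_{\mf{m}}}\simeq \bigoplus_{b\in B(G)_{\mathrm{un}}}\Dadm(J_{b}(\mathbb{Q}_{p}),\ol{\mathbb{F}}_{\ell})_{\phi_{\mf{m}}}$ as well as the equality of $j_{b!}$ and $j_{b*}$ on each summand. No step of this proof is truly hard; it is essentially a bookkeeping exercise. The only place where any subtlety could arise is in checking the case-by-case $\mu$-regularity and weak normalized regularity in non-banal characteristic for $\mathrm{Res}_{L/\mathbb{Q}_{p}}(\GSp_{4})$ and $G(\mathrm{Sp}_{4,L})$, but these are already completed in Lemma \ref{lemma:assumptionstypeC} under the banality hypotheses recorded in the table, so there is nothing left to do.
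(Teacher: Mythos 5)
Your proposal is correct and follows essentially the same route as the paper: the paper's proof of this corollary is a one-line reduction to Proposition \ref{prop: genlocal}, with the semi-simplicity (in fact irreducibility) of the $\rho_{b,w}$ established exactly as you describe — via the product reduction, Theorem \ref{thm:AssumptionLLC} and Proposition \ref{prop: compatibcentralisog} for Assumption \ref{compatibility}, and Lemmas \ref{lemma:assumptionstypeA}, \ref{lemma:assumptionstypeC}, and \ref{Lemma: mureg implies intertwiniso} applied to the Levi factors $J_{b}\simeq M_{b}$, including the observation about choosing a Weyl-non-fixed $\mu$ of product form for $G(\SL_{2,L})$ and $G(\mathrm{Sp}_{4,L})$. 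Your write-up simply spells out in full the bookkeeping that the paper delegates to the proof of Corollary \ref{cor: appliedperversetexactness}.
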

\begin{proof}
This follows from Proposition \ref{prop: genlocal}, where the semisimplicity of the $\rho_{b,w}$ follows as in the proof of the Previous corollary.
\end{proof}
\section{The Proof of the Main Theorems}
We recall that the pair $(\mathbf{G},X)$ will denote a Shimura datum with reflex field $E/\bb{Q}$. We let $\ell \neq p$ be distinct prime numbers, and $E_{\mf{p}}$ the completion at the place $\mf{p}$ in $E$ dividing $p$ determined by the fixed isomorphism $j: \ol{\bb{Q}}_{p} \xrightarrow{\simeq} \bb{C}$ with completed algebraic closure denoted by $C$.  We will let $G := \mathbf{G}_{\bb{Q}_{p}}$ be the local group with maximal torus $T$, and we let $\mu$ be the conjugacy class of geometric dominant cocharacters of $G$ attached to the inverse of the Hodge cocharacter of $X$ and the isomorphism $j: \ol{\bb{Q}}_{p} \xrightarrow{\simeq} \bb{C}$. We assume that $G$ is unramified, and let $K_{p}^{\mathrm{hs}} \subset G(\bb{Q}_{p})$ denote a hyperspecial level. We let $H_{K_{p}^{\mathrm{hs}}}$ denote the spherical Hecke algebra with coefficients in $\ol{\bb{F}}_{\ell}$, and recall that $B(G)_{\mathrm{un}} := \mathrm{Im}(B(T) \ra B(G))$ denotes the set of unramified elements.

For $K \subset \mathbf{G}(\bb{A}_{f})$ a sufficiently small open compact, we write $\Sh(\mathbf{G},X)_{K}/\Spec(E)$ for the Shimura variety attatched to the Shimura datum, with its associated adic space $\mathcal{S}(\mathbf{G},X)_{K}$ over $\Spa(E_{\mf{p}})$. and if $K = K^{p}K_{p}$, where $K_{p}$ (resp. $K^{p}$) denotes the level at $p$ (resp. away from $p$), we write $\mathcal{S}(\mathbf{G},X)_{K^{p}}$ for the associated Shimura variety at infinite level introduced in \S \ref{subsec: HodgeTatePeriodMap}.

In this section, we will reap the fruit of our work in the previous section and finally prove our main results. More precisely, in \S 5.1, we will establish our main results on the splitting of Mantovan's filtration (Theorem \ref{thm: appliedmantprodformbody}) for the torsion cohomology of a PEL type AC Shimura variety, as well as show the bounds for the torsion cohomology after generic localization (Theorem \ref{theorem: mainthmbody}). In \S 5.2, we push our results a bit further and deduce additional applications to abelian type Shimura varieties (Corollaries \ref{cor:abeliantypebody}, \ref{cor: CarTamcomp}). 

\subsection{Proof of Theorems 1.8 and 1.17}
We can now reap the fruit of our work in the previous sections and finally prove our main results. We start with Theorem \ref{thm: appliedmantprodform}.
\begin{theorem}{\label{thm: appliedmantprodformbody}}
Suppose $(\mathbf{G},X)$ is a PEL datum of type AC satisfying Assumption \ref{assump: codim} such that $\mathbf{G}_{\mathbb{Q}_{p}}$ is a product of  groups as in Table (\ref{constrainttable}) with $p$ and $\ell$ satisfying the corresponding conditions and let $\phi_{T}$ be a generic toral parameter with associated semi-simple L-parameter $\phi$ of $G$. Then the complex $R\Gamma_{c}(\mathrm{Sh}(\mathbf{G},X)_{K,C},\ol{\mathbb{F}}_{\ell})_{\phi}$ breaks up as a direct sum 
\[ \bigoplus_{b \in B(G,\mu)_{\mathrm{un}}} (R\Gamma_{c}(\Sht(G,b,\mu)_{\infty,C},\ol{\mathbb{F}}_{\ell}(d_{b}))_{\phi} \otimes^{\mathbb{L}}_{\mathcal{H}(J_{b})} R\Gamma_{c-\partial}(\Ig^{b},\ol{\mathbb{F}}_{\ell}))[2d_{b}] \]
of $G(\mathbb{Q}_{p})$-modules.
\end{theorem}
\begin{proof}
By Corollary \ref{cor: filtheckeops}, the complex $R\Gamma_{c}(\mathcal{S}(\mathbf{G},X)_{K^{p},C},\ol{\mathbb{F}}_{\ell})$ has a $G(\mathbb{Q}_{p})$-equivariant filtration with graded pieces isomorphic to $j_{1}^{*}T_{\mu}j_{b!}(V_{b})[-d](-\frac{d}{2})$. The cohomology of Igusa varieties $V_{b}$ and the global Shimura variety is admissible as a complex (For the $V_{b}$, we can write $V_{b}$ as the fiber of the natural excision map $R\Gamma(\Ig^{b,*},\ol{\bb{F}}_{\ell}) \ra R\Gamma(\partial\Ig^{b,*},\ol{\bb{F}}_{\ell})$ as in  the proof of Corollary \ref{cor: stalkspartial} and then analogously to \cite[Proposition~8.21]{Zha} show that $R\Gamma(\Ig^{b,*},\ol{\bb{F}}_{\ell})$ and  $R\Gamma(\partial\Ig^{b,*},\ol{\bb{F}}_{\ell})$ are admissible, since the condition of admissibility is preserved under finite limits, the claim follows), and therefore it follows that $j_{b!}(V_{b}) \in \DULA(\Bun_{G},\ol{\bb{F}}_{\ell})$, and so we can apply the results of the previous section to them. We consider the localization
\[ (j_{1}^{*}T_{\mu}j_{b!}(V_{b}))_{\phi}[-d](-\frac{d}{2}). \]
This defines a filtration on $R\Gamma_{c}(\mathcal{S}(\mathbf{G},X)_{K^{p},C},\ol{\mathbb{F}}_{\ell})_{\phi}$. The filtration on $R\Gamma_{c}(\mathcal{S}(\mathbf{G},X)_{K^{p},C},\ol{\mathbb{F}}_{\ell})_{\phi_{\mf{m}}}$ considered above comes from applying $(-)_{\phi}$ to $R\Gamma([\mathcal{F}\ell_{G,\mu^{-1}}^{\Diamond}/\ul{G(\mathbb{Q}_{p})}],i_{b!}i_{b}^{*}(R\pi_{\mathrm{HT!}}(\ol{\mathbb{F}}_{\ell})))$ viewed as a $G(\mathbb{Q}_{p})$-representation. Using Corollary \ref{cor: appliedsplitofsemiorthog}, we see that these graded pieces are also isomorphic to 
\[ (j_{1}^{*}T_{\mu}Rj_{b*}(V_{b}))_{\phi}[-d](-\frac{d}{2}) \]
via the natural transformation $j_{b!} \ra Rj_{b*}$ and are trivial for $b \notin B(G,\mu)_{\mathrm{un}}$. However, using Lemma \ref{lem: excisioncomparison}, this implies that the natural map
\[ R\Gamma([\mathcal{F}\ell_{G,\mu^{-1}}^{\Diamond}/\ul{G(\mathbb{Q}_{p})}],i_{b!}i_{b}^{*}(R\pi_{\mathrm{HT!}}(\ol{\mathbb{F}}_{\ell})))_{\phi_{\mf{m}}} \ra  R\Gamma([\mathcal{F}\ell_{G,\mu^{-1}}^{\Diamond}/\ul{G(\mathbb{Q}_{p})}],i_{b*}i_{b}^{*}(R\pi_{\mathrm{HT!}}(\ol{\mathbb{F}}_{\ell})))_{\phi_{\mf{m}}} \]
is an isomorphism (See Remark \ref{rem: excision}). Therefore, we see that the edge maps in the excision spectral sequence actually degenerate after applying $(-)_{\phi}$, giving us a direct sum decomposition
\[ R\Gamma_{c}(\mathcal{S}(\mathbf{G},X)_{K^{p},C},\ol{\mathbb{F}}_{\ell})_{\phi} \simeq \bigoplus_{b \in B(G,\mu)_{\mathrm{un}}} j_{1}^{*}T_{\mu}j_{b!}(V_{b})[-d](-\frac{d}{2})_{\phi},  \]
as desired.
\end{proof} 
We now turn our attention to Theorem \ref{theorem: mainthm}.
\begin{theorem}{\label{theorem: mainthmbody}}
Suppose $(\mathbf{G},X)$ is a PEL datum of type AC satisfying Assumption \ref{assump: codim} such that $\mathbf{G}_{\mathbb{Q}_{p}}$ is a product of  groups as in Table (\ref{constrainttable}) with $p$ and $\ell$ satisfying the corresponding conditions, and let $\mf{m} \subset H_{K_{p}^{\mathrm{hs}}}$ be a generic maximal ideal with associated semi-simple L-parameter $\phi_{\mf{m}}$. Then, for a level $K = K^{p}K_{p}^{\mathrm{hs}} \subset \mathbf{G}(\bb{A}_{f})$, the cohomology of  $R\Gamma(\mathrm{Sh}(\mathbf{G},X)_{K,\ol{E}},\ol{\mathbb{F}}_{\ell})_{\mf{m}}$ (resp. $R\Gamma_{c}(\mathrm{Sh}(\mathbf{G},X)_{K,\ol{E}},\ol{\mathbb{F}}_{\ell})_{\mf{m}}$) is concentrated in degrees $d \leq i \leq 2d$ (resp. $0 \leq i \leq d$).
\end{theorem}
\begin{proof}
We recall, by Proposition \ref{prop: artvanish}, that $V_{b}$ is a complex of smooth $J_{b}(\mathbb{Q}_{p})$-representations concentrated in degree $\leq d_{b}$. It follows that we have $\bigoplus_{b \in B(G,\mu)} j_{b!}(V_{b})_{\phi_{\mf{m}}} \in \pD^{\leq 0,\mathrm{ULA}}(\Bun_{G},\ol{\mathbb{F}}_{\ell})_{\phi_{\mf{m}}}$, using Proposition \ref{prop:localizeula}. Corollary \ref{cor: appliedperversetexactness} implies that 
\[ j_{1}^{*}T_{\mu}j_{b!}(V_{b})[-d]_{\phi_{\mf{m}}} \in \D^{\leq d}(G(\mathbb{Q}_{p}),\ol{\mathbb{F}}_{\ell})_{\phi_{\mf{m}}} \]
after forgetting the Weil group action. Therefore, we conclude that 
\[ R\Gamma_{c}(\mathcal{S}(\mathbf{G},X)_{K^{p},C},\ol{\mathbb{F}}_{\ell})_{\phi_{\mf{m}}} \]
is concentrated in degrees $0 \leq i \leq d$. By applying Poincar\'e duality at the pro-$p$ finite levels and Corollary \ref{cor: contragradients}, this allows us to conclude that the non-compactly supported cohomology
\[ R\Gamma(\mathcal{S}(\mathbf{G},X)_{K^{p},C},\ol{\mathbb{F}}_{\ell})_{\phi_{\mf{m}}^{\vee}} \]
localized at $\phi_{\mf{m}}^{\vee}$ is concentrated in degrees $d \leq i \leq 2d$, where we define this to be the colimit over the non-compactly supported cohomology of finite levels (cf. Remark \ref{rem: cohomologyatinfty}). Moreover, we note that genericity is preserved under taking duals and the same is true for regularity. It therefore follows that 
\[ R\Gamma(K_{p}^{\mathrm{hs}},R\Gamma(\mathcal{S}(\mathbf{G},X)_{K^{p},C},\ol{\mathbb{F}}_{\ell})_{\phi_{\mf{m}}^{\vee}}) \]
is also concentrated in degrees $\geq d$, but this is isomorphic to 
\[ R\Gamma(\mathcal{S}(\mathbf{G},X)_{K^{p}K_{p}^{\mathrm{hs}},C},\ol{\mathbb{F}}_{\ell})_{\mf{m}^{\vee}} \]
by Lemma \ref{lemma: localization map properties} (3). This establishes Theorem \ref{theorem: mainthm}, by applying Poincar\'e duality to the Shimura variety at finite level $K_{p}^{\mathrm{hs}}$. 
\end{proof}
We now turn our attention to the abelian type case.
\subsection{Proof of Corollary \ref{cor:abeliantype}}
We would like to obtain the main Theorem for some Shimura varities of non-PEL type, especially Hilbert-Siegel modular varieties (attached to $\Res_{F/\mathbb{Q}}\GL_2$ or $\Res_{F/\mathbb{Q}}\GSp_4$). We will show this in a more general setup, as follows. 

Let $(\mathbf{G},X)$, $(\mathbf{G}_1,X_1)$ be a pair of abelian type Shimura data such that $\mathbf{G},\mathbf{G}_1$ are centrally isogenous, and we have an isomorphism of derived subgroups
\begin{equation*}
    \mathbf{G}^\mathrm{der}\xrightarrow{\sim}\mathbf{G}_1^\mathrm{der},
\end{equation*}
(and hence of adjoint quotients). We further assume that both groups $G,G_1$ are unramified (observe that we can always arrange this, up to changing the prime $p$). Consider the associated Shimura varieties $\Sh(\mathbf{G},X)_{K}$ and $\Sh(\mathbf{G}_1,X_1)_{K_1}$, where we will choose the level $K,K_1$ such that the level at $p$ satisfies that we have an equality $K_{p}\cap G^\mathrm{der}(\mathbb{Q}_p)=K_{1p}\cap G_1^\mathrm{der}(\mathbb{Q}_p)$. In fact, we can and will take $K_{p}$ and $K_{1p}$ hyperspecial and hence $K'_p=K_{p}\cap G^\mathrm{der}(\mathbb{Q}_p)$ is also hyperspecial. This is possible because we observe that the group $G^\der$ is hence also unramified, and we have a natural map of Bruhat-Tits buildings 
\[\mathcal{B}(G^\der,\breve{\mathbb{Q}}_p)\ra \mathcal{B}(G,\breve{\mathbb{Q}}_p)\]
which sends a hyperspecial point on $\mathcal{B}(G^\der,\breve{\mathbb{Q}}_p)$ to a hyperspecial point, so by fixing a choice of hyperspecial point of $\mathcal{B}(G^\der,\breve{\mathbb{Q}}_p)$ we can obtain hyperspecial subgroups $K_p',K_p,K_{1p}$ as desired.

Now, observe that by the Satake isomorphism, we have an isomorphism of $\ol{\mathbb{F}}_\ell$-algebras
\begin{equation*}
H_{K_p}\simeq \ol{\mathbb{F}}_{\ell}[X_*(T)]^{W_G},
\end{equation*}
and, since $G,G^{\mathrm{der}}$ have isomorphic adjoint groups, the inclusion of cocharacters $X_*(T')\subset X_*(T)$ induces an inclusion of Hecke algebras $H'_{K'_p}\subset H_{K_p}$, where $T'$ denotes the torus $T\cap G^{\mathrm{der}}$, and $H'_{K'_p}$ denotes the spherical Hecke algebra for $G^\mathrm{der}$. Moreover, given a maximal ideal $\mathfrak{m}\subset H_{K_p}$, then $\mathfrak{m}'=\mathfrak{m}\cap H'_{K'_p}$ is a maximal ideal of $H'_{K'_p}$.

We first want to understand how this passage to the derived subgroup works for localization at various $L$-parameters $\phi$. We have the following Lemma:

\begin{lemma}
\label{lemma:localization-central-isog}
Let $G' \ra G$ be a map inducing an isomorphism on adjoint groups with $g: \phantom{}^{L}G \ra \phantom{}^{L}G'$, the induced map on dual groups. For $\phi: W_{\mathbb{Q}_{p}} \ra \phantom{}^{L}G(\ol{\mathbb{F}}_{\ell})$ a L-parameter and $A$ an admissible complex of $G(\mathbb{Q}_{p})$-modules, then there is a natural isomorphism of $G'(\mathbb{Q}_{p})$-modules   
\[ (A|_{G'(\mathbb{Q}_{p})})_{\phi'} \simeq \bigoplus_{\substack{\phi \\ \phi' = g\circ \phi}} A_{\phi}|_{G'(\mathbb{Q}_{p})}. \]
where the LHS is the composite $\D(G'(\bb{Q}_{p}),\ol{\bb{F}}_{\ell}) \xrightarrow{j_{1!}} \D(\Bun_{G'},\ol{\bb{F}}_{\ell}) \xrightarrow{(-)_{\phi'}} \D(\Bun_{G'},\ol{\bb{F}}_{\ell})_{\phi'}$ applied to $A|_{G'(\mathbb{Q}_{p})}$. 
\end{lemma}
\begin{proof}
By applying Corollary \ref{cor: appliedspecdecomp}, we obtain a decomposition
\[ A \simeq  \bigoplus_{\phi} A_{\phi} \]
of $A$ as a $G(\mathbb{Q}_{p})$-module. We restrict to $G'(\mathbb{Q}_{p})$ and apply the localization map $(-)_{\phi'}$. This gives an isomorphism 
\[ (A|_{G'(\mathbb{Q}_{p})})_{\phi'} \simeq \bigoplus_{\phi} (A_{\phi}|_{G'(\mathbb{Q}_{p})})_{\phi'}, \]
where we have used that localization commutes with direct sums since it is a left adjoint by definition. Now, using the compatibility of the Fargues-Scholze correspondence with central isogenies \cite[Theorem~IX.6.1]{FS}, either $\phi' = g \circ \phi$ and $A_{\phi}|_{G'(\mathbb{Q}_{p})} \in \D(G'(\mathbb{Q}_{p}),\ol{\mathbb{F}}_{\ell})_{\phi'}$ and, by the idempotence of the localization map, we have that  $(A_{\phi}|_{G'(\mathbb{Q}_{p})})_{\phi'} = A_{\phi'}|_{G'(\mathbb{Q}_{p})}$ or $(A_{\phi}|_{G'(\mathbb{Q}_{p})})_{\phi'}$ is $0$. The claim follows.
\end{proof}

This passage to derived subgroups allows us to pass to connected components of Shimura varieties. To see this, fix a connected component $X^+\subset X$. This also fixes a $X^+_1\subset X_1$, and an isomorphism $X^+\simeq X^+_1$, since $\mathbf{G},\mathbf{G}_1$ have isomorphic adjoint quotients. For any compact open subgroup $K\subset \mathbf{G}(\mathbb{A}_f)$, we let $\Sh^+(\mathbf{G},X)_K$ be the geometrically connected component which is the image of $X^+\times 1$. Moreover, we will let 
\begin{equation*}
    \Sh^+(\mathbf{G},X)_{K_p}=\lim\limits_{{\substack{\leftarrow\\K^p}}}\Sh^+(\mathbf{G},X)_{K_pK^p}.
\end{equation*}
Note that since all the transition morphisms are finite \'{e}tale and hence affine, $\Sh^+(\mathbf{G},X)_{K_p}$ is also a qcqs scheme by \cite[Lemma 01YX]{stacks-project}.

Now, observe that the action of $H'_{K'_p}$ on $\Sh(\mathbf{G},X)_{K_p}$ via the inclusion $H'_{K'_p}\subset H_{K_p}$ preserves the geometric connected component, as it acts trivially on the set of geometric connected components. To see this, recall from \cite[\S2.1.3]{Del} that the connected components are given by the profinite group
\[\pi_0(\Sh_{K_p}(\mathbf{G},X))=\mathbf{G}(\bb{Q})^-_{+}\backslash \mathbf{G}(\bb{A}^f)/K_p,\]
where as in the notation of \cite{Del} we denote $\mathbf{G}(\mathbb{R})_+$ the inverse image of $\mathbf{G}^{\mathrm{ad}}(\mathbb{R})$, $\mathbf{G}(\mathbb{Q})_+=\mathbf{G}(\mathbb{R})_+\cap \mathbf{G}(\mathbb{Q}$ and $\mathbf{G}(\bb{Q})^-_{+}$ is the closure of $\mathbf{G}(\bb{Q})_+$ in $\mathbf{G}(\bb{A}_f)$. Since this identification is $\mathbf{G}(\bb{A}_f)$-equivariant, and $\pi_0(\Sh_{K_p}(\mathbf{G},X))$ is abelian, we thus we see that the action of $\mathbf{G}(\bb{A}^f)$ must factor through the maximal abelian quotient, so the action of $\mathbf{G}^{\der}(\bb{A}^f)$ is trivial.

Now, using the notation of \cite[\S3.3]{Kis} we see that there exist groups $\mathscr{A}(\mathbf{G})^\circ,\mathscr{A}(\mathbf{G}_1),\mathscr{A}(\mathbf{G})$, whose definition we do not precisely recall here, such that from \cite[Proposition 3.3.10]{Kis}, we know that we have an isomorphism
\[\Sh(\mathbf{G}_1,X_1)_{K_{1p}}\simeq (\Sh^+(\mathbf{G},X)_{K_{p}}\times\mathscr{A}(\mathbf{G}_1)/\mathscr{A}(\mathbf{G})^\circ.\]
We further note that since $G_1^\der=G^\der$, we can simplify this to
\[\Sh(\mathbf{G}_1,X_1)_{K_{1p}}\simeq \Sh(\mathbf{G},X)_{K_{p}}^+\times \mathscr{A}(\mathbf{G}_1)/\mathscr{A}(\mathbf{G})^\circ,\]
since from \cite[\S3.3.9]{Kis} we see that the map $\mathscr{A}(\mathbf{G})^\circ\simeq \mathscr{A}(\mathbf{G}^\der)^\circ\simeq \mathscr{A}(\mathbf{G}^\der_1)^\circ\hookrightarrow \mathscr{A}(\mathbf{G}_1))$ is injective, using that $\mathbf{G},\mathbf{G}_1$ have isomorphic derived subgroups.

Thus, we have an isomorphism
\begin{equation*}    R\Gamma(\Sh(\mathbf{G}_1,X_1)_{K_{1p}},\ol{\mathbb{F}}_{\ell})_\mathfrak{m'}\simeq 
R\Gamma(\Sh(\mathbf{G},X)^{+}_{K_{p}},\ol{\mathbb{F}}_{\ell})_\mathfrak{m'}\otimes^{\bb{L}}_{\ol{\mathbb{F}}_{\ell}} C(\mathscr{A}(\mathbf{G}_1)/\mathscr{A}(\mathbf{G})^\circ)
\end{equation*}
where $C(\mathscr{A}(\mathbf{G}_1)/\mathscr{A}(\mathbf{G})^\circ)$ denotes the space of all continuous $\ol{\mathbb{F}}_{\ell}$-valued functions on the profinite set $\mathscr{A}(\mathbf{G}_1)/\mathscr{A}(\mathbf{G})^\circ$. 

Thus, observe that if $R\Gamma(\Sh^+(\mathbf{G},X)_{K_{p}},\ol{\mathbb{F}}_{\ell})_\mathfrak{m'}$ is concentrated in degrees $[d,2d]$, then so is $R\Gamma(\Sh(\mathbf{G}_1,X_1)_{K_{1p}},\ol{\mathbb{F}}_{\ell})_\mathfrak{m'}$, where we note that the above tensor product has no higher derived terms. Summarizing the above discussion, we thus have:
\begin{proposition}
\label{prop:connectedcomponentShimura}
    Suppose that we have two Shimura data $(\mathbf{G},X)$, $(\mathbf{G}_1,X_1)$ such that $(\mathbf{G}^\mathrm{ad},X^\mathrm{ad})\simeq (\mathbf{G}^\mathrm{ad}_1,X^\mathrm{ad}_1)$, $\mathbf{G}^\mathrm{der}\simeq \mathbf{G}_1^\mathrm{der}$, and both $G,G_1$ are unramified. Then the action of $H'_{K'_p}\subset H_{K^{\mathrm{hs}}_p}$ stabilizes each connected component, and for any maximal ideal $\mf{m}'\subset H'_{K'_p}$ if $R\Gamma(\Sh^+(\mathbf{G},X)_{K_{p}},\ol{\mathbb{F}}_{\ell})_\mathfrak{m'}$ is concentrated in degrees $[d,2d]$, then so is $R\Gamma(\Sh(\mathbf{G}_1,X_1)_{K_{1p}},\ol{\mathbb{F}}_{\ell})_\mathfrak{m'}$.
\end{proposition}

\begin{proposition}{\label{prop: abeliantypecase}}
Suppose that $(\mathbf{G},X)$ is a PEL-type Shimura datum such that Conjecture \ref{conj: torsionvanishing} holds for $(\mathbf{G},X)$, and we have another Shimura datum $(\mathbf{G},X)$ such that $(\mathbf{G}^\mathrm{ad},X^\mathrm{ad})\simeq (\mathbf{G}^\mathrm{ad}_1,X^\mathrm{ad}_1)$, $\mathbf{G}^\mathrm{der}\simeq \mathbf{G}_1^\mathrm{der}$, and both $G,G_1$ are unramified. Then Conjecture \ref{conj: torsionvanishing} holds for $(\mathbf{G}_1,X_1)$.
\end{proposition}

\begin{proof}
Since we assume that Conjecture \ref{conj: torsionvanishing} is true for $(\mathbf{G},X)$, we will first show that a maximal ideal $\mathfrak{m}$ of $H_{K_p}$ is generic if and only if the maximal ideal $\mathfrak{m}'$ of $H'_{K'_p}$ is generic. To see this, we will reformulate this in terms of $L$-parameters. This is equivalent to showing that an $L$-parameter
\begin{equation*}
    \phi = \phi_{\mf{m}}:W_{\mathbb{Q}_{p}} \ra \phantom{}^{L}T(\ol{\mathbb{F}}_{\ell})
\end{equation*}
attached to $\mf{m}$ is generic if and only if the composition $\phi'$ with the map $g:\phantom{}^{L}T(\ol{\mathbb{F}}_{\ell})\rightarrow \phantom{}^{L}T'(\ol{\mathbb{F}}_{\ell})$ induced by the inclusion of tori $T'\hookrightarrow T$ is generic. (Here, $T'=G^\mathrm{der}\cap T$). This follows from the observation that any coroot $\alpha$ factors through $G^{\mathrm{der}}$, and hence the composition $\alpha\circ \phi$ is equal to $\alpha\circ \phi'$. 

By Lemma \ref{lemma:localization-central-isog} applied to $G^{\mathrm{der}} = G' \subset G$, we have a natural decomposition
\begin{equation*}
H^i(\Sh(\mathbf{G},X)_{K^{p}},\ol{\mathbb{F}}_{\ell})_{\phi'}\simeq \bigoplus_{\substack{\phi \\ \phi'=g\circ\phi}} H^i(\Sh(\mathbf{G},X)_{K^{p}},\ol{\mathbb{F}}_{\ell})_{\phi}. 
\end{equation*}
Hence, we see that $H^i(\Sh(\mathbf{G},X)_{K^p},\ol{\mathbb{F}}_{\ell})_{\phi'}$ vanishes for $i<d$, since all the $\phi$ appearing on the right-hand side are generic, and hence we can apply Theorem \ref{theorem: mainthm}. Applying $R\Gamma(K_{p}',-)$ and Lemma \ref{lemma: localization map properties} (3), we see that 
\begin{equation*}
H^i(\Sh(\mathbf{G},X)_{K_p'K^p},\ol{\mathbb{F}}_{\ell})_{\mathfrak{m}'}
\end{equation*}
vanishes for $i<d$. Now, we can further apply the Hochschild-Serre spectral sequence to the $K_p/K_p'$-torsor $\Sh(\mathbf{G},X)_{K_p'K^p}\rightarrow \Sh(\mathbf{G},X)_{K_pK^p}$ to get that $H^i(\Sh(\mathbf{G},X)_{K_pK^p},\ol{\mathbb{F}}_{\ell})_{\mathfrak{m}'}$ vanishes for $i<d$. 

Now, we can consider the connected component $\Sh^+(\mathbf{G},X)_{K_pK^p}$, and observe that $\Sh(\mathbf{G},X)_{K_pK^p}$ is a finite disjoint union of isomorphic copies of $\Sh^+(\mathbf{G},X)_{K_pK^p}$. Moreover, exactly as explained above the action of $H'_{K_p'}$ stabilizes each connected component, so we see that $H^i(\Sh^+(\mathbf{G},X)_{K_pK^p},\ol{\mathbb{F}}_{\ell})_{\mathfrak{m}'}$ vanishes for $i<d$ as well. 

Finally, taking colimits over $K^p$, we see that $H^i(\Sh^+(\mathbf{G},X)_{K_p},\ol{\mathbb{F}}_{\ell})_{\mathfrak{m}'}$ vanishes for $i<d$ as well. Applying Proposition \ref{prop:connectedcomponentShimura},  $H^i(\Sh(\mathbf{G}_1,X_1)_{K_{1p}},\ol{\mathbb{F}}_{\ell})_\mathfrak{m'}$ vanishes for $i < d$.

Now, we let $\mathbf{Z}\subset\mathbf{G}$ be the center, and we denote by $\mathbf{Z}(\mathbb{Q})^-$ the closure of $\mathbf{Z}(\mathbb{Q})$ in $\mathbf{Z}(\mathbb{A}_f)$. Denote by $\ol{K_1^p}$ the quotient
\[
\ol{K_1^p}=(K_{1p}K^{p}_1/K_{1p}K^{p}_1\cap\mathbf{Z}(\mathbb{Q})^-)/(K^{p}_1/K^{p}_1\cap\mathbf{Z}(\mathbb{Q})^-).
\]
By the Hochschild-Serre spectral sequence (applied to the quotients $\ol{K_1^p}$), we see that for all sufficiently small $K_1^p$, $H^i(\Sh(\mathbf{G}_1,X_1)_{K_{1p}K_1^p},\ol{\mathbb{F}}_{\ell})_{\mathfrak{m}'}$ vanishes for $i<d$.

Now, consider a generic maximal ideal $\mathfrak{m}_1$ for the spherical Hecke algebra $H_{K_{1p}}$ of $G_1$. This corresponds to a generic maximal ideal $\mathfrak{m}'$ of $H'_{K'_p}$. It remains to observe that for any finitely-generated $H_{K_{1p}}$-module $A$, if the localization $A_{\mathfrak{m}'}=0$, then there is some element in $r\in H'_{K'_p}\backslash \mathfrak{m}'$ such that $rA=0$. Thus we must have $A_{\mathfrak{m}_1}=0$ as well since $H'_{K'_p}\backslash \mathfrak{m}'\subset H_{K_{1p}}\backslash \mathfrak{m}_1$. Taking $A=H^i(\Sh(\mathbf{G}_1,X_1)_{K_{1p}K_1^p},\ol{\mathbb{F}}_{\ell})$, we can conclude. 
\end{proof}
We obtain the following by combining this Proposition with Theorem \ref{theorem: mainthm}. 
\begin{corollary}{\label{cor:abeliantypebody}}
Suppose $(\mathbf{G}_1,X_1)$ is an abelian-type Shimura datum which has an associated PEL-type datum $(\mathbf{G},X)$ of type AC satisfying Assumption \ref{assump: codim} and such that $\mathbf{G}^\der\simeq\mathbf{G}_1^\der$, $G_{1} := \mathbf{G}_{1,\bb{Q}_{p}}$ is unramified and $G:=\mathbf{G}_{\mathbb{Q}_{p}}$ is a product of groups as in Table (\ref{constrainttable}) with $p$ and $\ell$ satisfying the corresponding conditions. We assume that $\mf{m}_1 \subset H_{K_{1p}^{\mathrm{hs}}}$ is a generic maximal ideal. Then, for a level $K_1 = K_1^{p}K_{1p}^{\mathrm{hs}} \subset \mathbf{G}(\bb{A}_{f})$, the cohomology of  $R\Gamma(\mathrm{Sh}(\mathbf{G}_1,X_1)_{K_1,\ol{E}},\ol{\mathbb{F}}_{\ell})_{\mf{m}}$ (resp. $R\Gamma_{c}(\mathrm{Sh}(\mathbf{G}_1,X_1)_{K_1,\ol{E}},\ol{\mathbb{F}}_{\ell})_{\mf{m}}$) is concentrated in degrees $d \leq i \leq 2d$ (resp. $0 \leq i \leq d$).     
\end{corollary}
\begin{proof}
This follows from Theorem \ref{theorem: mainthm}, once we show that the generic condition on $\phi^T_\mathfrak{m}$ is also satisfied by $\phi^{T_1}_{\mathfrak{m}_1}$. To see this, we can in fact see that this condition is satisfied if and only if it is satisfied by $G^\mathrm{der}$, where we look at the composition
\[\phi^{T^\mathrm{der}}_\mathfrak{m'}:W_{\mathbb{Q}_p}\rightarrow \phantom{}^{L}T(\ol{\bb{F}}_{\ell})\rightarrow \phantom{}^{L}T^{\mathrm{der}}(\ol{\bb{F}}_{\ell}).\] In fact, observe that we can descend any coroot $\alpha$ to a coroot $\alpha^{\mathrm{der}}:\phantom{}^{L}T^{\mathrm{der}}(\Lambda)\rightarrow \ol{\bb{F}}_{\ell}^\times$, since $G,G^\mathrm{der}$ have the same (co-)root systems, and we have $\alpha \circ \phi_{\mf{m}}^{T}=\alpha^\mathrm{der}\circ \phi^{T^\mathrm{der}}_\mathfrak{m'}$. 
\end{proof}

In particular, we can strengthen previous results of Caraiani-Tamiozzo \cite[Theorem B]{CT}, who previously showed torsion vanishing for Hilbert modular varieties under the additional assumption that $p$ was split in the totally real field $F$ (though we also note that they showed torsion vanishing under a hypothesis on $\mathfrak{m}$ which is weaker than the genericity considered here, see Remark \ref{rmk:weakgeneric}). 
\begin{corollary}{\label{cor: CarTamcomp}}
    Conjecture \ref{conj: torsionvanishing} is true for Hilbert modular varieties, quaternionic Shimura varieties assuming $(\ell,2[L:\mathbb{Q}_p])=1$, and Hilbert-Siegel Shimura varieties (attached to $\Res_{F/\mathbb{Q}}\GSp_4$) assuming that $\ell\nmid [L:\mathbb{Q}_{p}]$, where for both cases $L$ is the completion of $F$ at some unramified prime above $p$.
\end{corollary}
\begin{proof}
    Observe that for Hilbert-Siegel Shimura varieties (attached to $\Res_{F/\mathbb{Q}}\GL_2$, $\Res_{F/\mathbb{Q}}\GSp_4$), there is a cover by a PEL-type Shimura variety with local group $G$ of the form $G(\SL_2)$ and $G(\mathrm{Sp}_4)$ respectively. For the case of quaternionic Shimura varieties, we can relate their geometric connected components to unitary PEL-type Shimura varieties with local group $G=\mathrm{G}(U_2\times\dots \times U_2)$, as described in \cite[Corollary 3.11]{TX16}. This group does not appear in Table \ref{constrainttable}, but we show below that it satisfies the additional assumptions to apply Theorem \ref{thm: generalperverseexact}. Therefore, the result follows from Corollary \ref{cor:abeliantypebody}.
\end{proof}
\begin{lemma}
    The group $G=\mathrm{G}(U_2\times\dots \times U_2)$ satisfies Assumption \ref{compatibility}, and genericity implies regularity and strong $\mu$-regularity for any toral parameter.
\end{lemma}
\begin{proof}
    Firstly observe that $G$ is a subgroup of $\mathrm{Res}_{L/\mathbb{Q}_p}\GU_2$. Thus, we can apply Proposition \ref{prop: compatibcentralisog} because we know the result for $\mathrm{Res}_{L/\mathbb{Q}_p}\GU_2$. The additional assumptions required to invoke  Theorem \ref{thm: generalperverseexact} can be verified after passing to an extension where the group splits, from which we see that $\mathrm{G}(U_2\times\dots \times U_2)$ is isomorphic to $G'=(\mathbb{G}_m\times\mathbb{G}_m\times \GL_2\times \dots\times \GL_2)/\mathbb{G}_m$, where the $\mathbb{G}_m$ in the quotient is the diagonal copy of $\mathbb{G}_m$. Moreover, we observe that the maximal torus $T'$ of $G'$ can be further identified with $\mathbb{G}_m\times T\times\dots\times T$ by setting the element in the first copy of $\mathbb{G}_m$ to 1, and where each $T$ is the diagonal torus on $\GL_2$. In particular, we can reduce to the case of a product of $\GL_2$'s, from which the additional conditions follow as in the proof of Lemma \ref{lemma:assumptionstypeA}. More precisely, we write an element of $T$ as $(t,t_{11},t_{12},\dots,t_{d1},t_{d2})$, and the character $\chi$ as $\nu(t)\chi_{11}(t_{11})\dots \chi_{d2}(t_{d2})$. We get regularity by observing that if any element of $W_G$ is non-trivial on some $T$ factor (say the $i$th one), then by setting $t=1$, $t_{j1}=t_{j2}=1$, we get the relation $\chi_{i1}\chi^{-1}_{i2}\simeq\mathbf{1}$, a contradiction. We get $\mu$-regularity for any dominant cocharacter $\mu$, since we see that the strong $\mu$-regularity condition again holds for any minuscule $\mu$, and any dominant cocharacter of $G$ can be written as a sum of such minuscule $\mu$.
\end{proof}
\begin{remark}
    As remarked by a referee, there are Shimura data other than the ones listed in Corollary \ref{cor: CarTamcomp} to which we can apply Proposition \ref{prop: abeliantypecase}. For instance, one can consider the Shimura data where the group $\mathbf{G}_1$ is an inner form of $\mathrm{Res}_{F/\mathbb{Q}}\GSp_4$ for $F$ totally real, and which is split at all real places, and $X_1$ is the product of $[F:\mathbb{Q}]$ copies of the Siegel upper half space $\mathcal{H}_2$. In this case, one can show that there exists a PEL-type datum $(\mathbf{G},X)$ satisfying $(\mathbf{G}^{\mathrm{ad}},X^{\mathrm{ad}})\simeq (\mathbf{G}^{\mathrm{ad}}_1,X^{\mathrm{ad}})$, and $\mathbf{G}^\der\simeq\mathbf{G}_1^\der$. 
\end{remark}
\section{Conjectures and Concluding Remarks}
In this section, we give some further complements to the main results established in the previous section. In particular, in \S 6.1, we explain how, by combining Theorem \ref{thm: appliedmantprodform}, with some further results of \cite{Ham2} one can obtain a more precise description of the contribution of a basic unramified element to the cohomology of the torsion cohomology of the global Shimura variety, that is compatible with an analogous description provided by Xiao and Zhu in \cite{XZ}. In \S 6.2, we discuss possible generalizations of our results, introducing the notion of a Langlands-Shahidi type parameter, which should be the most general condition for which the analogue of all of our main results hold (Conjectures \ref{conj: semiorthogonaldecomposition}, \ref{conj: generaltorsionvanish}). 
\subsection{Relationship to Xiao-Zhu}{\label{sec: comaprisonwithXiaoZhu}}
We keep the running notation and assumptions of the previous section. In particular, the pair $(\mathbf{G},X)$ will be a Shimura datum of PEL type AC satisfying Assumption \ref{assump: codim}, and $K = K^{p}K_{p} \subset \mathbf{G}(\bb{A}_{f})$ will denote a sufficiently small level. We assume that the local group $G := \mathbf{G}_{\bb{Q}_{p}}$ is an unramified group of the form described in (\ref{constrainttable}) with $p$ and $\ell$ satisfying the corresponding conditions and that $\mf{m} \subset H_{K_{p}^{\mathrm{hs}}}$ is a generic maximal ideal in the spherical Hecke algebra of the form described in Theorem \ref{thm: appliedperversetexactnessintro}. We will assume that the basic element $b \in B(G,\mu)_{\mathrm{un}}$ is unramified (See \cite[Remark~4.2.11]{XZ} for a classification). Let us look at the middle degree cohomology $H^{d}(R\Gamma_{c}(\mathcal{S}(\mathbf{G},X)_{K^{p},C},\ol{\mathbb{F}}_{\ell})_{\phi_{\mf{m}}})$. By Theorem \ref{thm: appliedmantprodform}, it has a summand isomorphic to 
\[ H^{d}(R\Gamma_{c}(G,b,\mu)_{\phi_{\mf{m}}} \otimes^{\mathbb{L}}_{\mathcal{H}(J_{b})} R\Gamma_{c-\partial}(\Ig^{b},\ol{\mathbb{F}}_{\ell})).  \]
To describe this, let $\mathbf{G}'$ be the unique $\mathbb{Q}$-inner form of $\mathbf{G}$ such that 
$\mathbf{G}(\mathbb{A}^{p\infty}) \simeq \mathbf{G}'(\mathbb{A}^{p\infty})$, $\mathbf{G}'(\mathbb{R})$ is compact modulo center, and $\mathbf{G}_{\mathbb{Q}_{p}} \simeq J_{b}$ (See \cite[Proposition~3.1]{Han} for the existence). We write $C(\mathbf{G}'(\mathbb{Q}) \backslash \mathbf{G}'(\mathbb{A}_{f})/K^{p},\ol{\mathbb{F}}_{\ell})$ for the set of all continuous functions on the profinite (since $\mathbf{G}'_{\bb{R}}$ is compact modulo center) set $\mathbf{G}'(\mathbb{Q}) \backslash \mathbf{G}'(\mathbb{A}_{f})/K^{p}$. It is easy to show that one has an isomorphism
\[ C(K^{p}\backslash\mathbf{G}'(\mathbb{A}_{f})/\mathbf{G}'(\mathbb{Q}),\ol{\mathbb{F}}_{\ell}) \simeq  R\Gamma_{c-\partial}(\Ig^{b},\ol{\mathbb{F}}_{\ell}) \]
for example by combining \cite[Theorem~3.4]{Han} and Corollary \ref{cor: stalkspartial}. We let $V_{\mu} \in \Rep_{\ol{\mathbb{F}}_{\ell}}(\hat{G})$ be the usual highest weight module of highest weight $\mu$, which in particular agrees with the highest weight tilting module, since $\mu$ is minuscule. We let $b_{T}$ denote the unique (since $b$ is basic) reduction of $b \in B(G)$ to $B(T)$, and regard it as an element in $B(T) \simeq \mathbb{X}^{*}(\hat{T}^{\Gamma})$ in what follows. It should be the case that, under possible additional constraints on $\mf{m}$ depending on $\mu$ (See for example \cite[Conjecture~1.20]{Ham2} and \cite[Definition~1.4.2]{XZ}), we have an isomorphism
\begin{align}{\label{XZ isom}} R\Gamma_{c}(\Sht(G,b,\mu)_{\infty,C}/\ul{K_{p}^{\mathrm{hs}}},\ol{\mathbb{F}}_{\ell})_{\mf{m}} \otimes_{\mathcal{H}(J_{b})}^{\mathbb{L}} C(K^{p}\backslash\mathbf{G}'(\mathbb{A}_{f})/\mathbf{G}'(\mathbb{Q}),\ol{\mathbb{F}}_{\ell})  & \simeq \\ C(K^{p}K_{p}^{\mathrm{hs}}\backslash\mathbf{G}'(\mathbb{A}_{f})/\mathbf{G}'(\mathbb{Q}),\ol{\mathbb{F}}_{\ell})_{\mf{m}} \otimes V_{\mu}|_{\hat{G}^{\Gamma}}(b_{T})[-d](-\frac{d}{2}) \nonumber
\end{align}
of $H_{K_{p}^{\mathrm{hs}}}$-representations\footnote{One should also be able describe the Weil group action, as in \cite[Conjecture~1.20]{Ham2}.} up to taking the semi-simplifications of both sides as $H_{K_{p}^{\mathrm{hs}}}$-representations, where we note that $J_{b} \simeq G$ if $b \in B(G,\mu)_{\mathrm{un}}$ since $b$ is basic, and $J_{b}$ must be quasi-split since $b$ is unramified. Note that, if we were working with $\ol{\bb{Q}}_{\ell}$-coefficients\footnote{For this comparison, it would have been more natural to consider an analogue of Theorem \ref{thm: appliedmantprodform} with $\ol{\mathbb{Q}}_{\ell}$-coefficients. This is indeed doable assuming that $\phi_{\mf{m}}$ admits a $\ol{\mathbb{Z}}_{\ell}$-lattice as in \cite[Theorem~1.13]{Ham2}. This integrality condition is however an artifact of the theory of solid $\ol{\mathbb{Q}}_{\ell}$-sheaves not being properly understood (e.g excision fails) and should be removable with more technology.}, the space of automorphic forms $C(K^{p}\backslash\mathbf{G}'(\mathbb{A}_{f})/\mathbf{G}'(\mathbb{Q}),\ol{\mathbb{Q}}_{\ell})$ with rational coefficients will be semisimple as a $G(\bb{Q}_{p})$-representation, by virtue of the fact that $\mathbf{G}'_{\bb{R}}$ will be compact modulo center, and in this case the semi-simplification would be unnecessary.

In particular, by arguing as in \cite[Page~6]{Ko}, we know that $R\Gamma_{c}(\Sht(G,b,\mu)_{\infty,C}/\ul{K_{p}^{\mathrm{hs}}},\ol{\mathbb{F}}_{\ell})_{\mf{m}}$ will have cohomology sheaves with irreducible constituents given by the representations of $J_{b}(\mathbb{Q}_{p})$ with Fargues-Scholze parameter equal to $\phi_{\mf{m}}$ as conjugacy classes of parameters. Moreover, using that Assumption \ref{compatibility} holds for the groups appearing in Table (\ref{constrainttable}), we know by Proposition \ref{prop: constituent proposition} that they have to be constituents of $i_{B}^{G}(\chi)$, which will also be irreducible under the genericity assumption and the constraints appearing in Table (\ref{constrainttable}) (See the proof of Corollary \ref{cor: appliedperversetexactness}). In particular, the only irreducible $J_{b}(\bb{Q}_{p}) = G(\bb{Q}_{p})$-representations that will contribute to the $H_{K_{p}^{\mathrm{hs}}}$-semisimplification of the LHS of (\ref{XZ isom}) will be given by $i_{B}^{G}(\chi)$, and \cite[Conjecture~1.20]{Ham2} would imply that $R\Gamma_{c}(G,b,\mu)[i_{B}^{G}(\chi)] \simeq i_{B}^{G}(\chi) \otimes V_{\mu}|_{\hat{G}^{\Gamma}}(b_{T})[-d](\frac{-d}{2})$
as $G(\mathbb{Q}_{p})$-modules. If we assume $\ell$ is banal (i.e coprime to the pro-order of $K_{p}^{\mathrm{hs}}$) then passing to $K_{p}^{\mathrm{hs}}$-invariants, recalling that this is exact under the banal hypothesis, gives us the isomorphism (\ref{XZ isom}) after semi-simplification as $H_{K_{p}^{\mathrm{hs}}}$-modules.
\begin{remark}
If $B(G,\mu)_{\mathrm{un}}$ consists of only the basic element and the $\mu$-ordinary element and $\phi_{T}$ is strongly $\mu$-regular (Definition \ref{def: strongmureg}) then \cite[Conjecture~1.20]{Ham2} is true. In particular, it follows from \cite[Theorem~1.21]{Ham2} that the isomorphism (\ref{XZ isom}) can be made unconditional. More precisely, for $(\mathbf{G},X)$ any PEL type datum of type AC satisfying Assumption \ref{assump: codim} such that the local group $G$ is of the form described in (\ref{constrainttable}), and $\mf{m} \subset H_{K_{p}^{\mathrm{hs}}}$ a generic maximal ideal of the form described in Theorem \ref{thm: appliedperversetexactnessintro} with associated semisimple toral parameter $\phi_{\mf{m}}^{T}$ such that $\phi_{\mf{m}}^{T}$ is strongly $\mu$-regular in the sense of Definition \ref{def: strongmureg}, then if $B(G,\mu)_{\mathrm{un}}$ consists of only the basic element and the $\mu$-ordinary element the isomorphism (\ref{XZ isom}) holds after semi-simplifying as a $H_{K_{p}^{\mathrm{hs}}}$-module.
\end{remark}
We note that this description of the $K_{p}^{\mathrm{hs}}$-invariants of the middle degree cohomology on the generic fiber of the Shimura variety at hyperspecial level parallels Theorem \cite[Theorem~1.14 (1)]{XZ}, describing the middle degree cohomology on the special fiber of the natural integral model. 
\subsection{A General Torsion Vanishing Conjecture}
Consider now a general Shimura datum $(\mathbf{G},X)$. Let $\Lambda \in \{\ol{\mathbb{Q}}_{\ell},\ol{\mathbb{F}}_{\ell}\}$. If $\Lambda = \ol{\mathbb{F}}_{\ell}$ assume that $\ell$ is very decent as in Definition \ref{defn: verydecent}. We can then look at the $G(\mathbb{Q}_{p})$-representation. 
\[ R\Gamma_{c}(\mathcal{S}(\mathbf{G},X)_{K^{p},C},\Lambda) \]
defined by the cohomology at infinite level. By applying Corollary \ref{cor: appliedspecdecomp}, we obtain a $G(\mathbb{Q}_{p})$-equivariant decomposition of this 
\[ R\Gamma_{c}(\mathcal{S}(\mathbf{G},X)_{K^{p},C},\Lambda) = \bigoplus_{\phi} R\Gamma_{c}(\mathcal{S}(\mathbf{G},X)_{K^{p},C},\Lambda)_{\phi} \] 
running over semi-simple $L$-parameters $\phi: W_{\mathbb{Q}_{p}} \ra \phantom{}^{L}G(\Lambda)$. For such a $\phi$, we let $(\phi_{M},M)$ denote a cuspidal support. I.e $M$ is a Levi of $G$ and $\phi_{M}: W_{\mathbb{Q}_{p}} \ra \phantom{}^{L}M(\Lambda)$ is a supercuspidal $L$-parameter such that $\phi$ is induced (up to $\hat{G}$-conjugacy) by composing with the natural (up to $\hat{G}$-conjugacy) embedding $\phantom{}^{L}M(\Lambda) \ra \phantom{}^{L}G(\Lambda)$. We want to describe the degrees of cohomology that $R\Gamma_{c}(\mathcal{S}(\mathbf{G},X)_{K^{p},C},\Lambda)_{\phi}$ sits in for suitably nice $\phi$. The case where $\phi$ factors through $M = T$ is covered by Conjecture \ref{conj: torsionvanishing}. To go beyond this, we give the following definition. 
\begin{definition}{\label{def: LangShahi}}
For a semi-simple $L$-parameter $\phi$ with a cuspidal support $(M,\phi_{M})$, we let $P$ be a parabolic with Levi factor $M$ and unipotent radical $N$. We consider the representation $r$ given by looking at the action of $\phantom{}^{L}M$ on the Lie algebra of $\phantom{}^{L}N$ via the adjoint action. We say $\phi$ is of Langlands-Shahidi type if the Galois cohomology groups
\[ R\Gamma(W_{\mathbb{Q}_{p}},r \circ \phi_{M}) \]
and
\[ R\Gamma(W_{\mathbb{Q}_{p}}, r \circ \phi_{M}^{\vee}) \]
are trivial. Similarly, we say $\phi$ is of weakly Langlands-Shahidi type if
\[ H^{2}(R\Gamma(W_{\mathbb{Q}_{p}},r \circ \phi_{M}))  \]
and
\[ H^{2}(R\Gamma(W_{\mathbb{Q}_{p}}, r \circ \phi_{M}^{\vee}) \]
are trivial.
\end{definition}
\begin{remark}
We note that since we enforced this condition on both $r \circ \phi_{M}$ and $r \circ \phi_{M}^{\vee}$ that this is independent of the choice of parabolic $P$ and the choice of cuspidal support. Moreover, it is easy to check that, if $M = T$, this precisely recovers Definition \ref{def: generic}.
\end{remark}
The terminology of "Langlands-Shahidi type" comes from the fact that the representation $r \circ \phi_{M}$ is precisely the representation which appears in the description of the constant terms of the usual Eisenstein series via the Langlands-Shahidi method (\cite{LanglandsEulerProducts}, \cite{ShahidiLFunction}, \cite{ShahidiRamanujanConjecture}). The motivation for this definition comes from considering the behavior of geometric Eisenstein series over the Fargues-Fontaine curve for general parabolics, by making analogies with the classical theory over function fields, as developed in \cite{BG,Lau}. In particular, this should be the correct definition that guarantees that the eigensheaves $\mathcal{S}_{\phi}$ on $\Bun_{G}$ with eigenvalue $\phi$ are as simple as possible, and the analysis carried out in \cite{Ham2} generalizes to the non-principal case. This is discussed in more detail in \cite[Chapter~3]{Ham4}. In addition, we expect that the consequences derived from the analysis in \cite{Ham2} in the principal case should also generalize. More precisely, we conjecture the following generalization of Proposition \ref{prop: genlocal} and Corollary \ref{cor: appliedperversetexactness}
\begin{conjecture}{\label{conj: semiorthogonaldecomposition}}
Let $B(G)_{M} := \mathrm{Im}(B(M)_{\mathrm{basic}} \ra B(G))$ be the set of $M$-reducible elements, and let $\phi$ be a semi-simple $L$-parameter of Langlands-Shahidi type with cuspidal support $(M,\phi_{M})$. The category $\Dlis^{\mathrm{ULA}}(\Bun_{G},\Lambda)_{\phi}$ of $\phi$-local (as defined in Appendix \ref{append: spectralactionproperties}) lisse-\'etale ULA $\Lambda$-sheaves  breaks up as direct sum
\[ \Dlis^{\mathrm{ULA}}(\Bun_{G},\Lambda)_{\phi} \simeq \bigoplus_{b \in B(G)_{M}} \Dlis^{\mathrm{ULA}}(\Bun_{G}^{b},\Lambda)_{\phi} \]
via excision\footnote{We note that a priori there are no excision triangles for the HN-stratification of $\Bun_{G}$, since there is in general no well-behaved $!$-pushforward in the lisse formalsim with non-torsion coefficients. However, one can show that such a pushforward exists for the inclusions of Harder-Narasimhan strata, and satisfies the usual excision triangles (See for example the discussion on \cite[Page~11]{ImaConv}), and so this is well-defined in the present context.}, and the $!$ and $*$ pushhforwards agree for any smooth irreducible representation $\rho$ of $J_{b}(\mathbb{Q}_{p})$ lying in $\Dlis(\Bun_{G}^{b},\Lambda)_{\phi}$ for $b \in B(G)_{M}$.
\\\\
Given a tilting module $V \in \mathrm{Tilt}_{\Lambda}(\phantom{}^{L}G^{I})$ (where $\Tilt_{\Lambda}(\phantom{}^{L}G^{I})$ is defined as in \cite[Section~9]{Ham2}), if $\phi$ is of weakly Langlands-Shahidi type then the map induced by associated the Hecke operator 
\[ T_{V}: \Dlis^{\mathrm{ULA}}(\Bun_{G},\Lambda)_{\phi} \ra \Dlis^{\mathrm{ULA}}(\Bun_{G},\Lambda)^{BW_{\mathbb{Q}_{p}}^{I}}_{\phi} \]
on the $\phi$-localized ULA subcategory is perverse $t$-exact, where the fact the Hecke operator preserves this subcategory is proven as in Lemma \ref{lemma: localization map properties} (2).
\end{conjecture}
\begin{remark}{\label{rem: Generous?}}
During the preparation of this manuscript, Hansen formulated similar conjectures with rational coefficients \cite{HanBeij}. He refers to Langlands-Shahidi parameters as generous parameters \cite[Definition~2.5]{HanBeij} and to weakly Langlands-Shahidi parameters as generic semi-simple parameters \cite[Section~2.3]{HanBeij}. One can show that these two definitions are equivalent. Indeed, note that the Galois cohomology $H^{1}(R\Gamma(W_{\mathbb{Q}_{p}},r \circ \phi_{M}))$ controls the lifts of a semi-simple parameter $\phi_{M}: W_{\mathbb{Q}_{p}} \ra \phantom{}^{L}M(\Lambda)$ to a $\phantom{}^{L}P(\Lambda)$-valued parameter and that such lifts correspond to finding parameters whose semi-simplification is equal to $\phi$. Moreover, insisting that $H^{1}(R\Gamma(W_{\mathbb{Q}_{p}},r \circ \phi_{M}))$ is trivial is equivalent to insisting that $R\Gamma(W_{\mathbb{Q}_{p}},r \circ \phi_{M})$ is trivial using local Tate-duality and that the Euler-Poincar\'e characteristic of this complex is $0$. This shows the equivalence of the generous condition with the Langlands-Shahidi type condition, using that the stack of Langlands parameters with rational coefficients is reduced. Lastly, the set of such lifts coming from classes in $H^{0}(R\Gamma(W_{\mathbb{Q}_{p}},r \circ \phi_{M}))$ will give rise to non Frobenius semi-simple L-parameters allowing one to see that weakly Langlands-Shahidi is equivalent to generic semi-simple. 
\end{remark}
In particular, by combining this with a generalization of Theorem \ref{thm: mantprodform} to arbitrary Shimura varieties and the analysis carried out in \S 5, we could deduce the following as a consequence. 
\begin{conjecture}{\label{conj: generaltorsionvanish}}
Let $\phi$ be a semi-simple $L$-parameter of weakly Langlands-Shahidi type with cuspidal support $(M,\phi_{M})$. Then the complex $R\Gamma_{c}(\mathcal{S}(\mathbf{G},X)_{K^{p},C},\Lambda)_{\phi}$ (resp. $R\Gamma(\mathcal{S}(\mathbf{G},X)_{K^{p},C},\Lambda)_{\phi}$) is concentrated in degrees $0 \leq i \leq d$ (resp. $d \leq i \leq 2d$).
\end{conjecture}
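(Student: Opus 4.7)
The plan is to follow essentially the same blueprint used to prove Theorem \ref{theorem: mainthm}, combining three ingredients: (a) a Mantovan-type product formula for general Shimura varieties, (b) the conjectured direct sum decomposition of $\Dlis(\Bun_{G},\Lambda)_{\phi}$ indexed by $B(G)_{M}$ together with the perverse $t$-exactness of Hecke operators on this subcategory for parameters of weakly Langlands-Shahidi type, and (c) Artin vanishing for partially compactly supported Igusa cohomology.

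First, I would establish a suitable generalization of Theorem \ref{thm: mantprodform}. The key geometric input in the PEL type $A$ or $C$ case was the description of the fibers of the Hodge-Tate period morphism via the partial minimal compactifications $\Ig^{b,*}$, as developed in \cite{CS2,San,Zha}. For a general Shimura datum $(\mathbf{G},X)$ one needs the analogous description, but granting this input the argument of \S \ref{sec: manprodformula} carries over verbatim to produce a $G(\mathbb{Q}_{p}) \times W_{E_{\mf{p}}}$-equivariant filtration on $R\Gamma_{c}(\mathcal{S}(\mathbf{G},X)_{K^{p},C},\Lambda)$ with graded pieces isomorphic to $j_{1}^{*}T_{\mu}(j_{b!}V_{b})[-d](-d/2)$ for varying $b \in B(G,\mu)$, where $V_{b} := R\Gamma_{c-\partial}(\Ig^{b},\Lambda)$.

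Next, I would apply the localization $(-)_{\phi}$ to this filtration. By the first half of the conjecture on Hecke operators, the graded pieces indexed by $b \notin B(G)_{M}$ vanish, and for $b \in B(G)_{M}$ the natural map $j_{b!}V_{b} \to j_{b*}V_{b}$ becomes an isomorphism after localization, so the excision filtration splits as a direct sum exactly as in the proof of Theorem \ref{thm: appliedmantprodform}. For the upper bound on $R\Gamma_{c}(\mathcal{S}(\mathbf{G},X)_{K^{p},C},\Lambda)_{\phi}$, Proposition \ref{prop: artvanish} shows that each $V_{b}$ lies in cohomological degrees $\leq d_{b}$, so $\bigoplus_{b} (j_{b!}V_{b})_{\phi}$ lies in $\pD^{\leq 0,\mathrm{ULA}}(\Bun_{G},\Lambda)_{\phi}$. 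The perverse $t$-exactness of $j_{1}^{*}T_{\mu}$ on this localized subcategory (the second half of the Hecke operator conjecture) would then immediately give that the compactly supported cohomology is concentrated in degrees $0 \leq i \leq d$. The complementary bound for $R\Gamma(\mathcal{S}(\mathbf{G},X)_{K^{p},C},\Lambda)_{\phi}$ follows by Poincar\'e duality at finite level together with Corollary \ref{cor: contragradients}, once one notes that the weakly Langlands-Shahidi condition is preserved under the Chevalley involution $\phi \mapsto \phi^{\vee}$; this is immediate from the symmetric form of Definition \ref{def: LangShahi}, which imposes the condition on both $r \circ \phi_{M}$ and $r \circ \phi_{M}^{\vee}$.

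The principal obstacle is the Hecke operator conjecture itself, whose proof requires extending the theory of geometric Eisenstein series on $\Bun_{G}$ from the Borel to arbitrary parabolics $P$ with Levi factor $M$. Concretely, one needs to construct perverse filtered Hecke eigensheaves $\nmEis(\mathcal{S}_{\phi_{M}})$ attached to a supercuspidal $\phi_{M}$, generalizing the objects of \cite[Theorem~10.10]{Ham2}; show that the stalks on the $M$-reducible strata $\Bun_{G}^{b}$ for $b \in B(G)_{M}$ are given by parabolically induced representations $i_{P_{b}}^{G}(\rho_{M,b,w})$ which are clean for parameters of Langlands-Shahidi type; and verify that the analogue of the $\mu$-regularity argument goes through so that Hecke operators act through the Satake restriction of tilting modules. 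The vanishing of the $H^{2}$ of Definition \ref{def: LangShahi} plays the role of genericity in the toral case, obstructing non-trivial extensions that would otherwise spoil the splitting of the filtration on $T_{\mu}(\nmEis(\mathcal{S}_{\phi_{M}}))$. This analysis, which moreover requires ULA properties of sheaves on the stack of $P$-structures analogous to \cite[Assumption~8.1]{Ham2}, is expected to be provided by the forthcoming work \cite{HHS} and is discussed in more detail in \cite{Ham4}.
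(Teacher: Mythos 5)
This statement is a \emph{conjecture} in the paper: the authors do not prove it, but only indicate that it \emph{would} follow by combining Conjecture 6.3 (the direct sum decomposition of $\Dlis(\Bun_{G},\Lambda)_{\phi}$ over $B(G)_{M}$ and the perverse $t$-exactness of Hecke operators) with a generalization of Theorem \ref{thm: mantprodform} to arbitrary Shimura data and the analysis of \S 5. Your proposal reproduces exactly this intended derivation, including the correct identification of the genuinely open inputs (the fiber description of $\pi_{\mathrm{HT}}$ and the definition of $\Ig^{b,*}$ beyond PEL type $A$/$C$, and the extension of the geometric Eisenstein theory from the Borel to a general parabolic with the ULA input expected from \cite{HHS}), and the correct observation that the weakly Langlands-Shahidi condition is Chevalley-symmetric, which is what makes the Poincar\'e duality step work.

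One caveat: you invoke the \emph{splitting} of the excision filtration (cleanness of $j_{b!}V_{b}\to j_{b*}V_{b}$ and support on $B(G)_{M}$) under the \emph{weakly} Langlands-Shahidi hypothesis. In Conjecture 6.3 that splitting is only asserted for parameters of (full) Langlands-Shahidi type, and Remark \ref{rem: weaklyLanglandsShahidicase} explicitly predicts that it fails for merely weakly Langlands-Shahidi parameters unless $B(G,\mu)_{M}$ is a singleton --- only the perverse $t$-exactness of $T_{V}$ is expected to survive the weakening. Fortunately this does not damage your argument for the degree bounds: since each graded piece $j_{1}^{*}T_{\mu}(j_{b!}V_{b})_{\phi}[-d]$ lies in degrees $\leq d$ by Artin vanishing plus perverse $t$-exactness, the filtration alone (without any splitting) already forces $R\Gamma_{c}(\mathcal{S}(\mathbf{G},X)_{K^{p},C},\Lambda)_{\phi}$ into degrees $\leq d$. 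You should therefore drop the splitting step from the deduction of the vanishing statement and reserve it for the finer direct-sum description, which is only expected under the stronger hypothesis.
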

\begin{remark}
 For $(\mathbf{G},X)$ of PEL type AC, assuming \ref{assump: codim} and that $\phi$ of Langlands-Shahidi type, we should also obtain a $W_{E_{\mf{p}}} \times G(\mathbb{Q}_{p})$-equivariant direct sum decomposition 
 \[ R\Gamma_{c}(\mathcal{S}(\mathbf{G},X)_{K^{p},C},\Lambda)_{\phi} \simeq \bigoplus_{b \in B(G,\mu)_{M}} (R\Gamma_{c}(G,b,\mu)_{\phi} \otimes_{\mathcal{H}(J_{b})}^{\mathbb{L}} V_{b})[2d_{b}], \]
 where $R\Gamma_{c}(G,b,\mu) := \colim_{K_{p} \ra \{1\}} R\Gamma_{c}(\Sht(G,b,\mu)_{\infty,C}/\ul{K_{p}},\Lambda(d_{b}))$ and $R\Gamma_{c}(G,b,\mu)_{\phi}$ is the projection applied to the complex viewed as a $G(\mathbb{Q}_{p})$-representation. This should also generalize once one has appropriate general definitions of $\Ig^{b}$ and $\Ig^{b,*}$ so that one can actually define $V_{b} := R\Gamma_{c-\partial}(\Ig^{b},\Lambda)$. Under possible additional constraints on $\phi$, one should also be able to describe the contribution of $R\Gamma_{c}(G,b,\mu)_{\phi}$ in terms of the decomposition $V_{\mu}|_{Z(\hat{M}^{\Gamma})} = \mathcal{T}_{\mu}|_{Z(\hat{M}^{\Gamma})}$ for $b \in B(G)_{M}$ (along the lines of \cite[Conjecture~1.20]{Ham2}), as is explained in the toral case in \S \ref{sec: comaprisonwithXiaoZhu}. It would be interesting to formulate an optimal conjecture.   
\end{remark}
\begin{remark}{\label{rem: weaklyLanglandsShahidicase}}
We believe that this conjecture should be true under just the weakly Langlands-Shahidi condition. However, we strongly suspect that the splitting of the semi-orthogonal decomposition and in turn the splitting of Mantovan's filtration discussed in the previous Remark should not hold unless the set $B(G,\mu)_{M}$ is a singleton. In particular, in \cite[Section~2.2]{HanBeij} Hansen conjectures the existence of perverse sheaves lying $\Dlis(\Bun_{G},\Lambda)_{\phi}$, for which the semi-orthogonal decomposition does not split. Nonetheless, one still expects perverse $t$-exactness of Hecke operators to hold in these cases \cite[Conjecture~2.32]{HanBeij}. 
\end{remark}
\appendix
\section{Spectral Decomposition of Sheaves on $\mathrm{Bun}_G$, by David Hansen}{\label{append: spectralactionproperties}}
Let $G/\mathbb{Q}_{p}$ be a connected reductive group, $\Lambda/\mathbf{Z}_{\ell}$
an algebraically closed field. If $\mathrm{char}(\Lambda) \neq 0$ we assume
$\ell \nmid |\pi_{0}(Z(G))|$, where $Z(G)$ denotes the center of $G$, as in \cite[Theorem~I.10.1]{FS}.

Set $\D(\mathrm{Bun}_{G})= \Dlis(\mathrm{Bun}_{G},\Lambda)$ to be the derived category of lisse-\'etale $\Lambda$-sheaves, regarded
as a presentable stable $\infty$-category whenever convenient. Let $\mathfrak{X}_{\hat{G}}=Z^{1}(W_{E},\hat{G})_{\Lambda}/\hat{G}$
be the stack of $L$-parameters over $\Lambda$, and let $X_{\hat{G}}$ be its
coarse moduli space, $q:\mathfrak{X}_{\hat{G}}\to X_{\hat{G}}$ the natural map.
We will regard $\mathfrak{X}_{\hat{G}}$ as a disjoint union of finite type
algebraic stacks over $\Lambda$, and $X_{\hat{G}}$ as a disjoint union of finite
type affine $\Lambda$-schemes. As in \cite[Theorem IX.5.2 and Theorem X.0.2]{FS}, we have the spectral action of
$\mathrm{Perf}(\mathfrak{X}_{\hat{G}})$ on $\D(\mathrm{Bun}_{G})$, and
there is a natural map $\Psi_{G}: \mathcal{O}(\mf{X}_{\hat{G}}) = \mathcal{O}(X_{\hat{G}})\to\mathfrak{Z}(\D(\mathrm{Bun}_{G})) := \pi_{0}(\mathrm{End}(\mathrm{id}_{\D(\Bun_{G})}))$, where we recall that $Z^{1}(W_{E},\hat{G})_{\Lambda}$ is a disjoint union of affine schemes by \cite[Theorem~VIII.1.3]{FS}. These two structures are compatible (as proven by Zou \cite[Theorem~5.2.1]{Zou}).

By \cite[Prop. VIII.3.8]{FS}, the set of closed points $X_{\hat{G}}(\Lambda)$
is naturally in bijection with the set of isomorphism classes of semisimple
$L$-parameters $\phi:W_{E}\to\phantom{}^{L}\!G(\Lambda)$. Let $\mathfrak{m}_{\phi}\subset\mathcal{O}(X_{\hat{G}})$
be the maximal ideal associated with a given $\phi$.
\begin{definition}
    
Given any $\phi$ as above, $\D(\mathrm{Bun}_{G})_{\phi}\subset \D(\mathrm{Bun}_{G})$
is the full subcategory of sheaves $A\in \D(\mathrm{Bun}_{G})$ such
that for every $f\in\mathcal{O}(X_{\hat{G}})\smallsetminus\mathfrak{m}_{\phi}$,
$A\overset{\cdot f}{\to}A$ is an isomorphism. Here $\cdot f$ is
the endomorphism of $A$ induced by $\Psi_{G}$.
\end{definition}

We will call objects of $\D(\mathrm{Bun}_{G})_{\phi}$ $\phi$-\emph{local
}sheaves.
\\\\
By construction, $\D(\mathrm{Bun}_{G})_{\phi}$ is a full subcategory
of $\D(\mathrm{Bun}_{G})$ stable under arbitrary limits and colimits,
and the tautological inclusion functor $\iota_{\phi}:\D(\mathrm{Bun}_{G})_{\phi}\hookrightarrow \D(\mathrm{Bun}_{G})$
commutes with limits and colimits. Since $\D(\mathrm{Bun}_{G})$ is presentable, $\D(\mathrm{Bun}_{G})_{\phi}$ is then presentable by \cite[Theorem 1.1]{RS}. By the $\infty$-categorical adjoint
functor Theorem \cite[Cor. 5.5.2.9.(2)]{HTT}, the inclusion $\iota_{\phi}$ therefore admits a left adjoint $\mathcal{L}_{\phi}:\D(\mathrm{Bun}_{G})\to \D(\mathrm{Bun}_{G})_{\phi}$.\footnote{To see that $\iota_{\phi}$ is accessible, use \cite[Prop. 5.4.7.7]{HTT} together with the fact that $\iota_\phi$ admits a right adjoint, which follows from \cite[Cor. 5.5.2.9.(1)]{HTT}.}
The unit of the adjunction gives a map $A\to\iota_{\phi}\mathcal{L}_{\phi}A=:A_{\phi}$
functorially in $A$. Since $\iota_{\phi}$ is fully faithful, $\mathcal{L}_{\phi}\iota_{\phi}=\mathrm{id}$,
so $(A_{\phi})_{\phi}=A_{\phi}$, i.e. the endofunctor $A\rightsquigarrow A_{\phi}$
is idempotent. We remark that $\D(\mathrm{Bun}_{G})_{\phi}$ is a
localization of $\D(\mathrm{Bun}_{G})$, and the map $A\to A_{\phi}$
is the initial map from $A$ to a $\phi$-local sheaf.
\begin{proposition}\label{prop:localbasics}
The full subcategory $\D(\mathrm{Bun}_{G})_{\phi}$ is preserved by
the spectral action of $\mathrm{Perf}(\mathfrak{X}_{\hat{G}})$, and $A\rightsquigarrow A_{\phi}$ commutes with
the spectral action. Moreover, $\mathrm{supp}(A_{\phi}) \subseteq \mathrm{supp}(A)$.
\end{proposition}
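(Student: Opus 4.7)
All three assertions will follow from a single formal principle: for any $\mathcal{O}(X_{\hat{G}})$-linear functor $F\colon \D(\Bun_{G}) \to \mathcal{D}$ into an $\mathcal{O}(X_{\hat{G}})$-linear $\infty$-category admitting an $\mathcal{O}(X_{\hat{G}})$-linear right adjoint $F^{R}$, one obtains a natural isomorphism $F(A_{\phi}) \simeq F(A)_{\phi}$. Indeed, $F(A_{\phi})$ is $\phi$-local by $\mathcal{O}$-linearity of $F$ (since $f \cdot F(A_{\phi}) = F(f \cdot A_{\phi})$ is invertible whenever $f \notin \mf{m}_{\phi}$), and for any $\phi$-local $B \in \mathcal{D}$,
\[
\Hom(F(A_{\phi}), B) \simeq \Hom(A_{\phi}, F^{R} B) \simeq \Hom(A, F^{R} B) \simeq \Hom(F(A), B) \simeq \Hom(F(A)_{\phi}, B),
\]
where the second isomorphism uses $\mathcal{O}$-linearity of $F^{R}$ (making $F^{R} B$ still $\phi$-local) and the universal property of $A \to A_{\phi}$; Yoneda then yields the claim. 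Moreover, the right adjoint of any $\mathcal{O}(X_{\hat{G}})$-linear functor is automatically $\mathcal{O}(X_{\hat{G}})$-linear, by the naturality of both the central $f$-actions and the adjunction bijection.

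For the first two assertions, I would apply the principle to $F = C \star (-)$ with $C \in \Perf(\mf{X}_{\hat{G}})$. The $\mathcal{O}(X_{\hat{G}})$-linearity follows from Zou's theorem combined with the monoidality of the spectral action: the action of $f \in \mathcal{O}(X_{\hat{G}})$ on $C \star A$ identifies with $(f \cdot \mathcal{O}_{\mf{X}_{\hat{G}}}) \star C \star A \simeq C \star (f \cdot A)$. Since every object of $\Perf(\mf{X}_{\hat{G}})$ is dualizable, $C \star (-)$ has right adjoint $C^{\vee} \star (-)$, which is again of the same form and hence $\mathcal{O}$-linear. Preservation follows directly from this explicit formula, and the commutation of $A \rightsquigarrow A_{\phi}$ with the spectral action is the displayed Yoneda isomorphism.

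For the support assertion, I would apply the principle to $F = j_{b}^{*}$ with $j_{b}\colon \Bun_{G}^{b} \hookrightarrow \Bun_{G}$ a locally closed HN-stratum inclusion. The $\mathcal{O}(X_{\hat{G}})$-linearity of $j_{b}^{*}$ is immediate from naturality of $\Psi_{G}$. A right adjoint exists via the six-functor formalism of \cite{FS}: factoring $j_{b}$ as a composition of an open immersion followed by a closed immersion yields a right adjoint built as the composition of the corresponding pushforwards, and this right adjoint is again $\mathcal{O}$-linear as above. The resulting isomorphism $j_{b}^{*}(A_{\phi}) \simeq (j_{b}^{*} A)_{\phi}$ shows that vanishing on each HN-stratum is preserved under localization, giving $\mathrm{supp}(A_{\phi}) \subseteq \mathrm{supp}(A)$. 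The only substantive input to this argument is Zou's compatibility theorem identifying $\Psi_{G}$ with the spectral action of $\mathcal{O}(X_{\hat{G}}) \hookrightarrow \Perf(\mf{X}_{\hat{G}})$; the remainder is formal adjoint-functor manipulation, which I expect to be the easy part, with the main subtlety being to verify the $\mathcal{O}$-linearity of the relevant right adjoints uniformly across the six-functor formalism on $\Bun_{G}$.
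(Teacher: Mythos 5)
Your argument is correct, but it runs along a genuinely different track from the paper's. The paper dispatches the first claim exactly as you do (centrality of $\mathcal{O}(X_{\hat{G}})$ in the spectral action), but for the other two claims it writes down an explicit formula $A_{\phi}\simeq \colim_{i\in\mathcal{I}_{\phi}}F(i)$, where $\mathcal{I}_{\phi}$ is the cofiltered category of elements of $\mathcal{O}(X_{\hat{G}})\smallsetminus\mf{m}_{\phi}$ under divisibility and $F$ sends every object to $A$ and every morphism $h$ to $\cdot h$; the commutation with the spectral action and the support containment are then immediate because $C\star(-)$ and $j_{b}^{*}$ preserve colimits. You instead prove an abstract principle: any $\mathcal{O}(X_{\hat{G}})$-linear functor with $\mathcal{O}$-linear right adjoint intertwines the localizations, via the Yoneda comparison of $\Hom(F(A_{\phi}),B)$ and $\Hom(F(A)_{\phi},B)$ for $\phi$-local $B$. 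Both routes are sound. Your principle is cleaner and more general for the spectral action itself (where dualizability of $C\in\Perf(\mf{X}_{\hat{G}})$ hands you the right adjoint $C^{\vee}\star(-)$ for free), but for the support claim it forces you to first equip each stratum category $\D(\Bun_{G}^{b},\ol{\mathbb{F}}_{\ell})$ with an $\mathcal{O}(X_{\hat{G}})$-module structure making $j_{b}^{*}$ and $j_{b*}$ linear --- doable via $f\cdot B:=j_{b}^{*}(f\cdot j_{b*}B)$ and the triangle identities, but strictly more bookkeeping than the paper's one-line ``restriction commutes with colimits.'' Conversely, the paper's explicit colimit formula is not a detour: it is reused in the subsequent propositions of the appendix (e.g.\ the identification $\Hom(C,A_{\phi})\cong\Hom(C,A)_{\mf{m}_{\phi}}$ for compact $C$, and the ULA preservation), none of which would fall out of your formal principle alone. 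So if you adopt your approach you will still want the colimit description in hand for what follows.
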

Here $\mathrm{supp}(A) \subset |\mathrm{Bun}_G|=B(G)$ denotes the set of points $b$ such that $i_{b}^{\ast}A\neq 0$.

\begin{proof}
The first claim is clear, since the spectral action commutes with
the action of $\mathcal{O}(X_{\hat{G}})$. For the remaining claims (and
some later arguments), it is useful to give an explicit formula for
$A_{\phi}$. Let $\mathcal{I}_{\phi}$ be the diagram category whose
objects are elements of $\mathcal{O}(X_{\hat{G}})\smallsetminus\mathfrak{m}_{\phi}$
and where a morphism $f\to g$ is an element $h\in\mathcal{O}(X_{\hat{G}})\smallsetminus\mathfrak{m}_{\phi}$
such that $g=fh$. This is clearly cofiltered. Let $F\in\mathrm{Fun}(\mathcal{I}_{\phi},\D(\mathrm{Bun}_{G}))$
be the functor sending $f$ to $A$ and sending a morphism $h\in\mathrm{Mor}(f,g)$
to $\cdot h\in\mathrm{End}(A)$. Then $A_{\phi}=\mathrm{colim}_{i\in\mathcal{I}_{\phi}}F(i)$.
The remaining claims are now immediate.
\end{proof}
To make sense of the next Proposition, note that for any $A,B\in \D(\mathrm{Bun}_{G})$,
$\mathrm{Hom}(B,A)$ is naturally a $\mathfrak{Z}(\D(\mathrm{Bun}_{G}))$-module,
whence a $\mathcal{O}(X_{\hat{G}})$-module.
\begin{proposition}\label{prop:localizenaive}
If $C\in \D(\mathrm{Bun}_{G})$ is compact, then $\mathrm{Hom}(C,A_{\phi})\cong\mathrm{Hom}(C,A)_{\mathfrak{m}_{\phi}}$
functorially in $A$ and $C$, where the RHS is the usual localization as an $\mathcal{O}(X_{\hat{G}})$-module.
\end{proposition}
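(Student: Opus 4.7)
The proof should follow essentially formally from the explicit formula for $A_\phi$ already extracted in the proof of Proposition~\ref{prop:localbasics}. The plan is to chain together three ingredients: the colimit description of $(-)_\phi$, the compactness of $C$, and the classical filtered-colimit description of module-theoretic localization.

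First, I would recall that, as exhibited in the proof of Proposition~\ref{prop:localbasics}, one has a canonical isomorphism
\[ A_\phi \;\simeq\; \colim_{f \in \mathcal{I}_\phi} F(f), \]
where $\mathcal{I}_\phi$ is the filtered diagram category (reading the opposite convention from ``cofiltered'') whose objects are elements $f \in S := \mathcal{O}(X_{\hat{G}}) \setminus \mathfrak{m}_\phi$, whose morphisms $h : f \to g$ are elements $h \in S$ with $g = fh$, and where $F$ sends every object to $A$ and each morphism $h$ to the endomorphism $\cdot h$ of $A$ induced by $\Psi_G$.

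Next, since $C \in \D(\Bun_G)$ is compact, the functor $\Hom(C,-)$ commutes with filtered colimits by definition of compactness in a stable $\infty$-category, so
\[ \Hom(C, A_\phi) \;\simeq\; \colim_{f \in \mathcal{I}_\phi} \Hom(C, F(f)) \;\simeq\; \colim_{f \in \mathcal{I}_\phi} \Hom(C,A), \]
with the transition map associated to a morphism $h : f \to g$ being multiplication by $h$ on the $\mathcal{O}(X_{\hat{G}})$-module $\Hom(C,A)$ (here we use that the $\mathcal{O}(X_{\hat{G}})$-module structure on $\Hom(C,-)$ is induced by $\Psi_G$, so it is compatible with the $\cdot h$ maps appearing in $F$).

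Finally, I would identify this filtered colimit with the module-theoretic localization. This is a standard fact of commutative algebra: for any $\mathcal{O}(X_{\hat{G}})$-module $M$ (here $M = \Hom(C,A)$) and any multiplicative subset $S \subset \mathcal{O}(X_{\hat{G}})$, the localization $M[S^{-1}]$ is computed as precisely the filtered colimit of copies of $M$ indexed by $S$ with transition maps given by multiplication by elements of $S$. In our case $S = \mathcal{O}(X_{\hat{G}}) \setminus \mathfrak{m}_\phi$ and $M[S^{-1}] = M_{\mathfrak{m}_\phi}$, yielding the claimed isomorphism. Functoriality in both $A$ and $C$ is immediate, since each of the three steps is manifestly functorial in both variables. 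There is no real obstacle here; the proposition is essentially a formal consequence of the explicit construction of $A_\phi$ as a filtered colimit, together with the definition of a compact object.
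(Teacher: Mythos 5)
Your proposal is correct and is essentially identical to the paper's own proof: both use the explicit filtered-colimit formula for $A_{\phi}$ from the proof of Proposition \ref{prop:localbasics}, commute $\mathrm{Hom}(C,-)$ past the colimit using compactness of $C$, and identify the resulting colimit with the module-theoretic localization. Your additional remark that $\mathcal{I}_{\phi}$ is filtered in the sense needed for colimits (despite the paper's "cofiltered" wording) and your spelling out of the standard colimit description of $M_{\mathfrak{m}_{\phi}}$ are fine but add nothing beyond what the paper leaves implicit.
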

\begin{proof}
Notation as in the previous proof, we have
\begin{align*}
\mathrm{Hom}(C,A_{\phi}) & \cong\mathrm{Hom}(C,\mathrm{colim}_{i\in\mathcal{I}_{\phi}}F(i))\\
 & \cong\mathrm{colim}_{i\in\mathcal{I}_{\phi}}\mathrm{Hom}(C,F(i))\\
 & \cong\mathrm{Hom}(C,A)_{\mathfrak{m}_{\phi}}
\end{align*}
where the second isomorphism follows from the compactness of $C$
and the third isomorphism is immediate from the definition of $(-)_{\mathfrak{m}_{\phi}}$.
\end{proof}
\begin{proposition}
If $A$ is ULA, then also $A_{\phi}$ is ULA.
\end{proposition}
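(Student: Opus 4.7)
The strategy is to use the characterization of ULA sheaves on $\Bun_{G}$ via admissibility of their stalks on the Harder--Narasimhan strata (\cite[Theorem~V.7.1]{FS}), combined with the explicit filtered colimit description of $A_\phi$ provided in the proof of Proposition \ref{prop:localbasics}.

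First, since $j_b^*$ preserves colimits, the formula $A_\phi \simeq \mathrm{colim}_{i \in \mathcal{I}_\phi} F(i)$ yields a canonical isomorphism $j_b^* A_\phi \simeq (j_b^* A)_\phi$ in $\D(\Bun_G^b, \Lambda) \simeq \D(J_b(\mathbb{Q}_p), \Lambda)$, where the localization on the right is taken with respect to the induced $\mathcal{O}(X_{\hat G})$-action on the stratum (inherited by restriction from the spectral action on $\D(\Bun_G,\Lambda)$). Thus it suffices to show: if $C \in \Dadm(J_b(\mathbb{Q}_p), \Lambda)$, then $C_\phi$ is again admissible.

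Second, for any open compact subgroup $K \subset J_b(\mathbb{Q}_p)$, the object $j_{b!}(\mathrm{cInd}_K^{J_b(\mathbb{Q}_p)}(\Lambda)) \in \D(\Bun_G, \Lambda)$ is compact (as both $j_{b!}$ and the formation of $K$-invariants preserve colimits), so Proposition \ref{prop:localizenaive} together with adjunction yields
\[ (C_\phi)^K \simeq (C^K)_{\mathfrak{m}_\phi}, \]
where the right-hand side is the localization of the perfect $\Lambda$-complex $C^K$ as an $\mathcal{O}(X_{\hat G})$-module. Because filtered colimits are $t$-exact, cohomology satisfies $H^i((C^K)_{\mathfrak{m}_\phi}) \cong H^i(C^K)_{\mathfrak{m}_\phi}$, so boundedness of the amplitude is preserved; the essential claim is the perfectness of $(C^K)_{\mathfrak{m}_\phi}$.

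The heart of the proof, and the main obstacle, is the following general fact: if $M$ is a perfect $\Lambda$-complex carrying an $\mathcal{O}(X_{\hat G})$-action through endomorphisms in $\D(\Lambda)$, then $M_{\mathfrak{m}_\phi}$ is perfect. Since $M$ is perfect, $\mathrm{End}_{\D(\Lambda)}(M)$ is a finite-dimensional $\Lambda$-algebra, so the image $B$ of the structure map $\mathcal{O}(X_{\hat G}) \to \mathrm{End}_{\D(\Lambda)}(M)$ is a finite-dimensional commutative $\Lambda$-subalgebra, necessarily Artinian, and hence decomposes as $B \simeq \prod_j B_{\mathfrak{n}_j}$ over its finitely many maximal ideals. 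The corresponding orthogonal idempotents induce a splitting of $M$ in $\D(\Lambda)$ as a finite direct sum of perfect summands, and $M_{\mathfrak{m}_\phi}$ is then identified either with the summand attached to the (unique) $\mathfrak{n}_j$ lying over $\mathfrak{m}_\phi$, or with $0$ if no such $\mathfrak{n}_j$ exists. In either case $M_{\mathfrak{m}_\phi}$ is a direct summand of $M$, hence perfect. Applied to $M = C^K$, this produces admissibility of $C_\phi$ on each HN-stratum, and invoking \cite[Theorem~V.7.1]{FS} once more concludes that $A_\phi$ is ULA.
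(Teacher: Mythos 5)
Your proof is correct. It tests ULA-ness against the same family of compact objects as the paper's argument — the generators $j_{b!}(\mathrm{cInd}_K^{J_b(\mathbb{Q}_p)}\Lambda)$ — but packages this through the stratum-wise admissibility criterion of \cite[Theorem~V.7.1]{FS} rather than the global criterion that $\RHom(C,-)$ be perfect for every compact $C$ (\cite[Prop.~VII.7.9]{FS}), which is what the paper uses. The real divergence is in the final finiteness step. The paper writes $\RHom(C,A_\phi)$ as a filtered colimit of copies of the perfect complex $\RHom(C,A)$ and appeals to the fact that a filtered colimit of perfect complexes with uniformly bounded amplitude and cohomology dimensions is perfect; you instead observe that the $\mathcal{O}(X_{\hat G})$-action on the perfect complex $M=C^K$ factors through a finite-dimensional commutative (hence Artinian) quotient, so that $M_{\mathfrak{m}_\phi}$ is cut out by an idempotent and is a direct summand of $M$. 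Your version is a bit more structural: it makes the "it easily follows" step of the paper explicit, and it already exhibits $A_\phi$ as a direct summand of $A$ on each stratum, anticipating Proposition \ref{prop:localizeula}. One small imprecision: Proposition \ref{prop:localizenaive} as stated concerns $\mathrm{Hom}=H^0\RHom$, so to get the identification $(C_\phi)^K\simeq (C^K)_{\mathfrak{m}_\phi}$ at the level of complexes you should either apply it to all shifts or rerun its proof with $\RHom$ in place of $\mathrm{Hom}$ (compactness gives commutation of $\RHom$ with the filtered colimit, exactly as in the paper's own proof); this is cosmetic and does not affect the argument.
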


\begin{proof}
Recall from \cite[Prop. VII.7.9]{FS} that $B\in \D(\mathrm{Bun}_{G})$
is ULA iff $\RHom(C,B)\in\mathrm{Perf}(\Lambda)$ is a perfect complex for all compact
objects $C\in \D(\mathrm{Bun}_{G})$. Now, if $C$ is compact, $\RHom(C,-)$
commutes with filtered colimits, so
\begin{align*}
\RHom(C,A_{\phi}) & \simeq \RHom(C,\mathrm{colim}_{i\in\mathcal{I}_{\phi}}F(i))\\
 & \simeq\mathrm{colim}_{i\in\mathcal{I}_{\phi}}\RHom(C,F(i))
\end{align*}
with notation as in the proof of Proposition \ref{prop:localbasics}. Since $F(i)\simeq A$
for all $i$, $\mathrm{colim}_{i\in\mathcal{I}_{\phi}}\RHom(C,F(i))$
is a filtered colimit of perfect complexes $P_{i}$ which vanish outside
a finite interval independent of $n$, and with $\mathrm{dim}_{\Lambda}(H^{j}(P_{i}))$
bounded independently of $i$. It then easily follows that $\mathrm{colim}_{i\in\mathcal{I}_{\phi}}\RHom(C,F(i))$
is perfect, whence the claim.
\end{proof}
\begin{proposition}\label{prop:localizeula}
If $A$ is ULA, the natural maps $A\to\prod_{\phi}A_{\phi}\leftarrow\oplus_{\phi}A_{\phi}$
are isomorphisms, where the direct sum and direct product are taken over all semi-simple $L$-parameters. In particular, $A_{\phi}$ is functorially a direct
summand of $A$ for ULA sheaves $A$, and the functor $(-)_{\phi}$ on ULA
sheaves is perverse t-exact.
\end{proposition}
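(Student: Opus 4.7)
The overall plan is to test everything against compact generators $C$ of $\D(\Bun_G)$ and translate the claim into a finiteness statement for the action of $\mathcal{O}(X_{\hat G})$ on $\RHom(C,A)$. The key input, exploiting the ULA hypothesis crucially, is that $\RHom(C,A)$ is a perfect complex of $\Lambda$-modules by \cite[Prop. VII.7.9]{FS}, in particular bounded with finite-dimensional cohomology. Consequently the action of $\mathcal{O}(X_{\hat G})$ on this graded $\Lambda$-vector space factors through its image in $\mathrm{End}_\Lambda(\RHom(C,A))$, a finite-dimensional Artinian $\Lambda$-algebra, and is therefore supported at only finitely many maximal ideals $\mathfrak{m}_{\phi_1},\ldots,\mathfrak{m}_{\phi_n}$ of $\mathcal{O}(X_{\hat G})$. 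This is really the only nontrivial step, and the one I would call the main ``obstacle,'' though it requires nothing beyond this observation once framed correctly.

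Granted this finite support, elementary commutative algebra produces a canonical decomposition $\RHom(C,A) \simeq \bigoplus_{i=1}^n \RHom(C,A)_{\mathfrak{m}_{\phi_i}}$ with all other localizations vanishing. Applying Proposition \ref{prop:localizenaive} to identify $\RHom(C,A)_{\mathfrak{m}_\phi} \simeq \RHom(C,A_\phi)$, and using the compactness of $C$ to commute $\RHom(C,-)$ with arbitrary direct sums, I conclude that the natural maps
\[ \RHom(C,A) \longrightarrow \RHom\bigl(C, \prod\nolimits_\phi A_\phi\bigr) \longleftarrow \RHom\bigl(C, \bigoplus\nolimits_\phi A_\phi\bigr) \]
are both isomorphisms, where the right-hand comparison uses that only finitely many of the $\RHom(C,A_\phi)$ are nonzero so that the canonical map from the direct sum to the product is an isomorphism. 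Since compact objects generate $\D(\Bun_G)$, Yoneda then upgrades this to the asserted isomorphisms $A \simeq \bigoplus_\phi A_\phi \simeq \prod_\phi A_\phi$ in $\D(\Bun_G)$.

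The remaining assertions follow formally. The decomposition $A \simeq \bigoplus_\phi A_\phi$ exhibits each $A_\phi$ as a functorial direct summand of $A$. For perverse $t$-exactness, I recall that $j_b^*$ and $j_b^!$ are both triangulated, and in particular send direct summands to direct summands; hence if $A \in \pD^{\mathrm{ULA},\leq 0}(\Bun_G,\Lambda)$, then $j_b^*A$ lies in $\D^{\leq \langle 2\rho_G,\nu_b\rangle}$ for every $b$, and the same holds for the direct summand $j_b^*A_\phi$, so $A_\phi \in \pD^{\leq 0}(\Bun_G,\Lambda)$; the symmetric argument using $j_b^!$ handles $\pD^{\geq 0}$. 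Combined with the preceding proposition, which guarantees that $A_\phi$ remains ULA, this yields the claimed perverse $t$-exactness of $(-)_\phi$ on ULA sheaves.
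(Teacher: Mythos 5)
Your proposal is correct and follows essentially the same route as the paper: test against compact generators, use the ULA hypothesis to make $\RHom(C,A)$ a perfect complex and hence a finite-length $\mathcal{O}(X_{\hat{G}})$-module supported at finitely many closed points, invoke Proposition \ref{prop:localizenaive} to identify the localizations with $\RHom(C,A_{\phi})$, and use compactness to commute with the direct sum. Your added justification of the perverse $t$-exactness via $j_b^*$ and $j_b^!$ preserving direct summands is a correct fleshing-out of a step the paper leaves implicit.
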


\begin{remark}\label{sumproductula}
    
The isomorphism $\oplus_{\phi}A_{\phi}\overset{\sim}{\to}\prod_{\phi}A_{\phi}$
may be surprising at first glance. To put this in context, we remind
the reader that if $(\pi_{i})_{i\in I}$ is a collection of admissible
smooth $\Lambda[G(\mathbb{Q}_{p})]$-modules whose product $\prod_{i}\pi_{i}$ is admissible,
then $\oplus_{i}\pi_{i}\overset{\sim}{\to}\prod_{i}\pi_{i}$ automatically,
because admissibility of $\prod_{i}\pi_{i}$ implies that for any
given compact open subgroup $K\subset G(\mathbb{Q}_{p})$ we have $\pi_{i}^{K}=0$
for all but finitely many $i$. A similar argument occurs in the following
proof, which actually shows that if $(A_{i})_{i\in I}$ is any collection
of ULA sheaves on $\mathrm{Bun}_{G}$ whose product $\prod_{i}A_{i}$
is ULA, then $\oplus_{i}A_{i}\overset{\sim}{\to}\prod_{i}A_{i}$ automatically. 
\end{remark}

\begin{proof}
We first show that $A\to\prod_{\phi}A_{\phi}$ is an isomorphism.
Let $C$ be any compact object. It suffices to prove that the natural
map
\[
\mathrm{Hom}(C,A)\to\prod_{\phi}\mathrm{Hom}(C,A_{\phi})\cong\mathrm{Hom}(C,\prod_{\phi}A_{\phi})
\]
is an isomorphism, since $\D(\Bun_{G})$ is compactly generated \cite[Theorem~I.5.1 (iii)]{FS}. As in the previous proof, $\RHom(C,A)$
is a perfect complex, so $\mathrm{Hom}(C,A)$ is a finite $\Lambda$-vector
space. In particular, it is a finite length $\mathcal{O}(X_{\hat{G}})$-module
supported at a finite set of closed points $S\subset X_{\hat{G}}(\Lambda)$, so
if $\phi\notin S$ then $\mathrm{Hom}(C,A_{\phi})=\mathrm{Hom}(C,A)_{\mathfrak{m}_{\phi}}=0$
using Proposition \ref{prop:localizenaive}. We then conclude that
\begin{align*}
\mathrm{Hom}(C,A) & =\oplus_{\phi\in S}\mathrm{Hom}(C,A)_{\mathfrak{m}_{\phi}}\\
 & =\oplus_{\phi\in S}\mathrm{Hom}(C,A_{\phi})\\
 & =\prod_{\phi}\mathrm{Hom}(C,A_{\phi})
\end{align*}
where the first equality follows from general nonsense about finite
length modules over commutative rings, the second equality follows
from Proposition \ref{prop:localizenaive}, and the third equality follows from the vanishing
of $\mathrm{Hom}(C,A_{\phi})$ for all but finitely many $\phi$.
This also shows that $\mathrm{Hom}(C,\oplus_{\phi}A_{\phi})\cong\oplus_{\phi}\mathrm{Hom}(C,A_{\phi})\to\prod_{\phi}\mathrm{Hom}(C,A_{\phi})$
is an isomorphism (here again the first isomorphism follows from compactness
of $C$), which implies that $\oplus_{\phi}A_{\phi}\overset{\sim}{\to}\prod_{\phi}A_{\phi}$
is an isomorphism.
\end{proof}
Next, recall the Verdier duality functor $\mathbb{D}_{\Bun_{G}}$ on $\D(\mathrm{Bun}_{G})$,
which induces an involutive anti-equivalence on the subcategory of ULA
sheaves. Recall also that, for any $A$, the diagram
\[
\xymatrix{\mathcal{O}(X_{\hat{G}})\ar[rr]^{\Psi_{G}}\ar[d]^{f\mapsto f^{\vee}} & & \mathrm{End}(A)\ar[d]\\
\mathcal{O}(X_{\hat{G}})\ar[rr]^{\Psi_{G}} & & \mathrm{End}(\mathbb{D}_{\Bun_{G}}(A))
}
\]
commutes, where $f\mapsto f^{\vee}$ is the involution of $\mathcal{O}(X_{\hat{G}})$
induced by composition with the Chevalley involution at the level of
$L$-parameters (this follows from a small adaptation of the proof of \cite[Proposition~IX.5.3]{FS}). Since $f\in\mathfrak{m}_{\phi}$ iff $f^{\vee}\in\mathfrak{m}_{\phi^{\vee}},$
we deduce that if $A$ is $\phi$-local then $\mathbb{D}_{\Bun_{G}}(A)$ is $\phi^{\vee}$-local.
Using biduality for ULA sheaves (which follows easily from \cite[Proposition VII.7.7 and Proposition VII.7.9]{FS}), we also get that if $A$ is ULA then $A$ is $\phi$-local
if and only if $\mathbb{D}_{\Bun_{G}}(A)$ is $\phi^{\vee}$-local.
\begin{corollary}{\label{cor: contragradients}}
If $A$ is ULA, then $\mathbb{D}_{\Bun_{G}}(A_{\phi})\cong(\mathbb{D}_{\Bun_{G}}(A))_{\phi^{\vee}}$.
\end{corollary}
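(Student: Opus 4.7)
The strategy is to combine the decomposition $A \simeq \bigoplus_\psi A_\psi$ of Proposition \ref{prop:localizeula} with the observation, recorded just before the corollary, that Verdier duality intertwines $\phi$-locality with $\phi^\vee$-locality on ULA sheaves. The plan is to dualize the decomposition term-by-term, recognize each dual summand as $\phi^\vee$-local, and then match this against the canonical $\phi^\vee$-local decomposition of $\mathbb{D}_{\Bun_G}(A)$ supplied by Proposition \ref{prop:localizeula} applied once more.

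First I would note that $A$ ULA gives $A \simeq \bigoplus_\psi A_\psi \simeq \prod_\psi A_\psi$, where the coincidence of sum and product is Remark \ref{sumproductula}. Applying the contravariant anti-equivalence $\mathbb{D}_{\Bun_G}$ on $\DULA$, which exchanges products and coproducts, yields $\mathbb{D}_{\Bun_G}(A) \simeq \prod_\psi \mathbb{D}_{\Bun_G}(A_\psi)$; since $\mathbb{D}_{\Bun_G}(A)$ is again ULA, Remark \ref{sumproductula} applied to the family $(\mathbb{D}_{\Bun_G}(A_\psi))_\psi$ turns this back into a direct sum $\bigoplus_\psi \mathbb{D}_{\Bun_G}(A_\psi)$. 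By the Chevalley-involution argument from the preceding discussion, each summand $\mathbb{D}_{\Bun_G}(A_\psi)$ is $\psi^\vee$-local.

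The remaining step, which I expect to be the main (though modest) obstacle, is to extract the $\phi^\vee$-local part of $\mathbb{D}_{\Bun_G}(A)$ from this decomposition and identify it with $\mathbb{D}_{\Bun_G}(A_\phi)$. I would apply the left adjoint $(-)_{\phi^\vee}$, which commutes with coproducts, giving
\[ (\mathbb{D}_{\Bun_G}(A))_{\phi^\vee} \simeq \bigoplus_\psi (\mathbb{D}_{\Bun_G}(A_\psi))_{\phi^\vee}. \]
The term $\psi = \phi$ simplifies to $\mathbb{D}_{\Bun_G}(A_\phi)$ by idempotence of localization on an already $\phi^\vee$-local object. For the off-diagonal vanishing, I would apply Proposition \ref{prop:localizeula} to the ULA sheaf $B := \mathbb{D}_{\Bun_G}(A_\psi)$, which is $\psi^\vee$-local; the unit map $B \to B_{\psi^\vee}$ is an isomorphism, so comparing the decomposition $B \simeq \bigoplus_{\phi'} B_{\phi'}$ with the equality $B \simeq B_{\psi^\vee}$ forces $B_{\phi'} = 0$ for every $\phi' \neq \psi^\vee$, and in particular $(\mathbb{D}_{\Bun_G}(A_\psi))_{\phi^\vee} = 0$ whenever $\psi \neq \phi$. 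Assembling these observations yields the desired isomorphism $\mathbb{D}_{\Bun_G}(A_\phi) \simeq (\mathbb{D}_{\Bun_G}(A))_{\phi^\vee}$.
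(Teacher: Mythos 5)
Your proof is correct and takes essentially the same route as the paper: dualize the direct-sum decomposition from Proposition \ref{prop:localizeula}, use that $\mathbb{D}_{\Bun_{G}}$ exchanges $\psi$-local and $\psi^{\vee}$-local ULA objects together with Remark \ref{sumproductula}, and then apply $(-)_{\phi^{\vee}}$. Your explicit verification of the off-diagonal vanishing $(\mathbb{D}_{\Bun_{G}}(A_{\psi}))_{\phi^{\vee}}=0$ for $\psi\neq\phi$ is a step the paper leaves implicit, but it is the same argument.
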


\begin{proof}
By Proposition \ref{prop:localizeula} and the remarks preceding its proof, the decomposition
$A=\oplus_{\psi}A_{\psi}$ dualizes to a decomposition
\[
\mathbb{D}_{\Bun_{G}}(A) =\prod_{\psi}\mathbb{D}_{\Bun_{G}}(A_{\psi})\cong\oplus_{\psi}\mathbb{D}_{\Bun_{G}}(A_{\psi})
\]
where the second isomorphism follows from the discussion in Remark \ref{sumproductula}.
On the other hand, applying Proposition \ref{prop:localizeula} directly to $\mathbb{D}_{\Bun_{G}}(A)$
gives a decomposition
\[
\mathbb{D}_{\Bun_{G}}(A)\cong\oplus_{\psi'}(\mathbb{D}_{\Bun_{G}}(A))_{\psi'},
\]
so comparing these we get a natural isomorphism
\[
\oplus_{\psi}\mathbb{D}_{\Bun_{G}}(A_{\psi})\cong\oplus_{\psi'}(\mathbb{D}_{\Bun_{G}}(A))_{\psi'}.
\]
Applying $(-)_{\phi^{\vee}}$ to both sides, we get $\mathbb{D}_{\Bun_{G}}(A_{\phi})$ on the left side (using that $\mathbb{D}_{\Bun_{G}}(A_{\phi})$
is $\phi^{\vee}$-local), and $(\mathbb{D}_{\Bun_{G}}(A))_{\phi^{\vee}}$ on the right side. This gives the claim.
\end{proof}
\printbibliography
\end{document}